\theoremstyle{plain}
\newtheorem{thm}{Theorem}[section]
\newtheorem{cor}[thm]{Corollary}
\newtheorem{pro}[thm]{Proposition}
\newtheorem{lem}[thm]{Lemma}
\theoremstyle{definition}
\newtheorem{eg}[thm]{Example}
\newtheorem{rem}[thm]{Remark}
\newenvironment{thm-A}
{{\vs \noindent \bf Theorem~A.$\,$}\it}{\vs}
\newenvironment{thm-*}
{{\vs \noindent \bf Theorem.$\,$}\it}{\vs}
\newenvironment{thm-B}
{{\vs \noindent \bf Theorem~B.$\,$}\it}{\vs}
\newenvironment{thm-C}
{{\vs \noindent \bf Theorem~C.$\,$}\it}{\vs}
\newenvironment{thm-D}
{{\vs \noindent \bf Theorem~ D.$\,$}\it}{\vs}
\newenvironment{thm-E}
{{\vs \noindent \bf Theorem~ E.$\,$}\it}{\vs}
\def\vs{\vspace{0.2cm}}
\def\C{\mathbf{C}}
\def\R{\mathbf{R}}
\def\Q{\mathbf{Q}}
\def\Z{\mathbf{Z}}
\def\N{\mathbf{N}}
\def\k{\mathbf{k}}
\def\i{\sf{i}}
\def\j{\sf{j}}
\def\p{{\bullet}}
\def\P{\mathbb{P}}
\def\Bub{{\mathcal{B}}}
\def\z{{\sf{Z}}}
\def\zz{{\mathcal{Z}}}
\def\H{\mathbb{H}}
\def\B{\mathcal{B}}
\def\Sym{\sf{Sym}}
\def\Aut{{\sf{Aut}}}
\def\Pic{{\sf{Pic}}}
\def\NS{{\sf{NS}}}
\def\Cr{{\sf{Cr}}}
\def\Cent{{\sf{Cent}}}
\def\End{{\sf{End}}}
\def\PGL{{\sf{PGL}}}
\def\GL{{\sf{GL}}}
\def\Isom{{\sf{Isom}}}
\def\kf{{\sf{\Omega}}}
\def\Bir{{\sf{Bir}}}
\def\kod{{\sf{Kod}}}
\def\da{\dasharrow}
\def\Ind{{\text{Ind}}}
\def\ax{{\sf{Ax}}}
\def\dist{{\sf{dist}}}
\def\deg{{\sf{deg}}}
\def\mcdeg{{\sf{mcdeg}}}
\def\Pis{{\sf{Pis}}}
\def\Sal{{\sf{Sal}}}
\numberwithin{equation}{section}       % Number formulas within sections
\begin{document}
\setlength{\baselineskip}{0.491cm}        %0.51
%
%%%%%%%%%%%%%%%%%%%%%%%%%%%%%%%%%%%%%%%%%%%%%%%%%%%%%%%%%%%%%%%%%%
%

\title[Dynamical degrees]
{Dynamical Degrees of Birational transformations of projective surfaces}
\thanks{The first author acknowledge support by the Swiss National Science Foundation Grant  "Birational Geometry" PP00P2\_128422 /1 and both authors acknowledge support by the French National Research Agency Grant "BirPol", ANR-11-JS01-004-01. }
\author{J\'er\'emy Blanc}
\author{Serge Cantat}

\address{IRMAR, UMR 6625 du CNRS\\
         Universit\'e de Rennes I\\
         35042 Rennes, France}
\email{cantat@univ-rennes1.fr}

\address{Mathematisches Institut\\
Universit\"at Basel\\
 Rheinsprung 21\\
  4051 Basel, Switzerland}
\email{Jeremy.Blanc@unibas.ch}

%
%%%%%%%%%%%%%%%%%%%%%%%%%%%%%%%%%%%%%%%%%%%%%%%%%%%%%%%%%%%%%%%%%%
%
\begin{abstract}
The dynamical degree $\lambda(f)$ of a birational transformation $f$ measures the exponential growth rate of the degree
of the formulae that define the $n$-th iterate of $f$. We study the set of all dynamical degrees of all birational transformations
of projective surfaces, and the relationship between the value of $\lambda(f)$ and the structure of the conjugacy class of $f$.
For instance, the set of all dynamical degrees of birational transformations of the complex projective plane is a closed and well ordered
set of algebraic numbers. 
\end{abstract}
%
%%%%%%%%%%%%%%%%%%%%%%%%%%%%%%%%%%%%%%%%%%%%%%%%%%%%%%%%%%%%%%%%%%
%

\maketitle

\setcounter{tocdepth}{1}
%\tableofcontents
%
%%%%%%%%%%%%%%%%%%%%%%%%%%%%%%%%%%%%%%%%%%%%%%%%%%%%%%%%%%%%%%%%%%
%%
%%%%%%%%%%%%%%%%%%%%%%%%%%%%%%%%%%%%%%%%%%%%%%%%%%%%%%%%%%%%%%%%%%
%%
%%%%%%%%%%%%%%%%%%%%%%%%%%%%%%%%%%%%%%%%%%%%%%%%%%%%%%%%%%%%%%%%%%
%

\section{Introduction}\label{part:intro}

Given a birational transformation $f\colon X\dasharrow X$ of a projective surface, defined over a field $\k$, its dynamical 
degree $\lambda(f)$ is a positive real number that measures the complexity of the dynamics of $f$. 
If $\k$ is the field of complex numbers, 
the neperian logarithm $\log(\lambda(f))$
 provides an upper bound for the topological entropy of $f\colon X(\C)\dasharrow X(\C)$ 
and  is equal to it under natural assumptions (see \cite{Bedford-Diller:2005, Dinh-Sibony:Annals}). 
Our goal is to study the structure of the set of all dynamical degrees $\lambda(f)$, when $f$ runs over the group of
all birational transformations $\Bir(X)$ and $X$ over the collection of all projective surfaces. 

The dynamical degree $\lambda(f)$ is invariant under conjugacy. An important feature of our results may be summarized 
by the following slogan: {\sl{Precise knowledge on $\lambda(f)$ provides useful information on the conjugacy class of $f$}}. 
In particular, we shall obtain effective, quantitative bounds for the solutions of certain equations in $\Bir(X)$, like the conjugacy 
problem asking for a solution $h$ of the equation $hfh^{-1}=g$. 

Another motivation of the present paper is to develop the ``dictionary'' between groups of birational transformations of projective surfaces and mapping class groups of higher genus, closed, orientable surfaces. The dynamical degree $\lambda(f)$ plays a role which is similar to the dilatation factor $\lambda(\varphi)$ of pseudo-Anosov mapping classes (see  \S~\ref{par:modular-group} below).

As we shall see, our main results should be compared to two theorems proved by W. Thurston. The first one describes explicitly the set of topological entropies of post-critically finite, continuous, multimodal transformations of the unit interval as the set of logarithms of ``weak Perron numbers''. The second describes the structure of the set of volumes of hyperbolic manifolds of dimension $3$; this set is a countable, non-discrete, and well ordered subset of the real line.

%%%%%%%
\subsection{Dynamical degrees, Pisot numbers, and Salem numbers}
%%%%%%%

%%%
\subsubsection{Dynamical degrees}
%%%
Let $X$ be a projective surface defined over an algebraically closed field $\k$. In what follows, 
$\NS(X)$ denotes the N\'eron-Severi group  of $X$. Given a ring ${\mathbf{A}}$, $\NS_{\mathbf{A}}(X)$ stands for $\NS(X)\otimes_\Z {\mathbf{A}}$;
hence, $\NS_\Z(X)$ coincides with $\NS(X)$.

Let $f$ be
a birational transformation of $X$ defined over $\k$. It determines an endomorphism
$f_*\colon \NS(X)\to \NS(X)$, and the {\bf{dynamical degree}} $\lambda(f)$ of $f$ 
is defined as the spectral radius of the sequence of endomorphisms $(f^n)_*$, as $n$ goes to $+\infty$. More
precisely, once a norm $\parallel \cdot \parallel$ has been chosen on the real vector space $\End(\NS_\R(X))$, 
one defines
\[
\lambda(f) = \lim_{n\to \infty} \parallel (f^n)_* \parallel ^{1/n} \; ; 
\]
this limit exists, and does not depend on the choice of the norm.
Moreover, for every ample divisor $D\subset X$
\[
\lambda(f)=\lim_{n\to \infty} \left({D\cdot (f^n)_* D} \right)^{1/n},
\]
where $C\cdot D$ denotes the intersection number between divisors or divisor classes. 
By definition, $f$ is {\bf{loxodromic}} if $\lambda(f)>1$.

The {\bf{dynamical spectrum}} of $X$ is defined as the set 
\[
\Lambda(X)=\{\lambda(f)\mid f\in \Bir(X)\}.
\]
If one wants to specify the field $\k$, one may denote the dynamical spectrum by $\Lambda(X,\k)$.

\begin{eg} The N\'eron-Severi group of $\P^2_\k$ coincides with the Picard group $\Pic(\P^2_\k)$, has rank
$1$, and is generated by the class $e_0$ of a line: 
\[
\NS(\P^2_\k)=\Pic(\P^2_\k)=\Z e_0.
\]
Fix a choice of homogeneous coordinates $[x:y:z]$ on the projective plane~$\P^2_\k$. 
Let $f$ be an element of $\Cr_2(\k)$. One can then find three homogeneous polynomials
$P$, $Q$, and $R$ in the variables $(x,y,z)$, of the same degree $d$, and without common
factor of positive degree, such that
\[
f([x:y:z])=[P(x,y,z):Q(x,y,z):R(x,y,z)].
\]
This degree $d$ does not depend on the choice of homogeneous coordinates; it is
denoted by $\deg(f)$ and called the degree of $f$. On $\Pic(\P^2_\k)$, $f$ acts by multiplication by
$\deg(f)$; thus, we have $\lambda(f)=\lim\deg(f^n)^{1/n}.$
For instance, the {\bf{standard quadratic involution}}
\[
\sigma([x:y:z])=[\frac{1}{x}:\frac{1}{y}:\frac{1}{z}]=[yz:zx:xy].
\]
satisfies $\deg(\sigma^n)=1$ or $2$, according to the parity of $n$; hence $\lambda(\sigma)=1$. 
\end{eg}

%%%
\subsubsection{Pisot and Salem numbers (see \cite{BDG:Book})}\label{par:Intro-Pisot-Salem}
%%%
A {\bf{Pisot number}} is an algebraic integer $\lambda \in \, \,  ]1,\infty[$ whose other Galois conjugates
lie in the open unit disk; the set of Pisot numbers includes all integers $d\geq 2$ as well as all reciprocal quadratic integers 
$\lambda>1$. A {\bf{Salem number}} is an algebraic integer $\lambda\in \, \, ]1,\infty[$ whose other Galois conjugates
are in the closed unit disk, with at least one on the boundary; hence,
the minimal polynomial of $\lambda$ has at least two complex conjugate roots on the unit circle, its roots are permuted by the
involution $x\mapsto 1/x$, and its degree
 is at least $4$. We denote by $\Pis$ the set of Pisot numbers and by $\Sal$ the set of Salem numbers. 

It is known that $\Pis$ is a closed subset of the real line. It is contained in the closure of $\Sal$, and its infimum 
is equal to $\lambda_P\simeq 1.324717$,  the unique root $\lambda_P>1$ 
of the cubic equation $ x^3=x+1;$
this Pisot number is known as the {\bf{plastic number}}, or {\bf{padovan number}}. 
The smallest accumulation point of $\Pis$ is the golden mean $\lambda_G=(1+\sqrt{5})/{2}$;
all Pisot numbers between $\lambda_P$ and $\lambda_G$ have been listed. 

Our present knowledge of Salem numbers is much weaker. Conjecturally, the infimum of $\Sal$ is larger than 
$1$, and should be equal to the {\bf{Lehmer number}}, i.e.~to the Salem number $\lambda_L\simeq 1.176280$ 
obtained as the unique root $>1$ of the irreducible polynomial 
$
x^{10} + x^9 - x^7 - x^6 - x^5 - x^4 - x^3 + x + 1.
$

%%%
\subsubsection{Dynamical degrees and algebraic stability}\label{par:dyna-deg-alg-stab}
%%%
One says that $f\in \Bir(X)$ is {\bf{algebraically stable}} when the endomorphism $f_*$ of the N\'eron-Severi group $\NS(X)$ satisfies 
\begin{equation}\label{eq:alg-stable}
(f^n)_*=(f_*)^n
\end{equation}
for all positive integers $n$. If $f$ is algebraically stable, then $f^{-1}$ is also algebraically stable and $\lambda(f)$ is the spectral radius of the endomorphism $f_*$ of $\NS(X)$;
in particular, $\lambda(f)$ is an algebraic integer. Diller and Favre proved in \cite{Diller-Favre:2001} that 
every birational transformation of a projective surface $X$ is conjugate by a birational morphism $\pi:Y\to X$
to an algebraically stable transformation $\pi^{-1}\circ f \circ \pi$. From this fact and the Hodge index theorem, 
they obtained the following result. 

\begin{thm}[Diller and Favre]
Let $\k$ be a field and let $f$ be a birational transformation of a projective
surface defined over $\k$. If $\lambda(f)$ is different from~$1$, then $\lambda(f)$ is a Salem or a Pisot number.
\end{thm}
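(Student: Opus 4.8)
The plan is to reduce to the algebraically stable case and then run a linear-algebra argument on $\NS(X)$ equipped with its intersection form. By the theorem of Diller and Favre cited just above, there is a birational morphism $\pi\colon Y\to X$ such that $g=\pi^{-1}\circ f\circ \pi$ is algebraically stable; since $\lambda$ is a conjugacy invariant, $\lambda(f)=\lambda(g)$, and by the stated properties of algebraically stable maps, $\lambda(g)$ is the spectral radius of the endomorphism $g_*$ of the finitely generated free abelian group $\NS(Y)$. In particular $\lambda:=\lambda(f)$ is an algebraic integer (it is an eigenvalue of an integer matrix), and its Galois conjugates are among the eigenvalues of $g_*$. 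So the entire problem is now about the eigenvalues of $g_*$.

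The key structural input is that $g_*$ preserves the intersection form $q$ on $\NS(Y)$, which by the Hodge index theorem has signature $(1,\rho-1)$ where $\rho=\mathrm{rk}\,\NS(Y)$. First I would check that $g_*$ is an isometry of $q$: for birational maps of surfaces one has $q(g_*u,g_*v)=q(u,v)$ when $g$ is algebraically stable (pushforward multiplies intersection numbers by the degree, which is $1$ for a birational map; more precisely $(f_*u)\cdot v=u\cdot(f^*v)$ and algebraic stability makes $f_*$ and $f^*$ mutually inverse). Thus $g_*\in \O(\NS(Y),q)$, an orthogonal group of a lattice of signature $(1,\rho-1)$. Now I invoke the classification of eigenvalues of such isometries: for an isometry $\gamma$ of a quadratic form of Lorentzian signature, the eigenvalues off the unit circle come in pairs $\mu,1/\mu$ coming from a single real eigenvalue $\mu>1$ and its reciprocal (this is the ``loxodromic'' or ``hyperbolic'' case, where $\gamma$ has an eigenvector in the positive cone with eigenvalue $\mu$), all remaining eigenvalues lie on the unit circle, and the spectral radius is exactly this $\mu$ (when it exists). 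This dichotomy is the heart of the matter and the step I expect to cost the most care to state precisely — one must use that the positive cone is $\gamma$-invariant and run the Perron–Frobenius-type argument inside the hyperbolic space $\{q=1\}$, to conclude that there is at most one eigenvalue of modulus $>1$, it is real positive, simple, and equal to $\lambda$.

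Given this, the conclusion is immediate bookkeeping. If $\lambda(f)>1$, the above shows $\lambda$ is real, $\lambda>1$, it is an algebraic integer, its reciprocal $1/\lambda$ is a Galois conjugate, and every other Galois conjugate is an eigenvalue of $g_*$ of modulus $\le 1$. Two cases: either all other conjugates lie strictly inside the open unit disk — but then, since $1/\lambda$ is a conjugate lying strictly inside, the product of all conjugates (an integer, up to sign, being $\pm$ the constant term of the minimal polynomial, and nonzero since $\lambda\neq 0$) forces, together with $\lambda>1>1/\lambda$, the remaining conjugates to compensate; one finds $\lambda$ is a Pisot number (this is the case where the characteristic polynomial's ``unit circle part'' is trivial). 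Or at least one other conjugate lies on the unit circle; then its complex conjugate does too, the set of conjugates is stable under $x\mapsto 1/x$ because $g_*$ is an isometry (so the minimal polynomial is reciprocal), all conjugates other than $\lambda,1/\lambda$ lie on the unit circle, and the degree is at least $4$ — i.e.\ $\lambda$ is a Salem number. Hence $\lambda(f)$ is Pisot or Salem. The only genuinely nontrivial ingredient is the spectral dichotomy for isometries of a Lorentzian lattice; everything else is the Diller–Favre reduction plus elementary properties of minimal polynomials of algebraic integers.
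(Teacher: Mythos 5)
There is a genuine error at the heart of your argument: the claim that algebraic stability makes $f_*$ and $f^*$ mutually inverse, so that $g_*$ is an isometry of $(\NS(Y),q)$. This is false for birational maps that are not automorphisms. Algebraic stability gives $(f^n)_*=(f_*)^n$ and the adjunction $f_*C\cdot D=C\cdot f^*D$, but the composition $f_*f^*$ is \emph{not} the identity: as recalled in the paper (Equation \eqref{eq:push-pull}), $f_*f^*C=C+\sum_j(C\cdot E_j)E_j$, where the $E_j$ are the (nonzero, in general) images of the exceptional divisors of a resolution. Consequently $f^*C\cdot f^*C=C\cdot C+\sum_j(E_j\cdot C)^2$ (Equation \eqref{eq:push-pull-intersection}): $f^*$ only \emph{increases} self-intersections, and it preserves the form exactly when all $E_j$ vanish against everything, i.e.\ essentially when $f$ is conjugate to an automorphism --- which is the content of Theorem~A, not a free hypothesis. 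Your Lorentzian-isometry dichotomy then forces the characteristic polynomial to be reciprocal and the spectrum to be $\{\lambda,1/\lambda\}$ union a subset of the unit circle; this would prove that every dynamical degree $>1$ is Salem or a reciprocal quadratic integer, which is false. The paper's own example $h(X,Y)=(Y,X+Y^d)$ is algebraically stable on $\P^2_\k$ with $\lambda(h)=d\geq 2$; here $1/d$ is not an algebraic integer, hence cannot be an eigenvalue of the integer matrix $h_*$, so $h_*$ is certainly not an isometry and its spectrum is not closed under $\mu\mapsto 1/\mu$.

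Note also that the paper does not reprove this theorem; it quotes \cite{Diller-Favre:2001}, Theorem~5.1, and the sketch it gives of the mechanism is precisely the one you need: replace the (false) isometry property by the expansion inequality $f^*v\cdot f^*\bar v\geq v\cdot\bar v$ coming from \eqref{eq:push-pull-intersection}, and combine it with the Hodge index theorem (signature $(1,\rho-1)$) and the Perron--Frobenius eigenvector in the nef cone to show that $\lambda(f)$ is the unique eigenvalue of $f^*$ of modulus $>1$. Once you know that every other eigenvalue of the integer matrix $f^*$ lies in the closed unit disk, the bookkeeping at the end of your proposal simplifies and becomes correct: all Galois conjugates of $\lambda(f)$ other than $\lambda(f)$ itself lie in the closed unit disk, and $\lambda(f)$ is Pisot or Salem according to whether none or some lie on the boundary (with the paper's definition of Salem number, no separate reciprocity argument is needed). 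The first half of your reduction --- passing to an algebraically stable model, identifying $\lambda(f)$ with the spectral radius of an integer matrix, and locating the Galois conjugates among its eigenvalues --- is fine.
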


In this article we initiate the study of the dynamical spectrum $\Lambda(X)$. 
By Diller-Favre Theorem, $\Lambda(X)$ splits in two parts, its Pisot part $\Lambda^{P}(X)$
and its Salem part $\Lambda^S(X)$. The problem is to describe which numbers can appear in each of
these sets, as well as the relationship between these two sets. 

\begin{eg}
When $f$ is an algebraically stable  transformation of $\P^2_\k$, one gets $\lambda(f)=\deg(f)$.
For instance, the automorphism $h$ of the affine plane defined by 
$h(X,Y) = (Y, X+ Y^d)$ extends to a birational map of the projective plane such that $\deg(h^n)=d^n$
for all $n\geq 0$. In particular, $\Lambda(\P^2_\k)$ contains all integers $d\geq 1$, 
for all fields $\k$.
\end{eg}

\begin{eg}\label{eg:monomial}
Consider the group $\GL_2(\Z)$ acting by (monomial) automorphisms of the 
multiplicative group $\k^*\times \k^*$: If 
\[
A=\left( \begin{array}{cc} a & b \\ c& d \end{array} \right)
\]
is an element of $\GL_2(\Z)$ and $(X,Y)$ denotes the coordinates on $\k^*\times \k^*$, the
automorphism associated to $A$ is defined by
$f_A(X,Y)=(X^aY^b, X^cY^d).$
This provides an embedding of $\GL_2(\Z)$ in the automorphism group
$\Aut(\k^* \times \k^*)$, and thus in $\Bir(\P^2_\k(\k))$. 

For every $A$ in $\GL_2(\Z)$, the dynamical degree of $f_A$ is equal
to the spectral radius of the matrix $A$, i.e.~to the modulus of its unique eigenvalue 
$\lambda$ with $\vert \lambda\vert \geq1$;
this implies that $f_A$ is not an algebraically stable transformation of 
$\P^2_\k$ as soon as $\lambda(f_A)>1$, because $\lambda(f_A)$ is not
an integer in that case. 

As a byproduct of this example, the dynamical spectrum of the plane contains all reciprocal 
quadratic integers, i.e.~all roots $\lambda >1$ of equations
$
x^2+1 = tx
$
with $t$ in $\Z$. 
\end{eg}

%%%%%%%
\subsection{Salem numbers and automorphisms}
%%%%%%%

The dynamical degree of an automorphism, if different from $1$,  is either a quadratic number 
or a Salem number (see \cite{Diller-Favre:2001}). Here we prove a converse statement. 

\begin{thm-A}
Let $\k$ be an algebraically closed field. Let $f$ be a birational transformation of
a projective surface $X$, defined over $\k$. If $\lambda(f)$ is a Salem number, 
there exists a projective surface $Y$ and a birational mapping $\varphi\colon Y\da X$ such that $\varphi^{-1}\circ f \circ \varphi$
is an automorphism of $Y$. 
\end{thm-A}

Thus, one can decide whether a birational transformation is conjugate to an automorphism by looking
at its dynamical degree, except when this degree is $1$ or a quadratic integer. For the quadratic 
case, Examples~\ref{eg:Kummer} and~\ref{eg:counterexample-quad} show that there are quadratic
integers which are simultaneously realized as dynamical degrees of automorphisms and 
of birational transformations that cannot be conjugate to an automorphism. See Remark~\ref{rem:deg1} for birational transformations with  dynamical degree equal to $1$.

Once Theorem~A is proved, three corollaries can be deduced from results of McMullen
and the second author (see \cite{McMullen:Blowups} and \cite{Cantat:Annals}). 
The first corollary (see \S~\ref{par:firstgap})
is a  {\bf{spectral gap property}} for dynamical degrees: {\sl{There is no dynamical degree in the interval $]1, \lambda_L[$}}.
The second corollary does not seem to be related to values of dynamical degrees,
but the simple proof given here makes use of the spectral gap. It asserts that {\sl{the centralizer, in the group $\Bir(X)$, of a loxodromic element $f$ is finite by cyclic}} (see \S~\ref{par:centralizer}).
The third consequence is an effective and explicit bound for the optimal degree of a conjugacy (see \S~\ref{par:conjugacy-degree-bound}):
\begin{cor}\label{coro:conjug-degree-bound}
Two loxodromic elements $f$, $g\in \Bir(\P^2_\k)$ of degree $\leq d$ are conjugate if and only if they are conjugate by  an 
element $h$ of degree $\leq (2d)^{57}$.
\end{cor}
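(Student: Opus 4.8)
The plan is to argue inside the hyperbolic space $\H_\infty$ attached to the Picard--Manin space $\zz$ of $\P^2_\k$, on which $\Cr_2(\k)=\Bir(\P^2_\k)$ acts by isometries. The ``if'' direction is trivial, and the dynamical degree is a conjugacy invariant, so only the ``only if'' direction has content. I would use three standard facts. (i) The assignment $\phi\mapsto\phi_*$ is a homomorphism $\Cr_2(\k)\to\Isom(\H_\infty)$, and for the class $e_0\in\H_\infty$ of a line one has $\cosh\bigl(\dist(e_0,\phi_*e_0)\bigr)=\deg(\phi)$ for every $\phi$; thus controlling $\deg(\phi)$ amounts to controlling $\dist(e_0,\phi_*e_0)$. (ii) A loxodromic $\phi$ acts with an axis $\ax(\phi)$ and translation length $\log\lambda(\phi)$. (iii) By the spectral gap (the first corollary of Theorem~A), $\lambda(\phi)\ge\lambda_L$ whenever $\phi$ is loxodromic, so $\log\lambda(\phi)$ is bounded below by an explicit positive constant.

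Suppose $h_0 f h_0^{-1}=g$ with $h_0\in\Cr_2(\k)$ of possibly enormous degree. The key observation is that $h_n:=h_0 f^{-n}$ again conjugates $f$ to $g$ for every $n\in\Z$, while on $\H_\infty$ one has $(h_n)_*=(h_0)_*(f_*)^{-n}$. Since $(h_0)_*$ conjugates $f_*$ to $g_*$, it maps $\ax(f)$ isometrically onto $\ax(g)$ and intertwines their translations; hence, as $n$ ranges over $\Z$, the nearest-point projections onto $\ax(g)$ of the points $(h_n)_*e_0=(h_0)_*(f_*)^{-n}e_0$ form an arithmetic progression along $\ax(g)$ with common difference $\log\lambda(f)=\log\lambda(g)$.

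To estimate, put $\delta_f=\dist(e_0,\ax(f))$ and $\delta_g=\dist(e_0,\ax(g))$. Because $e_0$ and $f_*e_0$ both lie at distance $\delta_f$ from $\ax(f)$ with their feet $\log\lambda(f)$ apart, hyperbolic trigonometry in the totally geodesic subspace ($\cong\H^2$ or $\H^3$) containing these points gives
\[
\deg(f)=\cosh\bigl(\dist(e_0,f_*e_0)\bigr)\ \ge\ \cosh\bigl(\log\lambda(f)\bigr)+\bigl(\cosh\log\lambda(f)-1\bigr)\sinh^2\delta_f .
\]
Since $\cosh\log\lambda(f)-1$ is bounded below by an explicit $c_0>0$ (by (iii)), this forces $\delta_f\le\tfrac12\log\deg(f)+C_1$ with an explicit $C_1$, and likewise for $\delta_g$. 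Now choose $n$ so that the projection of $(h_n)_*e_0$ onto $\ax(g)$ falls within $\tfrac12\log\lambda(g)$ of the projection $p_g$ of $e_0$ onto $\ax(g)$ (possible by the previous paragraph), and put $h:=h_n$. Travelling from $e_0$ to $p_g$ (length $\delta_g$), along $\ax(g)$ to the foot of $h_*e_0$ (length $\le\tfrac12\log\lambda(g)$), and out to $h_*e_0$ (length $\delta_f$, as $(h_0)_*$ is an isometry) gives $\dist(e_0,h_*e_0)\le\delta_f+\delta_g+\tfrac12\log\lambda(g)$. Using $\lambda(g)\le\deg(g)\le d$ and $\deg(f),\deg(g)\le d$, this is at most $\tfrac32\log d+2C_1$, so by (i) $\deg(h)=\cosh\bigl(\dist(e_0,h_*e_0)\bigr)\le\cosh\bigl(\tfrac32\log d+2C_1\bigr)$, which is far below $(2d)^{57}$; since nothing is optimized, the generous exponent in the statement is all that is asserted.

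The indispensable ingredient is the spectral gap: it makes $f$ and $g$ genuine loxodromic isometries of $\H_\infty$ with honest axes, and above all it is what lets $\dist(e_0,\ax(f))$ be bounded in terms of $\deg(f)$ alone rather than jointly in $\deg(f)$ and $\lambda(f)$. (One could equally use the full centralizer $\Cent(f)$, which is virtually cyclic by the second corollary of Theorem~A, in place of $\langle f\rangle$, but this is not needed.) I expect the only genuine obstacle to be routine bookkeeping: carrying the comparison constants $c_0$ and $C_1$ explicitly through the two hyperbolic-geometry estimates. There is nothing delicate to optimize, which is why a very generous exponent suffices.
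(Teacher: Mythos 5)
Your argument is correct, and its overall architecture is the same as the paper's: normalize the conjugator by composing with a power of $f$ so that the relevant point lands within half a translation length of the projection of $e_0$ onto the axis, then apply the triangle inequality together with a bound on $\dist(e_0,\ax(f_\p))$ in terms of $\deg(f)$, with the spectral gap $\lambda\geq\lambda_L$ as the indispensable input. Where you genuinely diverge is in how that last bound is obtained. The paper's Lemma~\ref{lem:28} derives $\dist(e_0,\ax(f_\p))\leq 28\,\dist(e_0,f_\p(e_0))$ from the Gromov $\log(3)$-hyperbolicity of $\H_{\P^2_\k}$ (a thin-quadrilateral argument, with $28$ coming from $8\log 3/\log\lambda_L$), which is a coarse multiplicative estimate. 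You instead use the exact Saccheri-quadrilateral identity in the totally geodesic copy of $\H^2$ or $\H^3$ spanned by $e_0$, $f_\p(e_0)$ and the axis — legitimate here because finite-dimensional slices of $\H_{\P^2_\k}$ are genuine hyperbolic spaces, and the correct inequality $\cosh\dist(e_0,f_\p(e_0))\geq\cosh^2\delta_f\cosh(\log\lambda)-\sinh^2\delta_f$ follows from Cauchy--Schwarz on the negative-definite part of the intersection form. This yields the much sharper additive bound $\delta_f\leq\tfrac12\log\deg(f)+O(1)$, and hence a final estimate of order $d^{3/2}$ (up to an explicit constant) rather than $(2d)^{57}$; it is in fact closer in spirit to the quantitative estimate \eqref{Eq:Ineq-Axis-Deg} that the paper only deploys later, in the proof of Theorem~C. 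What the paper's coarser route buys is reusability of the $\delta$-hyperbolicity formalism elsewhere (e.g.\ in the $\deg(f^{400})$ criterion); what yours buys is a substantially better exponent. The only thing left implicit on your side is the numerical evaluation of $c_0=\cosh(\log\lambda_L)-1$ and $C_1$, but since the target exponent $57$ is so generous this is indeed routine.
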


%%%%%%%
\subsection{From projective surfaces to the projective plane}$\,$
%%%%%%%
Non rational surfaces are easily handled with.

\begin{thm-B} Let $\k$ be an algebraically closed field. Let $X$ be a projective surface defined over $\k$. 
If $X$  is not rational, then
\begin{enumerate}
\item $\Lambda(X)=\{1\}$ if $X$ is not birationally equivalent to an abelian surface, a
K3 surface, or an Enriques surface;

\item $\Lambda(X)\setminus \{1\}$ is made of quadratic integers and of Salem numbers of degree at most $6$  
(resp. $22$, resp. $10$) if $X$ is an abelian surface (resp. a K3 surface, resp. an Enriques surface).

\end{enumerate} 
The union of all dynamical spectra $\Lambda(X, \k)$, for all fields and all surfaces which are not
geometrically rational, is a closed discrete subset of the real line.
\end{thm-B}

\begin{rem}
When the characteristic of the field $\k$ vanishes, the degree bounds of Assertion (2) become $4$, $20$, and $10$ (in place
of $6$, $22$, and $10$).
\end{rem}

This result, proved in Section \ref{par:not-rational}, shows that the most interesting case is provided by rational surfaces. 
Thus, in the following statements, one can  assume that $X$ is birationally equivalent to the projective plane 
$\P^2_\k$; the dynamical  spectrum  is then equal to the set $\Lambda(\P^2_\k)$ of dynamical 
degrees of elements of the {\bf{Cremona group}}
$$
\Cr_2(\k)=\Bir(\P^2_\k).
$$

%%%%%%%
\subsection{Degrees and conjugacy classes}
%%%%%%%

%%%%%%%
\subsubsection{Minimal degree in the conjugacy class}
%%%%%%%

Given an element $f$ of $\Bir(\P^2_\k)$, define the {\bf{minimal degree}} of $f$ in its conjugacy class as
the positive integer
\[
\mcdeg(f)=\min \deg(g\circ f \circ g^{-1})
\]
where $g$ describes $\Bir(\P^2_\k)$ (thus, $\mcdeg(f)$ depends on the field and may decrease after a field 
extension). The function $\mcdeg$ is constant on conjugacy classes, 
and  
\[
\lambda(f)\leq \mcdeg(f)\leq \deg(f)
\]
for all birational transformations of the plane.
One of our main goals is to provide the following reverse inequality:

\begin{thm-C}
Let $\k$ be an algebraically closed field and let $f$ be a birational transformation of the plane $\P^2_\k$. 
\begin{enumerate}
\item If $\lambda(f)\geq 10^6$ then $\mcdeg(f)\leq 4700 \, \lambda(f)^5$.

\item If $\lambda(f)>1$, then $\mcdeg(f)\leq \cosh(18+345\log(\lambda(f)))\leq e^{18}\lambda(f)^{345}$. 
\end{enumerate}
\end{thm-C}

On the other hand, there are sequences of elements $f_n\in \Bir(\P^2_\k)$ such that 
$\mcdeg(f_n)$ goes to $+\infty$ with $n$ while $\lambda_1(f_n)=1$ for all $n$.
%%%%%%%
\subsubsection{Well ordered sets}
%%%%%%%
The set $\Lambda(\P^2_\k)$ is a subset of $\R_+$ and, as
such, is totally ordered. The following statement, which follows from Theorem~C, asserts that $\Lambda(\P^2_\k)$ is {\bf{well ordered}}: Every non-empty subset of $\Lambda(\P^2_\k)$ has a minimum; equivalently, it satisfies
the descending chain condition (if $(f_n)_{n\geq 0}$ is a sequence of birational transformations of
$\P^2_\k$ and $\lambda(f_{n+1})\le \lambda(f_n)$ for each $n$,  then  $\lambda(f_n)$ becomes eventually constant).
 
\begin{thm-D}
Let $\k$ be an algebraically closed field. 
The dynamical spectrum $\Lambda(\P^2_\k)\subset \R$ is well ordered, and it is closed  if $\k$ is uncountable. 
\end{thm-D}

In Theorem~\ref{thm:sal-pis}, we also show that {\sl{$\Lambda^P(\P^2_\k)$ is contained in the closure
of $\Lambda^S(\P^2_\k)$ if $\k$ is algebraically closed and of characteristic $0$}}.

From Theorem~B and Theorem~D, one obtains the existence of gaps in the dynamical spectrum of projective surfaces: There are small intervals of
real numbers that contain infinitely many Pisot and Salem numbers, but do not contain any dynamical degree. 

\begin{cor}
Let  $\Lambda$ be the set of all dynamical degrees of birational transformations 
of projective surfaces, defined over any field. Then, 
\begin{itemize}
\item[(1)] $\Lambda$ is a well ordered subset of $\R_+$;
\item[(2)] if $\lambda$ is an element of $\Lambda$, there is a real number $\epsilon>0$ such that $]\lambda, \lambda+\epsilon]$ 
does not intersect $\Lambda$;
\item[(3)] there is a non-empty interval $]\lambda_G, \lambda_G+\epsilon]$, on the right of 
the golden mean, that contains infinitely many Pisot and Salem numbers
but does not contain any dynamical degree. 
\end{itemize}
\end{cor}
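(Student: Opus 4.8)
The plan is to reduce the corollary, by base change and the structural results just stated, to the well-orderedness of a \emph{finite} union of spectra $\Lambda(\P^2_{\k})$ with $\k$ algebraically closed, which is furnished by Theorem~D. For $f\in\Bir(X)$ over $\k$ and $D$ ample, the intersection numbers $D\cdot(f^n)_*D$ are insensitive to extension of $\k$, so $\lambda(f)$ does not change and $\Lambda$ is the union of the $\Lambda(X,\k)$ with $\k$ algebraically closed. Theorem~B says that the part $\Lambda^{\mathrm{nr}}$ of $\Lambda$ coming from non-rational $X$ is a closed discrete subset of $\R$; in particular it is well ordered and meets every bounded interval in a finite set. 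Every rational surface is birational to $\P^2_{\k}$, and birationally conjugate transformations have equal dynamical degrees, so the remaining part is $\Lambda^{\mathrm{rat}}=\bigcup_{\k=\overline{\k}}\Lambda(\P^2_{\k})$. Since a strictly decreasing sequence of reals is bounded above and $\Lambda=\Lambda^{\mathrm{nr}}\cup\Lambda^{\mathrm{rat}}$, it suffices to show that $\Lambda^{\mathrm{rat}}\cap[1,M]$ is well ordered for every real $M$.

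Next I would reduce to finitely many fields in each bounded range. Any $g\in\Cr_2(\k)$ is defined over a subfield finitely generated over the prime field, with unchanged dynamical degree, and the algebraic closure of such a subfield embeds into a fixed uncountable algebraically closed field $\mathbf{K}_p$ of characteristic $p$ (with $\mathbf{K}_0=\C$). Hence $\Lambda^{\mathrm{rat}}=\Lambda(\P^2_{\C})\cup\bigcup_{p\ \mathrm{prime}}\Lambda(\P^2_{\mathbf{K}_p})$, each term being closed and well ordered by Theorem~D. Now fix $M$. By Theorem~C(2), each element of $\Lambda^{\mathrm{rat}}\cap(1,M]$ is of the form $\lambda(g)$ with $\deg(g)\le N:=\cosh(18+345\log M)$, and by the Diller--Favre theorem \cite{Diller-Favre:2001} such a $g$ is conjugate to an algebraically stable $\tilde g$ on a blow-up $\pi\colon Y\to\P^2_{\k}$, so that $\lambda(g)=\rho\bigl(\tilde g_*\colon\NS(Y)\to\NS(Y)\bigr)$. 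The number of points blown up by $\pi$ is not controlled by $\deg(g)$, but the cluster of those points, the configuration of the curves contracted by the iterates of $g$, and the matrix $\tilde g_*$ are all determined by the base scheme of $g$ — a $0$-dimensional scheme of length at most $N^2$ — together with finitely many of its iterated images; I would argue that for $p$ beyond an explicit bound $p_0(N)$ this finite combinatorial package carries no characteristic-specific phenomenon and lifts to characteristic zero, yielding $g'$ over $\C$ with $\deg(g')\le N$ and $\lambda(g')=\lambda(g)$. Granting this, $\Lambda(\P^2_{\mathbf{K}_p})\cap(1,M]\subseteq\Lambda(\P^2_{\C})$ for all $p>p_0(N)$, so
\[
\Lambda^{\mathrm{rat}}\cap[1,M]=\bigl(\Lambda(\P^2_{\C})\cap[1,M]\bigr)\cup\bigcup_{p\le p_0(N)}\bigl(\Lambda(\P^2_{\mathbf{K}_p})\cap[1,M]\bigr),
\]
a finite union of well-ordered (indeed closed, since each $\mathbf{K}_p$ is uncountable) subsets of $\R$. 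A finite union of well-ordered sets is well ordered (pigeonhole), so $\Lambda^{\mathrm{rat}}\cap[1,M]$, hence $\Lambda$, is well ordered; this is~(1), and the same display even shows $\Lambda$ is closed.

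Assertions (2) and (3) are then formal. Since $\Lambda$ contains every positive integer, the set $\{\mu\in\Lambda:\mu>\lambda\}$ is non-empty; by~(1) it has a minimum $\mu_0>\lambda$, and any $\epsilon\in\,]0,\mu_0-\lambda[$ proves~(2). Applying the same remark at $\lambda_G$ (here $\{\mu\in\Lambda:\mu>\lambda_G\}\ni 2$) gives $\epsilon>0$ with $]\lambda_G,\lambda_G+\epsilon]\cap\Lambda=\emptyset$; on the other hand $\lambda_G$ is a Pisot number, and Salem's classical construction of Salem numbers converging to a Pisot number from above (see \cite{BDG:Book}) puts infinitely many Salem numbers in $]\lambda_G,\lambda_G+\epsilon]$, which is~(3). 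The main obstacle is the lifting step of the second paragraph — controlling the algebraically stable model of a bounded-degree Cremona transformation uniformly in the characteristic, despite the unbounded number of blow-ups, and verifying that for all large $p$ no new dynamical degree appears below a given bound; everything else is a formal consequence of Theorems~B, C, D and classical facts about $\Pis$ and $\Sal$.
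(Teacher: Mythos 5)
Your scaffolding is sound: the reduction to algebraically closed fields (invariance of $D\cdot (f^n)_*D$ under base change), the use of Theorem~B to dispose of the non-rational part uniformly over all fields, the reduction of the rational part to universal domains $\mathbf{K}_p$, and the formal derivation of (2) and (3) from (1) are all correct and essentially what the paper intends. But the step you yourself flag as the main obstacle is a genuine gap, not a routine verification. Your claim that for $p>p_0(N)$ every dynamical degree in $\Lambda(\P^2_{\mathbf{K}_p})\cap\,]1,M]$ lifts to $\Lambda(\P^2_{\C})$ rests on the assertion that $\lambda(g)$ is determined by a ``finite combinatorial package'' of size bounded in terms of $\deg(g)$. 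This is false as stated: the algebraically stable model of $g$, hence the matrix whose spectral radius computes $\lambda(g)$, depends on the lengths of the orbits of the points of $\Ind(g^{-1})$ before they fall into $\Ind(g)$, and these lengths (equivalently, the Picard number of the stable model) are not bounded by $\deg(g)$ — you acknowledge this yourself one sentence earlier. Whether an orbit eventually meets $\Ind(g)$ is a condition on infinitely many iterates, and such conditions do not obviously specialize or lift well between characteristics; no bound $p_0(N)$ is justified, and the inclusion $\Lambda(\P^2_{\mathbf{K}_p})\cap\,]1,M]\subseteq\Lambda(\P^2_{\C})$ is not established (nor, to my knowledge, known). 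Since a countable union of well-ordered sets need not be well ordered, assertion (1) is not proved by your argument.

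The paper's route is different and avoids this entirely: after bounding degrees via Theorem~C and passing to a subsequence of constant degree $d$, it invokes Xie's lower semicontinuity of $\lambda$ on the algebraic variety $\Bir_d(\P^2_\k)$, so that a strictly decreasing sequence of dynamical degrees produces a strictly decreasing chain of Zariski-closed subsets $L(\lambda_n)$, which Noetherianity forbids (Theorem~\ref{thm:Well-Ordered}). This is the argument you would need to run — uniformly enough in the characteristic to handle your countable union over $p$ — rather than a lifting statement. Two smaller points: in (3) you only exhibit infinitely many Salem numbers in $]\lambda_G,\lambda_G+\epsilon]$, whereas the statement also asks for infinitely many Pisot numbers; this follows from the fact, recorded in \S\ref{par:Intro-Pisot-Salem} and \S 6.2, that $\lambda_G$ is an accumulation point of $\Pis$ from above. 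And your parenthetical claim that the displayed decomposition shows $\Lambda$ is closed inherits the same unproved lifting step.
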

In fact, gaps as in the third assertion of this corollary occur infinitely often, because there are infinitely many Pisot numbers that
are limits of Pisot numbers from the right.
%%%%%%%
\subsection{Organization of the paper}
%%%%%%%

Section~\ref{par:SAL-AUTO} provides a proof of Theorem~A and its first corollary, the absence of dynamical degree 
between $1$ and $\lambda_L\simeq 1.17628$.
Theorem~B is proved in Section~\ref{par:not-rational}; this may be skipped on a first reading. 
Section~\ref{par:BUBLES} introduces the bubble space and an infinite dimensional hyperbolic space on which $\Bir(X)$ acts by isometries; as a first application, we obtain two  
corollaries of Theorem~A. Section~\ref{par:WEYL} contains preliminary results on the infinite Weyl group $W_\infty$: This group is a Coxeter group on an infinite set of generators, and plays a crucial technical role in the study of the Cremona group $\Cr_2(\k)$. The proof of Theorem~C is quite difficult even if, in spirit, it is a variation on Noether-Castelnuovo proof of the fact that $\PGL_3(\k)$ and the standard quadratic involution $\sigma$ generate $\Bir(\P^2_\k)$. This proof occupies Section~\ref{par:PROOF-D}, and Section~\ref{par:ALG-FAM} shows how Theorem~D follows from Theorem~C. \tableofcontents

%%%%%%%
\subsection{Acknowledgements}
%%%%%%%
%**NewSerge**
Thanks to Nicolas Bergeron, Antoine Chambert-Loir, Julie D\'eserti, H\'el\`ene Esnault, Yves de Cornulier, Vincent Guirardel, Mattias Jonsson, St\'ephane Lamy, Curtis T. McMullen, and  Juan Souto for interesting discussions on this topic. We are also grateful to the referee for his careful reading and his suggestions.

%%%%%%%%%%%%%%%%%%%%%%%%%%%%%%
\section{Salem numbers and automorphisms}\label{par:SAL-AUTO}
%%%%%%%%%%%%%%%%%%%%%%%%%%%%%%

This section is devoted to the proof of Theorem~A. On our way, we introduce basic
definitions that are used all along this article. 

%%%%%%%
\subsection{Indeterminacy points, homaloidal nets and base points}
%%%%%%%

Let $X$ be a projective surface defined over an algebraically closed field $\k$. 
Let $f$ be a birational transformation of $X$. We denote by $\Ind(f)$ the set of 
{\bf{indeterminacy points}} of $f$; by convention, it is a proper subset of $X$ and does not
include infinitely near points. 

The {\bf{base points}} of $f$ are defined as follows. Let $D$ be a very ample divisor on $X$ 
and $\vert D \vert$ be the complete linear system containing $D$. The image of $\vert D\vert$
by $f$ is a linear system on $X$ (which, in general, is not complete); when $f$ is an element
of the Cremona group and $D$ is a line in $\P^2_\k$, this linear system $f_*\vert D\vert$ is 
the homaloidal net of $f^{-1}$ (see \cite{Alberich:LNM}). The set of base points of $f^{-1}$ 
(resp. the base ideal of $f^{-1}$) is defined as the set (resp. the ideal) of base points of this linear
system: Base points may be infinitely near, and come with a multiplicity. The notion of base point 
does not depend on the choice of a very ample divisor, but the multiplicities of the base points depend
on this choice. 

This distinction between base points and indeterminacy points is just used to emphasize 
the arguments for which it is important to know whether the point is a proper point of $X$ or not.

%%%%%%%
\subsection{Algebraic stability and the intersection form}
%%%%%%%

 One says that $f$ is algebraically
stable if the sequence $(f^n)_*$ of endomorphisms of $\NS(X)$ satisfies $(f^n)_*=(f_*)^n$
for all integers $n$ (cf. \S \ref{par:dyna-deg-alg-stab}). As explained in \cite{Diller-Favre:2001}, $f$ is not algebraically stable
if, and only if there is an indeterminacy point $q$ of $f^{-1}$ and a non-negative integer
$k$ such that $f$ is well defined at $q$, $f(q)$, $\ldots$, $f^{k-1}(q)$, and $f^k(q)$ is an indeterminacy
point of $f$. Blowing-up $q$, $\ldots$, $f^k(q)$,  the number of such ``bad'' indeterminacy 
points decreases and, in a finite number of steps, one constructs a birational morphism $\pi\colon X'\to X$ such
that $\pi^{-1}\circ f \circ \pi$ is algebraically stable (see \cite{Diller-Favre:2001} for this proof). 

Lets us now assume that $f$ is algebraically stable. The dynamical degree $\lambda(f)$ 
is then equal to the spectral radius of $f_*\in \End(\NS(X))$ and also to the spectral 
radius of $f^*=(f^{-1})_*$ because these endomorphisms are adjoint for the intersection form:
\[ 
f_*C\cdot D= C \cdot f^* D
\]
for all pairs $(C,D)$ of divisor classes. 

Factorize $f$ as $f=\epsilon \circ \pi^{-1}$
where $\pi\colon Z\to X$ and $\epsilon\colon Z\to X$ are birational morphisms. Write $\pi$
as a composition $\pi_1\circ \ldots \circ \pi_m$ of (inverse of) point blow-ups, and denote by
$F_j\subset Z$ the total transform of the indeterminacy point of $\pi_j^{-1}$ under the map
$\pi_j\circ \ldots \circ \pi_m$. Then, denote by $E_j$ the direct image of $F_j$ by $\epsilon$, 
for $1\leq j\leq m$. Each $E_j$, if not zero, is an effective divisor. According to \cite{Diller-Favre:2001}, 
Theorem~3.3, one has 
\begin{equation}\label{eq:push-pull}
f_*f^*C= C + \sum_{j=1}^m (C\cdot E_j)E_j
\end{equation}
for all curves (resp. divisor classes) $C$ in $X$; this formula corresponds to the following fact: 
The preimage of $C$ goes through the base points $p_j$ of $f$ with 
multiplicity $(C\cdot E_j)$; thus, the total transform of $f^{-1}C$ by $f$ contains both $C$
and $\sum_j (C\cdot E_j) E_j$. Taking intersection, and using that $f^*$ and $f_*$ are adjoint 
endomorphisms of $\NS(X)$ for the intersection form, one gets
\begin{equation}\label{eq:push-pull-intersection}
f^*C\cdot f^*C= C\cdot C + \sum_{j=1}^m (E_j\cdot C)^2.
\end{equation}
In particular, $f^*$ increases self-intersections. This property and Hodge index
theorem, according to which the intersection form has signature $(1,\rho(X)-1)$, 
are responsible for $\lambda(f)$ being a Pisot or Salem number (see \cite{Diller-Favre:2001}, Theorem~5.1). 

%%%%%%%
\subsection{Eigenvectors and automorphisms}
%%%%%%%

Since $X$ has dimension $2$, one easily shows that $f^*$ and $f_*$ preserve the
pseudo-effective and  nef cones of $\NS_\R(X)$. 
Assume that the dynamical degree $\lambda(f)$ is larger than $1$. 
Perron-Frobenius theorem assures the existence of an eigenvector $\Theta^+_X(f)$ for $f^*$ in the nef cone 
of $\NS_\R(X)$
such that $f^*\Theta^+_X(f)=\lambda(f) \Theta^+_X(f)$; moreover, this vector is unique
up to scalar factor (see \cite{Diller-Favre:2001}).  

\begin{thm}[Diller-Favre]\label{thm:isotropic-automorphism}
Let $X$ be a projective surface, and $f$ be a birational transformation of $X$, both
defined over an algebraically closed field $\k$. Assume that the dynamical degree $\lambda(f)$ is larger than $1$.
Then
\begin{enumerate}
\item $\Theta^+_X(f) \cdot \Theta^+_X(f)=0$ if and only if
 $\Theta^+_X(f)\cdot E_j=0$ for all $E_j$; 
 \item if  $\Theta^+_X(f) \cdot \Theta^+_X(f)=0$, there exists a birational morphism $\eta\colon X\to Y$,
such that $\eta\circ f \circ \eta^{-1}$ is an automorphism of $Y$.
\end{enumerate}
\end{thm}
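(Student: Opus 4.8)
The plan is to prove the two assertions in turn, using the push-pull formula~\eqref{eq:push-pull} and Hodge index theorem as the main tools.

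\smallskip

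\textbf{Proof of (1).} The implication $(\Leftarrow)$ is immediate from~\eqref{eq:push-pull}: if $\Theta^+_X(f)\cdot E_j=0$ for all $j$, then $f_*f^*\Theta^+_X(f)=\Theta^+_X(f)$, so $\lambda(f)\,f_*\Theta^+_X(f)=\Theta^+_X(f)$, which forces $\Theta^+_X(f)\cdot\Theta^+_X(f)=\lambda(f)^{-1}\,f_*\Theta^+_X(f)\cdot\Theta^+_X(f)=\lambda(f)^{-1}\,\Theta^+_X(f)\cdot f^*\Theta^+_X(f)=\Theta^+_X(f)\cdot\Theta^+_X(f)$ — rather, more directly, $\Theta^+_X(f)\cdot\Theta^+_X(f)=0$ follows by intersecting $\lambda(f)f_*\Theta^+_X(f)=\Theta^+_X(f)$ with $\Theta^+_X(f)$ and comparing with $f^*\Theta^+_X(f)=\lambda(f)\Theta^+_X(f)$. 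For $(\Rightarrow)$, apply~\eqref{eq:push-pull-intersection} with $C=\Theta^+_X(f)$: since $f^*\Theta^+_X(f)=\lambda(f)\Theta^+_X(f)$, the left side is $\lambda(f)^2\,\Theta^+_X(f)\cdot\Theta^+_X(f)$, so
\[
(\lambda(f)^2-1)\,\Theta^+_X(f)\cdot\Theta^+_X(f)=\sum_{j=1}^m (E_j\cdot\Theta^+_X(f))^2.
\]
If $\Theta^+_X(f)\cdot\Theta^+_X(f)=0$, the right-hand sum of squares vanishes, so each $E_j\cdot\Theta^+_X(f)=0$.

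\smallskip

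\textbf{Proof of (2).} Assume $\theta:=\Theta^+_X(f)$ satisfies $\theta\cdot\theta=0$; by (1) we also have $\theta\cdot E_j=0$ for all $j$. The strategy is to contract, step by step, $(-1)$-curves that are orthogonal to $\theta$ until no contractible curve is left in $\theta^\perp$, and then show the resulting surface carries $f$ as an automorphism. Concretely: $\theta$ is nef with $\theta\cdot\theta=0$ and $\theta\neq 0$, hence $\theta$ lies on the boundary of the nef cone; by Hodge index, $\theta^\perp$ is negative semidefinite with radical $\R\theta$. Consider the finitely many irreducible curves $C$ on $X$ with $\theta\cdot C=0$; each has $C\cdot C<0$ (unless $C$ is proportional to $\theta$ in $\NS_\R$, which cannot happen for an effective irreducible curve since $\theta$ is not rational in general). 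The key observation is that $f^*$ permutes, up to the action on exceptional data, the set of such curves: from~\eqref{eq:push-pull-intersection} and $\theta\cdot E_j=0$ one checks $f^*$ preserves $\theta^\perp$ and the intersection form on it, hence preserves the (finite) set of classes of negative self-intersection curves orthogonal to $\theta$. One then invokes the standard fact (this is exactly Theorem~3.3 and its corollary in Diller–Favre) that, after contracting the total transform configuration appropriately — equivalently, after running the procedure that makes $f$ algebraically stable and then contracting all $f$-periodic $(-1)$-curves contained in $\theta^\perp$ — one reaches a surface $Y$ with a birational morphism $\eta\colon X\to Y$ such that $g:=\eta\circ f\circ\eta^{-1}$ has no base points at all, i.e.\ is an automorphism. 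The point is that a base point of $g$ would produce an exceptional divisor $E'_j$ with $\eta_*\theta\cdot E'_j>0$ somewhere in the orbit, contradicting the orthogonality/invariance we have arranged; and the self-intersection $\eta_*\theta\cdot\eta_*\theta=0$ is preserved, so the process terminates because the Picard number drops at each contraction.

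\smallskip

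\textbf{Main obstacle.} The delicate point is Assertion (2): one must argue that the contraction process genuinely terminates in an automorphism rather than merely reducing the number of base points, and that the contracted curves can be chosen $f$-equivariantly. This requires combining the algebraic-stability reduction with a careful bookkeeping of which $(-1)$-curves lie in $\theta^\perp$ and form finite $f$-orbits; the finiteness of the set of curves orthogonal to $\theta$ (a consequence of Hodge index, since a negative definite lattice contains only finitely many vectors of bounded norm, here norm $-1$) together with the drop of the Picard number at each blow-down is what makes the induction work. I expect to cite \cite{Diller-Favre:2001}, Theorem~3.3, for the technical heart of this step rather than reproving it.
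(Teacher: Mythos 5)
Your proof of Assertion (1) is correct and is the same computation as the paper's: intersecting the push-pull identity \eqref{eq:push-pull} with $\Theta^+_X(f)$ and using the eigenvector relation gives $(\lambda(f)^2-1)\,\Theta^+_X(f)\cdot\Theta^+_X(f)=\sum_j(E_j\cdot\Theta^+_X(f))^2$, which yields both implications at once since $\lambda(f)>1$.

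For Assertion (2) you follow the same overall strategy as the paper (contract curves orthogonal to $\Theta^+_X(f)$, iterate, terminate because the Picard number drops), but there are genuine gaps in the execution. First, the crucial point that the intersection form is negative \emph{definite} (not merely semidefinite) on the span of the curves to be contracted rests on knowing that no scalar multiple of $\Theta^+_X(f)$ lies in $\NS_\Z(X)$; you dismiss this with ``$\theta$ is not rational in general,'' whereas it must be proved, and the paper does so by observing that $f_*^{k}\Theta^+_X(f)=\lambda(f)^{-k}\Theta^+_X(f)$ while $f_*$ preserves the lattice, which is incompatible with rationality since $\lambda(f)>1$. Without definiteness there is no contraction morphism at all: the existence of $\eta_0\colon X\to Y_0$ contracting the relevant configuration is the Grauert--Mumford criterion, which you never invoke (and which may produce a \emph{singular} $Y_0$ --- an issue you do not address; the paper passes to a minimal desingularization and checks that $f_0$ lifts). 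Second, you enlarge the problem unnecessarily by considering \emph{all} irreducible curves orthogonal to $\theta$ and asserting this set is finite; that finiteness claim is not justified by ``finitely many vectors of norm $-1$'' (the self-intersections are not bounded a priori), and it is not needed: the paper contracts only the irreducible components of the finitely many divisors $E_j$, which are orthogonal to $\Theta^+_X(f)$ by Assertion (1) together with nefness of $\Theta^+_X(f)$ and effectivity of the $E_j$, and then iterates on the new surface. Finally, your appeal to ``Theorem 3.3 of Diller--Favre'' for the termination in an automorphism is both a misattribution (that reference is the push-pull formula \eqref{eq:push-pull}) and essentially a citation of the statement being proved; the actual termination argument is that after each contraction the pushed-forward class remains an isotropic eigenvector, so the procedure can be repeated, and it must stop because the Picard number strictly decreases, at which point $f_0^{-1}$ contracts no curve and $f_0$ is an automorphism.
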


\begin{proof}[Sketch of the proof]
Equation \eqref{eq:push-pull-intersection}
and the eigenvector property 
\[
f^*\Theta^+_X(f)=\lambda(f)\Theta^+_X(f)
\]
imply that 
\[
(\lambda(f)^2-1) \Theta^+_X(f)\cdot \Theta^+_X(f) = \sum_{j=1}^m (E_j\cdot \Theta^+_X(f))^2.
\]
Hence, all divisors $E_j$
are orthogonal to $\Theta^+_X(f)$ if, and only if, $\Theta^+_X(f)$ is an isotropic vector. 

Let us now prove the second assertion. By the first assertion, every $E_j$ is orthogonal
to $\Theta^+_X(f)$; since the $E_j$ are effective and $\Theta^+_X(f)$ is nef, all irreducible
components of the $E_j$ are orthogonal to $\Theta^+_X(f)$; in other words, the $\Q$-vector subset of $\NS_\Q(X)$ generated by 
the irreducible components of the divisors $E_j$ is contained in the orthogonal complement $\Theta^+_X(f)^\perp$ 
of the isotropic vector $\Theta^+_X(f)^\perp$. 
On $\Theta^+_X(f)^\perp$, the intersection form is negative and its kernel is the line generated by  $\Theta^+_X(f)$.

From Equation \eqref{eq:push-pull}, one gets $f_*^{k}\Theta^+_X(f)=\lambda(f)^{-k}\Theta^+_X(f)$. Since
$\lambda(f)>1$ and $f_*$ preserves the lattice $\NS_\Z(X)$ one deduces that $\Theta^+_X(f)$ is irrational:
no scalar multiple of $\Theta^+_X(f)$ is contained in $\NS_\Z(X)$. 
Thus, the intersection form 
is negative definite on the $\Q$-vector space generated
by all classes of irreducible components of the divisors $E_j$. 

From Grauert-Mumford contraction theorem (see \cite{BPV:book}, thm$.$ 2.1 p$.$ 91), 
there is a birational morphism $\eta_0\colon X\to  Y_0$ which contracts simultaneously all these components. 
Let $f_0$ be the birational transformation $\eta_0\circ f \circ \eta_0^{-1}$.  
Since $\Theta^+_X(f)$ does not intersect the curves which are contracted by $\eta_0$, the class 
$(\eta_0)_*\Theta^+_X(f)\in \NS_\R(Y)$ is both isotropic and an eigenvector for $(f_0)_*$ with eigenvalue $\lambda(f)$. 
One can thus iterate this process until $f_0^{-1}$ does not contract any curve, i.e.~$f_0$ is an 
automorphism of $Y_0$. If $Y_0$ is singular, and $Y$ is a minimal desingularization of $Y_0$, 
 $f_0$ lifts to an automorphism $f_Y$ of $Y$; one can then show that there is an intermediate birational morphism
$\eta\colon X\to Y$ such that $\eta \circ f \circ \eta^{-1}=f_Y$. This concludes the proof. \end{proof}

\begin{eg}\label{eg:Kummer} Let $E$ be the elliptic curve associated to the lattice of Gaussian (resp. Eisenstein)
integers:
\[
E=\C/\Z[\i] \quad ({\text{resp.}} \C/ \Z[\j])
\]
where $\i^2=-1$ (resp. $\j^3=1$, $\j\neq 1$). Let $A$ be the abelian surface $E\times E$. The group 
$\GL_2(\Z[\i])$ (resp. $\GL_2(\Z[\j])$) acts by automorphisms on $A$, and commutes to 
$\nu(x,y)=(\i x , \i y)$ (resp. $\nu(x,y)=(\j x, \j y)$). As a consequence $\PGL_2(\Z[\i])$ (resp. $\PGL_2(\Z[\j])$) 
acts by automorphisms on the (singular) rational surface $X_0=A/\nu$, and on its minimal desingularisation $X$.
The surface $X$ being rational, this construction provides an embedding of $\PGL_2(\Z[\i])$ (resp. $\PGL_2(\Z[\j])$)
into the Cremona group. If $M$ is an element of the linear group $\GL_2(\Z[\i])$ (resp. $\GL_2(\Z[\j])$), the
associated birational transformation $g_M$ has dynamical degree
\[
\lambda(g_M)=\lambda(M)^2
\] 
where $\lambda(M)$ is the spectral radius of the matrix $M$.
\end{eg}

\begin{eg}\label{eg:counterexample-quad}
Start with the matrix $C$ defined by 
\[
C=\left( \begin{array}{cc} 1 & 1 \\ 1 & 0 \end{array}\right).
\]
Its spectral radius is the Golden mean $\lambda_G$. The square of $\lambda_G$ can be
realized as the dynamical degree of the monomial map $f_{C^2}$ associated to the second power $C^2$ of $C$, as described in Example~\ref{eg:monomial}. It is also realized as the dynamical degree of the transformation $g_C$ from Example \ref{eg:Kummer}. The birational transformation $f_{C^2}$ is not
conjugate to an automorphism of a rational surface $Y$, while $g_C$ is. %{\footnote{In \cite{Bedford-Kim:2013}, an example of a birational transformation of $\P^3_\C$ is given, which shows that Theorem~A does not extend naively to higher dimension: This example is a pseudo-automorphism of a smooth rational threefold which is not conjugate to an automorphism by a birational change of coordinates and whose first dynamical degree is a Salem number.}}
\end{eg}

\begin{rem}\label{rem:deg1}
The previous two examples show that {\sl{Theorem~A does not
extend to quadratic integers}}. 

If $f$ is a birational transformation of a projective surface $X$ with $\lambda(f)=1$, then  $\parallel (f^n)_*\parallel$
is bounded, or it grows linearly with $n$, or it grows quadratically: In the first and third cases, $f$ is  conjugate to 
an automorphism of a projective surface $Y$ by some birational transformation $\varphi\colon Y \dasharrow X$; 
in the second case, $f$ is not conjugate to an automorphism (see Section~\ref{par:GDF} an \cite{Diller-Favre:2001}). Thus, again, the 
``degree growth'' determines whether $f$ is, or not, conjugate to an automorphism. 
\end{rem}

%%%%%%%
\subsection{Proof of Theorem~A}
%%%%%%%
Let us now prove Theorem~A. Assume $\lambda(f)$ is a Salem number. 
Let $\chi(t)\in \Z[t]$  be the minimal polynomial of $\lambda(f)$. By assumption, 
there exists a root $\alpha$ of $\chi$ with modulus $1$; one can thus fix an
automorphism $\sigma$ of the field of complex numbers such that $\sigma(\lambda(f))=\alpha$. 

By Diller-Favre Theorem, we may assume that $f$ is algebraically stable. The eigenvector $\Theta^+_X(f)$ 
corresponds to the eigenvalue $\lambda(f)$; as such, it may be taken in $\NS_L(X)$, where
$L$ is the splitting field of $\chi$. Our goal is to show that $\Theta^+_X(f)$ is orthogonal to 
all $E_j$, $1\leq j \leq m$; the conclusion
will follow from Theorem~\ref{thm:isotropic-automorphism}. 

The automorphism $\sigma$ of the field $\C$ acts on $\NS_\C(X)$, preserving $\NS(X)$ pointwise. 
Apply $\sigma$ to both sides of $f^*\Theta^+_X(f)=\lambda(f)\Theta^+_X(f)$; since $f^*$ is defined over $\Z$, 
one obtains 
\[
f^* \Psi = \alpha \Psi, \quad {\text{ with }} \quad \Psi= \sigma(\Theta^+_X(f)).
\]
Since the divisor classes of the $E_j$ are in $\NS(X)$, all of them are $\sigma$-invariant. Thus, 
applying $\sigma$ to  Equation \eqref{eq:push-pull} we get 
\[
f_*f^* \Psi= \Psi +\sum_{j=1}^m (\Psi\cdot E_j) E_j.
\]
Taking intersection with the complex conjugate $\bar{\Psi}$ of $\Psi$, and using $f^*\Psi=\alpha \Psi$, we get
\[
(\alpha \bar{\alpha})\Psi\cdot \bar{\Psi} = f^* \Psi\cdot f^*\bar{\Psi} = \Psi\cdot \bar{\Psi} + \sum_{j=1}^m \vert  E_j\cdot \Psi\vert ^2.
\]
Since $\alpha$ has modulus  $1$, all intersections $E_j\cdot \Psi$ vanish and, applying $\sigma$ again, 
we deduce that $\Theta^+_X(f)\cdot E_j=0$ for all $1\leq j\leq m$. This concludes the proof.

%%%%%%%
\subsection{Salem numbers in $\Lambda(\P^2_\k)$}\label{par:Salem-in-Cremona}
%%%%%%%

Let $f$ be an element of $\Cr_2(\k)$ such that $\lambda(f)$ is a Salem
number. According to Theorem~A, $f$ is conjugate to an automorphism $g$ 
of a smooth rational surface $X$; according to Kantor and Nagata \cite{Nagata:I, Nagata:II},
$X$ is a blow-up of $\P^2_\k$ with Picard number $\rho(X)\geq 11$. Thus, the
study of $\Lambda^S(\P^2_\k)$ reduces to the following question: Which Salem numbers 
can be realized as spectral radii of linear transformations
\[
g_*\in \End(\NS_\R(X))
\]
where $X$ describes the set of blow-ups of $\P^2_\k$ and $g$ runs over the
group $\Aut(X)$? Recent results  answer  this question.

Write $X$ as a blow-up of the plane at $n$ points $p_1$, $p_2$, ..., $p_n$;
some of them may be infinitely near points, and we choose indices in such a way that 
$j\geq i$ if $p_j$ is infinitely near $p_i$. Denote by $\pi:X\to \P^2_\k$  the birational 
morphism corresponding to this sequence of blow-ups. Let $e_i\in \NS(X)$ denote
the N\'eron-Severi class of the total transform of $p_i$ under $\pi$ (for $1\leq i\leq n$), and let $e_0\in \NS(X)$ 
be the class of the total transform of a line in $\P^2_\k$. Then 
\[
\NS(X)= \Pic(X)= \Z e_0\oplus \Z e_1 \oplus \ldots \oplus \Z e_n,
\]
and the basis $(e_0, e_1, \ldots, e_n)$ is orthogonal with respect to the intersection form on 
$\Pic(X)$. More precisely, we have
\[
e_0\cdot e_0=1, \, e_i\cdot e_i=-1 \, \text{ if } \, i\geq 1, \, \text{ and } \, e_i\cdot e_j=0 \, \text{ if } \,  i\neq j.
 \]
The canonical class of $X$ is 
\[
k_X=-3e_0+ e_1+e_2+\ldots + e_n.
\]
The automorphism group $\Aut(X)$ acts linearly on $\Pic(X)$, preserves $k_X$, and preserves
the intersection form. As a consequence, $\Aut(X)$ preserves the orthogonal complement $k_X^\perp$ 
of $k_X$ in $\Pic(X)$. The elements 
\begin{eqnarray*}
v_0 & = & e_0 - e_1 - e_2 \\
v_i & = & e_i -e_{i+1}, \, 1\leq i\leq n-1
\end{eqnarray*}
form a basis of $k_X^\perp$, with respect to which the intersection form is given by the Dynkin diagram 
$T_{2,3,n-3}$:

\begin{figure}[ht]
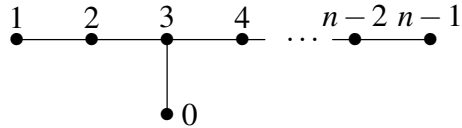

\xy
(-35,0)*{};
@={(0,0),(10,0),(20,0),(30,0),(45,0),(55,0),(20,-10)}@@{*{\bullet}};
(0,0)*{};(33,0)*{}**\dir{-};(42,0)*{};(55,0)*{}**\dir{-};
(20,0)*{};(20,-10)*{}**\dir{-};(38,0)*{\ldots};
(0,3)*{1}; (10,3)*{{2}}; (20,3)*{3};(30,3)*{4}; (45,3)*{{n-2}};(55,3)*{{n-1}};(23,-10)*{0};
\endxy
\caption{{\sf{Coxeter-Dynkin diagram of type $T_{2,3,n-3}$}}}\label{fig:Dynkin}
\end{figure}
In other words, 
\begin{eqnarray*}
v_k\cdot v_k & = & -2, \quad {\text{for all indices}}\, \,    k, \\
v_i \cdot v_j & = & 0, \quad  {\text{if the vertices}} \, \,    i \, \,   {\text{and}} \, \,    j \, \,   {\text{are not linked by an edge}}, \\
v_i\cdot v_j & = & 1, \quad  {\text{if the vertices}} \, \,   i \, \,   {\text{and}} \, \,   j \, \,   {\text{are the endpoints of an edge}}.
\end{eqnarray*}
The Weyl (or Coxeter) group $W_X$ of $X$ is the group of orthogonal transformations of $\Pic(X)$ generated 
by the involutions 
\[
s_i:u\mapsto u+ (u\cdot v_i)v_i, \quad 0\leq i \leq n-1.
\]
This group preserves the orthogonal decomposition $\Pic(X)=\Z k_X\oplus k_X^\perp$
and is isomorphic to the Coxeter group $W_n$ of the Dynkin diagram $T_{2,3,n-3}$. It turns out
that the definition of $W_X$ does not depend on the choice of the realization of $X$ as a
blow-up of the plane;  as an abstract group, $W_X$ depends only on the Picard number of $X$
(see \cite{DO}).

\begin{thm}[Nagata, McMullen, Uehara] $\, $
Let $\k$ be an algebraically closed field, and $n\geq 10$ be an integer. 

\begin{enumerate}
\item Let $X$ be a rational surface obtained from the projective plane $\P^2_\k$ by a sequence of blow-ups. 
 The image of $\Aut(X)$ in $\GL(\NS(X))$ is contained in the Weyl group $W_X$.
\item If ${\sf{char}}(\k)=0$ and if $\Phi$ is an element of $W_n$, there exists a rational surface $Y$ with Picard
number $n+1$ and an element $g$ of $\Aut(Y)$ such that the dynamical degree $\lambda(g)$
of $g$ is equal to the spectral radius $\lambda(\Phi)$ of $\Phi$.
\item There are Salem numbers which are not contained in $\Lambda(\P^2_\k)$ (resp. in $\Lambda(X)$
for any projective surface $X$). 
\end{enumerate}
\end{thm}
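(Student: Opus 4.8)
The plan is to treat the three assertions separately. The first is due to Nagata, the second is McMullen's realization theorem \cite{McMullen:Blowups}, and the third rests on a lattice obstruction of Uehara; in each case the work is to check that the cited result applies in the present setting. For assertion~(1): an automorphism $g$ of $X$ fixes the canonical class $k_X$ and preserves the intersection form, so $g_*$ restricts to an isometry of the root lattice $k_X^\perp$, which is hyperbolic because $n\geq 10$ forces $k_X\cdot k_X=9-n<0$. Moreover $g_*$ preserves the nef cone --- in particular the forward half of the positive cone that contains the ample classes --- and, being an isometry of a root lattice, carries roots to roots; hence it permutes the Weyl chambers cut out by the root system of type $T_{2,3,n-3}$. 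After composing $g_*$ with a suitable element of $W_X$ one may therefore assume it fixes the fundamental chamber attached to the chosen realization $X\to\P^2_\k$. Then $g_*$ induces an automorphism of the diagram $T_{2,3,n-3}$, which is trivial for $n\geq 10$ since the three arms have pairwise distinct lengths; so the modified map fixes every simple root, acts as the identity on $k_X^\perp$ and (fixing $k_X$ as well) on all of $\NS(X)$, and the original $g_*$ lies in $W_X\cong W_n$. This is Nagata's argument (see also \cite{DO}).

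For assertion~(2), I would quote the main construction of \cite{McMullen:Blowups} (refined later by Uehara). Given $\Phi\in W_n$, the goal is an automorphism realizing $\Phi$ that fixes an anticanonical cuspidal (or nodal) cubic $C\subset\P^2_\k$. Writing $\Phi$ as a word in the generators $s_i$ yields \emph{orbit data} prescribing how the indeterminacy points of the associated elementary quadratic transformations must be glued; one then chooses $n$ points $p_1,\dots,p_n$ on the smooth locus $C^*$ of $C$ --- an algebraic group isomorphic to $\mathbb{G}_a$ or $\mathbb{G}_m$ --- whose positions, read through the group law of $C^*$, solve the resulting finite system of equations while remaining in sufficiently general position. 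In characteristic $0$ this system is always solvable, and the corresponding composition of quadratic transformations is an automorphism $g$ of the blow-up $Y$ of $\P^2_\k$ at $p_1,\dots,p_n$ with $g_*=\Phi$ on $\NS(Y)$; hence $\lambda(g)=\lambda(\Phi)$. I expect this realization to be the main obstacle, but it is precisely the content of \cite{McMullen:Blowups}, which I would not reprove; the characteristic hypothesis enters only here, through the solvability and genericity of the point configuration.

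For assertion~(3), the first observation is that Theorem~A, together with the result of Kantor and Nagata recalled above and assertion~(1), shows --- in every characteristic --- that every Salem number lying in $\Lambda(\P^2_\k)$ is the spectral radius of some element of $\bigcup_{n\geq 10}W_n$. It thus suffices to produce a Salem number that is the spectral radius of no element of any $W_n$. Here one uses that the characteristic polynomial of $w\in W_n$ on $\NS(X)$ factors as $(t-1)\,S(t)\,C(t)$, where $S$ is the minimal polynomial of $\lambda(w)$ and, by Kronecker's theorem, $C$ is a product of cyclotomic polynomials; the pair formed by the sublattice of $k_X^\perp$ on which $w$ acts with characteristic polynomial $S$ and the restriction of $w$ is then constrained by the lattice $k_X^\perp$, and Uehara's analysis shows that these constraints exclude certain Salem polynomials --- I would cite one such explicit example, chosen of degree larger than $22$. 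Granting such a number $\lambda$: if $X$ is not rational then $\lambda\notin\Lambda(X)$ by Theorem~B, whose Salem part consists of numbers of degree at most $22$; if $X$ is rational over a field $\k$ then $\Lambda(X)=\Lambda(\P^2_\k)\subseteq\Lambda(\P^2_{\bar\k})$, which does not contain $\lambda$ by the previous step. Hence $\lambda$ belongs to no dynamical spectrum of any surface over any field. The delicate point is the existence of such an excluded Salem number in high enough degree, which I would extract from Uehara's work rather than rederive.
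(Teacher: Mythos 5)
The paper does not actually prove this theorem: its ``proof'' is the remark immediately following the statement, which assigns (1) to Nagata \cite{Nagata:I, Nagata:II}, (2) to Uehara \cite{Uehara} (building on McMullen and Bedford--Kim), and deduces (3) by combining Theorem~A with two results of McMullen. Your treatment of (1) and (2) is in the same spirit, but your route for (3) contains a genuine gap and reversed attributions. The obstruction you need --- a Salem number that is the spectral radius of no element of any $W_n$ --- is McMullen's \cite{McMullen:Coxeter}, not Uehara's; Uehara's contribution is the realization statement (2). More seriously, McMullen's excluded Salem numbers lie in $]\lambda_L,\lambda_P[$, and you give no argument that one can be chosen of degree larger than $22$; your plan depends on this because you dispose of non-rational surfaces via the degree bound of Theorem~B. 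The paper sidesteps the issue: McMullen already shows in \cite{McMullen:Blowups} that his excluded Salem numbers are not dynamical degrees of automorphisms of \emph{any} projective surface, so the only new ingredient is Theorem~A, which upgrades ``not realized by an automorphism'' to ``not realized by a birational transformation''. You should either adopt that route or justify the existence of an excluded Salem number of degree $>22$.

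A secondary issue concerns your sketch of (1). For $n\geq 11$ the Weyl group $W_n$ has infinite index in the orthogonal group of $k_X^\perp$, and the ample cone of $X$ is in general not contained in the fundamental chamber (the simple roots $e_i-e_{i+1}$ need not be effective when $p_{i+1}$ is not infinitely near $p_i$); so the step ``$g_*$ permutes the Weyl chambers, hence after composing with an element of $W_X$ it fixes the fundamental chamber'' does not follow from $g_*$ being an isometry preserving the positive cone. The actual content of Nagata's theorem is that $g_*$ carries a geometric basis to a geometric basis and that all geometric bases of $X$ form a single $W_X$-orbit; the latter is the nontrivial point and uses the effective cone, not only the lattice. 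Since, like the paper, you ultimately defer to Nagata, this does not sink the proposal, but the sketch as written would not stand alone as a proof.
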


When ${\sf{char}}(\k)=0$, this theorem 
shows that the Salem part of $\Lambda(\P^2_\k)$ is described in purely algebraic terms: It coincides 
with the set of spectral radii $\lambda(\Phi)>1$, with $\Phi$ in some $W_n$, $n\geq 10$, and this set
does not exhaust all Salem numbers. 

\begin{rem}
Assertion (1) is due to Nagata (see \cite{Nagata:I, Nagata:II}). Assertion (2) is due to Uehara, based on previous works by McMullen and Bedford and
Kim (see \cite{Uehara}). When the characteristic $p$ of the field is positive,  Harbourne proves a similar result, but for $\Phi$ in a normal subgroup $W_n(p)$ 
of $W_n$ of finite index (the index goes to $+\infty$ with $n$, see Example 3.4 in \cite{Harbourne:1987}).
Assertion (3) makes use of Theorem~A to extend a former result of McMullen. More precisely, 
McMullen proves that there are Salem numbers between $\lambda_L$ and $\lambda_P$ which are not realized by eigenvalues of elements
in the Coxeter groups $W_n$ (see \cite{McMullen:Coxeter}), and deduce from this that there are Salem numbers which are not realized
by dynamical degrees of automorphisms of surfaces (see \cite{McMullen:Blowups}); Theorem~A implies that McMullen's result holds for dynamical degrees of birational transformations. \end{rem}

%%%%%%%
\subsection{Gaps in the dynamical spectrum}\label{par:firstgap}
%%%%%%%

As announced in the introduction, we can now prove the following corollary to Theorem~A. 

\begin{cor}\label{cor:thm-A-cor1} Let $\k$ be an algebraically closed field. 
\begin{enumerate}

\item If $f$ is a birational transformation of a projective surface $X$ defined over $\k$
and $\lambda(f)$ is in the interval $]1,\lambda_P[$, then $f$ is conjugate
to an automorphism of a projective surface $Y$ by a birational mapping $\phi\colon X\da Y$.

\item There is no dynamical degree in the interval $]1, \lambda_L[$. 

\item If ${\sf{char}}(\k)=0$, the minimum of the dynamical degree $\lambda(f)>1$ among all  birational 
transformations of projective surfaces defined over $\k$ (resp. of $\P^2_\k$) is equal to the Lehmer 
number $\lambda_L\simeq 1.176280$.
\end{enumerate}
\end{cor}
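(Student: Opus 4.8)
The plan is to deduce all three assertions from Theorem~A together with the two algebraic inputs recalled just before: the classification of automorphism actions by the Weyl groups $W_n$ (Nagata's theorem, Assertion~(1) of the Nagata--McMullen--Uehara theorem) and McMullen's spectral gap for Coxeter groups. The logical skeleton is that a loxodromic birational transformation with small dynamical degree must, by Theorem~A, be conjugate to an automorphism of a rational surface $Y$, hence its dynamical degree is the spectral radius of an element of some $W_n$, and such spectral radii are either $1$ or at least $\lambda_L$; for the sharper statement $]1,\lambda_P[$ one only needs that $\lambda_P$ is below the infimum $\lambda_L$ of Salem numbers arising this way, which is automatic since $\lambda_L<\lambda_P$.

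First I would prove Assertion~(1). Suppose $\lambda(f)\in\,]1,\lambda_P[$. Since $\lambda_P$ is the smallest Pisot number, $\lambda(f)$ is not Pisot; by the Diller--Favre theorem it must therefore be a Salem number. Theorem~A then provides a projective surface $Y$ and a birational map $\varphi\colon Y\da X$ such that $g:=\varphi^{-1}\circ f\circ\varphi$ is an automorphism of $Y$. Replacing $Y$ by a minimal model of $g$ (contracting the finitely many $g$- or $g^{-1}$-periodic curves of negative self-intersection that could be blown down equivariantly) one may in addition assume $Y$ is $g$-minimal; standard surface classification then forces $Y$ to be rational, since a loxodromic automorphism of a surface of nonnegative Kodaira dimension acts with finite order on $\mathrm{NS}$ up to a power, contradicting $\lambda(g)>1$ unless the surface is rational (abelian, K3, Enriques cases all have $\mathrm{NS}$ on which the spectral radius comes from a Salem number too, but here we only need the conclusion "automorphism", already granted). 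So Assertion~(1) is really just the combination of "not Pisot $\Rightarrow$ Salem" with Theorem~A, and needs essentially no further work.

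Next, Assertion~(2). Let $f$ be any birational transformation of a projective surface with $1<\lambda(f)$, and suppose toward a contradiction that $\lambda(f)<\lambda_L$. Since $\lambda_L<\lambda_P$, Assertion~(1) applies: $f$ is conjugate to an automorphism $g$ of a projective surface $Y$, which as above we may take rational, and then by Kantor--Nagata $Y$ is a blow-up of $\P^2_{\k}$ with $\rho(Y)=n+1$ for some $n\ge 10$. By Nagata's theorem the image of $g_*$ in $\GL(\mathrm{NS}(Y))$ lies in the Weyl group $W_Y\cong W_n$, so $\lambda(f)=\lambda(g)$ is the spectral radius of an element $\Phi\in W_n$. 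By a theorem of McMullen on Coxeter groups of type $T_{2,3,m}$, the spectral radius of any element of $W_n$ is either $1$ or a Salem/reciprocal-quadratic number that is at least the Lehmer number $\lambda_L$ (indeed $\lambda_L$ is realized as $\lambda(\Phi)$ for a Coxeter element of $W_{10}=E_{10}$). This contradicts $1<\lambda(f)<\lambda_L$, proving that $]1,\lambda_L[$ contains no dynamical degree.

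Finally, Assertion~(3): when $\mathrm{char}(\k)=0$, Assertion~(2) shows the infimum of $\{\lambda(f)>1\}$ is $\ge\lambda_L$, and it remains to exhibit a birational transformation (even an automorphism of $\P^2_\k$ after blow-up) realizing exactly $\lambda_L$. Take $n=10$ and let $\Phi$ be a Coxeter element of $W_{10}$, whose spectral radius is the Lehmer number; Assertion~(2) of the Nagata--McMullen--Uehara theorem (Uehara's construction in characteristic $0$, building on McMullen and Bedford--Kim) produces a rational surface $Y$ of Picard number $11$ and $g\in\Aut(Y)$ with $\lambda(g)=\lambda(\Phi)=\lambda_L$. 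Since $Y$ is rational, $g$ is conjugate to an element of $\Cr_2(\k)$, so $\lambda_L\in\Lambda(\P^2_\k)$, and the infimum is attained. The main obstacle in writing this cleanly is marshalling the passage from "conjugate to an automorphism of some surface" to "conjugate to an automorphism of a rational surface blown up from the plane, with the Weyl-group constraint" — i.e. invoking the right minimality reduction and the Kantor--Nagata/Nagata results in the correct form — together with citing McMullen's Coxeter spectral gap precisely; the rest is bookkeeping around the inequality $\lambda_L<\lambda_P$.
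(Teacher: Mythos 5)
Your treatment of Assertions (1) and (3) matches the paper's: for (1), a dynamical degree in $]1,\lambda_P[$ cannot be Pisot, is therefore Salem by Diller--Favre, and Theorem~A applies; for (3), one realizes $\lambda_L$ by an automorphism of a rational surface in characteristic $0$ (Bedford--Kim, McMullen, or equivalently Uehara applied to a Coxeter element of $W_{10}$). So the overall strategy is the intended one.

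The gap is in Assertion (2), at the step ``which as above we may take rational.'' The justification you offer in (1) --- that a loxodromic automorphism forces the underlying surface to be rational --- is false: abelian, K3 and Enriques surfaces all carry automorphisms with $\lambda>1$ (Example~\ref{eg:Kummer} and Section~\ref{par:not-rational} of the paper are built on exactly such examples). Consequently your chain ``Kantor--Nagata $\Rightarrow g_*\in W_n \Rightarrow$ McMullen's Coxeter spectral gap'' only excludes dynamical degrees in $]1,\lambda_L[$ for birational transformations of \emph{rational} surfaces. If $X$ is, say, a K3 surface, then the automorphism produced by Theorem~A lives on a K3 surface, Nagata's theorem does not apply, and the spectral gap there requires a different, lattice-theoretic argument. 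The paper sidesteps this by quoting Theorem~1.2 of \cite{McMullen:Blowups}, which asserts $\lambda(g)\geq\lambda_L$ for automorphisms of \emph{arbitrary} compact surfaces; its proof handles the rational case via Coxeter groups, as you do, and the non-rational cases separately. To repair your argument, either invoke that theorem in its general form, or supply the missing gap for automorphisms of abelian, K3 and Enriques surfaces by a separate argument.
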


\begin{proof}
Let $f$ be a birational transformation of a projective surface $X$
defined over an algebraically closed field $\k$. 
Assume that the dynamical degree $\lambda(f)$ is a Salem number.
From Theorem~A, $f$ is conjugate to an automorphism of a smooth projective surface. Thus, Assertion (1)
follows from the fact that $\lambda(f)$ is a Salem number if $1< \lambda(f) < \lambda_P\simeq 1.324717$ (see \S~\ref{par:Intro-Pisot-Salem}). 

From Theorem~1.2 in \cite{McMullen:Blowups}, we deduce that $\lambda(f)\geq \lambda_L$, where $\lambda_L$ is the Lehmer number. Since all Pisot numbers are larger than $\lambda_L$, this proves assertion (2).

If ${\sf{char}}(\k)=0$, there is an automorphism $g$ of a rational surface $X$ such that $\lambda(g)=\lambda_L$ (see \cite{Bedford-Kim:2006, Bedford-Kim:2009} and \cite{McMullen:Blowups}); McMullen recently announced that such an example also exists on a projective K3 surface (see \cite{McMullen:Lehmer-K3}). In particular, 
the infimum of all dynamical degrees is a minimum, and is equal to the Lehmer number. This proves (3).
 \end{proof}

%%%%%%%%%%%%%%%%%%%%%%%%%%%%%%
\section{Surfaces which are not rational}\label{par:not-rational}
%%%%%%%%%%%%%%%%%%%%%%%%%%%%%%

In this section we prove Theorem~B and provide an example of a K3 surface with automorphisms
$f$ whose dynamical degrees $\lambda(f)$ are  Salem numbers of degree $22$. 

Let $X$ be a projective surface defined over an algebraically closed field $\k$. In order to prove Theorem~B, 
we consider the  Kodaira dimension of $X$ and refer to the classification of surfaces in Kodaira dimension $0$ and $-\infty$
(see \cite{BPV:book}).

%%%%%%%%%%%%%%%%%
\subsection{Ruled surfaces}
%%%%%%%%%%%%%%%%%

If the Kodaira dimension of $X$ is $-\infty$ but $X$ is not rational, then $X$
is ruled in a unique, $\Bir(X)$-invariant way. This implies that all elements of 
$\Bir(X)$ have dynamical degree $1$ (see \S~\ref{par:GDF} and Theorem~\ref{thm:Giz-class} below).

%%%%%%%%%%%%%%%%%
\subsection{Minimal models and automorphisms}
%%%%%%%%%%%%%%%%%
If the Kodaira dimension of $X$ is non negative, $X$ admits a unique minimal 
model $X'$. From now on, we replace $X$ by $X'$, so that we now have $\Bir(X)=\Aut(X)$. In particular, 
all elements of $\Lambda(X)\setminus \{ 1\}$ are Salem numbers, obtained
from eigenvalues of linear transformations of $\NS(X)$ (preserving the intersection 
form). 

\subsubsection{Positive Kodaira dimension} If the Kodaira dimension is equal to $2$, the automorphism group is finite, and
$\Lambda(X)$ reduces to $\{1\}$. If the Kodaira dimension of $X$ is equal to 
$1$, the Kodaira-Iitaka fibration provides an $\Aut(X)$-equivariant fibration $X\to B$ from $X$ to a curve $B$. 
The divisor class of the generic fiber of this fibration  is an isotropic vector in $\NS(X)$. This vector is $\Aut(X)$-invariant
and, consequently, all elements $f$ in $\Bir(X)$ are elliptic or parabolic isometries of $\NS(X)$ (see \S~\ref{par:Isom-Hinfini-TLength}). 
This implies that $\lambda(f)=1$ for all $f$ in $\Bir(X)$. 

\subsubsection{Vanishing Kodaira dimension} Let us now assume that ($X$ is minimal and) the Kodaira dimension of $X$ is equal to $0$. 
According to the classification of surfaces, $X$ is either
\begin{itemize}
\item[(i)]  an abelian surface;
\item[(ii)] a hyperelliptic surface, obtained as a quotient of an abelian surface
by a fixed point free group of automorphisms;
\item[(iii)] a K3 surface;
\item[(iv)] or an Enriques surface. 
\end{itemize}
Hyperelliptic surfaces don't have automorphisms with $\lambda(f)>1$, as shown in \cite{Cantat:cras}.
In cases (i), (iii), and (iv), $X$ has Picard number bounded from above by $4$, $22$ and $10$ respectively. 
This shows that $\lambda(f)$ is a Salem number of degree at most $22$. Moreover, the Picard number is
at most $20$ if the characteristic of $\k$ vanish, so that $\lambda(f)$ is an algebraic integer of degree 
at most $20$ in this case.

\begin{pro}
In characteristic $2$, there are examples of pairs $(X,f)$ where $X$ is a K3 surface, $f\colon X\to X$ is an automorphism, 
and $\lambda(f)$ is a Salem number of degree $22$.
\end{pro}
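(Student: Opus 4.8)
The plan is to exhibit an explicit K3 surface over $\overline{\mathbf{F}_2}$ whose Picard lattice has rank $22$ (a supersingular K3 surface), together with an automorphism whose action on $\NS(X)$ has a Salem characteristic polynomial of degree $22$. The point is that in characteristic $0$ the transcendental lattice is always nontrivial, so $\rho(X)\le 20$ and no degree-$22$ Salem number can occur; but supersingular K3 surfaces in positive characteristic have $\rho(X)=22$, and the $2$-adic discriminant form of their N\'eron-Severi lattice is very constrained (Artin invariant $1$ gives a unique such lattice). So first I would recall the structure of the supersingular K3 lattice $\Lambda_{2,1}$ of Artin invariant $1$: it is a rank $22$, signature $(1,21)$ even lattice whose discriminant group is $(\Z/2\Z)^{2}$. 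The task reduces to producing an automorphism $g$ acting on this lattice with $g^*$ having a degree-$22$ Salem polynomial, then realizing it geometrically.

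Concretely, I would look for a K3 surface admitting a large automorphism group, e.g.~a Kummer-type or a quartic surface with many lines, or better, use the construction philosophy of McMullen: pick a Salem polynomial $S(t)$ of degree $22$ that (after possibly multiplying by cyclotomic factors) is the characteristic polynomial of an isometry of $\Lambda_{2,1}$, check the lattice-theoretic realizability (the isometry must act trivially on the discriminant form, or at least through the image of $\O(\Lambda_{2,1})$, and must fix an ample-type vector appropriately so that a power preserves a chamber of the positive cone), then invoke a Torelli-type theorem for supersingular K3 surfaces in characteristic $p$ (Ogus) to lift the lattice isometry to an actual automorphism of some supersingular K3 $X$. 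The characteristic $2$ case is singled out presumably because there one has the cleanest explicit models — the supersingular K3 with Artin invariant $1$ in characteristic $2$ is the unique one, and it can be written very concretely (for instance as a purely inseparable cover, or via Dolgachev–Kond\=o's description), which makes the automorphism group enormous and easy to access.

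Alternatively, and perhaps more in the spirit of an \emph{example}, I would start from an explicit rational surface automorphism or a cyclic cover whose monodromy produces the Salem number $\lambda_L$-style behaviour but of degree $22$, and then thicken to a K3. The cleanest route is: take the supersingular K3 $X$ of Artin invariant $1$ over $\overline{\mathbf{F}_2}$; its $\NS(X)$ is $\Lambda_{2,1}$, and $\O(\Lambda_{2,1})$ surjects onto $\O(q_{\Lambda_{2,1}})$; by Ogus's crystalline Torelli theorem every isometry of $\NS(X)$ preserving the ample cone structure is induced (up to $\pm 1$ and the Weyl group) by an automorphism. So it suffices to produce \emph{one} isometry $w\in\O(\Lambda_{2,1})$ whose characteristic polynomial is a Salem polynomial of degree exactly $22$ — equivalently, an isometry of spectral radius $>1$ with no eigenvalue $1$ — and such that $w$ has no periodic curve obstruction, which is automatic if the Salem polynomial is irreducible of full degree $22$ since then $w$ fixes no nonzero vector and in particular no effective class, so after passing to a suitable power preserving a chamber, a Weyl-translate of $w$ is realized. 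Then $\lambda(g) = $ spectral radius of $w$, a degree $22$ Salem number.

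The main obstacle I expect is the lattice-theoretic bookkeeping: one must (i) confirm that the abstract supersingular K3 lattice $\Lambda_{2,1}$ in characteristic $2$ really does carry an isometry with an irreducible degree-$22$ Salem characteristic polynomial — this is a question about which Salem polynomials $S$ of degree $22$ admit a $\Z$-sublattice realization on an even lattice of signature $(1,21)$ with the prescribed $2$-discriminant form, and one typically builds such an isometry as a product of reflections or as a companion-matrix construction and then checks the discriminant form is correct — and (ii) verify the positivity/chamber condition needed so that Ogus's Torelli theorem yields an \emph{automorphism} rather than just a lattice isometry (ensuring $w$, or a power, maps the ample cone to itself or can be corrected by an element of the Weyl group generated by $(-2)$-curves). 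Step (i) is the crux; one can either do it by hand for a single well-chosen polynomial (mimicking McMullen's "Coxeter element" examples, which automatically live on the relevant $E_{10}$-type lattices and hence extend to the K3 lattice) or cite Uehara/McMullen-style results. Once a single such $w$ is in hand, the realization as $(X,f)$ and the computation $\lambda(f)=\lambda(w)=$ a degree-$22$ Salem number is routine given the earlier material.
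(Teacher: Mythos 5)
Your overall strategy --- find an isometry of the rank-$22$ supersingular N\'eron--Severi lattice with an irreducible degree-$22$ Salem characteristic polynomial, then lift it to an automorphism by a Torelli theorem --- is a legitimate-sounding plan, but as written it has two genuine gaps, and both happen to be exactly the points the paper's argument is designed to avoid.

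First, you defer the crux. You never exhibit (or prove the existence of) a single isometry $w$ of $\Lambda_{2,1}$ whose characteristic polynomial is an irreducible Salem polynomial of degree $22$; you only describe what such a verification would involve (discriminant-form bookkeeping, companion-matrix or reflection constructions, ``cite Uehara/McMullen-style results''). McMullen's realizability results are about which Salem polynomials admit isometries of the \emph{even unimodular} K3 lattice of signature $(3,19)$ satisfying period conditions in characteristic $0$; they do not apply off the shelf to $\Lambda_{2,1}$, which has nontrivial $2$-discriminant. Since the entire content of the proposition is the existence of such an isometry realized by an automorphism, this is a missing step, not a routine one. Second, the lifting step is shakier than you indicate: Ogus's crystalline Torelli theorem requires the isometry to be compatible with the K3 crystal (the characteristic subspace in $\NS(X)\otimes k$), not merely to preserve the ample cone up to the Weyl group, and in its original form it is proved only for $p\ge 5$; the characteristic $2$ case --- the one you need --- is precisely the delicate one.

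The paper takes a different and shorter route that sidesteps both issues. It quotes Dolgachev--Kond\=o: in characteristic $2$ there is a K3 surface $X$ with $\rho(X)=22$ whose automorphism group is infinite and preserves no proper subspace of $\NS_\R(X)$. Hence the image of $\Aut(X)$ meets the identity component ${\sf{O}}^0_\R(\NS_\R(X))$ in a Zariski dense subgroup. The failure of $\chi_{g^*}$ to be an irreducible degree-$22$ Salem polynomial forces $\chi_{g^*}$ to be divisible by one of \emph{finitely many} cyclotomic polynomials of degree $\le 22$, and each such divisibility is a proper Zariski-closed condition not containing the image of ${\sf{O}}^0_\R$ (here the parity of $22$ is used to ensure elements with no root-of-unity eigenvalue exist in ${\sf{O}}^0$). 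Zariski density then produces the desired $f$. In other words, the paper never constructs a lattice isometry and never invokes a Torelli theorem: the automorphism group is already known to be large, and a general element of it works. If you want to salvage your approach, you would need to either carry out the lattice computation for a specific degree-$22$ Salem polynomial on $\Lambda_{2,1}$ and justify the crystalline Torelli lifting in characteristic $2$, or switch to the density argument.
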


To construct such an example, we make use of one of the main results of~\cite{Dolgachev-Kondo:2003}:{\sl{
Let $\k$ be an algebraically closed field of characteristic $2$. There exists a K3 surface $X$, 
defined over $\k$, such that 
\begin{itemize}
\item[(i)] the Picard number of $X$ is equal to $22$;
\item[(ii)] the automorphism group of $X$ is infinite, and does not preserve any proper subspace of $\NS_\R(X)$.
\end{itemize}
}}

Let ${\sf{O}}_{\R}(\NS_\R(X))$ be the Lie group of orthogonal endomorphisms of the N\'eron-Severi space with respect to the intersection form. 
This group is an algebraic group, and we denote by ${\sf{O}}^0_{\R}(\NS_\R(X))$ its irreducible component that contains the identity. 
From the second property, we deduce that the image $\Aut(X)^\sharp$ of $\Aut(X)$ in $\GL_{\R}(\NS_\R(X))$ intersects
${\sf{O}}_{\R}^0(\NS_\R(X))$ on a Zariski dense subgroup; indeed, if $G\subset {\sf{O}}^0_{\R}(\NS_\R(X))$ is not Zariski
dense, then $G$ preserves a non-trivial, strict subspace of $\NS_\R(X)$ (see \cite{Benoist-delaHarpe} for instance). 

As $\Aut(X)^\sharp$ is Zariski dense, we can now prove that the characteristic polynomial of a ``general'' element of $\Aut(X)^\sharp$ is irreducible (over $\Z$), its degree is equal to $22$, and its larger root is a Salem number. The proof relies on the following remark: If $g^*$ is an element of 
$\Aut(X)^\sharp$, then $g^*$ preserves the integral structure of $\NS(X)$, and preserves the intersection form, the signature of which is equal to $(1,21)$; hence,
\begin{itemize}
\item if $g^*$ has no eigenvalue of modulus $>1$, the roots of $\chi_{g^*}$ are algebraic integers of modulus at most $1$ and, by Kronecker Lemma, are roots of $1$; thus $\chi_{g^*}$ splits as a product of cyclotomic polynomials;
\item if $g^*$ has an eigenvalue of modulus $>1$, it is unique and is either quadratic or a Salem number; hence, if $\chi_{g^*}(t)$ splits as a product of two non-constant polynomials $q(t)$ and $r(t)$ in $\Z[t]$, all the roots of $r$ or $q$ have modulus $1$ and $\chi_{g^*}$ is divisible by a cyclotomic polynomial. 
\end{itemize}
Thus, either there are elements $g$ with the required properties, or $\chi_{g^*}$ is divisible by a cyclotomic polynomial of degree at most
$22$ for every $g$ in $\Aut(X)$. Since their degree is bounded by $22$, there are only finitely many cyclotomic polynomials to consider. Let $V_{22}\subset \R[t]$ be the set of all monic polynomials of degree $22$. Given  a cyclotomic polynomial $r(t)$, the subset 
\[
V_{22}(r)=\{\chi(t)\; \vert \;  \, r(t) \;  {\text{divides}}\; \chi(t) \}
\]
is a proper algebraic subset of positive codimension; moreover, the image of ${\sf{O}}^0_{\R}(\NS_\R(X))$ in $V_{22}$ by the characteristic polynomial mapping is not contained in this set, because there are elements of ${\sf{O}}^0_{\R}(\NS_\R(X))$ without any eigenvalue being a root of unity (here we use that $22$ is even). Since $\Aut(X)^\sharp$ is Zariski dense in ${\sf{O}}^0_{\R}(\NS_\R(X))$, we conclude that there are elements $f^*$ of $\Aut(X)^\sharp$ such that $\chi_{f^*}(t)$ is not contained in any $V_{22}(r)$; the characteristic polynomial of such an element is irreducible (over $\Z$), its degree is equal to $22$, and its larger root is a Salem number. 

\begin{rem}
This argument has now been extended to other examples of K3 surfaces in positive characteristic by Esnault, Oguiso, and Yu (see \cite{Esnault-Oguiso-Yu}). 
\end{rem}

\subsection{Discrete spectrum} To conclude the proof of Theorem~B, we need to show  that the  
union of all dynamical spectra $\Lambda(X)$ where $X$ runs over the set
of non-rational projective surfaces defined over $\k$, and $\k$ runs over the 
set of all fields, is a discrete subset of the real line. This follows from the upper bound $22$ for the degrees
of Salem numbers in $\Lambda(X)$ and the following lemma.

\begin{lem}
Let $B$ be a positive number and $\Sal_B$ be the set of Salem numbers of degree at most $B$.
Then $\Sal_B$ is a closed discrete subset of the real line. 
\end{lem}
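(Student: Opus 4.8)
The plan is to prove the two assertions — closedness and discreteness — essentially simultaneously, by showing that any bounded sequence of Salem numbers of degree $\leq B$ has only finitely many distinct terms, which yields both properties at once. So first I would fix $B$ and work inside the interval $[1,M]$ for an arbitrary $M>1$; it suffices to show $\Sal_B\cap[1,M]$ is finite, since a closed discrete set is exactly one whose intersection with every compact interval is finite.

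Next I would exploit the rigid structure of the minimal polynomial of a Salem number $\lambda$ of degree $d\leq B$. The key point is that $\chi_\lambda(t)$ is a \emph{reciprocal} (palindromic) integer polynomial: its roots are $\lambda$, $1/\lambda$, and $d-2$ roots on the unit circle which come in complex-conjugate pairs. Hence all roots have modulus in $[1/M, M]$, and the coefficients of $\chi_\lambda$ are elementary symmetric functions of these roots, so each coefficient is bounded in absolute value by $\binom{d}{k}M^d \leq 2^B M^B$. Since the coefficients are integers of bounded size and the degree is bounded by $B$, there are only finitely many possible polynomials $\chi_\lambda$; therefore only finitely many possible values of $\lambda$ in $[1,M]$. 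This proves $\Sal_B\cap[1,M]$ is finite, hence $\Sal_B$ is discrete, and also closed: a sequence in $\Sal_B$ converging to some real $\mu$ is eventually constant (being a convergent sequence in a set all of whose bounded subsets are finite), so $\mu\in\Sal_B$.

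The only step requiring a little care is the reciprocity claim used to bound the roots: I would recall (or cite \cite{BDG:Book}) that if $\lambda$ is a Salem number then the map $x\mapsto 1/x$ permutes the conjugates of $\lambda$, which forces $1/\lambda$ to be a conjugate and, combined with the fact that the remaining conjugates lie on $\{|z|=1\}$ where $x\mapsto 1/x$ agrees with complex conjugation, gives $|z|\in[1/\lambda,\lambda]\subset[1/M,M]$ for every conjugate $z$. An alternative, avoiding reciprocity entirely, is to invoke Kronecker's theorem directly: a Salem number of degree $\leq B$ in $[1,M]$ has exactly one conjugate ($\lambda$ itself) of modulus $>1$ and all others of modulus $\leq 1$, so every coefficient of $\chi_\lambda$ is bounded by $\binom{B}{k}M$, again leaving only finitely many polynomials. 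I expect no genuine obstacle here — this is the standard Northcott-type finiteness argument — the main thing to get right is simply phrasing the coefficient bound cleanly and noting that finiteness in every compact interval is equivalent to the conjunction of closedness and discreteness.
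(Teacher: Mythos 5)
Your proof is correct and follows essentially the same route as the paper's: bound all Galois conjugates of a Salem number $\lambda\in[1,M]$ of degree $\le B$ in modulus, deduce that the integer coefficients of its minimal polynomial are bounded by a constant depending only on $B$ and $M$, and conclude that only finitely many such polynomials — hence finitely many such $\lambda$ — exist in any compact interval. The only (harmless) difference is that you spell out why finiteness on compact intervals gives both closedness and discreteness, whereas the paper leaves that implicit.
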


\begin{proof}
Let $\lambda$ be such a Salem number, contained in the interval $[a^{-1},a]$, with $a>1$. 
Its minimum polynomial $\chi(t)\in \Z[t]$ 
has integer coefficients, and all of them are symmetric 
polynomials in  $\lambda$, $\frac{1}{\lambda}$
and its conjugates of modulus $1$. Since all these numbers have modulus
at most $a$, all coefficients of $\chi$ are bounded by $C_B a^B$, where the constant
$C_B$ depends only on $B$. Since the coefficients of $\chi$ are integers,
there is a finite list of possible coefficients, a finite list of possible minimum 
polynomials $\chi$, and therefore a finite list of Salem numbers $\lambda\in [a^{-1},a]$
of degree $\leq B$. 

Thus, the intersection of $\Sal_B$ with any compact interval $[a^{-1},a]\subset \R^*_+$ is
finite, and $\Sal_B$ is discrete.
\end{proof}

%%%%%%%%%%%%%%%%%%%%%%%%%%%%%%
\section{Blow-ups, bubbles, isometries}\label{par:BUBLES}
%%%%%%%%%%%%%%%%%%%%%%%%%%%%%%

When $X$ is a projective surface, the  group $\Bir(X)$  acts faithfully by isometries on 
a hyperbolic space $\H_X$, the dimension of which is infinite when $X$ is ruled or rational. 
This construction is described in \cite{Cantat:Annals} and \cite{Cantat-Lamy:Acta} ; 
in this section, we summarize the main facts and apply them to control centralizers and conjugacy 
classes in $\Bir(X)$.
The reader may consult \cite{Burger-Iozzi-Monod:2005}, \cite{Cantat:Annals}, \cite{Cantat-Lamy:Acta}, and \cite{Favre:Bourbaki}
for the results which are summarized in Sections~\ref{SubSec:Bubble} and~\ref{par:Isom-Hinfini}.

%%%%%%%
\subsection{Bubbles and Picard-Manin space}\label{SubSec:Bubble}
%%%%%%%

Let $X$ be a projective surface, defined over an algebraically closed field $\k$. If
$\pi\colon Y\to X$ is a birational morphism, one obtains an embedding of 
N\'eron-Severi groups $\pi^*\colon \NS(X)\to \NS(Y)$. 
Given two birational 
morphisms $\pi_1\colon Y_1\to X$ and $\pi_2\colon Y_2\to X$, one says that $\pi_2$
is above $\pi_1$ (or covers $\pi_1$) if $\pi_1^{-1}\circ \pi_2$ is regular. 
Starting with two birational morphisms $\pi_1\colon Y_1\to X$ and $\pi_2\colon Y_2\to X$, one can always
find a third birational morphism $\pi_3\colon Y_3\to X$ which covers $\pi_1$
and $\pi_2$. It follows easily that the inductive limit of all groups
$\NS(Y_i)$, for all surfaces $Y_i$ above $X$, is well defined. This limit is the {\bf{Picard-Manin space}} $\z_X$
of $X$; the intersection form determines a scalar product on $\z_X$, which we denote
by $(v,w)\mapsto v\cdot w$.

The bubble space $\Bub(X)$ of $X$ is defined as follows. Consider all surfaces $Y$ above $X$, i.e.~all 
birational morphisms $\pi\colon Y\to X$. Given $p_1$ on $Y_1$ and $p_2$ on $Y_2$, identify
$p_1$ with $p_2$ if $\pi_1^{-1}\circ \pi_2$ is a local isomorphism in a neighborhood of $p_2$ and maps 
$p_2$ onto $p_1$. The bubble space $\Bub(X)$ is the union of all points of all surfaces above $X$ modulo
the equivalence relation generated by these identifications. 
If $p$ is a point of the bubble 
space, represented by  a point $p$ on a surface $Y\to X$, one denotes by $E(p)$ 
the exceptional divisor of the blow-up of $p$, and by $e(p)$ its divisor class, viewed as a point
in $\z_X$. These classes satisfy $e(p)\cdot e(p')=0$ if $p\neq p'$
and $e(p)\cdot e(p)=-1$. 

\begin{rem}[see \cite{Vaquie:2000}]
%**New** added a sentence:%  
A point $p$ of $\Bub(X)$ can also be seen as a divisorial valuation whose center in $X$ is a closed point; the value of the valuation 
on a rational function is the order of vanishing (or pole) of this function along $E(p)$. %**New**: end of changes%
\end{rem}

The N\'eron-Severi group $\NS(X)$ is naturally  embedded as a subgroup of the Picard-Manin space. This finite dimensional lattice
is orthogonal to  $e(p)$ for all $p$ in $\Bub(X)$, and the Picard-Manin space
coincides with the direct sum 
\[
\z_X=\NS(X)\oplus \bigoplus_p \Z e(p) 
\]
where $p$ runs over the bubble space. 
Thus, each element $v$ of the Picard-Manin space
can be written as a finite sum 
\[
v= v_X + \sum_{p\in \Bub(X)} a(p) e(p).
\]
The canonical form on $\z_X$ is a linear form $\kf\colon \z_X\to \Z$, 
which is defined by 
\[
\kf(v)=k_X\cdot v_X - \sum_p a(p),
\]
where $k_X$ is the canonical divisor of $X$.

There is a completion process, for which the completion $\zz_X$ of $\z_X\otimes_\Z \R$ 
is represented by square integrable sums:  
\[
\zz_X=\{ w + \sum_p a(p) e(p) \; \vert \; w\in \NS_\R(X), \; {\text{and}} \; \sum_p a(p)^2<\infty\}
\]
The intersection form extends as a scalar product with signature $(1,\infty)$ on this space, but the canonical
form $\kf$ doesn't. 

The {\bf{hyperbolic space}} $\H_X$ of $X$ is then defined by 
\[
\H_X=\{w\in \zz_X \; \vert \;  w\cdot w =1, \; {\text{and}} \; w\cdot a>0\;  {\text{for all ample classes}}\; a \in \NS(X)\}.
\]
This space $\H_X$ is an infinite dimensional analogue of the classical hyperbolic spaces
$\H_n$: The distance $\dist$ on $\H_X$  is defined by 
\[
\cosh(\dist(v,w))=v\cdot w
\]
for all pairs of elements of $\H_X$; it is complete. If $\H_X$ is cut with a  subspace of $\zz_X$ of dimension $n$, and 
the intersection is not empty, the result is a totally geodesic hyperbolic space of dimension $n-1$. In particular,
geodesics are intersections of $\H_X$ with planes. The projection of $\H_X$ in the projective space 
$\P(\zz_X)$ is one to one, and the boundary of its image is the projection of the cone of isotropic vectors
of $\zz_X$. Thus, we denote by $\partial \H_X$ the set
\[
\partial \H_X=\{\R_+ v \in \zz_X \; \vert \; v\cdot v = 0 , \; {\text{and}} \;  v\cdot a>0\; {\text{for all ample classes}}\; a \in \NS(X) \}.
\]

\begin{rem}\label{rem:gromov-hyp}$\;$

\noindent{\bf{1.--}} The hyperbolic space $\H_X$ is $\log(3)$-hyperbolic, in the sense of Gromov (see \cite{CDP:Book}).

\noindent{\bf{2.--}} Since $\H_X$ is Gromov hyperbolic, one can approximate configurations of points in $\H_X$ by 
configuration of points in metric trees (see \cite{Cantat-Lamy:Acta} for instance).

\noindent{\bf{3.--}} The set $\partial \H_X$ coincides with the Gromov boundary of the hyperbolic space $\H_X$ (note that
$\H_X$ is not locally compact). 
\end{rem}

%%%%%%%
\subsection{Isometries and dynamical degrees}\label{par:Isom-Hinfini}
%%%%%%%

The important fact is that $\Bir(X)$ acts faithfully on $\zz_X$ by continuous linear endomorphisms, preserving the intersection form, 
the effective cone, and the nef cone; it also preserves the subset $\z_X$ and canonical form $\kf\colon  \z_X\to \Z$. 
In particular, it preserves the hyperbolic space $\H_X$. 

\begin{rem}
Intuitively,  elements of $\Bir(X)$ behave like automorphisms
on $\zz_X$, because all points have been blown-up to define $\zz_X$, so that all indeterminacy points
have been resolved.
When the Kodaira dimension of $X$ is non-negative and $X$ is minimal, then $\Bir(X)$ coincides with $\Aut(X)$. 
The space $\H_X$ can be replaced by the subset of $\NS(X,\R)$ of
elements $v$ with $v\cdot v=1$ and $v\cdot a>0$ for all ample classes $a$; the action of $\Bir(X)=\Aut(X)$ is not
always faithful but the kernel coincides with the connected component $\Aut^0(X)$ up to finite index (see \cite{Lieberman:1978, Cantat:survey-auto}). 
\end{rem}

Let $f$ be an element of $\Bir(X)$. Denote by $f_\p$ its action on $\zz_X$:
\[
f_\p\colon \zz_X\to \zz_X
\]
is a linear isometry with respect to the intersection form. We also denote by 
$f_\p$ the isometry of $\H_X$ that is induced by this endomorphism of $\zz_X$. 

\subsubsection{Translation length and types of isometries}\label{par:Isom-Hinfini-TLength}
The translation length of an isometry $g$ of $\H_X$ is defined, as for all hyperbolic
spaces, by 
\[
L(g)=\inf \{\dist (v, g(v))\; \vert \; v \in \H_X\}.
\]
If this infimum is a minimum, 
either it is equal to $0$ and $g$ has a fixed point in $\H_X$, in which case $g$ is {\bf{elliptic}},
or it is positive and $g$ is {\bf{loxodromic}} (also called hyperbolic). If $g$ is loxodromic, the set of points
$x\in \H_X$ such that $\dist(x,g(x))$ is equal to the translation length of $g$ is a geodesic line $\ax(g)\subset \H_X$; its
boundary points  are represented by isotropic vectors $a(g)$ and $b(g)$ in $\zz_X$ 
such that 
\[
g(a(g))= e^{L(g)} a(g) \; {\text{ and }} \; g(b(g))=e^{-L(g)}b(g).
\] 
The axis of $g$ is the intersection of $\H_X$ with the plane containing $a(g)$ and $b(g)$.
Normalize the choice of $a(g)$ and $b(g)$ in such a way that $a(g)\cdot b(g)=1$.
Let $x$ be a point of $\H_X$, or a point of the isotropic cone of $\zz_X$ that 
intersects $a(g)$ and $b(g)$ positively; then, the orbit $g^n(x)$ 
converges to the boundary point $\R_+ a(g)$ when $n$ goes to $+\infty$, and 
to $\R_+ b(g)$  when $n$ goes to $-\infty$. More precisely, in $\zz_X$ we have
\[
\frac{1}{e^{n L(g)}} g^n(x)\to (x\cdot b(g))\,  a(g), \;\, {\text{and}} \; \, \frac{1}{e^{n L(g)}} g^{-n} (x) \to (x\cdot a(g))\,  b(g)
\]
as $n$ goes to $+\infty$.

When the infimum is not realized, $L(g)$ is equal to $0$, and $g$ is {\bf{parabolic}}: It fixes
a unique line in the isotropic cone of $\zz_X$; this line is fixed pointwise, and 
all orbits $g^n(x)$ in $\H_X$ accumulate to the corresponding boundary point when
$n$ goes to $\pm \infty$ (see \cite{Burger-Iozzi-Monod:2005} for examples of accumulation without convergence). 

\subsubsection{Types of birational transformations (see \cite{Cantat:Annals})}\label{par:GDF}
This classification of isometries into three types hold for all isometries of $\H_X$. For
isometries $f_\p$ induced by birational transformations of $X$, there is a dictionary between
this classification and the geometric properties of $f$. To state it, let us introduce
the following definitions: A birational transformation $f$ of a projective surface $X$ is
\begin{itemize}
\item[(i)] {\bf{virtually isotopic to the identity}} if there is a positive iterate $f^n$ of $f$ and 
a birational mapping $\phi\colon Z\da X$ such that $\phi^{-1}\circ f^n \circ \phi$ is an 
element of $\Aut(Z)^0$;
\item[(ii)] a {\bf{Halphen twist}} if $f$ preserves a one parameter family of genus one curves on $X$ but
$f$ is not virtually isotopic to the identity;
\item[(iii)] a {\bf{Jonqui\`eres twist}} if $f$ preserves a one parameter family of rational curves on $X$ but
$f$ is not virtually isotopic to the identity.
\end{itemize}

When $f$ is a Halphen (resp. a Jonqui\`eres twist) then, after  conjugacy by a birational 
mapping $\phi\colon Z\da X$, $f$ permutes the fibers of a genus $1$ (resp. a rational)
fibration $\pi\colon Z\to B$. Let $z$ be the divisor class of the generic fiber of this
fibration. Then $z$ is an isotropic vector in $\zz_X$ that is fixed by $f_\p$; in particular, 
$f_\p$ can not be loxodromic.

\begin{rem}
Let $f\colon X\dasharrow X$ be a Halphen twist, and let $\phi \colon Z\to X$ be a modification 
of $X$ on which the $f$-invariant family of genus one curves form a fibration $\pi\colon Z\to B$.
 Let $Z'$ be a relative minimal model of this genus one fibration ($Z'$ is obtained from $Z$ by blowing down 
 exceptional divisors of the first kind that are contained in fibers of $\pi$, and iteration of this process). Then, $f$ 
becomes an automorphism of $Z'$. On the other hand, Jonqui\`eres twists are not conjugate to
automorphisms of projective surfaces (see \cite{Blanc-Deserti}).
\end{rem}

\begin{thm}[Gizatullin, Cantat, Diller-Favre, see \cite{Diller-Favre:2001, Cantat:Annals}]\label{thm:Giz-class}
Let $\k$ be an algebraically closed field. Let $X$ be a projective surface defined over $\k$.
Let $f$ be birational transformation of $X$, let $f_\p$ be the isometry of $\H_X$ determined by $f$, 
and let $x$ be a point of $\H_X$. 
\begin{enumerate}
\item $f_\p$ is elliptic if and only if $f$ is virtually isotopic to the identity.
\item If $f_\p$ is parabolic, either $x\cdot f_\p^n(x)$ grows linearly with $n$, and $f$ is
a Jonqui\`eres twist, or $x\cdot f_\p^n(x)$ grows quadratically with $n$, and $f$ is
a Halphen twist.
\item $f_\p$ is loxodromic if and only if the dynamical degree $\lambda(f)$ is $>1$. 
\end{enumerate}
In all cases, the translation length $L(f_\p)$ is equal to the logarithm of $\lambda(f)$.
\end{thm}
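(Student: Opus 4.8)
The plan is to combine the birational classification of surface transformations (Diller–Favre, Gizatullin) with the construction of the Picard–Manin space $\zz_X$ and the action of $\Bir(X)$ on the hyperbolic space $\H_X$. The statement to prove is Theorem~\ref{thm:Giz-class}: the trichotomy elliptic/parabolic/loxodromic for $f_\p$ matches the trichotomy virtually-isotopic-to-identity / twist / loxodromic for $f$, with $L(f_\p)=\log\lambda(f)$ in every case. I would prove the identity $L(f_\p)=\log\lambda(f)$ first, in the general setting of the $\H_X$-action, and then treat each of the three cases of the trichotomy, since the geometric input is different in each.

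\textbf{Step 1: the translation length equals $\log\lambda(f)$.} By the Diller–Favre construction (already recalled in Section~\ref{par:SAL-AUTO}), $f$ is conjugate, by a birational morphism $\pi\colon Y\to X$, to an algebraically stable transformation; conjugation does not change $f_\p$ up to the obvious identification of the spaces $\H_Y$ and $\H_X$ induced by $\pi^*$, nor does it change $\lambda(f)$. So I may assume $f$ is algebraically stable on some model. Then $(f^n)_\p=(f_\p)^n$ on $\NS_\R(Y)\subset\zz_X$, and $\lambda(f)$ is the spectral radius of $f_\p$ restricted to $\NS_\R(Y)$. On the other hand, for any $x\in\H_X$, the limit $\lim_n \dist(x,f_\p^n(x))/n$ equals $L(f_\p)$ (standard for isometries of Gromov-hyperbolic spaces, or one can argue directly via $\cosh\dist(x,f_\p^n(x))=x\cdot f_\p^n(x)$ and submultiplicativity). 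Picking $x$ to be (a multiple of) an ample class $D\in\NS(Y)\subset\H_X$ normalized so that $D\cdot D=1$, one gets $x\cdot f_\p^n(x)=D\cdot (f^n)_*D$, so $L(f_\p)=\lim_n\frac{1}{n}\log\cosh^{-1}(D\cdot(f^n)_*D)=\lim_n\frac{1}{n}\log(D\cdot(f^n)_*D)=\log\lambda(f)$, using the degree-growth formula for $\lambda(f)$ from the introduction. This already shows that $f_\p$ is loxodromic (positive translation length, realized) if and only if $\lambda(f)>1$, which is assertion (3).

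\textbf{Step 2: the elliptic case.} If $f_\p$ is elliptic it fixes a point $w\in\H_X$, hence it fixes a nonzero vector in $\NS_\R(Y)$ for some model $Y$ above $X$ on which $f$ is algebraically stable (the fixed point of an isometry of $\H_X$ coming from $\Bir(X)$ can be taken in some finite-dimensional $\NS_\R(Y)$, since $f_\p$ acts on $\zz_X$ through $\NS(Y)$ once $Y$ resolves the indeterminacy — this is the intuition recorded in the Remark after \S~\ref{par:Isom-Hinfini}; one must make it precise using that $f$ acts on a finite-dimensional lattice after resolution). Then $\parallel(f^n)_\p\parallel$ is bounded, so some iterate $f^N$ lies in a group acting trivially on $\NS(Y)$ up to finite index, which by Lieberman's theorem (cited in the Remark) means $f^N$ is conjugate into $\Aut(Z)^0$ for a suitable model $Z$: $f$ is virtually isotopic to the identity. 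Conversely, if $f$ is virtually isotopic to the identity, $\parallel(f^n)_\p\parallel$ is bounded, so the $f_\p$-orbit of any point is bounded in $\H_X$ and $f_\p$ is elliptic. This is assertion (1).

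\textbf{Step 3: the parabolic case and the linear/quadratic dichotomy — the main obstacle.} Here $\lambda(f)=1$ and $f_\p$ has no fixed point in $\H_X$, so $\parallel(f^n)_\p\parallel$ is unbounded; the growth of $x\cdot f_\p^n(x)$ is either linear or quadratic (this is a general fact about parabolic isometries of hyperbolic spaces combined with the fact that $f_\p$ preserves the integral lattice, so the growth cannot be, say, $n^{3/2}$ — one appeals to the Jordan form of the action on $\NS(Y)$ being unipotent with a single block of size $2$ or $3$). The substantive work, which is where I expect the real difficulty, is to identify the invariant isotropic vector: in the linear-growth case it is the class of a fibration by rational curves (Jonqui\`eres twist), and in the quadratic-growth case the class of a genus-one fibration (Halphen twist). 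This uses the classification of Diller–Favre / Gizatullin of parabolic birational transformations of surfaces: from the unique $f_\p$-fixed isotropic ray one produces, after passing to a suitable model $Z$, a genuine fibration $\pi\colon Z\to B$ whose generic fiber class is that isotropic vector, and the adjunction/canonical form $\kf$ distinguishes arithmetic genus $0$ from $1$ — roughly, $\kf$ evaluated on the fiber class is $-2$ in the rational case and $0$ in the genus-one case, matching the linear versus quadratic growth. Since $f$ is not virtually isotopic to the identity (Step 2), it is a genuine twist and not, say, periodic on the base. I would invoke Theorem~3.2 and Proposition~0.2-type statements of \cite{Diller-Favre:2001} together with Gizatullin's and the second author's analysis in \cite{Cantat:Annals} for this step rather than reprove it; the point is that the hyperbolic-geometry trichotomy is exactly the shadow of their birational trichotomy. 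This gives assertion (2) and completes the proof.
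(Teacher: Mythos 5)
The paper does not prove this theorem: it is stated with attribution to Gizatullin, Cantat, and Diller--Favre and cited from \cite{Diller-Favre:2001, Cantat:Annals}, so there is no internal proof to compare against. Your Step~1 (deriving $L(f_\p)=\log\lambda(f)$ from $\cosh\dist(e_0,f_\p^n(e_0))=D\cdot (f^n)_*D$ and the stable-length formula) is exactly the link the paper sketches in the Example following the theorem, and your deduction of assertion~(3) from it is fine.

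There are, however, two genuine gaps in the other steps. In Step~2, the claim that a fixed point of an elliptic $f_\p$ ``can be taken in some finite-dimensional $\NS_\R(Y)$'' is false: the fixed point is the circumcenter of a bounded orbit and lives only in the completion $\zz_X$; moreover each iterate $f^n$ requires a different resolution, so $f_\p$ does not factor through any single $\NS(Y)$ unless $f$ is already (conjugate to) an automorphism --- which is what you are trying to prove. The correct route is: elliptic $\Rightarrow$ the orbit of $e_0$ is bounded $\Rightarrow$ $\deg(f^n)$ is bounded $\Rightarrow$ by a regularization theorem (Weil, or the algebraicity of bounded-degree subgroups as in \cite{Blanc-Furter}, used elsewhere in the paper for centralizers) $f$ is conjugate to an automorphism with bounded action on $\NS$, whence a power lies in $\Aut^0$. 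In Step~3, your heuristic for the linear/quadratic dichotomy (``Jordan form of the action on $\NS(Y)$ being unipotent with a single block of size $2$ or $3$'') presupposes an invariant finite-dimensional lattice, which fails precisely in the Jonqui\`eres case: Jonqui\`eres twists are \emph{not} conjugate to automorphisms, and the paper's Remark after Lemma~\ref{Lem:linear-growth} stresses that linear growth for a parabolic isometry is impossible in finite-dimensional hyperbolic space and is a genuinely infinite-dimensional phenomenon. The dichotomy is instead established by direct intersection computations with the invariant isotropic fiber class (as in Lemmas~\ref{Lem:linear-growth} and~\ref{lem:Halphengrowth}), combined with the Diller--Favre/Gizatullin identification of that class with a rational or genus-one pencil. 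Since you ultimately defer the substance of Step~3 to the cited references, this is a flaw in the motivating heuristic rather than in the logical structure, but Step~2 as written does not close.
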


\begin{eg} Let $f$ be a birational transformation of the plane $\P^2_\k$. 
Let $e_0$ be the class of a line, viewed as a point in $\H_{\P^2_\k}$. Then 
\[
f_\p(e_0)= \deg(f) e_0 -\sum a(p) e(p)
\]
where $\deg(f)$ is the degree of $f$ and $a(p)$ is the multiplicity of the homaloidal
net $f_*\vert {\mathcal{O}}(1)\vert $ at the point $p$ ($p$ may be "infinitely near"). 
Since $e_0$ does not intersect any of the $e(p)$, one gets 
\[
\cosh(\dist(e_0, f_\p(e_0)))=e_0\cdot f_\p(e_0)=\deg(f).
\]
This  establishes the link between $\deg(f^n)$ and $\dist(e_0, f^n_p(e_0))$ which leads
to the equality $L(f)=\log(\lambda(f))$ (see \S~\ref{par:Axis-Degree} for details and complements).
\end{eg}

\begin{eg} 
An element $f$ of the Cremona group is virtually isotopic to the identity if and only if $f$ has finite
order or $f$ is conjugate to an element of $\Aut(\P^2_\k)=\PGL_3(\k)$ (see \cite{Blanc-Deserti}).
\end{eg}

%%%%%%%
\subsection{Centralizers}\label{par:centralizer}
%%%%%%%

\begin{cor}   Let $f$ be a birational transformation of a projective surface $X$.  If $f$ is loxodromic, 
the infinite cyclic group generated by $f$ is a finite index subgroup of the centralizer of $f$ in $\Bir(X)$. 
\end{cor}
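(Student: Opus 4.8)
The plan is to exploit the action of $\Bir(X)$ on the hyperbolic space $\H_X$ and the fact, from Theorem~\ref{thm:Giz-class}, that a loxodromic $f$ acts as a loxodromic isometry $f_\p$ with a well-defined axis $\ax(f_\p)$ and a pair of boundary fixed points $\R_+a(f)$, $\R_+b(f)$ in $\partial\H_X$. First I would observe that if $g\in\Bir(X)$ commutes with $f$, then $g_\p$ commutes with $f_\p$ in the isometry group of $\H_X$; since the attracting/repelling fixed points of a loxodromic isometry are canonically attached to it (the attracting point is $\lim_{n\to\infty}f_\p^n(x)/\|f_\p^n(x)\|$ for any $x$ not on the repelling boundary), $g_\p$ must permute the set $\{\R_+a(f),\R_+b(f)\}$ and hence preserves the geodesic $\ax(f_\p)$. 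Therefore $g_\p$ acts on $\ax(f_\p)\cong\R$ by an isometry, i.e.\ either a translation or a reflection. This gives a homomorphism from $\Cent(f)\subset\Bir(X)$ to $\Isom(\R)$.

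The next step is to bound the image and the kernel of this homomorphism. For the image: an element $g_\p$ that is a reflection of the axis swaps $a(f)$ and $b(f)$; composing $g$ with $f$ or $g^2$ reduces to the translation case, so the image lands in a group containing $\Z\cdot L(f_\p)$ (coming from $f$ itself, whose translation length is $L(f_\p)=\log\lambda(f)>0$) with index at most $2$; the key point is that this image is \emph{discrete}. Discreteness should follow from the fact that $f_\p(e_0)$, resp.\ the action on the integral Picard--Manin lattice $\z_X$, has bounded coefficients controlled by degrees, so translation lengths of elements of $\Bir(X)$ along a fixed geodesic cannot accumulate at $0$; alternatively, one can invoke the spectral gap (Corollary~\ref{cor:thm-A-cor1}, assertion (2)): any $g$ whose restriction to $\ax(f_\p)$ is a nontrivial translation is itself loxodromic with $\lambda(g)\geq\lambda_L>1$, so its translation length is bounded below by $\log\lambda_L$, forcing the image to be an infinite cyclic group of translations (up to the index-$2$ ambiguity from reflections), inside which $\langle f_\p\rangle$ has finite index.

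It remains to control the kernel $K$, consisting of those $g\in\Cent(f)$ with $g_\p$ fixing $\ax(f_\p)$ pointwise. Such $g_\p$ fixes two linearly independent isotropic vectors $a(f)$ and $b(f)$, hence fixes a vector of positive self-intersection in the plane they span, so $g_\p$ is an elliptic isometry of $\H_X$; by Theorem~\ref{thm:Giz-class}(1), every element of $K$ is virtually isotopic to the identity, in particular $\lambda(g)=1$. One then argues that $K$ is finite: the elements of $K$ preserve $\ax(f_\p)$ pointwise and commute with $f$, so they preserve all the finite-dimensional $f_\p$-invariant subspaces of $\z_X$ and act trivially on the span of the axis; using that $f$ is loxodromic the invariant part carries enough positivity that the remaining group acting faithfully is a group of integral isometries of a negative-definite (or at least anisotropic) lattice fixing a vector, hence finite. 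Putting these together, $\Cent(f)$ is an extension of a finite-index-in-$\langle f\rangle$ quotient by a finite kernel, so $\langle f\rangle$ has finite index in $\Cent(f)$, as claimed.

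The main obstacle I expect is the discreteness of the image in $\Isom(\R)$ together with the finiteness of the kernel: neither is purely soft, and both require a genuine input beyond the hyperbolic-space formalism --- most cleanly the spectral gap of Corollary~\ref{cor:thm-A-cor1} for the translation lengths, and a careful lattice-theoretic or algebraic-stability argument (via the integrality of the $\Bir(X)$-action on $\z_X$ and finiteness of integral orthogonal groups of definite forms) for the kernel. Everything else --- commuting isometries share fixed points at infinity, loxodromics have canonical axes, elliptics are virtually isotopic to the identity --- is immediate from the structure recalled earlier in the paper.
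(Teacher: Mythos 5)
Your reduction of the statement to (a) discreteness of the image of $\Cent(f)$ acting on the axis and (b) finiteness of the kernel is exactly the paper's strategy, and your treatment of (a) is essentially the paper's: the spectral gap of Corollary~\ref{cor:thm-A-cor1} forces the translation lengths to avoid $]0,\log\lambda_L[$, so the image is a discrete, hence cyclic, group containing $\log\lambda(f)$. (One small simplification: since $gfg^{-1}=f$ and not $f^{-1}$, a commuting $g_\p$ must fix $\R_+a(f)$ and $\R_+b(f)$ \emph{individually}, so the reflection case you worry about does not occur and the paper works directly with the homomorphism $\theta\colon\Cent(f)\to\R^*_+$ given by the eigenvalue on $\R a(f)$.)

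The genuine gap is in step (b). Your proposed argument --- that the kernel $K$ acts faithfully as integral isometries of a definite lattice fixing a vector, hence is finite --- does not work, because the relevant lattice is the orthogonal complement of the axis inside the Picard--Manin space $\z_X$, which has \emph{infinite} rank. Infinite groups of integral isometries of an infinite-rank negative-definite lattice exist (any infinite permutation group of the classes $e(p)$), and more to the point, infinite groups of elliptic elements of $\Bir(X)$ of bounded degree exist: $\PGL_3(\k)$ itself fixes $e_0$ in $\H_{\P^2_\k}$. So nothing purely metric or lattice-theoretic rules out an infinite kernel. What the paper actually does is: elements of $K$ fix a point of $\ax(f_\p)$, hence have degree bounded by an explicit constant; then one invokes genuinely algebro-geometric input. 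In Kodaira dimension $\geq 0$ one passes to the minimal model, where $K$ sits in $\Aut(X)$ with bounded degree, and an infinite such group would contain a positive-dimensional connected algebraic subgroup, forcing $X$ abelian and producing an $f$-invariant elliptic fibration --- contradicting loxodromy. In the rational case one uses the theorem of Blanc--Furter that a bounded-degree subgroup of $\Bir(\P^2_\k)$ has algebraic Zariski closure, together with Enriques' classification of connected algebraic subgroups of the Cremona group, to again produce an $f$-invariant pencil of curves if $K$ is infinite. This passage from ``bounded degree'' to ``algebraic group'' to ``invariant fibration'' is the missing idea; without it your proof of finiteness of the kernel is incomplete.
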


\begin{proof}
Let $f$ be a loxodromic birational transformation of the surface $X$. 
Then $f$ acts on the hyperbolic space $\H_X$ as a hyperbolic isometry, with an invariant
axis $\ax(f)$. The endpoints of $\ax(f)$ correspond to two eigenvectors $b(f)$
and $a(f)$ in the isotropic cone of $\zz_X$, with 
\[
f_\p ( b(f)) =\frac{1}{\lambda(f)}b(f), \;\, \text{ and } \; f_\p (a(f))=\lambda(f)a(f).
\]
These vectors are unique up to scalar multiplication. 

Let $\Cent(f)$ denote the centralizer of $f$ in the group $\Bir(X)$. It preserves the eigenlines
$\R b(f)$ and $\R a(f)$, acting on each of them by scalar multiplication. This
provides a morphism 
$\theta\colon \Cent(f)\to \R^*_+$
such that 
\[
g_\p(a(f))=\theta(g)a(f)
\]
for all $g$ in $\Cent(f)$. Moreover, $\theta(g)$ or its inverse coincides with the dynamical degree
of $g$ because, if $g$ is loxodromic, then $g_\p$ has exactly two fixed points on the boundary of $\H_X$. 

The image of $\theta$ is a subgroup of $\R^*_+$ which is contained in $\Lambda(X)\cup \{1\} \cup \Lambda(X)^{-1}$. 
From the spectral gap property, this image does not intersect the interval $]1,\lambda_L[$, and is consequently a
discrete subgroup of $\R^*_+$. Since all infinite discrete subgroups or $\R^*_+$ are cyclic, the image
$\theta(\Cent(f))$ is cyclic. 

Let $\Cent(f)^0$ be the kernel of $\theta$. All we need to prove is that $\Cent(f)^0$ 
is finite because, then, the exact sequence
\[
1\to \Cent(f)^0 \to \Cent(f) \stackrel{\theta}{\to}  \Z\to 0
\]
proves that $\Cent(f)$ is finite by cyclic. 

The group $\Cent(f)^0$ preserves $\ax(f)$ and fixes $a(f)$. It must
therefore fix $\ax(f)$ pointwise. Let $q$ be a point of $\ax(f)$ and let $\Delta$ be its distance to $e_0$ 
in $\H_X$ (where, as above, $e_0$ is the class of a multiple of an ample divisor $D$ on $X$ with $e_0^2=1$). 
Let $h$ be an element of $\Cent(f)^0$. Then $\dist(e_0,h_\p(e_0))\leq 2 \dist(e_0,q)=2\Delta$; hence
\[
h_\p(e_0)\cdot e_0\leq \cosh(2\Delta).
\]
This shows that the degree of $h\in \Cent(f)^0\subset \Bir(X)$ with respect to the polarization $D$ 
is uniformly bounded by some explicit constant $M=\cosh(2\Delta)$. 

Assume that the Kodaira dimension $\kod(X)$ is non-negative. Changing $X$ into its unique minimal 
model, we assume that $X$ is minimal; this implies $\Bir(X)=\Aut(X)$ because $\kod(X)\geq 0$. 
Thus, $\Aut(X)$ contains a loxodromic element (determined by $f$) and $X$ is either an abelian surface, a K3 surface, or
an Enriques surface. The group $\Cent(f)^0$ is, now, a group of automorphisms of $X$ with bounded
degree with respect to a fixed polarization $D$ on $X$. This implies that the intersection 
of $\Aut(X)^0$ with $\Cent(f)^0$ is a finite index subgroup of $\Cent(f)^0$. Moreover, the centralizer of 
$f$ in $\Aut(X)^0$ is an algebraic group. Thus, either $\Cent(f)^0$
is finite, or it contains a connected algebraic subgroup $G\subset \Aut(X)^0$ of positive dimension. In the latter case, $X$ 
is an abelian variety and $G$ acts by translations on $X$, because $\Aut(X)$ is discrete for K3 and Enriques surfaces. 
Let $G_1$ be a closed, one dimensional subgroup of $G$: Its orbits form a fibration of $X$ by elliptic curves.
Since $f$ commutes to $G_1$, it preserves this fibration of $X$. This contradicts the fact that $f$ is 
loxodromic, and proves that $\Cent(f)^0$ is finite.

Assume that $\kod(X)$ is negative. Since $\Bir(X)$ contains a loxodromic element $f$, the surface $X$ is rational, and
we can suppose that $X$ is the projective plane and $e_0$ is the class of a line in $\P^2_\k$.
The group $\Cent(f)^0$ is a subgroup of $\Bir(\P^2_\k)$ of bounded degree. From Corollaries 2.8 and 2.18 of \cite{Blanc-Furter}, we
deduce that its Zariski closure in $\Bir(\P^2_\k)$ is an algebraic subgroup of $\Bir(\P^2_\k)$. Denote by $G$ the connected
component of the identity in this group. If $\Cent(f)^0$ is infinite, the dimension of $G$ is positive, and a result of 
Enriques shows that $G$ is contained, after conjugation, in the group of automorphisms of a minimal, rational surface (see \cite{Blanc:2009, Enriques}).
As a consequence, $G$ contains a Zariski closed abelian subgroup $A$ whose orbits have dimension $1$ in $X$. Those orbits are organized
in a pencil of curves that is invariant under the action of $f$. This contradicts $\lambda(f)>1$ and shows that $\Cent(f)^0$ is finite. 
\end{proof}

%%%%%%%
\subsection{Conjugacy between loxodromic transformations}\label{par:conjugacy-degree-bound}
%%%%%%%

Assertion (1) of Corollary~\ref{cor:thm-A-cor1} can be rephrased as follows: {\sl{For all loxodromic elements $f$ in $\Bir(\P^2_\k)$ and all points $x$ in 
$\H_{\P^2_\k}$, 
\[
\dist(x,f_\p (x)) \geq \log(\lambda_L) 
\]
where $\lambda_L$ is the Lehmer number.}} 

\begin{lem}\label{lem:28}
For all loxodromic elements $f$ in $\Bir(\P^2_\k)$ and all points $x$ in 
$\H_{\P^2_\k}$, 
\[
\dist(x,\ax(f_\p)) \leq 28 \cdot \dist(x,f_\p(x)).
\]
\end{lem}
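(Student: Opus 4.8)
The plan is to exploit the $\delta$-hyperbolicity of $\H_{\P^2_\k}$ together with the spectral gap from Corollary~\ref{cor:thm-A-cor1}. Fix a loxodromic $f$ and a point $x\in\H_{\P^2_\k}$, and let $p$ be the point of $\ax(f_\p)$ closest to $x$; set $R=\dist(x,p)$. The strategy is to show that if $R$ were large compared to $\dist(x,f_\p(x))$, then the segments $[x,f_\p(x)]$, $[x,p]$ and $[f_\p(x),f_\p(p)]$ would force a very short translation length $L(f_\p)=\log\lambda(f)$, contradicting $\log\lambda(f)\ge\log\lambda_L$. Concretely, by the triangle inequality and $f_\p$-invariance of $\ax(f_\p)$, $\dist(p,f_\p(p))\le \dist(p,x)+\dist(x,f_\p(x))+\dist(f_\p(x),f_\p(p)) = 2R+\dist(x,f_\p(x))$, and since $p,f_\p(p)$ both lie on the axis, $L(f_\p)=\dist(p,f_\p(p))$; but we also need a \emph{lower} bound on $L(f_\p)$ in terms of $R$ and $\dist(x,f_\p(x))$, and this is where hyperbolicity enters.

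The key geometric step: in a $\delta$-hyperbolic space, if $x$ is at distance $R$ from a geodesic $\gamma$ and $f_\p$ is a loxodromic isometry with axis $\gamma$, then $\dist(x,f_\p(x))\ge 2R + L(f_\p) - c\,\delta$ for an absolute constant $c$ — up to a bounded error, the nearest-point projection to $\gamma$ is distance-non-increasing and the geodesic from $x$ to $f_\p(x)$ must travel out to near $\gamma$, along it a distance roughly $L(f_\p)$, and back. Equivalently, $2R \le \dist(x,f_\p(x)) - L(f_\p) + c\delta \le \dist(x,f_\p(x)) + c\delta$. With $\delta=\log 3$ (Remark~\ref{rem:gromov-hyp}) this already gives $R\le \tfrac12\dist(x,f_\p(x)) + \tfrac{c}{2}\log 3$. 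To convert the additive error into the clean multiplicative bound $R\le 28\,\dist(x,f_\p(x))$, I use that $\dist(x,f_\p(x))\ge L(f_\p)\ge\log\lambda_L$ is bounded below by a positive constant: the additive term $\tfrac{c}{2}\log 3$ is at most a fixed multiple of $\log\lambda_L$, hence at most that multiple of $\dist(x,f_\p(x))$, and adding the two multiplicative contributions yields a constant which one checks is $\le 28$ (this is exactly the kind of slack the factor $28$ is built to absorb; $\tfrac12 + \tfrac{c\log 3}{2\log\lambda_L}$ with the thin-triangles constant $c$ small, e.g.\ $c\le 4$, and $\log\lambda_L\simeq 0.16$, comfortably fits).

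For the thin-triangles estimate itself I would argue directly in $\zz_X$ rather than quote a black box: writing $x=\alpha\,a(f)+\beta\,b(f)+w$ with $w\perp a(f),b(f)$ and $a(f)\cdot b(f)=1$, one has $\cosh\dist(x,p)$ and $\cosh\dist(x,f_\p(x))$ as explicit functions of $\alpha,\beta,w\cdot w$ and $\lambda(f)$, using $f_\p(a(f))=\lambda(f)a(f)$, $f_\p(b(f))=\lambda(f)^{-1}b(f)$ as in \S\ref{par:Isom-Hinfini-TLength}; the nearest point $p$ to $x$ on the axis is the one with $\alpha=\beta$. A short computation with $\cosh\dist(x,f_\p(x)) = x\cdot f_\p(x) = \alpha\beta(\lambda+\lambda^{-1}) + w\cdot w$ (up to cross terms that vanish) versus $\cosh\dist(x,p) = 2\sqrt{\alpha\beta} + \text{(similar)}$ gives, after taking logarithms, precisely an inequality of the form $\dist(x,\ax(f_\p))\le \tfrac12\dist(x,f_\p(x)) + \text{const}$. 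The main obstacle is getting this last computation clean enough that the resulting constant, combined with the lower bound $\log\lambda_L$ on translation length, provably comes out below $28$; the large numerical margin in the statement strongly suggests the authors did the explicit $\zz_X$ computation and then were generous with the constant, so I expect no conceptual difficulty, only bookkeeping.
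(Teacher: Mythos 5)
Your proposal is essentially correct, but it reaches the bound by a genuinely different route than the paper. The paper's proof replaces $f$ by the power $f^{55}$ (with $55=\lceil 8\log 3/\log\lambda_L\rceil$, using the spectral gap), applies the coarse thin-quadrilateral estimate to $x$, $y$, $f^{55}_\p(y)$, $f^{55}_\p(x)$ — which is legitimate only because $f^{55}_\p$ translates the axis by at least $8\log 3$ — and then divides by $55$ via the triangle inequality; the constant $28$ is just $\lceil 55/2\rceil$. Your first, ``black-box'' version of the key step is not valid as stated: for a single application of $f_\p$ the translation length can be as small as $\log\lambda_L\simeq 0.16$, far below the hyperbolicity constant $\log 3$, and in that regime the quadrilateral $x,y,f_\p(y),f_\p(x)$ is not long and thin along the axis, so $\dist(x,f_\p(x))\ge 2R+L-c\delta$ with a small universal $c$ does not follow from coarse hyperbolicity alone — this is exactly why the paper passes to $f^{55}$. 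However, the exact computation in $\zz_{\P^2_\k}$ that you propose as a substitute does work and closes this gap: writing $x=\alpha a+\beta b+w$ with $a\cdot b=1$ and $w$ in the negative-definite complement, one gets $\cosh\dist(x,\ax(f_\p))=\sqrt{2\alpha\beta}$ and $x\cdot f_\p(x)=\alpha\beta(\lambda+\lambda^{-1})+w\cdot f_\p(w)$. Note the cross term is $w\cdot f_\p(w)$, \emph{not} $w\cdot w$ as you wrote; but Cauchy--Schwarz in the negative-definite complement gives $w\cdot f_\p(w)\ge w\cdot w$, which is the direction you need, yielding $\cosh\dist(x,f_\p(x))\ge 1+2\cosh^2(R)\sinh^2(L/2)$ with $R=\dist(x,\ax(f_\p))$ and $L=\log\lambda(f)$. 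This gives $2R\le \dist(x,f_\p(x))-2\log\sinh(\log\lambda_L/2)+\log 2$, an additive error of about $5.7$, which your spectral-gap trick ($\dist(x,f_\p(x))\ge\log\lambda_L$) converts into a multiplicative constant of about $18$, comfortably below $28$. So your method actually yields a sharper constant than the paper's; what it costs is the explicit eigenvector computation in place of the paper's soft, purely metric argument. Both proofs rely essentially on the spectral gap from Theorem~A.
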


\begin{rem}
The constant $28$ is the smallest integer $m$ such that $m\log(\lambda_L) \geq 4\log(3)$, 
and the occurrence of $\log(3)$ comes from the fact that $(\H_{\P^2_\k}, \dist)$ is $\log(3)$-hyperbolic 
in the sense of Gromov (see Remark~\ref{rem:gromov-hyp}). 
\end{rem}

\begin{proof}
Let $y$ be the projection of the point $x$ on the axis of $f_\p$. Let $n$ be the least positive integer which satisfies 
\[
\dist(y,f^n_\p(y))\geq 8\log(3).
\]
Consider the geodesic quadrilateral with vertices $x$, $y$, $f^n_\p(y)$, and $f^n_\p(x)$. By hyperbolicity, 
the geodesic segment $[x,f^n_\p(x)]$ is contained in the $(2\log(3))$-neighborhood of the other three, and its length is at least $8\log(3)$; hence, its middle point $m$ is at most $(2\log(3))$-away from $[y,f^n_\p(y)]$. Let $m'$ be the projection of $m$ on the segment  $[y,f^n_\p(y)]$. Then the distance from $x$ to $m'$ is equal to the sum of the distances from $x$ to $y$ and from 
$y$ to $m'$, up to an error of $2\log(3)$. The same estimate in the triangle $(m',f^n_\p(y), f^n_\p(x))$ provides the inequality :
\[
\dist(x,f^n_\p(x))\geq \dist(x,y)+ \dist(y, f^n_\p(y))+\dist(f^n_\p(y),f^n_\p(x))-8\log(3).
\]
Since $f_\p$ is an isometry and $y$ is the projection of $x$ on its axis, the choice for $n$ implies
\[
n\cdot \dist(x,f_\p(x))\geq 2\dist(x,\ax(f_\p)).
\]
On the other hand, Corollary~\ref{cor:thm-A-cor1} shows that $n$ can be chosen to be the smallest integer above $8\log(3)/\log(\lambda_L)\simeq 54.13$, that is $n=55$.
\end{proof}

\begin{thm}\label{thm:conj}
Let $f$ and $g$ be two loxodromic elements of $\Bir(\P^2_\k)$. If $f$ is conjugate to $g$,  one can find an element $h$ of 
$\Bir(\P^2_\k)$ such that $f=hgh^{-1}$ and 
\[
\deg(h)\leq 2^{57}(\deg(f)\deg(g))^{29}
\]
\end{thm}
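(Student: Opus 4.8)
The plan is to transfer the conjugacy problem from $\Bir(\P^2_\k)$ to the isometry group of the hyperbolic space $\H_{\P^2_\k}$, exploit the thin-triangle geometry there, and then translate the resulting distance bound back into a degree bound. First I would fix a conjugating element $h_0$ with $f = h_0 g h_0^{-1}$ (which exists by hypothesis), and look at the induced isometries $f_\p$ and $g_\p$. Since $f$ and $g$ are conjugate they are both loxodromic with the same translation length $L = \log(\lambda(f)) = \log(\lambda(g))$, and $(h_0)_\p$ carries the axis $\ax(g_\p)$ onto $\ax(f_\p)$. The key point is that $h_0$ is far from being unique: we may replace $h_0$ by $h_0 g^k$ for any $k\in\Z$ (this still conjugates $g$ to $f$), and more importantly by $\phi h_0$ for any $\phi$ in the centralizer $\Cent(f)$. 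Using Corollary on centralizers, $\Cent(f)$ contains $\langle f\rangle$ with finite index; for the purposes of moving a chosen base point along $\ax(f_\p)$ it suffices to use powers of $f$, so effectively we are allowed to translate the relevant base point by integer multiples of $L$ along $\ax(f_\p)$.

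Next I would choose the base point $e_0 = $ the class of a line in $\H_{\P^2_\k}$, and consider the point $(h_0)_\p(e_0)$. Let $y$ be the projection of $(h_0)_\p(e_0)$ onto $\ax(f_\p)$; by Lemma~\ref{lem:28} applied to $x = (h_0)_\p(e_0)$ we have $\dist((h_0)_\p(e_0),\ax(f_\p)) \le 28\,\dist((h_0)_\p(e_0), f_\p (h_0)_\p(e_0))$. But $f_\p(h_0)_\p(e_0) = (h_0)_\p g_\p (e_0)$, and since $(h_0)_\p$ is an isometry this distance equals $\dist(e_0, g_\p(e_0)) = \log(\deg(g))$ (using $\cosh(\dist(e_0,g_\p e_0)) = e_0\cdot g_\p(e_0) = \deg(g)$ as in the Example). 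So $\dist((h_0)_\p(e_0), \ax(f_\p)) \le 28\log(\deg(g))$. Now, replacing $h_0$ by $h := f^k h_0$ for a suitable integer $k$, I translate $y$ along $\ax(f_\p)$ to bring it within distance $L \le 28\log\deg(f)$ (actually $\le \log(\deg(f))$, since $f_\p$ moves a point on its axis by exactly $L$ and $L\le\log\deg(f)$) of the projection $y_0$ of $e_0$ onto $\ax(f_\p)$. We also have $\dist(e_0, \ax(f_\p)) = \dist(e_0, y_0) \le \dist(e_0, f_\p(e_0))/? $ — more directly, $\dist(e_0,\ax(f_\p)) \le 28\log(\deg f)$ by Lemma~\ref{lem:28} again with $x=e_0$. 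Then by the triangle inequality,
\[
\dist(e_0, h_\p(e_0)) \le \dist(e_0,\ax(f_\p)) + \dist(y_0, y) + \dist(y, h_\p(e_0)),
\]
and all three terms are bounded: the first by $28\log\deg(f)$, the second by $\log\deg(f)$ after the translation adjustment, the third by $28\log\deg(g)$. This gives $\dist(e_0, h_\p(e_0)) \le 29\log\deg(f) + 28\log\deg(g) \le 29\log(\deg(f)\deg(g))$, and since $\cosh(\dist(e_0,h_\p(e_0))) = \deg(h)$ we get $\deg(h) = \cosh(\dist) \le e^{\dist} \le (\deg(f)\deg(g))^{29}$; inserting the factor $2^{57}$ comes from bookkeeping the $\cosh$-vs-exponential conversion ($\cosh(t)\le e^t$ but converting the sum of $\cosh$'s of the pieces, or the precise constant $n=55$ appearing in Lemma~\ref{lem:28}, produces the stated $2^{57}(\deg f\deg g)^{29}$).

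The main obstacle I expect is making the "translate along the axis by a power of $f$" step fully rigorous and quantitatively tight: one must check that conjugating by $f^k h_0$ really does move the projected point $y$ by $kL$ along $\ax(f_\p)$ in the correct direction, that one can always land within one fundamental domain (distance $\le L$) of $y_0$, and that $L\le\log\deg(f)$ (which follows from $L(f_\p)=\log\lambda(f)\le\log\deg(f)$, true since $\cosh(L)\le\cosh\dist(e_0,f_\p e_0)$ is not quite what is needed — rather one uses $\lambda(f)\le\deg(f)$ directly). A secondary subtlety is that Lemma~\ref{lem:28} is stated with the constant $28$ coming from $n=55$, and tracking how the two applications of the lemma (once to $e_0$, once to $(h_0)_\p e_0$) plus the one-step axis translation combine into the constants $2^{57}$ and $29$ requires care with the $\cosh/\exp$ estimates and with whether one bounds $\dist(e_0,h_\p e_0)$ or directly $h_\p(e_0)\cdot e_0$. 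Everything else is routine hyperbolic geometry together with the already-established dictionary $L(f_\p)=\log\lambda(f)$, $\cosh\dist(e_0,f_\p e_0)=\deg(f)$, and the centralizer corollary.
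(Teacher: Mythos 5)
Your proposal is correct and follows essentially the same route as the paper: project onto the axes, use Lemma~\ref{lem:28} twice and the spectral gap, adjust the conjugacy by a power of $f$ to land within one translation length of the projection of $e_0$, and convert back to degrees via $\cosh$. The only slip is writing $\dist(e_0,g_\p(e_0))=\log(\deg(g))$ instead of $\mathrm{arccosh}(\deg(g))\leq\log(2\deg(g))$, but as you note this discrepancy is exactly what the factor $2^{57}$ absorbs.
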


The following argument provides also a proof of Corollary~\ref{coro:conjug-degree-bound} (with $d\geq \deg(f)$, $\deg(g)$).

\begin{proof}
Let $x$ be the projection of $e_0$ on the axis of $f$, and $y$ be its projection on the axis of $g$. Let $h_0$ be an element of $\Bir(\P^2_\k)$ that conjugates $f$ to $g$; it maps $y$ onto a point $z_0:=(h_0)_\p(y)$ of $\ax(f_\p)$.  Let $k$ be an integer such that 
$\dist(f_p^k(z_0),x)\leq \log(\lambda(f))$. Such a $k$ exists because  $f_\p$ acts by translation of length $\log(\lambda(f))$ on its axis. Changing 
$h_0$ into $h=f^k\circ h_0$, we obtain a new conjugacy from $g$ to $f$ that maps $y$ onto a point $z=h(y)$ at distance at most
$\log(\lambda(f))$ from $x$.  Now, 
\[
\dist(e_0,h_\p(e_0))\leq \dist(e_0,x)+\dist(x,h_\p(y))+ \dist(h_\p(y),h_\p(e_0)).
\]
Since $h_\p$ is an isometry we get 
\[
\dist(e_0,h_\p(e_0))\leq \dist(e_0,\ax(f_\p))+\log(\lambda(f))+ \dist(e_0, \ax(g_\p)).
\]
The previous lemma can then be applied to $e_0$, and gives 
\[
\dist(e_0,h_\p(e_0))\leq \log(\lambda(f))+28\cdot(\dist(e_0,f_\p(e_0))+\dist(e_0, g_\p(e_0))).
\]
The result follows from $\log(\lambda(f_\p))\leq \dist(e_0,f_\p(e_0))$, $\cosh (\dist(e_0,f_\p(e_0)))= \deg(f)$
and easy estimates for the reciprocal function of $\cosh(\cdot )$. 
\end{proof}
%**New***Remark changed into a Corollary%
Let us add the following complement to Theorem~\ref{thm:conj}, which  shows that the hypothesis on $f$ and $g$ 
(being loxodromic) in the theorem can be checked in a finite number of steps.
\begin{cor}
An element $f\in \Bir(\P^2_\k)$ is loxodromic if and only if $\deg(f^{400})\geq 3^{19} \deg(f^{200})$. 
\end{cor}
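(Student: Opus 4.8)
The plan is to read off $\deg(f^n)$ from the orbit of $e_0$ under $f_\p$ on the hyperbolic space $\H_{\P^2_\k}$, where $e_0$ is the class of a line, so that $\deg(f^n)=e_0\cdot f_\p^n(e_0)=\cosh\bigl(\dist(e_0,f_\p^n(e_0))\bigr)$; one may assume $\k$ algebraically closed, since neither side of the asserted equivalence changes under field extension. By Theorem~\ref{thm:Giz-class}, $f$ is loxodromic, or $f_\p$ is elliptic, or $f_\p$ is parabolic, and I would handle these cases separately. The key phenomenon is a dichotomy: when $f$ is loxodromic the ratio $\deg(f^{400})/\deg(f^{200})$ is forced to be \emph{enormous} (of size roughly $\lambda(f)^{200}\geq\lambda_L^{200}>10^{13}$), whereas when $f_\p$ is elliptic or parabolic this same ratio is \emph{always strictly less than $4$}; since $4<3^{19}<10^{13}$, the corollary follows.

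For the loxodromic case, let $a,b\in\zz_{\P^2_\k}$ be the isotropic endpoints of $\ax(f_\p)$, normalised by $a\cdot b=1$ and $f_\p(a)=\lambda a$, $f_\p(b)=\lambda^{-1}b$ with $\lambda=\lambda(f)$, and write $e_0=\alpha a+\beta b+w$ with $w\perp a$ and $w\perp b$. Then $\alpha,\beta>0$ (as $e_0$ is ample and $a,b$ are nef), $w\cdot w\leq 0$, and $2\alpha\beta=1-w\cdot w$; setting $c=\alpha\beta\geq\frac12$ one has $-w\cdot w=2c-1$. Since $f_\p$ is an isometry preserving the negative definite subspace $\langle a,b\rangle^{\perp}$, Cauchy--Schwarz gives $|w\cdot f_\p^n(w)|\leq -w\cdot w=2c-1$, whence $\deg(f^n)=c(\lambda^n+\lambda^{-n})+w\cdot f_\p^n(w)$ with an error bounded by $2c-1$. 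Therefore $\deg(f^{400})\geq c(\lambda^{400}-2)$ and $\deg(f^{200})\leq c(\lambda^{200}+3)$; the auxiliary constant $c$ (morally $\cosh^2$ of the distance from $e_0$ to the axis) cancels in the quotient, giving $\deg(f^{400})/\deg(f^{200})\geq(\lambda^{400}-2)/(\lambda^{200}+3)>\lambda^{200}-3$. The spectral gap of Corollary~\ref{cor:thm-A-cor1} yields $\lambda\geq\lambda_L$, so $\lambda^{200}-3\geq\lambda_L^{200}-3>3^{19}$, which is the desired inequality.

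For the remaining two cases I would establish the stronger bound $\deg(f^{400})<4\,\deg(f^{200})$. If $f_\p$ is elliptic, fix a fixed point $p\in\H_{\P^2_\k}$ and write $e_0=\cosh(R)\,p+\sinh(R)\,u$ with $R=\dist(e_0,p)$, $u\cdot p=0$, $u\cdot u=-1$; then $\deg(f^n)=A+Bc_n$ where $A=\cosh^2R$, $B=\sinh^2R$ (so $A-B=1$) and $c_n=u\cdot f_\p^n(u)\in[-1,1]$. Applying the spectral theorem to the orthogonal operator $f_\p|_{p^{\perp}}$ produces a symmetric probability measure $\mu$ with $c_n=-\int\cos(n\psi)\,d\mu(\psi)$; using $\cos(2\theta)=2\cos^2\theta-1$ together with Cauchy--Schwarz, $c_{400}=1-2\!\int\cos^2(200\psi)\,d\mu\leq 1-2c_{200}^{2}$, and then the elementary identity $4(A+Bc)-(A+B-2Bc^2)=2B(c+1)^2+3>0$ forces $\deg(f^{400})/\deg(f^{200})\leq(A+B-2Bc_{200}^2)/(A+Bc_{200})<4$. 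If $f_\p$ is parabolic it fixes pointwise a unique isotropic line $\R v$; since $e_0\cdot e_0=1>0$ one has $e_0\cdot v\neq 0$, and the identities $f_\p^n(e_0)\cdot v=e_0\cdot v$, $f_\p^n(e_0)\cdot f_\p^n(e_0)=1$ show that the whole orbit lies on the horosphere through $e_0$ based at $v$. On that horosphere $z-z'\in v^{\perp}$ is negative semidefinite and $\cosh\dist(z,z')=z\cdot z'=1+\frac12\|z-z'\|^2$ with $\|z-z'\|^2:=-(z-z')\cdot(z-z')\geq 0$, and $f_\p$ acts as an isometry for $\|\cdot\|$ because it preserves the intersection form; so, writing $D_n=\|f_\p^n(e_0)-e_0\|$, the triangle inequality gives $D_{400}\leq 2D_{200}$, and $\deg(f^{400})/\deg(f^{200})=(2+D_{400}^2)/(2+D_{200}^2)\leq(2+4D_{200}^2)/(2+D_{200}^2)<4$.

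The main obstacle, and the reason the argument is not completely routine, is the non-loxodromic case: the ratio bound there must be \emph{uniform in $f$}, but the naive subadditivity estimate $\dist(e_0,f_\p^{400}(e_0))\leq 2\dist(e_0,f_\p^{200}(e_0))$ only gives $\deg(f^{400})<2\deg(f^{200})^2$, hence a ratio bound of order $\deg(f^{200})$, which is useless since $\deg(f^{200})$ is already unbounded among Halphen twists. One is therefore forced to use the \emph{flat} geometry of horospheres and the \emph{spherical} geometry of distance spheres around the fixed point, rather than mere Gromov hyperbolicity of $\H_{\P^2_\k}$. A minor technical point is that $f_\p|_{p^{\perp}}$ may have continuous spectrum (on $\zz_{\P^2_\k}$ the operator $f_\p$ is close to a permutation of the exceptional classes), which is why I pass to the spectral measure $\mu$ in the elliptic case rather than diagonalising; the rest consists of the elementary estimates indicated above.
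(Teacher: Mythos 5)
Your proof is correct, and it takes a genuinely different route from the paper in the harder direction. The paper sets $g=f^{200}$ and handles the implication ``$\deg(g^2)\geq 3^{19}\deg(g)\Rightarrow$ loxodromic'' by citing Junyi Xie's semicontinuity-type estimate ($\deg(g^2)\geq 3^{19}\deg(g)$ forces $\dist(e_0,g_\p^2 e_0)>\dist(e_0,g_\p e_0)+18\log 3$, which is incompatible with $\log(3)$-hyperbolicity unless $g_\p$ is loxodromic); you instead prove the contrapositive directly by showing the uniform bound $\deg(f^{400})<4\deg(f^{200})$ for every non-loxodromic $f$, treating the elliptic case with the spectral measure of $f_\p$ on $p^\perp$ (the inequality $c_{400}\leq 1-2c_{200}^2$, which incidentally also follows from a two-line Gram-matrix argument without invoking the spectral theorem, provided one notes that $p^\perp$ with $-q$ is indeed a Hilbert space) and the parabolic case with the flat metric on the invariant horosphere. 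This makes the corollary self-contained, identifies correctly why naive subadditivity fails (the ratio bound must be uniform even though $\deg(f^{200})$ is unbounded among twists), and yields the much sharper constant $4$ in place of $3^{19}$ for that direction. For the converse, both arguments rest on the same input --- the spectral gap $\lambda(f)\geq\lambda_L$ from Corollary~\ref{cor:thm-A-cor1} --- but the paper uses the Gromov-hyperbolic quadrilateral estimate of Lemma~\ref{lem:28} (losing $8\log 3$ and hence needing $\lambda_L^{200}>3^{27}$), whereas your exact eigenvector decomposition $e_0=\alpha a+\beta b+w$ with Cauchy--Schwarz on $\langle a,b\rangle^\perp$ gives the cleaner bound $\deg(f^{400})/\deg(f^{200})>\lambda^{200}-3$ and needs only $\lambda_L^{200}>3^{19}+3$. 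All the estimates check out; the only cosmetic point is that the reduction to $\k$ algebraically closed should be stated at the outset since the spectral gap and the classification of isometry types are quoted in that setting (and, as you note, neither $\deg(f^n)$ nor $\lambda(f)$ changes under base extension).
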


\begin{proof}
Let us write $g=f^{200}$, which is loxodromic if and only if $f$ is.

One direction has been proved by  Junyi Xie in \cite{Junyi-Xie:preprint}: If $\deg(g^2)\geq 3^{19} \deg(g)$, then $\dist(e_0,g_\p^2(e_0)) > \dist(e_0,g_\p(e_0)) + 18 \log(3)$, and this implies that $g_\p$ is a loxodromic isometry because $\H_X$ is $\log(3)$-hyperbolic.

In the other direction, if $f_\p$ is loxodromic, the translation length of $f_\p$ is at least $\log(\lambda_L)$. Hence, the translation length of $g$ is larger than $ 200\log(\lambda_L)$. Then, as in the proof of Lemma~\ref{lem:28}, 
\[
\dist(e_0,g_\p^2(e_0)) \geq \dist(e_0,g_\p(e_0))+ L(g)  - 8\log(3),
\]
and this implies that $\deg(g^2)\geq 3^{19} \deg(g)$ because $\lambda_L^{200}> 3^{8+19}$.
\end{proof}
%***New** (end of changes)%

\begin{eg} Let $m$ be a positive integer and $a$ be a non-zero element of the field $\k$. 
Let $(x,y)$ be affine coordinates of the plane. Consider the  transformation  
$f_a\colon (x,y)\mapsto (ax,y)$. This automorphism is conjugate to 
$g_{a,m}\colon (x,y)\mapsto (ax,a^my)$
by the monomial transformation $h(x,y)=(x, x^m \cdot y)$. The degree of $f_a$ and of $g_{a,m}$ is equal to $1$, but the degree of $h$ is 
$m+1$. If $a^\Z$ is a Zariski dense subgroup of $\k^*$, one easily shows that there is no conjugacy $h'$ of degree $<m+1$.
Thus, the degree of the conjugacy is not bounded by the degree of $f_a$ and $g_{a,m}$ if one does not assume $\lambda(f)>1$.\end{eg}

%%%%%%%%%%%%%%%%%%%%%%%%%%%%%%
\section{The Weyl group $W_\infty$}\label{par:WEYL}
%%%%%%%%%%%%%%%%%%%%%%%%%%%%%%

We now define, and study, a group of linear transformations of $\z_{\P^2_\k}$, with integer coefficients, preserving the 
intersection form. This group of isometries $W_\infty$ is a subgroup of $\Isom(\z_{\P^2_\k})$ that contains the image
of $\Bir(\P^2_\k)$. 

%%%%%%%%
\subsection{Definition of $W_\infty$}
%%%%%%%%

In what follows, $e_0\in \z_{\P^2_\k}$ is the class of a line. 
Let $p_1$, $p_2$, and $p_3$ be the three points of the plane defined by 
\[
p_1=[1:0:0], \quad p_2=[0:1:0], \quad p_3=[0:0:1].
\]

The ``infinite Weyl group'' $W_\infty$   is the group of $\Z$-linear automorphisms of $\z_{\P^2_\k}$ generated by:
\begin{enumerate}
\item the group $\Sym({\B(\P^2_\k)})$ of permutations of the set $\B(\P^2_\k)$, that acts on 
$\z_{\P^2_\k}$ by sending $e_0$ to itself and permuting the $e(p)$.
\item the involution $\sigma_0$ that sends $e_0$ onto $2e_0-e({p_1})-e({p_2})-e({p_3})$, sends $e({p_i})$ onto $e_0-e({p_1})-e({p_2})-e({p_3})+e({p_i})$ for $i=1,2,3$ and fixes $e(p)$ for all $p$ in $\B(\P^2_\k)\backslash \{p_1,p_2,p_3\}$.
\end{enumerate}

Let $p$ and $q$ be two elements of $\B(\P^2_\k)$. The element $e(p)-e(q)$ of $\z_{\P^2_\k}$ has self-intersection $-2$; as a consequence, the linear 
transformation 
\[
\tau_{p,q}\colon x\mapsto x + (x\cdot (e(p)-e(q))) (e(p)-e(q))
\]
is the orthogonal reflection that maps $e(p)-e(q)$ to its opposite. The group generated by all these reflections is the subgroup of elements of $\Sym(\B(\P^2_\k))$ with finite support. Similarly, $\sigma_0$ 
corresponds to the orthogonal reflection associated to $e_0-e({p_1})-e({p_2})-e({p_3})$. 
This explains why $W_\infty$ is considered as an infinite Weyl group (or Coxeter group).

By construction, $W_\infty$ preserves the intersection form and the canonical form $\kf$, and extends as a group of isometries of $\H_{\P^2_\k}$.

\begin{lem}\label{lem51} Let $\k$ be an algebraically closed field.
If  $f$ is an element of $\Bir(\P^2_\k)$, the linear transformation $f_\p\colon \z_{\P^2_\k}\to \z_{\P^2_\k}$ is an element of $W_\infty$.
\end{lem}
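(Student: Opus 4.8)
The strategy is to reduce the general birational transformation $f$ to the action of the known generators of $\Bir(\P^2_\k)$ on the Picard--Manin space. By the Noether--Castelnuovo theorem, $\Bir(\P^2_\k)=\Cr_2(\k)$ is generated by $\PGL_3(\k)$ and the standard quadratic involution $\sigma$; since $W_\infty$ is a group, it suffices to show that $g_\p\in W_\infty$ whenever $g\in\PGL_3(\k)$ and that $\sigma_\p\in W_\infty$. So the whole lemma becomes a matter of identifying the action of each of these generators on $\z_{\P^2_\k}$ with one of the generators (or an easy product of generators) of $W_\infty$.

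First I would treat $g\in\PGL_3(\k)$. Such a $g$ is a regular automorphism of $\P^2_\k$, hence induces a bijection $g_\sharp$ of the bubble space $\B(\P^2_\k)$ (it sends a point, proper or infinitely near, to its image) and fixes the class $e_0$ of a line, while $g_\p(e(p))=e(g_\sharp(p))$ for every $p$. This is exactly the action of the permutation $g_\sharp\in\Sym(\B(\P^2_\k))$ described in clause (1) of the definition of $W_\infty$, so $g_\p\in W_\infty$. Second, I would handle $\sigma$ itself: by the explicit formula for $\sigma$ given in the first example of the introduction, $\sigma$ is a quadratic map with base points exactly $p_1=[1:0:0]$, $p_2=[0:1:0]$, $p_3=[0:0:1]$, each of multiplicity one, and from the formula $f_\p(e_0)=\deg(f)e_0-\sum a(p)e(p)$ together with the standard action of a quadratic involution on the exceptional classes, one computes $\sigma_\p(e_0)=2e_0-e(p_1)-e(p_2)-e(p_3)$ and $\sigma_\p(e(p_i))=e_0-e(p_1)-e(p_2)-e(p_3)+e(p_i)$, while $\sigma_\p(e(p))=e(p)$ for every other $p\in\B(\P^2_\k)$ (such points are moved by $\sigma$ as a birational map but their exceptional classes are fixed, since $\sigma$ is a local isomorphism near them). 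This is precisely the involution $\sigma_0$ of clause (2). Hence $\sigma_\p=\sigma_0\in W_\infty$.

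The one point requiring a little care — and the place where I'd slow down — is the behaviour on the exceptional classes $e(p)$ for $p\notin\{p_1,p_2,p_3\}$, and more generally matching up bubble-space points correctly under a general $g\in\PGL_3(\k)$ that does not fix $\{p_1,p_2,p_3\}$: one must check that $g_\sharp$ really is a well-defined permutation of $\B(\P^2_\k)$ compatible with the identifications defining the bubble space, and that $g_\p$ acts on $\z_{\P^2_\k}$ exactly by permuting the $e(p)$ via $g_\sharp$ with no ``cross terms.'' This follows from functoriality of the Picard--Manin construction under isomorphisms, but it is the substantive content. With these two computations in hand, the lemma follows: an arbitrary $f\in\Bir(\P^2_\k)$ is a finite word in elements of $\PGL_3(\k)$ and $\sigma$, so $f_\p$ is the corresponding finite word in elements $g_\sharp\in\Sym(\B(\P^2_\k))$ and copies of $\sigma_0$, all of which lie in $W_\infty$; since $W_\infty$ is a group, $f_\p\in W_\infty$.

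I would close by remarking, as a consistency check, that all the maps involved preserve the intersection form and the canonical form $\kf$ on $\z_{\P^2_\k}$ (true for $f_\p$ by the general facts recalled in \S\ref{par:Isom-Hinfini}, and true for the generators of $W_\infty$ by construction), so the identification $f_\p\in W_\infty$ is compatible with the ambient structure and no inconsistency can arise.
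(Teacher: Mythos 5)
Your overall route is exactly the paper's: invoke Noether--Castelnuovo, check that elements of $\PGL_3(\k)$ act as permutations of $\Bub(\P^2_\k)$ fixing $e_0$, check the standard quadratic involution separately, and conclude because $W_\infty$ is a group. The $\PGL_3(\k)$ step is fine. However, your treatment of $\sigma$ contains a false claim: you assert that $\sigma_\p(e(p))=e(p)$ for every $p\notin\{p_1,p_2,p_3\}$, ``since $\sigma$ is a local isomorphism near them,'' and conclude $\sigma_\p=\sigma_0$ on the nose. Being a local isomorphism near $q$ does not fix $e(q)$; it sends $e(q)$ to $e(\sigma(q))$, and $q$ and $\sigma(q)$ are in general distinct points of the bubble space (take $q=[1:1:2]$, so $\sigma(q)=[2:2:1]$, two distinct proper points of the plane and hence distinct classes $e(q)\neq e(\sigma(q))$ in $\z_{\P^2_\k}$). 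The identifications defining $\Bub(\P^2_\k)$ only merge points lying on different models \emph{over the same base}, not points exchanged by a birational self-map of the base.

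The correct statement, which is what the paper proves, is that $\sigma_\p$ sends each $e(q)$ with $q$ outside the base locus to some $e(q')$ with $q'$ outside the base locus, and hence $\sigma_\p$ is the composition of $\sigma_0$ with an element of $\Sym(\B(\P^2_\k))$ --- not $\sigma_0$ itself. This still lies in $W_\infty$, so the lemma survives, and the repair is one line; but as written the intermediate claim and its justification are wrong, and you should replace ``$\sigma_\p=\sigma_0$'' by ``$\sigma_\p=s\circ\sigma_0$ for some $s\in\Sym(\B(\P^2_\k))$.''
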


\begin{proof}
If $f$ has degree $1$, it is an element of the group $\Aut(\P^2_\k)$, acts by permutation on $\Bub(\P^2_\k)$ and fixes the class $e_0$.
In other words, the map $f\mapsto f_\p$ provides an embedding of $\Aut(\P^2_\k)$ into $ \Sym({\B(\P^2_\k)})\subset W_\infty$. 
If $f$ is the standard quadratic transformation 
\[
 [x:y:z]\dasharrow [yz:xz:xy]
\] it has three base-points, namely 
$p_1=[1:0:0]$, $p_2=[0:1:0]$, and $p_3=[0:0:1]$. 
Moreover, $f_\p$ acts as $\sigma_0$ on $e_0$ and the  $e(p_i)$ for $i=1,2,3$,  and transforms each $e(q)$, $q\in \Bub(\P^2_\k)\setminus \{e(p_1), e(p_2), e(p_3)\}$, to some $e(q')$ with $q'\in \Bub(\P^2_\k)\setminus \{e(p_1), e(p_2), e(p_3)\}$. This implies that $f_\p$ is the composition of $\sigma_0$ with an element of $\Sym({\B(\P^2_\k)})$, so that $f_\p$ is in $W_\infty$. The result follows from Noether-Castelnuovo theorem, which asserts that $\Bir(\P^2_\k)$ is generated by $\Aut(\P^2_\k)$ and the standard quadratic transformation, when $\k$ is algebraically closed (see \cite{KSC:Book} or \cite{Alberich:LNM}). 
\end{proof}

\begin{rem}$\;$

{\bf{  1.--}} The group $W_\infty$ is strictly larger than $\Cr_2(\k)$ because the elements of $\Sym({\Bub(\P^2_\k)})$
fix $e_0$ but most of them are not induced by projective linear transformations of the plane.

{\bf{2.--}} Lemma~\ref{lem51} is implicitly contained in Noether's original ``proof'' of Noether-Castelnuovo theorem.
\end{rem}

%%%%%%%%
\subsection{Degrees, multiplicity, base-points}
%%%%%%%%

Let $h$ be an element of $W_\infty$. We define the {\bf{degree}} $\deg(h)$ by $\deg(h)=e_0\cdot h(e_0)$; the degree is a positive integer because all elements of $W_\infty$ preserve $\H_{\P^2_\k}$.
Writing 
\[
h(e_0)=\deg(h)e_0-\sum_p a(p)e(p),
\]
where $p$ runs over $\Bub(\P^2_\k)$, we say that $p$ is a {\bf{base-point}} of $h^{-1}$ if $a(p)\neq 0$; the integer $a(p)$ is the {\bf{multiplicity}} of the base-point. Since $h$ is an isometry of $\H_{\P^2_\k}$, it is either elliptic, parabolic, or loxodromic; moreover, the {\bf{dynamical degree}}
\[
\lambda(h)=\lim_{k\to +\infty} (\deg(h^k)^{1/k})
\]
is well defined, and its logarithm is the translation length $L(h)$. 

It is not a priori clear that the multiplicities $a(p)$ are non-negative. 
For example, $\Lambda=3e_0+e({p_1})-\sum_{i=2}^7 e({p_i})$ satisfies $\Lambda^2=1$ and intersects the canonical form as $e_0$ does. To show that such an element cannot be sent onto $e_0$ by an element of $W_\infty$, we need the following lemma.

\begin{lem}\label{Lem:MultiplicitiesPositive}
Let $v\in\z_{\P^2_\k}$ be one of the following vectors:
\[
e_0, \;  \; e({q_1}),  \; \;  e_0-e({q_1}), \;  \; 3e_0-\sum_{i=1}^l e(q_i)
\]
for some distinct points $q_1,\dots,q_l\in \Bub(\P^2_\k)$. For any $h\in W_\infty$, the following holds:
\begin{enumerate}
\item
There exists $n\geq 1$ and $s_1,\dots,s_n\in \Sym(\Bub(\P^2_\k))$ satisfying
\begin{eqnarray*}
h(v)& =& s_n\sigma_0s_{n-1}\sigma_0\dots s_2\sigma_0s_1(v)\\
(s_i\sigma_0\dots\sigma_0s_1)(v)\cdot e_0 & > &(s_{i-1}\sigma_0\dots\sigma_0s_1)(v)\cdot e_0
\end{eqnarray*}
for all $i=2,\dots,n$.

\item Either $h(v)=e(q)$ for some $q\in \Bub(\P^2_\k)$, or there exists $k\geq 0$, non-negative integers  $d,a_1,\dots,a_k$, and a finite set of points $r_1,\dots,r_k\in \Bub(\P^2_\k)$, such that
\[
h(v)=de_0-\sum_{i=1}^k a_i e({r_i}).
\]
\end{enumerate}
\end{lem}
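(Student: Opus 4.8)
The plan is to prove both assertions simultaneously by induction on the degree $\deg(h)=e_0\cdot h(e_0)$, using the factorization of $h$ as a word $s_n\sigma_0 s_{n-1}\sigma_0\cdots s_1$ in $W_\infty$ that is forced to be ``reduced'' in the sense that each application of $\sigma_0$ (after reordering the base points appropriately with a permutation) strictly increases the degree. First I would establish Assertion (1): given any expression $h=t_m\sigma_0 t_{m-1}\cdots \sigma_0 t_1$ of $h$ as a product of a permutation and copies of $\sigma_0$ (which exists by the proof of Lemma~\ref{lem51}, since $W_\infty$ is generated by $\Sym(\Bub(\P^2_\k))$ and $\sigma_0$), I would look at the sequence of degrees $d_i=(t_i\sigma_0\cdots t_1)(v)\cdot e_0$. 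Whenever $d_i\le d_{i-1}$ for some $i$, I want to argue that the partial word can be shortened or rearranged: the key point is that $\sigma_0$ is a reflection in $e_0-e(p_1)-e(p_2)-e(p_3)$, and $\sigma_0$ applied to a vector $w$ changes its $e_0$-coefficient by $w\cdot(e_0-e(p_1)-e(p_2)-e(p_3))$; if we precompose with a suitable permutation $s_i$ sending the three ``largest-multiplicity'' base points of the current vector to $p_1,p_2,p_3$, then this quantity is maximally negative, hence $\sigma_0$ decreases the degree, which together with the finiteness of the degree (a non-negative integer) forces the process to terminate. The cleanest way to organize this is the standard Noether-inequality argument: if the three largest multiplicities $a,b,c$ of $h(v)=de_0-\sum a(p)e(p)$ satisfy $a+b+c>d$, then applying $\sigma_0$ after the permutation strictly decreases $d$; iterate until $a+b+c\le d$, at which point one is in a ``terminal'' situation.

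For Assertion (2), I would run the induction on $d=\deg(h)$ using the factorization from (1). The base case $d=1$ (or the vector being some $e(q)$, of degree $0$) is handled directly: if $h(v)\cdot e_0=1$ then $h(v)=e_0-\sum a(p)e(p)$ with $\sum a(p)^2 = 1-v\cdot v$ by isometry, and for $v\in\{e_0,e_0-e(q_1),3e_0-\sum e(q_i)\}$ one computes $v\cdot v\in\{1,0,9-l\}$, pinning down the possibilities; a short case analysis shows all multiplicities are $0$ or the vector is a single $e(q)$. For the inductive step, write $h(v)=s_n\sigma_0(w)$ where $w=(s_{n-1}\sigma_0\cdots s_1)(v)$ has strictly smaller degree, so by induction $w$ is either some $e(q)$ — which one checks directly cannot map under $s_n\sigma_0$ to a higher-degree class, contradiction — or $w=d'e_0-\sum b_i e(r_i)$ with $d'\ge 0$, $b_i\ge 0$. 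Since $s_{n-1}$ is a permutation, the three base points $p_1,p_2,p_3$ of $\sigma_0$ can be assumed (by the degree-increasing property) to be among the $r_i$ with the three largest multiplicities, say $b_1,b_2,b_3$ (padding with zero multiplicities if fewer than three base points occur). Then $\sigma_0(w)=(2d'-b_1-b_2-b_3)e_0-(d'-b_1-b_2-b_3+b_1)e(p_1)-\cdots$, and the new $e_0$-coefficient is $2d'-b_1-b_2-b_3>d'$ by the degree-increase hypothesis, so it is non-negative; the new multiplicity at $p_i$ is $d'-(b_j+b_k)$ for $\{j,k\}$ the other two, which is $\ge 0$ exactly because $b_i$ was one of the three largest and $2d'-b_1-b_2-b_3\ge 0$ already forces $d'\ge b_j+b_k$ when $b_i$ is the smallest of the three (and for the larger ones it is even more clearly non-negative, using $b_i\le d'$ which itself follows from the terminality or from self-intersection bounds). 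The remaining multiplicities are untouched by $\sigma_0$ and then merely permuted by $s_n$, so they stay non-negative; hence $h(v)=de_0-\sum a_i e(r_i)$ with all $a_i\ge 0$, completing the induction.

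The main obstacle I anticipate is making the ``rearrangement into a degree-increasing word'' of Assertion (1) genuinely rigorous rather than hand-wavy: one has to be careful that precomposing with a permutation $s_i$ to move the top three multiplicities into position $p_1,p_2,p_3$ is compatible with later letters of the word and does not spoil earlier degree inequalities, and one must handle the degenerate cases where the current vector has fewer than three nonzero base points, or where two of the top multiplicities are equal (so the choice of permutation is not unique). The correct framing is to \emph{define} the word greedily from scratch — at each stage apply the permutation that sorts multiplicities in decreasing order and then apply $\sigma_0$, stopping when $a_1+a_2+a_3\le d$ — rather than trying to massage a given word; one then shows this greedy procedure terminates (degree strictly decreases each step while the Noether inequality holds, and degree is a non-negative integer) and that upon termination the vector is already in the desired form, which forces the greedily-produced word to represent $h(v)$ itself because $h$ was assumed to already be applied. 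A small amount of care is also needed to verify the inequality $b_i\le d'$ used above; this follows from the fact that $w$ lies in $\H_{\P^2_\k}$ or on its isotropic boundary together with positivity of the intersection with ample classes, or directly from the terminal Noether inequality once established. Modulo these bookkeeping points, the argument is the classical Noether reduction adapted to the lattice $\z_{\P^2_\k}$.
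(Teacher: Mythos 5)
Your reduction $(1)\Rightarrow(2)$ is essentially the paper's argument (induction on $n$, with the observation that the degree-increase condition forces $d'=d-b_1-b_2-b_3>0$ and hence keeps the new multiplicities non-negative), and that part is sound — except for the small slip that when $w=e(q)$ the image $s_n\sigma_0(w)$ is not a contradiction but simply $e_0-e(q')-e(q'')$, which is of the desired form with $d=1$.

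The genuine gap is in your proof of Assertion (1). Your greedy reduction applied to $h(v)$ rests on two claims, neither of which you can establish at this stage. First, to know that the greedy step (sort the coefficients, apply $\sigma_0$ at the three largest) strictly decreases the degree until the terminal condition $a_1+a_2+a_3\le d$ is reached with $d$ minimal, you are implicitly invoking Noether's inequality; but the paper's proof of Noether's inequality (Lemma~\ref{Lem:EqualitiesNoether}) uses both the non-negativity of the multiplicities and the bound $a_i+a_j\le d$, and both of these are deduced \emph{from} Lemma~\ref{Lem:MultiplicitiesPositive}. Since the coefficients of $h(v)$ are a priori of arbitrary sign (that is the whole point of the lemma), the argument is circular. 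Second, your claim that "upon termination the vector is already in the desired form" is unjustified: the terminal condition is a purely numerical one and does not characterize the $W_\infty$-orbit of $v$. The paper's own example $\Lambda=3e_0+e(p_1)-\sum_{i=2}^7 e(p_i)$ has $\Lambda^2=1$, $\kf(\Lambda)=\kf(e_0)$, and its three largest coefficients sum to exactly $d=3$, so the greedy procedure halts on it immediately even though it is not $e_0$ up to permutation — indeed the lemma is precisely the tool needed to show $\Lambda$ is not in the orbit. The paper circumvents all of this by a different mechanism: it starts from an \emph{arbitrary} word $s_m\sigma_0\cdots\sigma_0 s_1$ representing $h$, looks at the sequence of degrees $d_i$ along the partial words applied to $v$, and removes each local maximum $d_{k-1}<d_k\ge d_{k+1}$ by rerouting $s_{k+1}\sigma_0 s_k\sigma_0$ through a chain of quadratic elements $g_t\cdots g_1$ built from interpolating triples $P_1,\dots,P_t$ of base points, each intermediate image having degree $<d_k$; a lexicographic induction on $(D,l)$ (the largest local-maximum value and its multiplicity) then terminates the rearrangement. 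That argument never needs to know the signs of the coefficients in advance, which is exactly what your greedy scheme cannot avoid.
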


\begin{rem}\label{rem:quadW} Let us list the elements of degree $1$ or $2$ in $W_\infty$; this will be useful for the proof of Lemma~\ref{Lem:MultiplicitiesPositive}. 

\vspace{0.1cm}

$\bullet$ Let $h$ be an element of degree $1$ in $W_\infty$. This means that 
$
h(e_0)=e_0-\sum a(p) e(p)
$
where $a(p)\in \Z$ vanishes for all but a finite number of points $p\in \Bub(\P^2_\k)$.
Since the self-intersection
 is preserved under the action of $h$, the sum $\sum_p a(p)^2$ is equal to $0$; this   
 implies that $h(e_0)=e_0$. Let $p$ be an element of $\Bub(\P^2_\k)$. From 
 \[
 h(e(p))\cdot e_0 = h(e(p))\cdot h(e_0)= 0
 \]
we deduce that $h(e(p))=\sum b(q)e(q)$ with $b(q)\in \Z$, and then that $h(e(p))=\pm e(q)$ for some point $q$ in $\Bub(\P^2_\k)$, because the self-intersection of $h(e_p)$ is $-1$. Since the canonical linear form is preserved, one concludes that $h(e_p)=e_q$. 
In other words, $h$ is an element of $\Sym(\Bub(\P^2_\k))$.

\vspace{0.1cm}

$\bullet$ Say that $h\in W_\infty$ is {\bf{quadratic}} if its degree is equal to $2$. Write
$h(e_0)=2e_0-\sum a(p) e(p)$ with $a(p)\in \Z$. The invariance of the self-intersection
provides
 \[
 4-\sum_p a(p)^2 = 1\, ;
 \]
hence, there are exactly three base points, each with multiplicity $1$. Composing $h$ with an element of $\Sym(\Bub(\P^2_\k))$, 
one may assume that these three base-points coincide with the base-points $p_1$, $p_2$, and $p_3$ of $\sigma_0$. 
Then $\sigma_0 h$ has degree $1$. We conclude that quadratic elements are composition $s\sigma_0 s'$ with $s$ and $s'$ in $\Sym(\Bub(\P^2_\k))$.\end{rem}

\begin{proof} The proof of this lemma parallels classical facts from Coxeter group theory.

We first prove that $(1)$ implies $(2)$. 
The proof proceeds by induction on the minimum number $n\geq 1$ for which $h$ satisfies assertion $(1)$.
If $n=1$ then $h(v)=s_1(v)$ for some element 
%**New**: W_\infty->\Sym(\Bub(\P^2_\k))
$s_1\in \Sym(\Bub(\P^2_\k))$, and assertion $(2)$ follows. 
Let us now assume that $n\geq 2$ and apply the induction hypothesis to $s_{n-1}\sigma_0\dots s_2\sigma_0s_1$.
Let $w=s_{n-1}\sigma_0\dots s_2\sigma_0s_1(v)$.
If $w$ is equal to $e_q$ for some $q\in \Bub(\P^2_\k)$, we apply $s_n\sigma_0$: $h(v)=s_n\sigma_0(w)$ is either $e({q'})$ or $e_0-e({q'})-e({q''})$ for some $q',q''\in \Bub(\P^2_\k)$, and the first case is in fact impossible by $(1)$.
Otherwise, we write
$w=de_0-\sum_{i=1}^k c_ie({r_i})$
for some points $r_1,\dots,r_k\in \Bub(\P^2_\k)$ and non-negative integers $d,c_i$. Ordering the points and adding, if necessary, points with trivial coefficients $c_i=0$, we assume that $r_1$, $r_2$, and $r_3$ are the three base-points $p_1$, $p_2$ and $p_3$ of $\sigma_0$.
By definition of $\sigma_0$, we find %**New**: \sigma_0s_{n-1}(w)-> \sigma_0(w)
\[
\sigma_0(w)=(d+d')e_0-\sum_{i=1}^3(c_i+d')e(r_i)-\sum_{i=4}^m c_ie(r_i),
\]
 where $d'=d-c_1-c_2-c_3$. Since 
 \[
 d+d'=s_n\sigma_0\dots s_2\sigma_0s_1(v)\cdot e_0>d,
 \]
  we obtain $d'>0$ and see that $c_i+d'$ is non-negative for $i=1,2,3$. This proves that $(1)\Rightarrow(2)$ by induction on $n$.

We now  prove assertion $(1)$. By definition of $W_\infty$, we can always write $h$ as a composition $s_m\sigma_0s_{m-1}\sigma_0\dots s_2\sigma_0s_1$ for some $s_1,\dots,s_m\in \Sym(\Bub(\P^2_\k))$. For $i=1,\dots,m$, write 
\[
w_i=s_i\sigma_0\dots\sigma_0s_1(v) \quad {\text{and}} \quad   d_i=w_i\cdot e_0.
\]
Our aim is to replace the sequence $(s_i)_{i=1}^m$, keeping $s_m\sigma_0s_{m-1}\sigma_0\dots s_2\sigma_0s_1(v)=h(v)$, in order to assure that the sequence $(d_i)_{i=1}^m$ increases strictly.

Let $s$ and $s'$ be elements of $\Sym({\Bub(\P^2_\k)})$. One easily checks, for all possibilities of $v$,  that   
\[
s'\sigma_0s(v)\cdot e_0> v \cdot e_0 \quad {\text{or}} \quad s'\sigma_0 s(v)=s''(v)
\]
for some $s''\in \Sym({\Bub(\P^2_\k)})$. We can then change the sequence $(s_i)_{i=1}^m$ to assure that $m=1$, in which case the result is obvious,  or $d_2>d_1$.

We set $S=\{i\in\{2,\dots,m-1\}\mid d_{i-1}< d_i\ge d_{i+1}\}$ and write $D=\max\{d_i | i\in S\}$ with the convention $D=0$ if $S=\emptyset$. We then denote by $l$ the number of elements $i\in \{2,\dots,m-1\}$ such that $d_i=D$ and prove assertion $(1)$ by induction on the pairs $(D,l)$, ordered lexicographically. 

If $D=0$ or $l=0$, then $S=\emptyset$ and we have $d_1<d_2<\dots<d_m$, which achieves the proof. 
We can thus assume that there is some $k\in S$ such that $d_k=D>0$. Let us recall that
\[
d_{k-1}<d_k\ge d_{k+1}.
\]

Since $k<m$, the induction hypothesis and the proof of $(1)\Rightarrow (2)$ yield the existence of points $p_1,\dots,p_r$ such that %**New**: w_{k}=s_{k}\sigma_0\dots s_2\sigma_0s_1(e_0) -> w_{k}=s_{k}\sigma_0\dots s_2\sigma_0s_1(v)
\[
w_{k}=s_{k}\sigma_0\dots s_2\sigma_0s_1(v)=d_ke_0-\sum_{i=1}^r a_ie(p_i)
\]
 for some non-negative integers $a_1,\dots,a_r$. We again assume that $p_1,p_2,p_3$ are the three base-points of $\sigma_0$. Since $w_{k+1}=s_{k+1}\sigma_0(w_k)$ and $w_{k-1}=\sigma_0(s_k)^{-1}(w_k)$, we find
\[
d_{k+1}=e_0\cdot \sigma_0(w_k)=\sigma_0(e_0)\cdot w_k=2d_k-\sum_{i=1}^3 e({p_i})\cdot w_k=2d_k-\sum_{i=1}^3 a_i,
\]
and
\[
d_{k-1}=e_0\cdot \sigma_0(s_k)^{-1}(w_k)=s_k\sigma_0(e_0)\cdot w_k=2d_k-\sum_{i=1}^3 e({q_i})\cdot w_k,
\]
where $q_i=s_k(p_i)$ for $i=1,2,3$.
This implies %%**New** Removed "$\sum_{i=1}^3 e({p_i})\cdot w_k\ge d_k$; as a consequence "
\[
\sum_{i=1}^3 e({q_i}) \cdot w_k > d_k\quad {\text{and}} \quad \sum_{i=1}^3 e({p_i}) \cdot w_k \ge d_k.
\]
Let  $t+2$ be the number of points of the set $\{p_1,p_2,p_3,q_1,q_2,q_3\}$. We can define $t$ sets of $3$ points 
$P_1,\dots,P_t\subset \Bub(\P^2_\k)$, such that $P_1=\{q_1,q_2,q_3\}$, $P_t=\{p_1,p_2,p_3\}$, and  
\begin{itemize}
\item $P_i\cap P_{i-1}$ contains $2$ points and 
\item $\sum_{x\in P_i} w_k \cdot e({x})>d_k$ for $i=1,\dots,t-1$.
\end{itemize}
For each $i=1,\dots,t$, we fix an element $s'_i\in \Sym({\Bub(\P^2_\k)})$ that sends $\{p_1,p_2,p_3\}$ onto $P_i$%**New**: P_t->P_i
; for $i=1$, we choose $s'_1$ to be $s_k$ and for $i=t$ we choose $s'_t={\rm{Id}}$. We then write $w'_i=\sigma_0 (s'_i)^{-1}(w_k)$ and $g_i=\sigma_0 (s'_{i+1})^{-1}s'_i\sigma_0$. To illustrate this, we make a picture in the case where $t=4$:

\begin{center}$\xymatrix@R=0.5cm@C=1.2cm{
&&w_k
\ar[rrd]^{\sigma_0 (s'_4)^{-1}}
\ar[lld]_{\sigma_0 (s'_1)^{-1}}
\ar[rdd]^>>>>>>{\sigma_0 (s'_3)^{-1}}
\ar[ldd]_>>>>>>{\sigma_0 (s'_2)^{-1}}
&&&\\
w_{k-1}=w'_1\ar[dr]^{g_1} && && w'_4 \\
& w'_2\ar[rr]^{g_2}&& w'_3\ar[ru]^{g_3} }$\end{center}

For $i=1,\dots,t-1$, the inequality $\sum_{x\in P_i} w_k \cdot e({x})>d_k$ yields $e_0\cdot w'_i<d_k$. Moreover, the fact that $P_i\cap P_{i-1}$ contains two points implies that $g_i$ is an element of $W_\infty$ of degree $2$; as such, it is equal to $a_i \sigma_0 b_i$ for some $a_i,b_i\in \Sym_{\Bub(\P^2_\k)}$ (see Remark~\ref{rem:quadW}).%**New**: added "Remark"
 Because $s'_t$ preserves $P_t=\{p_1,p_2,p_3\}$,  $s_{k+1}\sigma_0s'_t \sigma_0$ is equal to some $a_t\in \Sym_{\Bub(\P^2_\k)}$. We can then write $s_{k+1}\sigma_0s_k\sigma_0\in W_\infty$, that sends $w_{k-1}$ onto $w_{k+1}$, as

$\begin{array}{rcl}
s_{k+1}\sigma_0s_k\sigma_0&=&(s_{k+1}\sigma_0s'_t \sigma_0)\sigma_0 (s'_t)^{-1}s'_1\sigma_0\\
&=&a_t(\sigma_0 (s'_{t})^{-1}s'_{t-1}\sigma_0)\dots (\sigma_0 (s'_{3})^{-1}s'_2\sigma_0)(\sigma_0 (s'_{2})^{-1}s'_1\sigma_0)\\
&=&a_t g_tg_{t-1}\dots g_1.\end{array}$

For $i=1,\dots,t-1$, $g_i\dots g_1(w_{k-1})=w'_i$, which has intersection with $e_0$ smaller than $d_k$.  The replacement above in the decomposition  (writing each $g_i$ as $a_i \sigma_0 b_i$ and rearranging the terms) either decreases $D$ or decreases $l$, without changing $D$. 
\end{proof}

%%%%%%%%
\subsection{Noether inequality}
%%%%%%%%
Let $h$ be an element of $W_\infty$ of degree $d$.
By Lemma~\ref{Lem:MultiplicitiesPositive}, there is a finite set of points  $q_i\in \B(\P^2_\k)$, and positive integers $a_i$, $i=1,\dots,k$, such that 
\[
h(e_0)=de_0-\sum_{i=1}^k a_i e({q_i}),
\] 
where the $a_i$  are positive integers.  Computing $h(e_0)^2=e_0^2=1$ and applying the canonical form $\kf$ to $h(e_0)$, we get the classical Noether equalities
\begin{equation}\label{eq:NoetherEquality}
\sum_{i=1}^k (a_i)^2=d^2-1,\quad \sum_{i=1}^k a_i=3d-3.
\end{equation}
This already implies that there are at least three points $q_i$ if $d\neq 1$.
\begin{lem}[Noether inequality]\label{Lem:EqualitiesNoether}
Let $h$ be an element of $W_\infty$ of degree $d\ge 2$, and let $a_1,\dots,a_k$ be the multiplicities of the base-points of $h$. 

\begin{enumerate}
\item The following equality is satisfied.
\[\begin{array}{rcl}
(d-1)(a_1+a_2+a_3-(d+1))&=&(a_1-a_3)(d-1-a_1)+\\
&&(a_2-a_3)(d-1-a_2)+\\
&&\sum_{i=4}^k a_i(a_3-a_i)
\end{array}
\]
\item
For any $i,j$ with $1\le i<j\le k$, we have $a_i+a_j\le d$.
\item
Ordering the $a_i$ such that $a_1\ge a_2\ge a_3\ge a_4\dots$ we have 
\[
a_1+a_2+a_3\ge d+1.
\]
\end{enumerate}
\end{lem}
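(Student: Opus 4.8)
The plan is to take the three assertions in turn: (1) is a formal identity in the Noether equalities \eqref{eq:NoetherEquality}, (2) is proved by induction on the degree with the help of Lemma~\ref{Lem:MultiplicitiesPositive}(1), and (3) is obtained by feeding (2) into the identity of (1).

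For (1), I would expand the right-hand side and substitute $\sum_{i=4}^k a_i=3d-3-a_1-a_2-a_3$ and $\sum_{i=4}^k a_i^2=d^2-1-a_1^2-a_2^2-a_3^2$ from \eqref{eq:NoetherEquality}; after the cancellations this leaves
\[
(d-1)(a_1+a_2-2a_3)+3a_3(d-1)-(d-1)(d+1),
\]
which equals $(d-1)(a_1+a_2+a_3-(d+1))$. No ordering of the multiplicities is needed here: the identity holds for any labelling in which $a_1,a_2,a_3$ are three of the $a_i$ and $a_4,\dots,a_k$ are the remaining ones.

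For (2), I would induct on $d=\deg(h)\ge 2$. If $d=2$, Remark~\ref{rem:quadW} gives $h=s\sigma_0 s'$ with $s,s'\in\Sym(\Bub(\P^2_\k))$, hence $h(e_0)=2e_0-e(s(p_1))-e(s(p_2))-e(s(p_3))$; the three multiplicities all equal $1$ and any two sum to $2=d$. For $d\ge 3$, apply Lemma~\ref{Lem:MultiplicitiesPositive}(1) with $v=e_0$ to write $h=s_n\sigma_0 s_{n-1}\sigma_0\cdots\sigma_0 s_1$ with $s_1,\dots,s_n\in\Sym(\Bub(\P^2_\k))$ and with the associated degrees $(s_i\sigma_0\cdots\sigma_0 s_1)(e_0)\cdot e_0$ strictly increasing (the last being $d$); set $g=s_{n-1}\sigma_0\cdots\sigma_0 s_1$, so $d':=\deg(g)<d$, and note $d'\ge 2$ (if $d'=1$ then $g(e_0)=e_0$ and $\deg(h)=2$). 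By Lemma~\ref{Lem:MultiplicitiesPositive}(2), $g(e_0)=d'e_0-\sum_l b_l e(r_l)$ with all $b_l\ge 1$; write $c_i$ for the multiplicity of $p_i$ as a base point of $g$ (with $c_i=0$ if $p_i$ is not one) and set $\delta=d'-c_1-c_2-c_3$. A direct computation from the definition of $\sigma_0$ gives
\[
\sigma_0(g(e_0))=(d'+\delta)e_0-\sum_{i=1}^3(c_i+\delta)e(p_i)-\sum_{r_l\notin\{p_1,p_2,p_3\}}b_l\,e(r_l),
\]
and intersecting with $e_0$ yields $d=d'+\delta$, so $\delta>0$. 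Since $s_n$ only permutes the base points, the multiplicities of $h$ are $c_1+\delta,c_2+\delta,c_3+\delta$ together with those $b_l$ for which $r_l\notin\{p_1,p_2,p_3\}$, and it remains to check $a+a'\le d$ in three cases: (a) $a=c_i+\delta$, $a'=c_j+\delta$ with $\{i,j,l\}=\{1,2,3\}$, where $a+a'=c_i+c_j+2\delta=d-c_l\le d$; (b) $a=c_i+\delta$, $a'=b_l$ with $r_l\notin\{p_1,p_2,p_3\}$, which reduces to $c_i+b_l\le d'$ and holds because either $c_i=b_m$ for some $m\ne l$ (induction hypothesis for $g$) or $c_i=0$ and $b_l\le d'-1$ (from $\sum_l b_l^2=(d')^2-1$); (c) $a=b_l$, $a'=b_{l'}$, where $b_l+b_{l'}\le d'<d$ by the induction hypothesis.

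For (3), order the multiplicities so that $a_1\ge a_2\ge\cdots\ge a_k$ (there are at least three since $d\ge 2$) and apply the identity of (1). From (2) and $a_i\ge 1$ one gets $a_1\le d-1$ and $a_2\le d-1$, so $d-1-a_1\ge 0$ and $d-1-a_2\ge 0$; moreover $a_1-a_3\ge 0$, $a_2-a_3\ge 0$, and $a_3-a_i\ge 0$ for $i\ge 4$. Hence the right-hand side of the identity of (1) is a sum of non-negative terms, so $(d-1)(a_1+a_2+a_3-(d+1))\ge 0$, and $d-1\ge 1$ gives $a_1+a_2+a_3\ge d+1$. The genuinely delicate step is the inductive step of (2): one must read off correctly the base points and multiplicities of $h$ from those of $g$ after composition with $\sigma_0$ — keeping track in particular of which of $p_1,p_2,p_3$ are base points of $g$ and of the signs in the $\sigma_0$-formula — and then run the three-case verification; once that bookkeeping is set up, (1) and (3) are purely formal.
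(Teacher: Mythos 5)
Your proof is correct, and parts (1) and (3) coincide with the paper's argument (the identity in (1) is exactly the ``multiply the second Noether relation by $a_3$, subtract, and rearrange'' computation, and (3) is the same positivity argument once one knows $a_i\le d-1$). Where you genuinely diverge is assertion (2). The paper's proof is a two-line reduction: $a_i+a_j\le d$ is equivalent to $(e_0-e(q_i)-e(q_j))\cdot h^{-1}(e_0)\ge 0$; choosing $s\in\Sym(\Bub(\P^2_\k))$ sending $q_i,q_j$ to $p_1,p_2$, the composition $\sigma_0 s$ maps $e_0-e(q_i)-e(q_j)$ onto $e(p_3)$, so the inequality becomes the non-negativity of the coefficient of $e(p_3)$ in $(\sigma_0 s h^{-1})(e_0)$, which is exactly Lemma~\ref{Lem:MultiplicitiesPositive}(2). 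You instead run an induction on the degree through the increasing decomposition of Lemma~\ref{Lem:MultiplicitiesPositive}(1), tracking how the multiplicities transform under one application of $\sigma_0$; your bookkeeping (the formula for $\sigma_0(g(e_0))$, the positivity of $\delta$, and the three cases (a)--(c)) is accurate, and the appeal to the induction hypothesis for $g$ in cases (b) and (c) is legitimate since $\deg(g)=d'<d$ and $d'\ge 2$. The trade-off is that your route essentially re-traverses the induction already carried out inside the proof of Lemma~\ref{Lem:MultiplicitiesPositive}, whereas the paper exploits that lemma once, as a black box, via the observation that $e_0-e(q_i)-e(q_j)$ lies in the $W_\infty$-orbit of $e(q)$; your version is longer but makes the mechanism of the inequality more transparent. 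Two cosmetic points: Lemma~\ref{Lem:MultiplicitiesPositive}(1) only gives $h(e_0)=s_n\sigma_0\cdots\sigma_0 s_1(e_0)$, not an identity of group elements (harmless, since you only ever use the image of $e_0$), and the alternative $g(e_0)=e(q)$ in Lemma~\ref{Lem:MultiplicitiesPositive}(2) should be explicitly excluded by $g(e_0)\cdot e_0=d'\ge 2$.
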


\begin{proof}
To prove assertion $(1)$, multiply the second Noether relation by $a_3$ and subtract it from the first; then rearrange the terms.

Assertion $(2)$ is equivalent to $(e_0-e({q_i})-e({q_j}))\cdot (h^{-1}(e_0))\ge 0$. Take an element $s\in \Sym({\B(\P^2_\k)})$ that sends  $q_i$ and $q_j$ onto $p_1=[1:0:0]$ and $p_2=[0:1:0]$. This implies that $\sigma_0 s$ maps $e_0-e({q_i})-e({q_j})$ onto $e(p_3)$, where $p_3=[0:0:1]$. The inequality is now equivalent to $(\sigma_0 s h^{-1})(e_0)\cdot e(p_3)\ge 0$, and follows from Lemma~\ref{Lem:MultiplicitiesPositive}.

Assertion $(2)$ implies that $d-1-a_i\ge 0$ for all $i$, since the number of base-points is bigger than $2$.
Then, assertion $(3)$ follows from the first one, because the right-hand side of the equality is non-negative.
\end{proof}

%%%%%%%%
\subsection{Jonqui\`eres elements}
%%%%%%%%

An element of $W_\infty$ is called a {\bf{Jonqui\`eres element}} with respect to $e_0-e(p)$ (or to the point $p\in \Bub(\P^2_\k)$)
if $h(e_0-e(p))=e_0-e(p)$. Jonqui\`eres twists $f$ in $\Bir(\P^2_\k)$ are conjugate to Jonqui\`eres elements within $\Bir(\P^2_\k)$ if $\k$ is algebraically closed. 

\begin{lem} \label{Lem:Jonqtrans}
Let $h\in W_\infty$, and let $p_1,q_1$ be points of  $\B(\P^2_\k)$ such that $h(e_0-e({p_1}))=e_0-e({q_1})$.
Let $m$ be the degree of $h$. There exists two subsets of $2m-2$ points $\{p_2,\dots, p_{2m-1}\}$ and $\{q_2,\dots,q_{2m-1}\}$ in $\B(\P^2_\k)$, such that $q_1\not=q_i$ and $p_1\not=p_i$ for $i\ge 2$, and such that the following hold:
$$\begin{array}{rcl}
h(e_0)&=&m e_0-(m-1)e({q_1})-\sum_{i=2}^{2m-1} e({q_i});\\
h^{-1}(e_0)&=&m e_0-(m-1)e({p_1})-\sum_{i=2}^{2m-1} e({p_i});\\
h(e({p_1}))&=&(m-1) e_0-(m-2)e({q_1})-\sum_{i=2}^{2m-1} e({q_i});\\
h^{-1}(e({q_1}))&=&(m-1) e_0-(m-2)e({p_1})-\sum_{i=2}^{2m-1} e({p_i});\\
h(e(p_i))&=&e_0-e({q_1})-e({q_i})\;  \mbox{ for } i=2,\dots,2m-1;\\
h^{-1}(e(q_i))&=&e_0-e({p_1})-e({p_i}) \; \mbox{ for } i=2,\dots,2m-1.
\end{array}$$
\end{lem}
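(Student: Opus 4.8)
The plan is to exploit the fact that $h$ is an isometry of $\z_{\P^2_\k}$ preserving the intersection form and the canonical form $\kf$, combined with the positivity of multiplicities from Lemma~\ref{Lem:MultiplicitiesPositive}. First I would apply Lemma~\ref{Lem:MultiplicitiesPositive} to $v=e_0$ and to $v=e({p_1})$ to write $h(e_0)=me_0-\sum_i b_i e(q'_i)$ and $h(e({p_1}))=d'e_0-\sum_i c_i e(r_i)$ with all $b_i,c_i\ge 0$ (the case $h(e_0)=e(q)$ is excluded since $\deg(h)=m$, and $h(e({p_1}))=e(q)$ would force, via $h(e_0-e({p_1}))=e_0-e({q_1})$, the contradiction $h(e_0)=e_0$ when $m\ge 2$; if $m=1$ the statement is trivial). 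The hypothesis $h(e_0-e({p_1}))=e_0-e({q_1})$ then reads $h(e_0)-h(e({p_1}))=e_0-e({q_1})$, which after rearranging gives $h(e({p_1}))=(m-1)e_0 + e({q_1}) - \sum (\text{stuff})$; one reads off that the $e_0$-coefficient of $h(e({p_1}))$ is $m-1$.

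Next I would extract the numerical data from the three preserved quantities. From $h(e_0)^2=e_0^2=1$ and $\kf(h(e_0))=\kf(e_0)=-3$ we get the Noether equalities $\sum b_i^2=m^2-1$, $\sum b_i=3m-3$. Applying $h$ to $e_0-e({p_1})$ and using $h(e_0-e({p_1}))=e_0-e({q_1})$, together with $(e_0-e({p_1}))^2=0$, $(e_0-e({p_1}))\cdot e_0=1$, we learn that $q_1$ occurs among the base points of $h^{-1}$, say with multiplicity $a$; computing $h(e_0)\cdot h(e({p_1})) = e_0\cdot e({p_1})=0$ and $h(e({p_1}))^2 = -1$ forces the multiplicity of $h(e_0)$ at... — more cleanly: since $h(e({p_1}))=h(e_0)-(e_0-e({q_1}))=(m-1)e_0+e({q_1})-\sum_{i}b_i e(q'_i)$, and $h(e({p_1}))$ has all non-positive $e(q)$-coefficients by Lemma~\ref{Lem:MultiplicitiesPositive}, the point $q_1$ must already appear among the $q'_i$ with $b$-multiplicity $\ge 1$, and in fact comparing with $h(e({p_1}))^2=-1$, $\kf(h(e({p_1})))=-1$ pins the multiplicity of $h(e_0)$ at $q_1$ to be exactly $m-1$ and that of $h(e({p_1}))$ at $q_1$ to be $m-2$. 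That is the content of the first, third (and, by applying the same reasoning to $h^{-1}$, the second and fourth) displayed equalities.

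Then I would turn to the remaining base points. Write $h(e_0)=me_0-(m-1)e({q_1})-\sum_{i\ge 2} b_i e(q_i)$ with $q_i\ne q_1$ and $b_i\ge 1$. The Noether equalities now become $\sum_{i\ge 2} b_i^2 = m^2-1-(m-1)^2 = 2m-2$ and $\sum_{i\ge 2} b_i = 3m-3-(m-1) = 2m-2$. Since $b_i\ge 1$ integers with equal sum and sum of squares, each $b_i=1$ and there are exactly $2m-2$ of them; this gives the first displayed formula, and symmetrically the second for $h^{-1}$. For $h(e({p_i}))$ with $i\ge 2$: we have $e({p_i})\cdot e({p_1})=0$, $e({p_i})\cdot e_0=0$, $e({p_i})^2=-1$, $\kf(e({p_i}))=-1$, and $e({p_i})\cdot h^{-1}(e_0)=1$ (the multiplicity), so $h(e({p_i}))$ satisfies: orthogonal to $h(e_0-e({p_1}))=e_0-e({q_1})$, orthogonal to $h(e_0)$... — computing $h(e({p_i}))\cdot e_0$ via adjointness equals $e({p_i})\cdot h^{-1}(e_0)=1$, so $h(e({p_i}))=e_0-\sum(\cdots)$; self-intersection $-1$ and $\kf=-1$ then force exactly two unit base points; orthogonality to $e_0-e({q_1})$ forces one of them to be $q_1$; and $h(e({p_i}))\cdot h(e({p_j}))=e({p_i})\cdot e({p_j})=0$ for $i\ne j$ forces the second base points to be distinct, and distinct from $q_1$ by $h(e({p_i}))\cdot h(e({p_1}))=0$ — calling them $q_i$ identifies the set $\{q_2,\dots,q_{2m-1}\}$ and yields the fifth formula, the sixth following by symmetry. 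The main obstacle will be bookkeeping: making sure the same indexing set $\{q_2,\dots,q_{2m-1}\}$ works consistently across all six formulas (i.e. that the base points of $h(e_0)$ other than $q_1$ are exactly the ``second'' base points of the $h(e({p_i}))$, with a compatible bijection $p_i\leftrightarrow q_i$), which I would handle by defining $q_i$ to be that second base point of $h(e({p_i}))$ and then verifying via intersection computations that the resulting multiset coincides with the base points of $h(e_0)$; everything else is forced linear algebra once positivity of multiplicities is in hand.
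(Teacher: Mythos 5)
Your proposal is correct and follows essentially the same route as the paper's proof: positivity of multiplicities from Lemma~\ref{Lem:MultiplicitiesPositive}, the Noether equalities from invariance of the self-intersection and of $\kf$, and adjointness of $h$ and $h^{-1}$ to pin each class down. The only (harmless) differences are organizational — the paper gets $a_1=m-1$ from $(e_0-e(q_1))\cdot h(e_0)=(e_0-e(p_1))\cdot e_0=1$ rather than from $h(e(p_1))^2=-1$, computes $h^{-1}(e(q_i))$ first and then obtains $h(e(p_i))$ and $h^{-1}(e_0)$ by linearity, whereas you compute $h(e(p_i))$ directly and match the two indexing sets at the end; both versions of the bookkeeping close up.
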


\begin{proof}
Write $
h(e_0)=me_0-\sum_{i=1}^k a_i e({q_i}),
$ 
where the $a_i$  are positive integers. Since $m-a_1=(e_0-e({q_1}))\cdot h(e_0)=(e_0-e({p_1}))\cdot e_0=1$, we obtain $a_1=m-1$. From Noether equalities, one obtains $\sum _{i=2}^k a_i=\sum_{i=2}^k (a_i)^2=2m-2$. In particular, all $a_i$ are equal to  $1$ and $k=2m-2$: 
\[
h(e_0)=me_0-(m-1)e({q_1})-\sum_{i=2}^{2m-2} e({q_i}). 
\] 
From $h(e({p_1}))=h(e_0)-(e_0-e({q_1}))$ we deduce $h(e(p_1))=(m-1)e_0-(m-2)e({q_1})-\sum_{i=2}^{2m-2} e({q_i})$.%**New** added the missing parenthesis

Apply now Lemma~\ref{Lem:MultiplicitiesPositive} for the elements $e(q_i)$, $i=1,\dots,2m-2$.
One finds a  subset $\{p_1,\ldots, p_l\}$ of $\B(\P^2_\k)$ and non-negative integers $b_i,c_{i1},\dots,c_{il}$ such that  
\[
h^{-1}(e({q_i}))=b_i e_0-\sum_{j=1}^l c_{ij} e({p_j}).
\] 
Since $e_0\cdot h^{-1}(e({q_i}))=h(e_0)\cdot e({q_i})=1$ and $e({p_1})\cdot h^{-1}(e({q_i}))=h(e({p_1}))\cdot e({q_i})=1$, we get $b_i=c_{ i 1}=1$. From $(h^{-1}(e({q_i})))^2=-1$ follows that $h^{-1}(e({q_i}))=e_0-e({p_1})-e({p_i})$ for some point $p_i\in \B(\P^2_\k)$ distinct from $p_1$. Doing this for each $i$, this defines $2m-1$ points $p_2,\dots,p_{2m-1}$. Then
\[
h(e({p_i}))=h(e_0-e({p_1}))-h(e_0-e({p_1})-e({p_i}))=e_0-e({q_1})-e({q_i}).
\] 
It remains to observe that $mh(e_0)-(m-1)h(e({p_1}))-\sum_{i=2}^{2m-1} h(e({p_i}))=e_0$, so $h^{-1}(e_0)=m e_0-(m-1)e({p_1})-\sum_{i=2}^{2m-1} e({p_i})$; the value of $h^{-1}(e({q_1}))$ follows now directly from $h(e_0-e({p_1}))=e_0-e({q_1})$.
\end{proof}

\begin{lem}\label{Lem:linear-growth}
Let $h_1,h_2\in W_\infty$ be Jonqui\`eres elements with respect to the same point $p\in  \B(\P^2_\k)$. Then 
\[
\deg(h_1h_2)<\deg(h_1)+\deg(h_2).
\]
In particular, the sequence $\{\deg (h_1)^n\}_{n\in \N}$ grows at most linearly and $h_1$ is not loxodromic.
\end{lem}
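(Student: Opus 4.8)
The plan is to derive everything from Lemma~\ref{Lem:Jonqtrans}, which already describes completely the action of a Jonqui\`eres element on $e_0$ and $e(p)$. First I would record two elementary observations. Since $h(e_0-e(p))=e_0-e(p)$ passes to inverses and to iterates, both $h_1^{-1}$ and every power $h_1^n$ are again Jonqui\`eres elements with respect to the \emph{same} point $p$. Moreover, as elements of $W_\infty$ act by isometries of the intersection form, one has
\[
\deg(h_1h_2)=e_0\cdot(h_1h_2)(e_0)=h_1^{-1}(e_0)\cdot h_2(e_0),
\]
so it is enough to bound this intersection number from above.

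The main step is then a direct computation. Applying Lemma~\ref{Lem:Jonqtrans} to $h_1$ and to $h_2$ separately, and writing $m_1=\deg(h_1)$, $m_2=\deg(h_2)$, we get
\[
h_1^{-1}(e_0)=m_1 e_0-(m_1-1)e(p)-\sum_{i=2}^{2m_1-1}e(p_i),\qquad h_2(e_0)=m_2 e_0-(m_2-1)e(p)-\sum_{j=2}^{2m_2-1}e(q_j),
\]
with all $p_i,q_j$ distinct from $p$. Expanding the product and using $e_0^2=1$, $e(p)^2=-1$, $e_0\cdot e(p)=e_0\cdot e(p_i)=e(p)\cdot e(p_i)=0$, the only surviving terms give
\[
\deg(h_1h_2)=m_1m_2-(m_1-1)(m_2-1)-c=m_1+m_2-1-c,
\]
where $c=\#\{(i,j)\mid p_i=q_j\}\ge 0$ counts the base points common to $h_1^{-1}$ and $h_2$; here I use only that each $e(p_i)\cdot e(q_j)$ lies in $\{0,-1\}$. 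Hence $\deg(h_1h_2)\le m_1+m_2-1<\deg(h_1)+\deg(h_2)$, and the degenerate case $m_1=1$ (where $h_1\in\Sym(\B(\P^2_\k))$ and the first sum is empty) is covered by the same formula.

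For the ``in particular'' statement I would feed this back into an induction: since each $h_1^n$ is a Jonqui\`eres element with respect to $p$, the first part applied to the pair $h_1$, $h_1^n$ gives $\deg(h_1^{n+1})\le\deg(h_1)+\deg(h_1^n)-1$, hence $\deg(h_1^n)\le 1+n(\deg(h_1)-1)$ by induction on $n$. Therefore $\lambda(h_1)=\lim_n\deg(h_1^n)^{1/n}=1$, so the translation length $L(h_1)=\log\lambda(h_1)$ vanishes and $h_1$ is not loxodromic. I do not expect any genuinely hard step here once Lemma~\ref{Lem:Jonqtrans} is granted; the only point requiring care is the sign bookkeeping in the intersection product — specifically that the ``common base point'' contribution $-c$ is $\le 0$ rather than of indefinite sign, which is precisely where the non-negativity of the multiplicities from Lemma~\ref{Lem:MultiplicitiesPositive} (built into Lemma~\ref{Lem:Jonqtrans}) enters.
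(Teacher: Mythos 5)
Your proof is correct and follows essentially the same route as the paper's: both reduce to computing $\deg(h_1h_2)=h_1^{-1}(e_0)\cdot h_2(e_0)$, exploit that the multiplicity of $p$ in each of $h_1^{-1}(e_0)$ and $h_2(e_0)$ is $d_i-1$, and observe that the remaining cross terms only decrease the result, giving $d_1+d_2-1$. The only cosmetic difference is that you invoke Lemma~\ref{Lem:Jonqtrans} to get that the off-$p$ multiplicities are exactly $1$, whereas the paper needs only their non-negativity (via Lemma~\ref{Lem:MultiplicitiesPositive}) together with the identity $(e_0-e(p))\cdot h_i^{\pm1}(e_0)=1$.
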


\begin{rem} 
Let $m\geq 2$ %**New**:n->m
 be an integer and $h$ be an isometry of a finite dimensional hyperbolic space $\H_m$; here, $\H_m$
is one of the two connected components of the affine quadric $x_0^2=x_1^2+\ldots + x_m^2$ in $\R^{m+1}$, and $h$ is 
the restriction of an element of ${\sf{O}}_{1,m}(\R)$ preserving $\H_m$. Assume that $h$ is parabolic ; this means
that the linear transformation $h$ is not contained in a compact subgroup of ${\sf{O}}_{1,m}(\R)$ but does not have 
any eigenvalue of modulus $>1$. Then, $\parallel h^n\parallel$ grows quadratically with $n$. In other words, 
given any base point $e_0$ in $\H_m$, the sequence of distances $\dist(e_0, h^n(e_0))$ grows like $\cosh(c n^2)$
for some positive constant $c$.

Lemma \ref{Lem:linear-growth} shows that the behavior of parabolic transformations may be different if $\H_n$
is replaced by its infinite dimensional sibling. For instance, consider the birational transformation $f\colon (x,y)\mapsto (xy,y)$. Then $f_\p$ determines a Jonqui\`eres element  of $W_\infty$ with $\deg(f_\p^n)=n$; equivalently, $\dist(e_0, f_p^n(e_0))$ grows like $\cosh(c n)$. 
\end{rem}

\begin{proof}
Note that $\deg (h_1h_2)=h_1h_2(e_0)\cdot e_0=h_2(e_0)\cdot (h_1)^{-1}(e_0)$. Let $q_2,\dots,q_k$ be the  base-points  of $h_1$ or $(h_2)^{-1}$ which are distinct from $p$. Because $(e_0-e({p}))\cdot (h_i)^{\pm 1}(e_0)=(e_0-e({p}))\cdot e_0=1$, we get
\[
\begin{array}{rcl}
(h_1)^{-1}(e_0)&=&d_1e_0-(d_1-1)e({p})-\sum_{i=2}^k a_i e(q_i),\\
h_2(e_0)&=&d_2e_0-(d_2-1)e({p})-\sum_{i=2}^k b_i e(q_i),\end{array}
\]
for some non-negative integers $d_1$, $d_2$, $a_i$, and $b_i$. Moreover, $d_1=\deg(h_1)$, $d_2=\deg(h_2)$. Hence,
$$\begin{array}{l}
\deg (h_1h_2)=d_1d_2-(d_1-1)(d_2-1)-\sum a_ib_i\leq \deg(h_1)+\deg(h_2)-1.
\end{array}$$
\end{proof}

\begin{pro}\label{Lem:JonqRac}
Let $h$ be an element of $W_\infty$ of degree $d\geq 3$. Let $q_1,\dots,q_k$ be the points which are base-points of either $h$ or $h^{-1}$. 
Let $a_i$ and $b_i$ be the multiplicities of the base points:  $a_i=e({q_i})\cdot h^{-1}(e_0)$  and $b_i=e({q_i})\cdot h(e_0)$. Then one of the following properties holds:
\begin{enumerate}
\item
$d<3 (d-a_i)(d-b_i)$;
\item
$a_i=d-1=b_i$ for at least one base-point $q_i$; in that case $h$ is a Jonqui\`eres element with respect to $e_0-e(q_i)$.
\end{enumerate}
In particular if $h$ is not a  Jonqui\`eres element of $W_\infty$, then
\[
d-( a_i+b_i)/2 > \sqrt{d/3}
\]
for all $i=1,\dots,k$.
\end{pro}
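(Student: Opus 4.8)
The final displayed inequality is a purely formal consequence of the dichotomy $(1)$--$(2)$: in case $(1)$ the arithmetic--geometric mean inequality gives
\[
\big((d-a_i)+(d-b_i)\big)^2\ \geq\ 4(d-a_i)(d-b_i)\ >\ \frac{4d}{3},
\]
so $d-(a_i+b_i)/2>\sqrt{d/3}$; and case $(2)$ is excluded as soon as $h$ is not a Jonqui\`eres element. So the plan is to prove the dichotomy, and I would start with the trivial reductions. Since any two multiplicities of base-points of $h$ (resp.\ of $h^{-1}$) add up to at most $d$ by Lemma~\ref{Lem:EqualitiesNoether}$(2)$, and there are at least three such base-points by~\eqref{eq:NoetherEquality}, one has $a_i\leq d-1$ and $b_i\leq d-1$. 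If $q_i$ is not a base-point of $h$ then $a_i=0$ and $3(d-a_i)(d-b_i)=3d(d-b_i)\geq 3d>d$, so $(1)$ holds; symmetrically if $q_i$ is not a base-point of $h^{-1}$. Hence I may assume $a_i\geq1$ and $b_i\geq1$.

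Next I would treat the borderline case $d-b_i=1$ (and, symmetrically, $d-a_i=1$). Here Lemma~\ref{Lem:MultiplicitiesPositive} applied to the vector $e_0-e(q_i)$, whose image under $h^{-1}$ has degree $d-b_i=1$, shows that $h^{-1}(e_0-e(q_i))=e_0-e(q')$ for some $q'\in\B(\P^2_\k)$; equivalently $h(e(q'))=h(e_0)-e_0+e(q_i)$, a class of degree $d-1$, so $q'$ is a base-point of $h$ of multiplicity $d-1$. By~\eqref{eq:NoetherEquality}, every other base-point of $h$ then has multiplicity $\leq1$; hence either $q'=q_i$ and $a_i=b_i=d-1$, or $q'\neq q_i$ and $a_i=1$, in which case $3(d-a_i)(d-b_i)=3(d-1)>d$. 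When $a_i=b_i=d-1$ one writes $h^{-1}(e_0)=de_0-(d-1)e(q_i)-\sum_j e(s_j)$ and $h(e_0)=de_0-(d-1)e(q_i)-\sum_j e(t_j)$ with $2d-2$ further base-points of multiplicity $1$ on each side; applying Lemma~\ref{Lem:MultiplicitiesPositive} to $e(q_i)$ and using the invariance of the intersection form and of $\kf$ together with $h(e(q_i))\cdot h(e_0)=e(q_i)\cdot e_0=0$, the coefficients of $h(e(q_i))=(d-1)e_0-\sum_p\epsilon_p e(p)$ satisfy $\sum_p\epsilon_p=3(d-1)-1$, $\sum_p\epsilon_p^2=(d-1)^2+1$ and $(d-1)\epsilon_{q_i}+\sum_j\epsilon_{t_j}=d(d-1)$; these relations force $\epsilon_{q_i}=d-2$, $\epsilon_{t_j}=1$ for all $j$ and $\epsilon_p=0$ otherwise, so that $h(e_0-e(q_i))=h(e_0)-h(e(q_i))=e_0-e(q_i)$. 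Thus $h$ is a Jonqui\`eres element with respect to $e_0-e(q_i)$, i.e.\ alternative $(2)$ holds.

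There remains the case $d-a_i\geq2$ and $d-b_i\geq2$; then $3(d-a_i)(d-b_i)\geq12>d$ whenever $d\leq11$, so I may assume $d\geq12$. For this I would again use Lemma~\ref{Lem:MultiplicitiesPositive}, now on both $e_0-e(q_i)$ and $e(q_i)$: the images $h(e_0-e(q_i))=(d-a_i)e_0-\sum_p\delta_p e(p)$ and $h(e(q_i))=a_ie_0-\sum_p\epsilon_p e(p)$ have non-negative coefficients, they satisfy the ``local'' Noether relations $\sum_p\delta_p=3(d-a_i)-2$, $\sum_p\delta_p^2=(d-a_i)^2$, $\sum_p\epsilon_p=3a_i-1$, $\sum_p\epsilon_p^2=a_i^2+1$ and $\sum_p\delta_p\epsilon_p=a_i(d-a_i)-1$, and $\delta_p+\epsilon_p$ is the multiplicity of $p$ in $h(e_0)$; combining these with the corresponding data for $h^{-1}$, with the global relations~\eqref{eq:NoetherEquality}, and with Lemma~\ref{Lem:EqualitiesNoether}, one discusses the sizes of $a_i$ and $b_i$ relative to $d$. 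The inequality $3(d-a_i)(d-b_i)>d$ is straightforward unless both $a_i$ and $b_i$ are close to $d$, in which case Lemma~\ref{Lem:EqualitiesNoether}$(2)$ makes $q_i$ the unique dominant base-point on both sides and leaves only a controlled list of near-extremal multiplicity vectors; each of those must be eliminated because the positivity in Lemma~\ref{Lem:MultiplicitiesPositive} forces exactly the extremal Jonqui\`eres shape computed above, which contradicts $d-a_i\geq2$. I expect this last bookkeeping --- ruling out a common base-point with ``co-multiplicity'' $\leq d/3$ on both sides for large $d$ unless $h$ is Jonqui\`eres --- to be the only real difficulty; an equivalent route would be to estimate $\deg(\theta h\theta)=2(d-a_i)+2(d-b_i)+e(q_i)\cdot h(e(q_i))$ for the quadratic element $\theta\in W_\infty$ based at $q_i$ and two general points of $\B(\P^2_\k)$, and then to apply Lemma~\ref{Lem:EqualitiesNoether} to that transformation.
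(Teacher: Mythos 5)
Your reduction of the final inequality to the dichotomy (via AM--GM), your elimination of the indices with $a_i=0$ or $b_i=0$, and your treatment of the boundary case $b_i=d-1$ (resp.\ $a_i=d-1$) are correct and essentially match the paper's first step. (Two small remarks there: once you know $h^{-1}(e_0-e(q_i))=e_0-e(q_i)$, applying $h$ gives $h(e_0-e(q_i))=e_0-e(q_i)$ at once, so the coefficient computation for $h(e(q_i))$ is an unneeded detour --- and as stated it asserts without proof that three scalar relations ``force'' the extremal solution, which really requires Lemma~\ref{Lem:Jonqtrans}.)

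The genuine gap is the main case $d-a_i\geq 2$, $d-b_i\geq 2$, $d\geq 12$. The local relations you write for $h(e_0-e(q_i))$ and $h(e(q_i))$ are correct, but you never derive $d<3(d-a_i)(d-b_i)$ from them: you only announce a discussion ``of the sizes of $a_i$ and $b_i$'' and explicitly flag the remaining ``bookkeeping'' as the real difficulty. That bookkeeping \emph{is} the proposition; a controlled list of near-extremal multiplicity vectors is never produced, and it is not clear that the proposed route closes. The paper finishes in a few lines with no case analysis, and you had all the ingredients: write $h(e_0-e(q_1))=(d-a_1)e_0-\sum_i r_ie(q_i)$ with $r_i\geq0$ (Lemma~\ref{Lem:MultiplicitiesPositive}); note $d-a_1-r_1=h(e_0-e(q_1))\cdot(e_0-e(q_1))\geq1$ because these are two non-proportional isotropic classes in the forward cone; get $\sum_i r_i=3(d-a_1)-2$ from the canonical form; expand $h(e_0)\cdot h(e_0-e(q_1))=1$ to obtain $d(d-a_1)-b_1r_1=1+\sum_{i\geq2}b_ir_i$; and use $b_i\leq d-b_1$ from Lemma~\ref{Lem:EqualitiesNoether}(2). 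Then
\[
d\ \leq\ d(d-a_1-r_1)+(d-b_1)r_1\ =\ 1+\sum_{i\geq 2}b_ir_i\ \leq\ 1+(d-b_1)\bigl(3(d-a_1)-2\bigr)\ <\ 3(d-a_1)(d-b_1),
\]
the last step because $d-b_1\geq1$. You should replace your sketched case discussion by an argument of this kind (or complete the discussion in full); as it stands the central inequality is unproved.
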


\begin{proof}
%**NewSerge**: changed the sentence for the notation:
To simplify the notation, write $e_i=e(q_i)$ for $i=1,\dots,k$.
Suppose that one of the $a_i$ is equal to $d-1$, and order the points to assume $a_1=d-1$. 
Since $a_1=h(e_1)\cdot e_0$, we have $h(e_0-e_1)\cdot e_0=1$; this implies that 
$h(e_0-e_1)=e_0-e_j$ for some $j$ and we deduce $b_j=d-1$. If $j=1$, then $h$
is a Jonqui\`eres element with respect to $e_0-e_1$. Assume now that $j\neq 1$.
Lemma~\ref{Lem:Jonqtrans} implies that
the $a_i$ for $i\neq 1$ and the $b_i$ for $i\neq j$  are equal all to $0$ or $1$. In particular,  for each $i$, we  get
$(d-a_i)(d-b_i)\ge d-1>d/3$. This proves that either $(1)$ or $(2)$ is satisfied when some $a_i$ is equal to $d-1$.

Assume now that $a_j<d-1$ for all indices $j$. In particular $h(e_0-e_j)$ is distinct from $e_0-e_j$. One only needs to show that $d<3 (d-a_i)(d-b_i)$ for all~$i$. 
Reordering the points one may assume $i=1$.
 
We have 
\[
h(e_0-e_1)\cdot e_0=(e_0-e_1)\cdot h^{-1}(e_0)=d-a_1\ge 2
\]
and we can write 
\begin{center}$
h(e_0-e_1) = (d-a_1) e_0 - \sum_{i=1}^k r_i e_i
$\end{center}
for some coefficients $r_i\geq 0$. Moreover
\[
1\le h(e_0-e_1)\cdot (e_0-e_1)=d-a_1-r_1.
\]
because $e_0-e_1$ and $h(e_0-e_1)$ are two isotropic elements of $\z_{\P^2_\k}$ 
in the boundary of $\H_{\P^2_\k}$ that are not orthogonal (hence their intersection is a positive integer).
As $h$ preserves the canonical linear form, 
\[
\sum_{i=1}^k r_i = 3 (d-a_1)-2.
\]
From $h(e_0)\cdot h(e_0-e_1)=e_0\cdot (e_0-e_1)=1$ we get 
\begin{center}$
d(d-a_1)-b_1r_1=\sum_{i=2}^{k} b_i r_i+1
$\end{center}
where $b_i=e_i\cdot h^{-1}(e_0)\ge 0$ is the multiplicity of $p_i$ as a base-point of $h$. Applying Lemma~\ref{Lem:EqualitiesNoether} to $h$ we have $b_1+b_i\le d$ for $i=2,\dots,k$, so $b_i\le d-b_1$. Because $d(d-a_1)-b_1r_1=d(d-a_1-r_1)+(d-b_1)r_1\ge d$, we get
\[
d\le1+(d-b_1)\sum_{i=2}^k r_i\le 1+(d-b_1)\cdot (3(d-a_1)-2)<3(d-b_1)(d-a_1).
\]
This concludes the proof of the alternative. Then, assertion $(1)$ implies
\[
\begin{array}{rcl}
(d-( a_i+b_i)/2)&=&\frac{ (d-a_i)+ (d-b_i)}{2}\\
& \ge & \sqrt{(d-a_i)(d-b_i)}\\
&>&\sqrt{d/3}.\end{array}
\]
This concludes the proof of the proposition.\end{proof}

%%%%%%%%
\subsection{Halphen elements}
%%%%%%%%

Given nine distinct points $q_i$ in $\Bub(\P^2_\k)$, the class
\[
K=3e_0-\sum_{i=1}^9 e(q_i)
\]
is an isotropic vector of $\z_{\P^2_\k}$. An  element $h$ of $W_\infty$ is an {\bf{Halphen element}}
with respect to such a class $K$ if $h(K)=K$.

\begin{lem}[Growing of Halphen type maps]\label{lem:Halphengrowth}
If  $h_1$ and $h_2$  are Halphen elements of $W_\infty$  with respect to the same isotropic class
 $K$ then
 \[
 \sqrt{\deg(h_1h_2)}<\sqrt{\deg(h_1)}+\sqrt{\deg(h_2)}.
 \]
In particular, the sequence $\{\deg (h_1)^n\}_{n\in \N}$ grows at most quadratically and $h_1$ is not loxodromic.
\end{lem}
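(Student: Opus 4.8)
The plan is to imitate the proof of Lemma~\ref{Lem:linear-growth}, with the isotropic class $e_0-e(p)$ of a Jonqui\`eres element replaced by the isotropic Halphen class $K=3e_0-\sum_{i=1}^9 e(q_i)$. Consider the real plane $P=\R e_0\oplus\R K\subset \z_{\P^2_\k}\otimes\R$. One has $e_0\cdot e_0=1$, $e_0\cdot K=3$ and $K\cdot K=0$, so the intersection form restricted to $P$ has discriminant $-9<0$; it is non-degenerate of signature $(1,1)$. Since the form on $\z_{\P^2_\k}\otimes\R$ has signature $(1,\infty)$, its restriction to the orthogonal complement $P^\perp$ is negative definite and $\z_{\P^2_\k}\otimes\R=P\oplus P^\perp$.

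First I would record a normalization. If $h\in W_\infty$ is a Halphen element with respect to $K$, put $u=h(e_0)$ and $d=\deg(h)$. Using that $h$ is an isometry and $h(K)=K$ one gets $u\cdot u=1$, $u\cdot K=e_0\cdot K=3$ and $u\cdot e_0=d$. Write $u=u_P+u_\perp$ along $P\oplus P^\perp$. The component $u_P\in P$ is uniquely determined by the two intersection numbers $u_P\cdot e_0=d$ and $u_P\cdot K=3$, namely $u_P=e_0+\tfrac{d-1}{3}K$. Hence $u_P\cdot u_P=2d-1$ and $u_\perp\cdot u_\perp=u\cdot u-u_P\cdot u_P=2-2d$; in particular $d\geq 1$, and the ``size of $h(e_0)$ in the negative directions'', $\sqrt{-(u_\perp\cdot u_\perp)}=\sqrt{2d-2}$, is of order $\sqrt{d}$, not of order $d$.

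Next I would run the estimate. Set $d_1=\deg(h_1)$, $d_2=\deg(h_2)$, and note that $h_1^{-1}$ is again a Halphen element with respect to $K$, with $\deg(h_1^{-1})=d_1$. Let $v=h_1^{-1}(e_0)=v_P+v_\perp$ and $u=h_2(e_0)=u_P+u_\perp$, so $v_P=e_0+\tfrac{d_1-1}{3}K$ and $u_P=e_0+\tfrac{d_2-1}{3}K$. Since $\deg(h_1h_2)=(h_1h_2)(e_0)\cdot e_0=h_2(e_0)\cdot h_1^{-1}(e_0)=u\cdot v$, one splits $\deg(h_1h_2)=u_P\cdot v_P+u_\perp\cdot v_\perp$. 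A direct computation gives $u_P\cdot v_P=1+(d_1-1)+(d_2-1)=d_1+d_2-1$. For the second term, Cauchy--Schwarz for the positive definite form $w\mapsto -(w\cdot w)$ on $P^\perp$ yields
\[
u_\perp\cdot v_\perp\leq \sqrt{\bigl(-(u_\perp\cdot u_\perp)\bigr)\bigl(-(v_\perp\cdot v_\perp)\bigr)}=\sqrt{(2d_2-2)(2d_1-2)}=2\sqrt{(d_1-1)(d_2-1)}.
\]
Hence $\deg(h_1h_2)\leq d_1+d_2-1+2\sqrt{(d_1-1)(d_2-1)}$. Since $d_1,d_2\geq 1$, the inequality $d_1+d_2\geq 2\sqrt{d_1d_2}$ gives $(d_1-1)(d_2-1)\leq(\sqrt{d_1d_2}-1)^2$, whence
\[
\deg(h_1h_2)\leq d_1+d_2-1+2(\sqrt{d_1d_2}-1)=\bigl(\sqrt{d_1}+\sqrt{d_2}\bigr)^2-3<\bigl(\sqrt{d_1}+\sqrt{d_2}\bigr)^2,
\]
and taking square roots proves $\sqrt{\deg(h_1h_2)}<\sqrt{\deg(h_1)}+\sqrt{\deg(h_2)}$. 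For the last assertion, applying this to the pair $(h_1^n,h_1)$ (both Halphen with respect to $K$) gives $\sqrt{\deg(h_1^{n+1})}<\sqrt{\deg(h_1^{n})}+\sqrt{\deg(h_1)}$, hence $\deg(h_1^{n})<n^2\deg(h_1)$ by induction; therefore $\lambda(h_1)=\lim_n\deg(h_1^{n})^{1/n}\leq 1$, so $h_1$ is not loxodromic (cf.\ Theorem~\ref{thm:Giz-class}).

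I do not expect a real obstacle: the argument is short once the plane $P$ is chosen. The one point to handle with care is that, in contrast with the Jonqui\`eres case (where the analogue of $u_\perp\cdot v_\perp$ is manifestly $\leq 0$, giving linear subadditivity), here the cross term $u_\perp\cdot v_\perp$ has no reason to be nonpositive, so one cannot hope for linear subadditivity and must instead invoke Cauchy--Schwarz on the negative definite space $P^\perp$ --- and this is exactly what produces the square-root subadditivity. Conceptually, an isometry fixing the isotropic class $K$ moves $e_0$ inside the ``horosphere'' $\{w\cdot K=3\}$, on which the induced metric is Euclidean, so the relevant displacement grows like $\sqrt{\deg}$.
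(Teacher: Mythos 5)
Your proof is correct and follows essentially the same route as the paper's: both exploit $h_i(K)=K$ to split $h_i^{\pm1}(e_0)$ into a piece determined by $K$ and $e_0$ (computed exactly) plus a remainder in a negative definite subspace, then apply Cauchy--Schwarz there to get the square-root subadditivity. The only difference is cosmetic --- the paper writes $h_i^{-1}(e_0)=\tfrac{d_i}{3}K+v_i$ with $v_i$ supported on the $e(p)$'s rather than using the orthogonal complement of $\R e_0\oplus\R K$ --- and both yield the same conclusion.
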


\begin{proof}
Note that $\deg (h_1h_2)=h_1h_2(e_0)\cdot e_0=h_2(e_0)\cdot (h_1)^{-1}(e_0)$.
For $i=1,2$, write $d_i=\deg (h_i)$ and define $v_i$ by 
\[
(h_i)^{-1}(e_0)=\frac{d_i}{3}K+v_i.
\]
This decomposition satisfies $e_0\cdot v_i=0$,
\[
3=K\cdot e_0=K\cdot (h_i)^{\pm 1}(e_0)=K\cdot v_i
\]
because $K\cdot K=0$, and %**New** h(e_0)^2->h_i(e_0)^2 
\[
1=h_i(e_0)^2=(v_i)^2+2\frac{d_i}{3}(K\cdot v_i)=(v_i)^2+2d_i.
\]
Writing $v_1=\sum a(p) e({p})$ and $v_2=\sum b(p)e({p})$, one gets
\[
(v_1\cdot v_2)^2=(\sum a(p)b(p))^2\leq \sum a(p)^2\cdot \sum b(p)^2=(v_1)^2\cdot (v_2)^2.
\] 
Hence,
\begin{eqnarray*}
\deg (h_1h_2)&=&(\frac{d_1}{3}K+v_1)\cdot (\frac{d_1}{3}K+v_2)\\
&=&d_1+d_2+v_1\cdot v_2\\
&\leq&d_1+d_2+\sqrt{(2d_1-1)(2d_2-1)}\\
&<&(\sqrt{d_1}+\sqrt{d_2})^2.
\end{eqnarray*}
\end{proof}

\begin{lem}\label{Lem:HalphenIneq}
Let $h$ be an element of $W_\infty$,  let $p_1,\dots,p_m$ be the  base-points of  $h$ and $h^{-1}$. 
Let $a_k$ and $b_k$ be the multiplicities of the base points: $a_k=h(e_0)\cdot e(p_k)$, $b_k=h^{-1}(e_0)\cdot e(p_k)$. 
If 
\[
d/3 \geq  3+ (3+\sum_{j={10}}^m b_j)\left( \max_{i=1}^9\vert 3a_i-d\vert   + \sum_{j={10}}^{m} a_j\right).
\]
then $h$ is an Halphen element with respect to $K=3e_0-\sum_{i=1}^9 e(p_i)$.
\end{lem}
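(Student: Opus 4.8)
The goal is to show that $h$ is a Halphen element with respect to $K:=3e_0-\sum_{i=1}^9 e(p_i)$, i.e. $h(K)=K$. Since $K^2=0$ and $\kf(K)=0$, and every element of $W_\infty$ preserves the intersection form and the canonical form $\kf$ and extends to an isometry of $\H_{\P^2_\k}$, the class $h(K)$ is again isotropic with $\kf(h(K))=0$ and defines a boundary point of $\H_{\P^2_\k}$; two isotropic classes of $\z_{\P^2_\k}$ lying in the closure of the positive cone have non-negative intersection, which vanishes exactly when they are proportional. Hence it suffices to prove $K\cdot h(K)=0$: then $h(K)=cK$ with $c>0$, and applying the same to $h^{-1}$ shows $c$ and $c^{-1}$ are positive integers (by primitivity of $K$), so $c=1$.

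The first step is an exact computation of $K\cdot h(K)$. Writing $h(K)=3h(e_0)-\sum_{i=1}^9 h(e(p_i))$ and applying Lemma~\ref{Lem:MultiplicitiesPositive} to $e_0$ and to each $e(p_i)$ (and to $K$ itself, which has the admissible form $3e_0-\sum e(q_i)$), each image is either $e(q)$ or of the form $d'e_0-\sum_q c_q e(q)$ with all $c_q\ge0$. Substituting these expressions, and using the Noether equalities \eqref{eq:NoetherEquality} for $h$ and for $h^{-1}$, one is led to an identity of the shape
\[
K\cdot h(K)=3\sum_{j=10}^m a_j-\sum_{i=1}^9 m_i,
\]
where $m_i$ is the multiplicity of $h(e(p_i))$ supported away from $\{p_1,\dots,p_9\}$ --- a quantity that is $\ge0$ except in the degenerate case where $p_i$ is not a base point of $h$. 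The symmetric computation for $h^{-1}$ gives $K\cdot h(K)=3\sum_{j=10}^m b_j-\sum_{i=1}^9 m'_i$. In particular $K\cdot h(K)$ lies between $0$ and $3\min\bigl(\sum_{j\ge10}a_j,\sum_{j\ge10}b_j\bigr)$.

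The heart of the matter is to show the hypothesis forces $\sum_{j=10}^m a_j=0$, i.e. that $h^{-1}$ (and, symmetrically, $h$) has no base point outside $\{p_1,\dots,p_9\}$. Arguing by contradiction, suppose $\sum_{j\ge10}a_j\ge1$. To the identity above I would add Noether's inequality (Lemma~\ref{Lem:EqualitiesNoether}), giving $a_i+a_j\le d$ and, after reordering, $a_1+a_2+a_3\ge d+1$ --- and likewise for the $b_i$ --- together with the self-intersection identity obtained from $\bigl(h(e_0)-\tfrac d3 K\bigr)^2$, namely
\[
\sum_{i=1}^9 (3a_i-d)^2+9\sum_{j=10}^m a_j^2=18d-9+6d\sum_{j=10}^m a_j,
\]
and its analogue for $h^{-1}$. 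The point is that the ``spurious mass'' $\sum_{j\ge10}a_j$ must be compensated inside the nine-point pattern, and that per unit of it one is forced either to enlarge the excess degree $\sum_{j\ge10}b_j=h(K)\cdot e_0-3$ of $h(K)$, or to enlarge the deviation $\max_{i\le9}|3a_i-d|$ of $h(e_0)$ from the balanced class $\tfrac d3 K$; estimating these against $d$ should produce
\[
\frac d3<3+\Bigl(3+\sum_{j=10}^m b_j\Bigr)\Bigl(\max_{i=1}^9|3a_i-d|+\sum_{j=10}^m a_j\Bigr),
\]
contradicting the hypothesis. Once $\sum_{j\ge10}a_j=\sum_{j\ge10}b_j=0$, the identity of the previous paragraph gives $K\cdot h(K)\le0$, hence $K\cdot h(K)=0$; equivalently, $h(e_0)$, the $h(e(p_i))$, and therefore $h(K)$ are supported on $e_0,e(p_1),\dots,e(p_9)$, so $h(K)=3e_0-\sum_{i=1}^9\delta_i e(p_i)$, and $h(K)^2=0$ together with $\kf(h(K))=0$ force $\sum_i\delta_i=9$ and $\sum_i\delta_i^2=9$; then $\sum_i(\delta_i-1)^2=0$, so $\delta_i=1$ for all $i$ and $h(K)=K$.

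The main obstacle is the contradiction of the third paragraph: converting the Noether identities, the Noether inequality, and the self-intersection relations into exactly the displayed inequality. One has to check that spurious base points genuinely cost what is claimed --- per unit of $\sum_{j\ge10}a_j$, either $3+\sum_{j\ge10}b_j$ or $\max_i|3a_i-d|$ must be of order $d$ --- and to dispose of the small-degree degenerate cases (where some $p_i$ is not a base point of $h$ and $m_i$ can be negative); I would expect the clean argument to treat the ``nearly balanced'' regime of $h(e_0)$, where $\max_i|3a_i-d|$ is small and one can argue directly (using that $h^{-1}(e_0)=\tfrac d3 K$ would force $e_0\cdot h(K)=0$, which is impossible), separately from the ``unbalanced'' regime, where the deviation already bounds $d$.
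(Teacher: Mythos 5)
There is a genuine gap: the decisive quantitative step is not carried out. You correctly reduce the problem, via the Hodge index theorem, to showing $K\cdot h(K)=0$, and you correctly set up a proof by contradiction whose target is the negation of the hypothesis. But at the point where the inequality must actually be produced, your text says only that estimating the various quantities ``should produce'' the displayed bound, and you yourself flag this as ``the main obstacle.'' That derivation is the entire content of the lemma, so the proposal as written does not prove it. Moreover, the intermediate ``exact computation'' $K\cdot h(K)=3\sum_{j\ge 10}a_j-\sum_{i=1}^9 m_i$ is only asserted ``of the shape'': expanding $K\cdot h(K)=h^{-1}(K)\cdot K$ produces cross terms $e(p_i)\cdot h(e(p_l))$ that do not collapse to the claimed expression without further argument, and the bound $0\le K\cdot h(K)\le 3\min(\sum_{j\ge10}a_j,\sum_{j\ge10}b_j)$ is not justified (nor obviously correct, since $K\cdot h(K)=3n-\sum_{i\le 9}c_i$ with $n=3+\sum_{j\ge10}b_j$, which only gives the upper bound $9+3\sum_{j\ge10}b_j$ without extra input).

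For comparison, the paper's mechanism is more direct and avoids your intermediate sub-goal $\sum_{j\ge10}a_j=0$ altogether. Write $h(K)=ne_0-\sum_k c_k e(p_k)$; Lemma~\ref{Lem:MultiplicitiesPositive} gives $n>0$ and $c_k\ge 0$, the invariance of $\kf$ gives $3n=\sum_k c_k$ and rules out $h(K)=e_0-e(p_k)$, whence $c_k\le n-1$ via $h(K)\cdot h(e_0-e(p_k))=K\cdot(e_0-e(p_k))>0$; the intersection $h(K)\cdot e_0=K\cdot h^{-1}(e_0)$ together with Noether's equalities gives $n=3+\sum_{j\ge10}b_j$. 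Assuming $h(K)\ne K$, Hodge index forces $K\cdot h(K)\ge 1$, i.e.\ $\sum_{i=1}^9 c_i\le 3n-1$. The key identity is then $3=h(e_0)\cdot h(K)=\frac d3\bigl(3n-\sum_{i\le 9}c_i\bigr)-\sum_{i\le 9}(a_i-\frac d3)c_i-\sum_{j\ge10}a_jc_j$, from which $d/3\le 3+(3n-1)\max_i|a_i-\frac d3|+(n-1)\sum_{j\ge10}a_j$, contradicting the hypothesis. If you want to salvage your route, you would need to supply an argument of exactly this kind; the pieces you list (Noether identities, Noether inequality, the self-intersection relation) do not by themselves yield the stated bound, and the per-unit ``cost'' heuristic in your last paragraph is not substantiated.
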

\begin{proof}
To simplify the notation, we write $e_k=e({p_k})$ for $k=1,\dots,m$. Thus, 
\begin{eqnarray*}
h(e_0) & = & de_0-\sum_{i=1}^9 a_i e_i -\sum_{j=10}^m a_j e(q_j) \\
h^{-1}(e_0) & = & de_0-\sum_{i=1}^9 b_i e_i -\sum_{j=10}^m b_j e(q_j) 
\end{eqnarray*}
where the multiplicities $a_k$ and $b_k$ are non-negative integers ($1\leq k\leq m$). 

Denote by $K\in \z_{\P^2_\k}$ the element $3e_0-\sum_{i=1}^9 e_i$, and  write
\[
h(K)=ne_0-\sum_{i=1}^m c_k e_k\; ;
\]
to obtain such a formula, we may have to allow new base points, and thus increase the number $m$. By Lemma~\ref{Lem:MultiplicitiesPositive}, 
$n$ is a positive integer, and the $c_k$ are non-negative integers.

The canonical form $\kf$ vanishes on $K$. The invariance of $\kf$ gives 
\[
3n=\sum_{k=1}^m c_k
\]
and $h(K)\neq e_0-e_k$ for all $k$ (because $\kf(e_0-e_k)=2\neq 0$). From 
Lemma~\ref{Lem:MultiplicitiesPositive} we get $K\cdot (e_0-e_k)= h(K)\cdot h(e_0-e_k)  > 0$ for all indices $k$:
\[
c_k\leq n-1, \quad \forall 1\leq k  \leq m.
\]
Since $h$ preserves the intersection form, Hodge index theorem implies that $h(K)=K$ if and only if $h(K)\cdot K=0$, if and only if $h(K)$ is proportional to $K$. We now assume that $h$ does not fix $K$; this implies that $h(K)\cdot K$ is positive, hence that
\[
1\leq 3n-\sum_{i=1}^9c_i \quad {\text{and}} \quad \sum_{i=1}^9 c_i\leq 3n-1
\]
Since $h(K)\cdot e_0 = K\cdot h^{-1}(e_0)$, Noether equalities imply
\[
 n = 3d-\sum_{i=1}^9 b_i = 3+\sum_{j=10}^m b_j \geq 3.
\] 
We now compute $h(e_0)\cdot h(K)$, and get 
\[
3 = dn -\sum_{i=1}^9 a_i c_i - \sum_{j=10}^m a_j c_j = \frac{d}{3}(3n-\sum_{i=1}^9c_i) - \sum_{i=1}^9(a_i-\frac{d}{3})c_i-\sum_{j=10}^ma_jc_j.
\]
Then
\begin{eqnarray*}
d/3 & \leq & 3+ \sum_{i=1}^9(a_i-\frac{d}{3})c_i + \sum_{j=10}^ma_jc_j\\
 & \leq & 3 +  \left( \sum_{i=1}^9 c_i\right) \max_{i=1}^9{\left\vert a_i-\frac{d}{3}\right\vert}+ \max_{j\geq 10}{\left(c_j\right)} \sum_{j=10}^m a_j\\
 & \leq & 3 + (3n-1)\max_{i=1}^9{\vert a_i-\frac{d}{3}\vert} + (n-1)  \sum_{j=10}^m a_j \\
 & \leq & \left( 3+\sum_{j=10}^m b_j \right) \left( 3(\max_{i=1}^9{\vert a_i-\frac{d}{3}\vert}) + \sum_{j=10}^m a_j \right) + R
\end{eqnarray*}
where 
\[
R= 3-\max_{i=1}^9{\vert a_i-\frac{d}{3}\vert} -   \sum_{j=10}^m a_j< 3.
\]
This concludes the proof.
 \end{proof}

%%%%%%%%%%%%%%%%
\subsection{Base points of Jonqui\`eres transformations: From $W_\infty$ to $\Bir(\P^2_\k)$}
%%%%%%%%%%%%%%%%

Elements of Jonqui\`eres type in $W_\infty$ are not all realized by birational transformations of the plane. 
The precise constraints that the base points must satisfied are listed in the following proposition. Both the
statement and its proof are necessary to obtain Theorem~C and~D.

\begin{pro}\label{Prop:ExistenceJonq}
Let $p_1,\dots,p_{2m-1}\in \B(\P^2_\k)$ be $2m-1$ distinct points. There exists a Jonqui\`eres element $f\in \Bir(\P^2_\k)$ whose base-points are $p_1,\dots,p_{2m-1}$, and such that 
\[
{f_\p}^{-1}(e_0)=me_0-(m-1)e({p_1})-\sum_{i=2}^{2m-1} e({p_i})
\]
if and only if the points $p_i$ can be ordered so as to satisfy the following properties: 
\begin{enumerate}
\item
$p_1$  is a proper point of $\P^2_\k$;
\item
for any $i\ge 2$,  either $p_i$ is  a proper point of $\P^2_\k$ or $p_i$ is in the first neighbourhood of $p_j$ for some $j<i$;
\item
for all $i>j\ge 2$, there is no line of $\P^2_\k$ which passes through $p_1$, $p_i$, $p_j$;
\item
for all triples $k>j>i\ge 2$, at least one of the two points $p_j,p_k$ does not belong, as proper or infinitely near point, to the exceptional divisor associated to $p_i$;
\item
the number of points in $\{p_2,\dots,p_{2m-1}\}$ that belong, as proper or infinitely near points, to the exceptional divisor associated to $p_1$, is at most $m-1$ (these are points "proximate" to $p_1$);
\item
for any $k\ge 1$, each curve of $\P^2_\k$ of degree $k$ with multiplicity $k-1$ at $p_1$ passes through at most $k+m-1$ of the points $\{p_2,\dots,p_{2m-1}\}$.
\end{enumerate}
 \end{pro}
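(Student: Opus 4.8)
The ``only if'' direction is the easy one and follows from two standard facts about plane Cremona maps, applied to $f^{-1}$; the ``if'' direction is the substantial half, which I would handle by induction on the degree $m$, the geometric object to aim for being the \emph{fundamental curve} of the de Jonqui\`eres map.

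\textbf{The forward direction.} Suppose such an $f$ exists, and let $\pi\colon X\to\P^2_\k$ be the blow-up of the base points $p_1,\dots,p_{2m-1}$ of $f^{-1}$; put $D={f_\p}^{-1}(e_0)=me_0-(m-1)e(p_1)-\sum_{i\ge2}e(p_i)$, so that $D\cdot D=1$. By definition of the base points, $f^{-1}$ lifts to a morphism $X\to\P^2_\k$ whose pull-back of $\mathcal{O}(1)$ is $D$; hence $D$ is nef, so $D\cdot C\ge 0$ for every effective class $C$ on $X$ (or on any blow-up of $X$). Conditions (1), (2), (4) and (5) then express exactly the proximity inequalities for the base-point configuration of a plane Cremona transformation, read at $p_1$ (multiplicity $m-1$) and at the points $p_i$, $i\ge2$ (multiplicity $1$): these inequalities are themselves the relations $D\cdot\widetilde E(p)\ge 0$ for the strict transforms $\widetilde E(p)$ of exceptional curves, together with the impossibility, when $p$ is a point of multiplicity $0$, of any $p_i$ being proximate to it. Condition (3) holds because a line through $p_1$ and through two of the $p_i$, $i\ge2$, would have strict transform $\widetilde L$ with $D\cdot\widetilde L\le -1<0$; and condition (6) holds because a degree-$k$ curve with multiplicity $\ge k-1$ at $p_1$ meeting $s$ of the points $p_2,\dots,p_{2m-1}$ has strict transform $\widetilde C$ with $0\le D\cdot\widetilde C\le mk-(m-1)(k-1)-s=m+k-1-s$.

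\textbf{The converse.} Here I would induct on $m$, the case $m=1$ being trivial (take any element of $\PGL_3(\k)$ fixing $p_1$). So let $m\ge2$ and assume (1)--(6). Introduce $\Gamma_X=(m-1)e_0-(m-2)e(p_1)-\sum_{i=2}^{2m-1}e(p_i)\in\NS(X)$, which satisfies $\Gamma_X^2=\Gamma_X\cdot K_X=-1$, while $D:=\Gamma_X+(e_0-e(p_1))$ is the target class, with $D\cdot D=1$, $D\cdot K_X=-3$ and $D\cdot\Gamma_X=0$; geometrically $\pi_*\Gamma_X$ is a plane curve of degree $m-1$ with an $(m-2)$-fold point at $p_1$ through $p_2,\dots,p_{2m-1}$. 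The key assertion is that (1)--(6) force $\Gamma_X$ to be represented by an irreducible curve $\Gamma$ --- necessarily a smooth rational $(-1)$-curve --- and force $|D|$ to be base-point free on $X$. Granting this, Riemann--Roch gives $h^0(X,D)\ge3$, while the exact sequence $0\to\mathcal{O}_X(e_0-e(p_1))\to\mathcal{O}_X(D)\to\mathcal{O}_\Gamma(D)\to0$, with $\deg\mathcal{O}_\Gamma(D)=D\cdot\Gamma=0$ and $\Gamma\simeq\P^1$, forces $h^0(X,D)=3$ and shows $\Gamma$ is not a fixed component of $|D|$. Hence $|D|$ defines a birational morphism $\phi_{|D|}\colon X\to\P^2_\k$ of degree $D\cdot D=1$; since $D\cdot(e_0-e(p_1))=1$, this morphism carries the pencil of lines through $p_1$ to a pencil of lines, so $f:=(\phi_{|D|}\circ\pi^{-1})^{-1}$ is a de Jonqui\`eres element with ${f_\p}^{-1}(e_0)=D$ and base points exactly $p_1,\dots,p_{2m-1}$, as required.

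\textbf{The main obstacle.} Everything above is formal; the weight of the proof lies entirely in the key assertion that (1)--(6) imply the irreducibility of $\Gamma_X$ and the base-point freeness of $|D|$. I see two routes, which share the same combinatorial core. (a) Directly: prove $\Gamma_X$ effective via Riemann--Roch and the Noether-type inequalities of Lemmas~\ref{Lem:EqualitiesNoether} and~\ref{Lem:MultiplicitiesPositive}, then use (3)--(6) --- which say precisely that $D\cdot C\ge0$ for the strict transforms $C$ of lines through $p_1$, of exceptional curves, and of plane curves of degree $k$ with multiplicity $k-1$ at $p_1$ --- to forbid $\Gamma_X$ from splitting off any such $C$ and to deduce that $|D|$ has no base point. (b) Inductively inside $\Bir(\P^2_\k)$: using (1), (2), (3), (5) construct a quadratic de Jonqui\`eres transformation $\theta$ whose base points are $p_1$ together with two points of $\{p_2,\dots,p_{2m-1}\}$ chosen maximal for the infinitely near order; by Lemma~\ref{Lem:Jonqtrans} the action of $\theta_\p$ on $\z_{\P^2_\k}$ is explicit, so $\theta_\p$ transforms $\{p_1,\dots,p_{2m-1}\}$ into a configuration of $2m-3$ points, which one checks still satisfies (1)--(6) with degree $m-1$, and then applies the induction hypothesis and composes. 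In both routes the delicate step is the same --- a case analysis on the infinitely near positions of the $p_i$ showing that (1)--(6) are exactly what is needed for the argument to run, and in particular that the quantitative bound (6) is stable under the reduction --- and this bookkeeping, rather than any single geometric input, is the real difficulty.
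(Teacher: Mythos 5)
Your ``only if'' direction is fine and is essentially the paper's argument: $D={f_\p}^{-1}(e_0)$ is nef, and intersecting it with the effective classes attached to lines through $p_1$, to exceptional divisors, and to degree-$k$ curves with an $(k-1)$-fold point at $p_1$ yields (1)--(6). The problem is the converse. You reduce it to a ``key assertion'' --- that (1)--(6) force $\Gamma_X=(m-1)e_0-(m-2)e(p_1)-\sum_{i\ge2}e(p_i)$ to be represented by an irreducible $(-1)$-curve and force $\vert D\vert$ to be base-point free --- and then you write ``Granting this\dots''. But that assertion \emph{is} the content of the proposition; everything after it (Riemann--Roch, the restriction sequence to $\Gamma$, degree $D\cdot D=1$) is indeed formal. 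Neither of your two routes is carried out: route (a) requires showing $\Gamma_X$ is effective, irreducible, and that no base point survives, which is exactly the case analysis you defer; route (b) requires choosing an admissible quadratic Jonqui\`eres map inside the configuration (not automatic when the $p_i$ form towers of infinitely near points --- your ``maximal for the infinitely near order'' choice need not give three points forming the base scheme of a quadratic map) and then verifying that all six conditions, including the quantitative bound (6), persist for the transformed $2m-3$ points. You candidly flag this bookkeeping as ``the real difficulty,'' which means the proof is not complete.

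For comparison, the paper closes this gap by a concrete construction rather than by linear systems or induction on $m$: blow up $p_1$ to get $\mathbb{F}_1$, then perform the elementary transformation at $p_2$, then at $p_3$, and so on; conditions (2)--(4) guarantee at each step that $p_j$ is a proper point of the current Hirzebruch surface and that no later base point sits on the fiber being contracted, so the chain of links $X_2\dasharrow\cdots\dasharrow X_{2m-1}$ is well defined. Conditions (5) and (6) enter only at the end, to rule out a section of self-intersection $\le -3$ on $X_{2m-1}$, forcing $X_{2m-1}\simeq\mathbb{F}_1$ so that one can contract back to $\P^2_\k$. If you want to keep your linear-systems framing, you would need to supply the analogue of that final step: a proof that every irreducible curve $C$ with $D\cdot C<0$ or every base point of $\vert D\vert$ produces a violation of one of (1)--(6), together with the effectivity and irreducibility of $\Gamma_X$. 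As written, the converse is a strategy, not a proof.
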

 
For Property (1), the case $m=2$ is special: There are three base-points of multiplicity $1$, and at least one of them is a proper point of the plane; 
we chose such a point and call it $p_1$.

\begin{proof} This result is well known to specialists (see \cite{Gizatullin:1982, Alberich:LNM}), but it is hard to extract this precise statement from the literature.

We first verify that the six properties are necessary. The case $m=1$ corresponds to linear projective transformations
and is easily handled with. So, assume that $f$ is a Jonqui\`eres transformation 
with base points $p_i$, degree $m\geq 2$, and 
\[
{f_\p}^{-1}(e_0)=me_0-(m-1)e({p_1})-\sum_{i=2}^{2m-1} e({p_i}).
\]
Let $C$ be a curve of degree $k$ with multiplicity $k-1$ at $p_1$. Let $I$ be the set of indices $i$ with $p_i\in C$. 
The class $ke_0- (k-1) e(p_1) - \sum_{i\in I} p_i$ is effective, and the class $f_\p^{-1}(e_0)$ is numerically effective; their
intersection is equal to 
\[
mk-(m-1)(k-1)-\vert I\vert = m+k-1 -\vert I\vert.
\]
Since this number is non-negative, Property (6) is satisfied. Properties (3) and (4) are proved along the same lines. 
To prove (2), assume that $p_i$ is not a proper point of the plane, and is not in the first neighbourhood of any other 
base point $p_i$. Then we find a point $q$ which is not a base point such that $e(q)-e(p_i)$ is effective. Intersecting
with ${f_\p}^{-1}(e_0)$ one gets $-1\geq 0$, a contradiction. Property (1) and (5) are proved in a similar way.

We now prove that Properties (1) to (6) are sufficient to construct such a Jonqui\`eres transformation.  
Denote by $\pi\colon X_2\to \P^2_\k$ the blow-up of $p_1$. The surface $X_2$ is the Hirzebruch surface $\mathbb{F}_1$: it admits a morphism $\eta_2\colon X_2\to \P^1$, whose fibres correspond to the lines of $\P^2_\k$ through the point $p_1$.

By property (2), the point $p_2$ is a proper point of $X_2$. Consider the fiber 
\[
F_2=(\eta_2)^{-1}(\eta_2(p_2));
\]
since this curve is the strict transform of the line through $p_1$ and $p_2$, property (3) implies that $F_2$ does not contain any of the $p_i$, $i\geq 3$, as proper or infinitely near point. Denote by $X_2\dasharrow X_3$ the birational map which consists of the blow-up of $p_2$, followed by the contraction of the strict transform of $F_2$. By construction $X_3$ is a Hirzebruch surface $\mathbb{F}_0$ or $\mathbb{F}_2$, and $p_3$ is now a proper point of $X_3$ by property (2). The pencil of lines through $p_1$ correspond to the ruling $\eta_3\colon X_3\to \P^1$ and the assumptions (3) and (4) imply that the fiber $F_3$ of $\eta_3$ through $p_3$ does not contain any $p_i$ with $i\geq 4$. 

Iterating this process, one constructs a sequence of maps $X_3\dasharrow X_4\dasharrow \dots \dasharrow X_{2m-1}$. %**New** X_{2n-1}->X_{2m-1}
 For $j=2,\dots,2m-1$, the surface $X_j$ is a Hirzebruch surface and comes with a morphism $\eta_j\colon X_j\to \P^1$, the fibers of which correspond to the lines of $\P^2_\k$ through $p_1$.  Moreover, the point $p_j$ is a proper point of $X_j$, and no other point of the fibre $F_j=(\eta_j)^{-1}(\eta_j(p_j))$ is one of the $p_i$; this latter condition is given by $(3)$ and $(4)$.

By construction, $X_{2m-1}$ is isomorphic to $\mathbb{F}_{r}$, for some odd integer $r$. We claim that $r=1$; this amounts to show that no section of $\eta_{2m-1}$ has self-intersection $\le -3$. If this section corresponds to the curve of $X_2=\mathbb{F}_1$ contracted by $\pi$ onto $p_1$ (its exceptional curve), it implies that at least $m$ of the points $p_2,\dots,p_{2m-1}$ belong, as proper or infinitely near, to the section, contradicting  hypothesis (5).
We can thus assume that the hypothetic section of self-intersection $\le -3$ corresponds to a curve of $\P^2_\k$ of degree $k$ passing through $p_1$ with multiplicity $k-1$. Such a curve has self-intersection $2k-1$ on $X_2=\mathbb{F}_1$. It must pass  through $l$ of the points  $p_2,\dots,p_{2m-1}$ and it must have self-intersection $(2k-1)-l+(2m-2-l)=2(k+m-l)-3$ on $X_{2m-1}=\mathbb{F}_r$. Assumption (6) implies that $l< k+m$: this shows that the self-intersection is $\ge -1$.

Therefore, $X_{2m-1}$ is isomorphic to $\mathbb{F}_1$. Contracting the exceptional divisor, we  obtain a birational morphism $X_{2m-1}$ on a surface which is isomorphic to $\P^2_\k$; hence, we can identify this surface with the initial plane $\P^2_\k$ and assume that the section is contracted to the point $p_1$. The composition of the maps $\P^2_\k=X_1 \dasharrow X_2\dasharrow \dots \dasharrow X_{2m-1}\to \P^2_\k$ is a birational map that preserves the pencil of lines through $p_1$, and whose base-points are exactly $p_1,\dots,p_{2m-1}$. Classical Noether equalities imply that the degree of the map is $m$, the multiplicity of $p_1$ is $m-1$ and the other multiplicities are $1$. This achieves the proof.
\end{proof}

%%%%%%%%%%%%%%%%%%%%%%%%%%%%%%
\section{Dynamical degrees and conjugacy classes}\label{par:PROOF-D}
%%%%%%%%%%%%%%%%%%%%%%%%%%%%%%
 
Our goal is to prove Theorem~C from the introduction. Thus, given a birational transformation 
$f$ of the projective plane with large dynamical degree $\lambda(f)$, we want to conjugate
$f$ by an element $g$ of $\Bir(\P^2_\k)$ to obtain $\deg(gfg^{-1})\leq C^{st}\lambda(f)^5$
(where the constant $C^{st}$ does not depend on $f$ and will be bounded by $4700$). 

Given $f\in \Bir(\P^2_\k)$ with $\lambda(f)>1$, the main arguments
may be summarized as follows. The degree of $f$ is large, compared to $\lambda(f)$, if and only if the axis $\ax(f_\p)$ is far away from the
base point $e_0$ of the hyperbolic space $\H_{\P^2_\k}$. Thus, we want to conjugate $f$ by $g$ so  that
the axis $g_\p(\ax(f_p))$ of $gfg^{-1}$ becomes closer to $e_0$.
A similar problem occurs in the proof of Noether-Castelnuovo theorem. When one wants to prove that quadratic transformations of 
the plane generate $\Bir(\P^2_\k)$, one starts with an element $f$ in $\Bir(\P^2_\k)$ and then looks for a  quadratic map $h$
such that $\deg(hf)<\deg(f)$; on $\H_{\P^2_\k}$, the problem is to find a quadratic map such that $g_\p(f_\p(e_0))$ is closer to $e_0$ than $f_\p(e_0)$ is.
We follow the same strategy as in Noether's proof. In other words, we first work with elements of $W_\infty$, and produce elements
$h\in W_\infty$ such that $h(\ax(f_\p))$ is close to $e_0$; then, as Castelnuovo did to correct Noether's error, we have to modify $h$ slightly in order to realize it as $g_\p$ for some $g$ in $\Bir(\P^2_\k)$. Proposition~\ref{Prop:ExistenceJonq} is used for this purpose.

\begin{rem}
The proof makes use of ideas from hyperbolic geometry (on the metric space $(\H_{\P^2_\k}, \dist)$). The distance
is given by $\cosh(\dist(u,v))=u\cdot v$, and it becomes rapidly annoying to transfer inequalities from distances to intersection
numbers, and vice versa. This is the reason why computations are done with intersections, even if they correspond to simple inequalities between distances. \end{rem}
 
%%%%%%%
\subsection{Axis, degree, distance to $e_0$}\label{par:Axis-Degree}
%%%%%%%

%%
\subsubsection{Isotropic eigenvectors of loxodromic elements}
 Let $h$ be a loxodromic element of $W_\infty$, and let $\lambda(h)$ be its dynamical degree. We refer to Section~\ref{par:Isom-Hinfini-TLength} for the basic properties of loxodromic isometries of hyperbolic spaces. 

Write 
\[
h(e_0)=de_0-\sum_i a_i e(p_i)
\]
where the $a_i\geq 0$ are the multiplicities of the base points $p_i\in \Bub(\P^2_\k)$. The positive integer $d$ 
is the degree of $h$: $d=h(e_0)\cdot e_0$. As explained in \S~\ref{par:Isom-Hinfini-TLength}, $h$ preserves two isotropic lines $\R v_+$ 
and $\R v_-$, where $v_+$ and $v_-$ are elements of $\zz_{\P^2_\k}$ and 
\[
h(v_+)= \lambda(h) v_+, \quad h(v_-)= v_-/\lambda(h).
\]
With the normalization $v_+\cdot e_0=v_-\cdot e_0=1$, the vectors $v_+$ and $v_-$ are uniquely defined. Moreover, 
one has
\[
v_{+} = \lim\limits_{n\to \infty} \frac{h^n(e_0)}{\lambda^n},\quad 
v_{-} = \lim\limits_{n\to \infty} \frac{h^{-n}(e_0)}{\lambda^n}.
\]
Thus, we can write 
\[
v_+=e_0-\sum_{i} \alpha_i e(p_i),\quad v_-=e_0-\sum_i \beta_i e(p_i)
\]
where the $p_i$ form a countable subset of $\Bub(\P^2_\k)$; the set $\{p_i\}$ is contained in the union of all base points 
of all iterates $h^n$ for $n\in \Z$. The canonical form $\kf$ is $h$-invariant, hence $\kf(v_+)=\kf(v_-)=0$; since $v_+$ and $v_-$ are isotropic, we obtain
\[
\sum_i \alpha_i^2=\sum_i\beta_i^2=1 \quad \text{and} \quad \sum \alpha_i=\sum \beta_i=3.
\]
Since $v_+$ and $v_-$ are limits of sequences $h^n(e_0)\lambda^{-\vert n\vert}$, Lemma~\ref{Lem:MultiplicitiesPositive} implies the following positivity statement. 

\begin{lem}\label{EPositif}
 Let $u\in\z_{\P^2_\k}$ be one of the vectors
\[
e_0, \; e({p_1}), \; e_0-e({p_1}), \; 3e_0-\sum_{i=1}^l e(p_i)
\] for some distinct points $p_1,\dots,p_l\in \B(\P^2_\k)$.  Then $u \cdot v\ge 0$ for all $v\in \ax(h)$, %**New** added: the end of the sentence:
 where $\ax(h)$ is the intersection of the plane generated by $v_{+}$ and $v_{-}$ with  $\H_{\P^2_\k}$.

If $h=f_\p$ for some element $f\in \Bir(\P^2_\k)$ and $C\in \zz_{\P^2_\k}$ is an effective divisor, then $C\cdot v\ge 0$ for every $v\in \ax(h)$.
\end{lem}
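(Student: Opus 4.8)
The plan is to deduce everything from Lemma~\ref{Lem:MultiplicitiesPositive} together with the description of $\ax(h)$ as the set of strictly positive combinations of the two isotropic eigenvectors. First I would reduce the claim about an arbitrary $v\in\ax(h)$ to the two eigenvectors $v_+$ and $v_-$. Both are isotropic, both pair positively with ample classes (indeed $v_\pm\cdot e_0=1$ by the chosen normalization, and they are limits of $h^{\pm n}(e_0)/\lambda(h)^n$), and they are not proportional, being eigenvectors for the distinct eigenvalues $\lambda(h)^{\pm1}$. Hence the reverse Cauchy--Schwarz inequality in the signature $(1,\infty)$ space $\zz_{\P^2_\k}$ gives $v_+\cdot v_->0$, and therefore the plane $\R v_++\R v_-$ meets $\H_{\P^2_\k}$ exactly in the vectors $s v_++t v_-$ with $s,t>0$ and $2st\,(v_+\cdot v_-)=1$. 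In particular every $v\in\ax(h)$ is a strictly positive combination of $v_+$ and $v_-$, so it suffices to prove $u\cdot v_+\ge 0$ and $u\cdot v_-\ge 0$ (and, for the last assertion, the analogous inequalities with $C$ in place of $u$).

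Next, using continuity of the intersection pairing on $\zz_{\P^2_\k}$ and the formula $v_+=\lim_n h^n(e_0)/\lambda(h)^n$, I would write
\[
u\cdot v_+=\lim_{n\to\infty}\frac{u\cdot h^n(e_0)}{\lambda(h)^n}=\lim_{n\to\infty}\frac{h^{-n}(u)\cdot e_0}{\lambda(h)^n},
\]
the second equality holding because $h$ is an isometry of the intersection form. Now $h^{-n}$ is an element of $W_\infty$ and $u$ is one of the four vectors to which Lemma~\ref{Lem:MultiplicitiesPositive} applies; hence either $h^{-n}(u)=e(q)$ for some $q\in\Bub(\P^2_\k)$, so that $h^{-n}(u)\cdot e_0=0$, or $h^{-n}(u)=d_ne_0-\sum_i a_{n,i}e(r_{n,i})$ with all $d_n,a_{n,i}\ge 0$, so that $h^{-n}(u)\cdot e_0=d_n\ge 0$. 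Either way every term of the sequence is non-negative, whence $u\cdot v_+\ge 0$. Applying the same argument to the loxodromic element $h^{-1}\in W_\infty$ (whose isotropic eigenvectors are $v_+$ and $v_-$ with the roles interchanged) yields $u\cdot v_-\ge 0$, and the first assertion follows.

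For the second assertion, suppose $h=f_\p$ with $f\in\Bir(\P^2_\k)$ and let $C\in\zz_{\P^2_\k}$ be effective. The same manipulation gives
\[
C\cdot v_+=\lim_{n\to\infty}\frac{(f^{-n})_\p(C)\cdot e_0}{\lambda(h)^n}.
\]
By the facts recalled in Section~\ref{par:Isom-Hinfini}, $\Bir(\P^2_\k)$ acts on $\zz_{\P^2_\k}$ preserving the effective cone, so $(f^{-n})_\p(C)$ is effective; and $e_0$, being the class of a line, is nef, so $(f^{-n})_\p(C)\cdot e_0\ge 0$. Hence $C\cdot v_+\ge 0$, and likewise $C\cdot v_-\ge 0$, giving $C\cdot v\ge 0$ for every $v\in\ax(h)$.

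I do not anticipate a serious obstacle: the argument is a direct application of Lemma~\ref{Lem:MultiplicitiesPositive} plus continuity of the pairing. The only points requiring (routine) care are the elementary hyperbolic-geometry input $v_+\cdot v_->0$, which is what makes every point of $\ax(h)$ a genuine positive combination of the two eigenvectors, and the bookkeeping check that the four admissible vector types in the statement are precisely those covered by Lemma~\ref{Lem:MultiplicitiesPositive}.
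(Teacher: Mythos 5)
Your proposal is correct and follows exactly the route the paper intends: the paper derives this lemma in one line from Lemma~\ref{Lem:MultiplicitiesPositive} via the limit formulas $v_\pm=\lim_n h^{\pm n}(e_0)/\lambda^n$, which is precisely your argument (non-negativity of $u\cdot h^{\pm n}(e_0)=h^{\mp n}(u)\cdot e_0$ term by term, continuity of the pairing, and the fact that every point of $\ax(h)$ is a positive combination of $v_+$ and $v_-$). The effective-cone argument for the second assertion is likewise the intended one.
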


\subsubsection{Axis, and translation length}

The proof of the following lemma is straightforward (see \cite{Cantat-Lamy:Acta, Burger-Iozzi-Monod:2005}).
 
\begin{lem}\label{Lem:LoxoAxDeltaE}
 Let $h$ be a loxodromic element of $W_\infty$  of degree $d$ and dynamical degree $\lambda$.
Denote by $v_{+}$ and $v_{-}$ the eigenvectors of $h$ in $\zz_{\P^2_\k}$ for the eigenvalues $\lambda$ and $\lambda^{-1}$ such that  $v_{+}\cdot e_0=v_{-}\cdot e_0=1$.
Then 
\begin{itemize}
\item[(i)] $d=h(e_0)\cdot e_0=e_0\cdot  h^{-1}(e_0)$ and this degree is equal to $\cosh(\dist(h(e_0),e_0))$ and to $\cosh(\dist(e_0,  h^{-1}(e_0)))$;
\item[(ii)] $\log(\lambda(h))$ is the translation length of $h$, i.e.~the minimum of
$\dist(x,h(x))$ for  $x$ in $\H_{\P^2_\k}$. 
\item[(iii)]
The set of points of $\H_{\P^2_\k}$ that realize the translation length is the {\bf{axis}} of $h$; it coincides with the geodesic line 
\[
\ax(h)=\left.\left\{ \frac{1}{\sqrt{2v_+\cdot v_-}} \left(t v_+ + \frac{v_-}{t}\right)\ \right|\ t\in \R_{>0}\right\}\subset \H_{\P^2_\k}.
\]
\item[(iv)]
The distance $\delta$ from $e_0$ to the axis $\ax(h)$ satisfies
\[
\cosh(\delta)=\sqrt{\frac{2}{v_{+}\cdot v_{-}}}.
\]
It is realized by the projection of 
$e_0$ on the axis, i.e.~by the point
\begin{center}$
E= \sqrt{\frac{2}{v_{+}\cdot v_{-}}}  \frac{v_{+}+v_{-}}{2}.
$\end{center}
\end{itemize}
\end{lem}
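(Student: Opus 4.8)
The plan is to reduce everything to a short computation inside the $2$-plane $P=\R v_+\oplus\R v_-$, using only three facts: $v_+$ and $v_-$ are isotropic, they are normalized by $v_+\cdot e_0=v_-\cdot e_0=1$, and $v_+\cdot v_->0$. Item (i) is essentially definitional: $\deg(h)=e_0\cdot h(e_0)$ by definition, and since $h$ is an isometry of the intersection form, $e_0\cdot h^{-1}(e_0)=h(e_0)\cdot h(h^{-1}(e_0))=h(e_0)\cdot e_0$; as $h$ preserves $\H_{\P^2_\k}$, both $h(e_0)$ and $h^{-1}(e_0)$ lie in $\H_{\P^2_\k}$, so the relation $\cosh(\dist(u,w))=u\cdot w$ yields the two stated equalities.

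Next I would check that $v_+\cdot v_->0$: the vectors $v_\pm$ are nonzero isotropic classes obtained as limits of the positive classes $h^{\pm n}(e_0)\lambda^{-n}\in\H_{\P^2_\k}$ (or one may invoke Lemma~\ref{EPositif}), hence they pair positively with every ample class and represent points of $\partial\H_{\P^2_\k}$; the reverse Cauchy--Schwarz inequality in Lorentzian signature then gives $v_+\cdot v_-\ge0$, with equality only if $v_+$ and $v_-$ are proportional, which is excluded since they span distinct eigenlines of $h$. For (ii)--(iii): by \S\ref{SubSec:Bubble}, $P\cap\H_{\P^2_\k}$ is a totally geodesic hyperbolic line. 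One verifies directly that for $t>0$ the vector $w(t)=\frac{1}{\sqrt{2v_+\cdot v_-}}\bigl(tv_++t^{-1}v_-\bigr)$ satisfies $w(t)\cdot w(t)=1$ and $w(t)\cdot a>0$ for every ample $a$, so $w(t)\in\H_{\P^2_\k}$, and that conversely every point of $P\cap\H_{\P^2_\k}$ has this form; this gives the displayed formula for $\ax(h)$ as a set. Using $v_+\cdot v_+=v_-\cdot v_-=0$ one computes $w(t)\cdot w(s)=\tfrac12\bigl(t/s+s/t\bigr)=\cosh\bigl(\log(t/s)\bigr)$, so $t\mapsto w(e^t)$ is a unit-speed parametrization; and since $h(w(t))=w(\lambda t)$ we get $\dist\bigl(w(t),h(w(t))\bigr)=\log\lambda$ for all $t$, i.e. $h$ translates this geodesic by $\log\lambda$ and in particular $L(h)\le\log\lambda$.

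For the reverse inequality $L(h)\ge\log\lambda$ and for the identification of the displacement-minimizing set with $P\cap\H_{\P^2_\k}$, I would invoke the standard theory of loxodromic isometries of complete $\mathrm{CAT}(-1)$ (equivalently Gromov-hyperbolic) spaces: nearest-point projection $\pi$ onto the complete geodesic $P\cap\H_{\P^2_\k}$ is $1$-Lipschitz and commutes with $h$, whence $\dist(x,h(x))\ge\dist\bigl(\pi(x),h(\pi(x))\bigr)=\log\lambda$ for every $x$, with equality precisely on $P\cap\H_{\P^2_\k}$ (see \cite{Cantat-Lamy:Acta, Burger-Iozzi-Monod:2005} and \cite{CDP:Book}). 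This proves (ii) and (iii). Finally, for (iv), the normalization $v_\pm\cdot e_0=1$ gives $\cosh(\dist(e_0,w(t)))=e_0\cdot w(t)=\frac{1}{\sqrt{2v_+\cdot v_-}}\bigl(t+t^{-1}\bigr)$, minimized at $t=1$; hence $\cosh(\delta)=\sqrt{2/(v_+\cdot v_-)}$ and the nearest point of $\ax(h)$ to $e_0$ is $w(1)=\sqrt{2/(v_+\cdot v_-)}\cdot\frac{v_++v_-}{2}=E$. The only step that is not bare linear algebra is the lower bound $L(h)\ge\log\lambda$ with the sharp characterization of the axis as the min-set; this is where the genuine geometry of the infinite-dimensional hyperbolic space enters, and it is the point I would be most careful to state and cite precisely.
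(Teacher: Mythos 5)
Your proof is correct, and it takes exactly the route the paper has in mind: the paper gives no argument at all for this lemma (it declares the proof ``straightforward'' and points to \cite{Cantat-Lamy:Acta, Burger-Iozzi-Monod:2005}), and your computation in the plane $\R v_+\oplus\R v_-$ together with the standard projection argument for the lower bound $L(h)\ge\log\lambda$ is precisely the content those references supply. The only step that genuinely goes beyond linear algebra is the one you correctly isolate and cite, namely the sharp identification of the displacement-minimizing set with the axis.
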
 

\subsubsection{Approximation of the points of the axis}

Denote by $\left\Vert\cdot \right\Vert$  the Euclidean norm on $\zz_{\P^2_\k}$, defined by 
\[
\left\Vert u \right\Vert=a_0^2+\sum_{p\in \B(\P^2_\k)} a_p^2.
\]
for $u=a_0e_0 + \sum a_p e(p)$. 
If the degree of $h$ is large, the euclidean norm of $d^{-1}h(e_0)-v_+$ must be small: 

\begin{lem}[Approximation of the  axis]\label{Lem:Distvv}
Let $h\in W_\infty$ be a loxodromic element of degree $d$ and dynamical degree $\lambda$. Then
\[
\begin{array}{rclrcl}
\left\Vert \frac{1}{d}h^{-1}(e_0)-v_{-}\right\Vert&<&\sqrt{\frac{2}{\lambda d}} & ; \quad
\left\Vert \frac{1}{d}h(e_0)-v_{-}\right\Vert&<&\sqrt{\frac{2\lambda}{ d}}\vspace{0.1 cm}\\ 

\left\Vert \frac{1}{d}h^{-1}(e_0)-v_{+}\right\Vert&<&\sqrt{\frac{2\lambda}{ d}}& ; \quad
 \left\Vert \frac{1}{d}h(e_0)-v_{+}\right\Vert&<&\sqrt{\frac{2}{\lambda d}}.
\end{array}
\]
Moreover, 
\[
\left\Vert \left( \frac{{h(e_0)}+{h^{-1}(e_0)}}{2d}\right)-\left(\frac{  v_{+}+ v_{-}}{2}\right)\right\Vert<\sqrt{\frac{2}{\lambda d}}, \]
and 
\[\frac{(\lambda-\frac{1}{\lambda})^2}{2d^2}<v_{+}\cdot v_{-}<\frac{1}{d}\left(\frac{1}{\lambda}+\lambda+2\right).
\]
\end{lem}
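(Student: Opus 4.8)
The plan is to reduce everything to elementary manipulations of the intersection form, using two observations. First, if a vector $w\in\zz_{\P^2_\k}$ has vanishing $e_0$-coefficient, then $\|w\|^2=-\,w\cdot w$, since the $e(p)$ form an orthonormal basis (for the negative of the form) of the orthogonal complement of $e_0$. Second, because $h$ is a linear isometry with $h(v_+)=\lambda v_+$, $h(v_-)=v_-/\lambda$, and $v_+\cdot e_0=v_-\cdot e_0=1$, Lemma~\ref{Lem:LoxoAxDeltaE}(i) gives $\deg(h^{-1})=\deg(h)=d$ (so that $\tfrac1d h(e_0)$, $\tfrac1d h^{-1}(e_0)$, $v_+$ and $v_-$ all have $e_0$-coefficient $1$), and one gets in one line each the four intersection numbers
\[
h(e_0)\cdot v_+=h^{-1}(e_0)\cdot v_-=\tfrac1\lambda,\qquad h(e_0)\cdot v_-=h^{-1}(e_0)\cdot v_+=\lambda
\]
(for instance $\lambda\,(h(e_0)\cdot v_+)=h(e_0)\cdot h(v_+)=e_0\cdot v_+=1$; and $v_+^2=v_-^2=0$).

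\textbf{The four displayed inequalities and the ``moreover'' line.} These are a single computation. Put $w_1=\tfrac1d h(e_0)-v_+$; it has no $e_0$-component, so
\[
\|w_1\|^2=-\,w_1\cdot w_1=-\Big(\tfrac1{d^2}h(e_0)^2-\tfrac2d\,h(e_0)\cdot v_++v_+^2\Big)=\tfrac2{d\lambda}-\tfrac1{d^2}<\tfrac2{\lambda d}.
\]
The same computation applied to the pairs $(h^{-1}(e_0),v_-)$, $(h^{-1}(e_0),v_+)$ and $(h(e_0),v_-)$ gives the remaining three bounds, the two ``mixed'' ones being $\tfrac{2\lambda}d-\tfrac1{d^2}<\tfrac{2\lambda}d$. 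The ``moreover'' line then follows from the triangle inequality, since
\[
\frac{h(e_0)+h^{-1}(e_0)}{2d}-\frac{v_++v_-}{2}=\tfrac12\Big(\big(\tfrac1d h(e_0)-v_+\big)+\big(\tfrac1d h^{-1}(e_0)-v_-\big)\Big)
\]
and each summand has norm $<\sqrt{2/(\lambda d)}$.

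\textbf{The bounds on $v_+\cdot v_-$.} The key remark is that $v_+-v_-=\big(\tfrac1d h(e_0)-v_-\big)-\big(\tfrac1d h(e_0)-v_+\big)$ is a difference of two vectors whose norms were just computed, and that $v_+-v_-$ has vanishing $e_0$-coefficient, so $\|v_+-v_-\|^2=-(v_+-v_-)^2=2\,(v_+\cdot v_-)$ (the only surviving term, as $v_\pm$ are isotropic). The upper bound is the plain triangle inequality: $\|v_+-v_-\|<\sqrt{\tfrac2{d\lambda}}+\sqrt{\tfrac{2\lambda}d}$, whose square is exactly $\tfrac2d(\lambda+\tfrac1\lambda+2)$. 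For the lower bound, set $A=\|\tfrac1d h(e_0)-v_-\|^2=\tfrac{2\lambda}d-\tfrac1{d^2}$ and $B=\|\tfrac1d h(e_0)-v_+\|^2=\tfrac2{d\lambda}-\tfrac1{d^2}$; the reverse triangle inequality and $2\sqrt{AB}<A+B$ give
\[
\|v_+-v_-\|^2\ge(\sqrt A-\sqrt B)^2=\tfrac{(A-B)^2}{(\sqrt A+\sqrt B)^2}>\tfrac{(A-B)^2}{2(A+B)},
\]
and since $A-B=\tfrac2d(\lambda-\tfrac1\lambda)$ and $A+B=\tfrac2d(\lambda+\tfrac1\lambda-\tfrac1d)$ this equals $\tfrac{(\lambda-1/\lambda)^2}{d(\lambda+1/\lambda)-1}\ge\tfrac{(\lambda-1/\lambda)^2}{d^2}$, the last step being the elementary inequality $\lambda+\tfrac1\lambda\le d+\tfrac1d$ (valid because $1<\lambda\le d$ and $x\mapsto x+\tfrac1x$ increases on $[1,\infty)$).

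\textbf{Main obstacle.} There is no serious difficulty: once the intersection numbers $h^{\pm1}(e_0)\cdot v_\pm$ are recorded in the right normalization and one notices that every vector occurring in the statement has $e_0$-coefficient $0$ or $1$, the argument is a string of one-line computations and two applications of the (reverse) triangle inequality. The only point requiring a little care is the strictness of the last inequality when $\lambda=d$, where $d(\lambda+\tfrac1\lambda)-1=d^2$: it is rescued by the fact that $\|\tfrac1d h(e_0)-v_+\|\ne\|\tfrac1d h(e_0)-v_-\|$ makes $2\sqrt{AB}<A+B$ strict.
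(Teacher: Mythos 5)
Your proof is correct. For the four norm estimates and the ``moreover'' line it is the paper's argument in different clothing: the paper expands $h^{\pm1}(e_0)$ and $v_\pm$ in coordinates and invokes the Noether equality $\sum b_i^2=d^2-1$ together with $\sum\beta_i^2=1$, which is exactly your identity $\|w\|^2=-w\cdot w$ applied to $w=\tfrac1d h^{\pm1}(e_0)-v_\pm$ (and both proofs hinge on the same intersection numbers $h^{\pm1}(e_0)\cdot v_\pm\in\{\lambda,1/\lambda\}$); the upper bound on $v_+\cdot v_-$ is the same triangle-inequality computation. Where you genuinely diverge is the lower bound on $v_+\cdot v_-$: the paper writes $\lambda-\tfrac1\lambda=\sum a_i(\alpha_i-\beta_i)$ and applies Cauchy--Schwarz against the multiplicities, getting denominator $2(d^2-1)<2d^2$ at once, whereas you use the reverse triangle inequality together with the \emph{exact} values of $\|\tfrac1d h(e_0)-v_\pm\|^2$ and the AM--GM bound $2\sqrt{AB}<A+B$. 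Your route is self-contained given the earlier computations, but it trades the paper's free inequality $d^2-1<d^2$ for the extra input $\lambda+\tfrac1\lambda\le d+\tfrac1d$, i.e.\ $\lambda\le d$; this is true but should be justified in one line, e.g.\ by submultiplicativity of the degree on $W_\infty$ ($\deg(h^2)=h(e_0)\cdot h^{-1}(e_0)=d^2-\sum a_ib_i\le d^2$ since the multiplicities are non-negative by Lemma~\ref{Lem:MultiplicitiesPositive}, and similarly for all iterates), which gives $\lambda=\lim\deg(h^n)^{1/n}\le d$. Your observation that strictness at the end is rescued by $A\neq B$ (since $\lambda>1$) is the right one.
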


In particular, Lemma~\ref{Lem:LoxoAxDeltaE} implies
\begin{equation}\label{Eq:Ineq-Axis-Deg}
\sqrt{\frac{2d}{\frac{1}{\lambda}+\lambda+2}}< \cosh(\dist(e_0,\ax(h)))<\frac{2d}{\lambda-\frac{1}{\lambda}}.
\end{equation}

\begin{rem}
Note that the middle points $(v_++v_-)/2$ or $(h(e_0)+h^{-1}(e_0))/2$ are not contained in $\H_{\P^2_\k}$. 
From a geometric point of view, it would be better to scale them (by the square root of their self-intersection), but the formulas would be 
difficult to read.
\end{rem}

\begin{proof}
Let us derive the top four inequalities. 
From $b_i=e_i\cdot h^{-1}(e_0)=h(e_i)\cdot e_0$ we get
\[
\lambda^{-1}=h(v_-)\cdot e_0=v_-\cdot h^{-1}(e_0)=d-\sum_i b_i\beta_i.
\]
With Noether equality $\sum (b_i)^2=d^2-1$ and the relation  $\sum (\beta_i)^2=1$ we deduce that
\begin{eqnarray*}
\sum_i \left(\frac{b_i}{d}-\beta_i\right)^2 & = & \frac{\sum (b_i)^2}{d^2}-\frac{2\sum b_i\beta_i}{d}+\sum (\beta_i)^2 \\
 & = & \frac{2}{\lambda d}-\frac{1}{d^2}<\frac{2}{\lambda d}.
\end{eqnarray*}
This means that $\left\Vert \frac{h^{-1}(e_0)}{d}-v_{-}\right\Vert<\sqrt{\frac{2}{\lambda d}}$. If one replaces $v_{-}$ by $v_{+}$,  then $\lambda^{-1}$ is changed into $\lambda$; replacing $h$ with $h^{-1}$ yields the three other inequalities.

The fifth inequality follows by the triangular inequality. We now estimate the intersection product $v_+\cdot v_-$. 
On one hand,  
\[
\Vert  v_+-v_-\Vert^2 = 2v_+\cdot v_-=  2-2\sum \alpha_i\beta_i
\]
because $v_+$ and $v_-$ are isotropic. On the other hand, the above inequalities yield 
\[
\Vert v_+-v_-\Vert^2 <\left(\sqrt{\frac{2}{\lambda d}}+\sqrt{\frac{2\lambda}{ d}}\right)^2=\frac{2}{d}\left(\frac{1}{\lambda}+\lambda+2\right). 
\]
Thus, altogether, one gets
\[
v_{+}\cdot v_{-}=1-\sum \alpha_i\beta_i<\frac{1}{d}\left(\frac{1}{\lambda}+\lambda+2\right).
\]

In the other direction, note that $\frac{1}{\lambda}=d-\sum_i a_i\alpha_i$ and $\lambda=d-\sum_i a_i \beta_i$, and deduce
$\lambda-\frac{1}{\lambda}=\sum a_i (\alpha_i-\beta_i)$. Cauchy-Schwarz inequality yields 
\[
(\lambda-\frac{1}{\lambda})^2\le \sum (a_i)^2\cdot \sum (\alpha_i-\beta_i)^2=(d^2-1)\cdot (2 v_{+}\cdot v_{-}),
\]
so that
\[
v_{+}\cdot v_{-}\geq   \frac{(\lambda-\frac{1}{\lambda})^2}{2(d^2-1)}.
\]
This concludes the proof.
\end{proof}

%%%%%%%%%%%%%
\subsection{Decreasing the distance from $e_0$ to the axis}
%%%%%%%%%%%%%
We keep the same notation, $h$ being a loxodromic element of degree $d$, dynamical degree $\lambda$, ... In particular, 
$E$ is the projection of $e_0$ to the axis $\ax(h)$,  $a_i=e({p_i})\cdot h(e_0)$ and $b_i=e({p_i})\cdot h^{-1}(e_0)$. The middle
point of $h(e_0)$ and $h^{-1}(e_0)$ is  
\[
de_0-\sum_i c_i e(p_i), \quad \text{with} \quad c_i=\frac{a_i+b_i}{2}.
\]
%%%
\subsubsection{Strategy}
%%%
The following lemma provides a strategy to decrease the distance between $\ax(h)$ and $e_0$ by conjugacy with 
a quadratic element $g$ of $W_\infty$. Similarly, if $\gamma_i=(\alpha_i+\beta_i)/2$ then
\[
\frac{1}{2} ( v_{+}+ v_{-})= e_0-\sum \gamma_i e({p_i}).
\]

\begin{lem}\label{Lem:Decreasing}
Let $p_1,p_2,p_3$ be three distinct points of $\B(\P^2_\k)$.
If 
\[
\sum_{i=1}^3c_i\ge d+\frac{5}{2}\sqrt{\frac{d}{\lambda}},
\]
 then
\begin{enumerate}
\item
$(e({p_1})+e({p_2})+e({p_3})-e_0)\cdot E>\frac{\sqrt{2}(\frac{5}{2}-\sqrt{6})}{\sqrt{\lambda\left(\frac{1}{\lambda}+\lambda+2\right)}}=\frac{5-2\sqrt{6}}{\sqrt{2}(\lambda+1)}$
\item
$\cosh(\dist(e_0,g(E)))<\cosh(\dist(e_0,E))-\frac{5-2\sqrt{6}}{\sqrt{2}(\lambda+1)}$,\\
for any quadratic element $g\in W_\infty$ with base-points at $p_1,p_2,p_3$.
\end{enumerate}
\end{lem}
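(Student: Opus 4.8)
The plan is to deduce everything from the approximation estimates of Lemma~\ref{Lem:Distvv}, translating the hypothesis $\sum_{i=1}^3 c_i \ge d + \tfrac{5}{2}\sqrt{d/\lambda}$ into a lower bound for the intersection number $(e({p_1})+e({p_2})+e({p_3})-e_0)\cdot E$, and then feeding that lower bound into the formula for how a quadratic reflection moves the point $E$. First I would recall that $E = \sqrt{2/(v_+\cdot v_-)}\,(v_++v_-)/2 = \sqrt{2/(v_+\cdot v_-)}\,(e_0-\sum_i\gamma_i e({p_i}))$ with $\gamma_i=(\alpha_i+\beta_i)/2$, so that
\[
(e({p_1})+e({p_2})+e({p_3})-e_0)\cdot E = \sqrt{\tfrac{2}{v_+\cdot v_-}}\Bigl(\gamma_1+\gamma_2+\gamma_3-1\Bigr).
\]
So the first task is to bound $\gamma_1+\gamma_2+\gamma_3-1$ from below. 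Here I would use the fifth inequality of Lemma~\ref{Lem:Distvv}, namely $\left\Vert \tfrac{1}{d}(h(e_0)+h^{-1}(e_0))/2 - (v_++v_-)/2\right\Vert < \sqrt{2/(\lambda d)}$, which on the relevant coordinates says $\sum_i (c_i/d - \gamma_i)^2 < 2/(\lambda d)$; in particular $|c_i/d-\gamma_i| < \sqrt{2/(\lambda d)}$ for each $i$, and summing over $i=1,2,3$ gives $\sum_{i=1}^3 \gamma_i > \sum_{i=1}^3 c_i/d - 3\sqrt{2/(\lambda d)}$ — actually a slightly sharper bound, $\sum_{i=1}^3\gamma_i > \sum_{i=1}^3 c_i/d - \sqrt{6/(\lambda d)}$, comes from applying Cauchy--Schwarz to the three-term subsum, which is what makes the constant $\tfrac52-\sqrt6$ appear. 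Combining with the hypothesis $\sum c_i/d \ge 1 + \tfrac52\sqrt{1/(\lambda d)}$ yields $\gamma_1+\gamma_2+\gamma_3-1 > (\tfrac52-\sqrt6)\sqrt{1/(\lambda d)}$.

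For assertion (1) it then remains to multiply by $\sqrt{2/(v_+\cdot v_-)}$ and use the upper bound $v_+\cdot v_- < \tfrac{1}{d}(\tfrac1\lambda+\lambda+2)$ from Lemma~\ref{Lem:Distvv}, which gives $\sqrt{2/(v_+\cdot v_-)} > \sqrt{2d/(\tfrac1\lambda+\lambda+2)}$; the $d$'s cancel and one is left with $(e({p_1})+e({p_2})+e({p_3})-e_0)\cdot E > \tfrac{\sqrt2(\tfrac52-\sqrt6)}{\sqrt{\lambda(\tfrac1\lambda+\lambda+2)}}$. Since $\lambda(\tfrac1\lambda+\lambda+2) = 1+\lambda^2+2\lambda = (\lambda+1)^2$, this simplifies to $\tfrac{\sqrt2(\tfrac52-\sqrt6)}{\lambda+1} = \tfrac{5-2\sqrt6}{\sqrt2(\lambda+1)}$, as claimed. (I should double-check the arithmetic $\sqrt2\cdot(\tfrac52-\sqrt6) = \tfrac{5\sqrt2}{2}-\sqrt{12} = \tfrac{5}{\sqrt2}-2\sqrt3$; and $\tfrac{5-2\sqrt6}{\sqrt2} = \tfrac{5}{\sqrt2}-\tfrac{2\sqrt6}{\sqrt2} = \tfrac{5}{\sqrt2}-2\sqrt3$ — consistent.)

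For assertion (2), let $g$ be a quadratic element of $W_\infty$ with base points $p_1,p_2,p_3$; by Remark~\ref{rem:quadW}, $g = s\sigma_0 s'$ with $s,s'$ permutations, and the core computation is the effect of the reflection $\sigma_0$ associated to the $(-2)$-vector $\theta := e_0-e({p_1})-e({p_2})-e({p_3})$ (of course $\theta\cdot\theta = 1-3 = -2$). For any $v$, $\sigma_0(v) = v + (v\cdot\theta)\theta$, hence $e_0\cdot g(E) = e_0\cdot\sigma_0(E') = \sigma_0(e_0)\cdot E' $ where $E' = (s')^{-1}(E)$ is again a point of some axis (the axis of a conjugate of $h$), and $\sigma_0(e_0) = e_0 + (e_0\cdot\theta)\theta = e_0 + \theta$ since $e_0\cdot\theta = 1$; this gives $e_0\cdot g(E) = (e_0+\theta)\cdot E' = e_0\cdot E' + \theta\cdot E'$. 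Now $e_0\cdot E' = e_0\cdot E = \cosh(\dist(e_0,E))$ since $s'$ fixes $e_0$ and is an isometry, and $\theta\cdot E' = (e_0-e({p_1'})-e({p_2'})-e({p_3'}))\cdot E = -(e({p_1})+e({p_2})+e({p_3})-e_0)\cdot E$ after relabelling the $p_i$ through $s'$ — so $\cosh(\dist(e_0,g(E))) = \cosh(\dist(e_0,E)) - (e({p_1})+e({p_2})+e({p_3})-e_0)\cdot E$, and assertion (1) finishes it. The main obstacle I anticipate is bookkeeping: being careful that the strict inequality in Lemma~\ref{Lem:Distvv} is preserved through the Cauchy--Schwarz step (it is, since $d^2-1 < d^2$ leaves slack, or simply because the inequality $\sum(c_i/d-\gamma_i)^2 < 2/(\lambda d)$ is already strict), and checking that $g(E)$ is indeed the projection of $e_0$ onto the axis of the conjugated map so that $\cosh(\dist(e_0,g(E)))$ really is the quantity being compared — which holds because $g$ is an isometry sending $\ax(h)$ (via the intermediate step) to the axis of $g h g^{-1}$ and $e_0$ is $s'$-fixed.
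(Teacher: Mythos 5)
Your proposal is correct and follows essentially the same route as the paper: the fifth estimate of Lemma~\ref{Lem:Distvv} plus Cauchy--Schwarz on the three relevant coordinates gives $\gamma_1+\gamma_2+\gamma_3-1>(\tfrac52-\sqrt6)/\sqrt{\lambda d}$, the upper bound on $v_+\cdot v_-$ converts this into assertion (1), and assertion (2) is the adjunction identity $e_0\cdot g(E)=g^{-1}(e_0)\cdot E$ with $g^{-1}(e_0)=2e_0-e(p_1)-e(p_2)-e(p_3)$. (Your detour through $g=s\sigma_0 s'$ is unnecessary — and note $E'$ should be $s'(E)$ rather than $(s')^{-1}(E)$ — but the paper's one-line version of that step is exactly your final displayed identity.)
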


\begin{proof}
If necessary, we enlarge the set of base points $\{p_i\}\subset \B(\P^2_\k)$ to include the three points $p_1$, $p_2$, and $p_3$ (allowing 
multiplicities equal to $0$). From Lemma~\ref{Lem:Distvv} we know that
\[
\left\Vert \left( \frac{{h^{-1}(e_0)}+{ h(e_0)}}{2d}\right)-\left(\frac{v_{+}+ v_{-}}{2}\right)\right\Vert<\sqrt{\frac{2}{\lambda d}},
\]
which may be written as 
\[
\sum \left(\frac{c_i}{d}-\gamma_i\right)^2<\frac{2}{\lambda d}.
\] 
Apply Cauchy-Schwarz inequality for the scalar product between the vector $(1,1,1)\in \R^3$ and the vector $((c_i/d)-\gamma_i)_{i=1}^3$, to get 
\[
\sum_{i=1}^3 \lvert\frac{c_i}{d}-\gamma_i\rvert<\sqrt{\frac{6}{\lambda d}}.
\]
By assumption, we have $(\sum_{i=1}^3\frac{c_i}{d})-1\ge \frac{5}{2\sqrt{\lambda d}}$; hence
\[
\gamma_1+\gamma_2+\gamma_3-1>\frac{\frac{5}{2}-\sqrt{6}}{\sqrt{\lambda d}}.
\]
Since $E=\sqrt{\frac{2}{v_+\cdot v_-}} \frac{  v_+ +   v_-}{2}$ and $v_{+}\cdot v_{-}<\frac{1}{d}\left(\frac{1}{\lambda}+\lambda+2\right)$ (Lemma~\ref{Lem:Distvv}), we obtain
\[
(e({p_1})+e({p_2})+e({p_3})-e_0)\cdot E >\sqrt{\frac{2}{{\frac{1}{d}\left(\frac{1}{\lambda}+\lambda+2\right)}}}\left(\frac{\frac{5}{2}-\sqrt{6}}{\sqrt{\lambda d}}\right)=\frac{\sqrt{2}(\frac{5}{2}-\sqrt{6})}{\sqrt{\lambda\left(\frac{1}{\lambda}+\lambda+2\right)}}
\]
If $g$ is a quadratic element of $W_\infty$ with base-points $p_1,p_2,p_3$,  then $g^{-1}(e_0)=2e_0-e({p_1})-e({p_2})-e({p_3})$. Consequently, 
\begin{eqnarray*}
\cosh(\dist(e_0,E))-\cosh(\dist(e_0,g(E))) & = & e_0\cdot E-e_0\cdot g(E) \\
& = & e_0\cdot E-g^{-1}(e_0)\cdot E\\
& = & (e({p_1})+e({p_2})+e({p_3})-e_0)\cdot E
\end{eqnarray*}
and the conclusion follows from the previous inequality. 
\end{proof}

%%% 
\subsubsection{Noether inequality for the axis of $h$}
%%% 
\begin{lem}\label{Lem:Noetherci}
The coefficients $c_i=(a_i+b_i)/2$ of $(h(e_0)+h^{-1}(e_0))/2$ satisfy
\[
\begin{array}{rcl}
\sum_{i=1}^k c_i&=&3d-3;\\
\sum_{i=1}^k (c_i)^2&>& (d^2-1)-\frac{d}{2}\left(\lambda^{-1}+\lambda+2\right);
\end{array}
\]

and 
\[
\begin{array}{rcl}
(d-1)(c_1+c_2+c_3-(d+1))&>&(c_1-c_3)((d-1)-c_1)+\\
&&(c_2-c_3)((d-1)-c_2)+\\
&&\sum_{i=4}^k c_i(c_3-c_i)\\
&&-\frac{d}{2}\left(\lambda^{-1}+\lambda+2\right)\end{array}
\]
\end{lem}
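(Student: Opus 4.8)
The three displayed relations in Lemma~\ref{Lem:Noetherci} are perturbations of the exact Noether equalities \eqref{eq:NoetherEquality} and of the Noether inequality of Lemma~\ref{Lem:EqualitiesNoether}(1), where the exact statements for the class $h(e_0)$ (with multiplicities $a_i$) and the exact statements for $h^{-1}(e_0)$ (with multiplicities $b_i$) are averaged, and the ``cross terms'' $\sum a_ib_i$ are controlled by the approximation estimates of Lemma~\ref{Lem:Distvv}. So the plan is: (i) write down the exact relations for $a_i$ and for $b_i$ separately; (ii) average them, producing the $c_i$ and an error term of the form $\tfrac14\sum(a_i-b_i)^2$; (iii) bound $\sum(a_i-b_i)^2$ using Lemma~\ref{Lem:Distvv}.

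\textbf{First relation.} The canonical form gives $\sum a_i = 3d-3$ and $\sum b_i = 3d-3$ exactly (these are the second equalities in \eqref{eq:NoetherEquality} applied to $h$ and to $h^{-1}$, which has the same degree $d$ by Lemma~\ref{Lem:LoxoAxDeltaE}(i)). Averaging gives $\sum c_i = \tfrac12(\sum a_i + \sum b_i) = 3d-3$ with no error term.

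\textbf{The error bound.} For the remaining two relations I first estimate $\sum_i (a_i-b_i)^2$. Writing $\tfrac1d h(e_0) = e_0 - \sum \tfrac{a_i}{d} e(p_i)$ and $\tfrac1d h^{-1}(e_0) = e_0 - \sum \tfrac{b_i}{d}e(p_i)$, the difference has Euclidean norm
\[
\left\Vert \tfrac1d h(e_0) - \tfrac1d h^{-1}(e_0)\right\Vert
\le \left\Vert \tfrac1d h(e_0) - v_+\right\Vert + \left\Vert v_+ - \tfrac1d h^{-1}(e_0)\right\Vert
< \sqrt{\tfrac{2}{\lambda d}} + \sqrt{\tfrac{2\lambda}{d}}
\]
by Lemma~\ref{Lem:Distvv}, so $\sum_i \big(\tfrac{a_i}{d}-\tfrac{b_i}{d}\big)^2 < \tfrac1d(\lambda^{-1}+\lambda+2)$, i.e.\ $\sum_i(a_i-b_i)^2 < d(\lambda^{-1}+\lambda+2)$.

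\textbf{Second relation.} From $\sum a_i^2 = d^2-1 = \sum b_i^2$ (the first equalities in \eqref{eq:NoetherEquality} for $h$ and $h^{-1}$) one gets, using the parallelogram identity $a_i^2 + b_i^2 = 2c_i^2 + \tfrac12(a_i-b_i)^2$,
\[
\sum_i c_i^2 = \tfrac12\big(\sum a_i^2 + \sum b_i^2\big) - \tfrac14 \sum_i (a_i-b_i)^2
= (d^2-1) - \tfrac14\sum_i(a_i-b_i)^2 > (d^2-1) - \tfrac{d}{4}(\lambda^{-1}+\lambda+2).
\]
This is slightly stronger than the claimed bound (with $\tfrac d4$ in place of $\tfrac d2$), so in particular the stated inequality holds.

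\textbf{Third relation.} Apply Lemma~\ref{Lem:EqualitiesNoether}(1) to $h$ (multiplicities $a_i$, ordered so that the chosen three are $a_1,a_2,a_3$) and to $h^{-1}$ (multiplicities $b_i$, with the \emph{same} three base-point indices), then average the two identities. Each quadratic term $(a_i-a_j)(d-1-a_i)$ becomes, after averaging with its $b$-counterpart, the corresponding term in $c$'s plus a quadratic error in $(a_i-b_i)$ and $(a_j-b_j)$; the left-hand term $(d-1)(a_1+a_2+a_3-(d+1))$ averages exactly to $(d-1)(c_1+c_2+c_3-(d+1))$. Collecting all the error terms and bounding each by $|a_i-b_i|,|a_j-b_j|\le$ (their root-mean contributions), one shows the total error is at most $\tfrac14\sum_i(a_i-b_i)^2 \le \tfrac d4(\lambda^{-1}+\lambda+2) < \tfrac d2(\lambda^{-1}+\lambda+2)$, which gives the stated strict inequality. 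The bookkeeping here is the main nuisance: one must check that each averaged cross-term $\tfrac12\big[(a_i-a_j)(d-1-a_i) + (b_i-b_j)(d-1-b_i)\big]$ equals $(c_i-c_j)(d-1-c_i)$ up to a term that is a product of two $(a_\bullet-b_\bullet)$'s, and similarly for the tail $\sum_{i\ge4} a_i(a_3-a_i)$; then a single application of Cauchy--Schwarz bounds the accumulated error by a multiple of $\sum(a_i-b_i)^2$. I expect the only real work — and the only place an error could creep in — to be tracking the exact coefficient in front of $\sum(a_i-b_i)^2$ after averaging the cubic-looking (actually quadratic, since the $(d-1)$ factors are constants) Noether identity; but since the statement only asks for the crude bound $\tfrac d2(\lambda^{-1}+\lambda+2)$, any such constant $\le \tfrac14$ suffices and no delicate optimization is needed.
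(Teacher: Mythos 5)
Your plan for the first two relations is the paper's own: average the exact Noether equalities \eqref{eq:NoetherEquality} for $h$ and $h^{-1}$ and control $\sum_i(a_i-b_i)^2$ via Lemma~\ref{Lem:Distvv}. But there is an arithmetic slip in the error bound: $\bigl(\sqrt{2/(\lambda d)}+\sqrt{2\lambda/d}\bigr)^2=\tfrac{2}{\lambda d}+\tfrac{2\lambda}{d}+\tfrac{4}{d}=\tfrac{2}{d}(\lambda^{-1}+\lambda+2)$, not $\tfrac{1}{d}(\lambda^{-1}+\lambda+2)$, so the correct estimate is $\sum_i(a_i-b_i)^2<2d(\lambda^{-1}+\lambda+2)$. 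Consequently your ``slightly stronger'' bound $(d^2-1)-\tfrac d4(\cdots)$ for $\sum c_i^2$ is not justified; the parallelogram identity with the corrected error gives exactly the lemma's $(d^2-1)-\tfrac d2(\cdots)$, with no room to spare. The same missing factor of $2$ is needed to land exactly on the $-\tfrac d2(\lambda^{-1}+\lambda+2)$ in the third relation.

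For the third relation your route differs from the paper's, and as written it has a gap. The paper does not average the identity of Lemma~\ref{Lem:EqualitiesNoether}(1) for $h$ and $h^{-1}$: it deduces the third relation purely algebraically from the first two, by subtracting $c_3\sum c_i=3c_3(d-1)$ from the inequality for $\sum c_i^2$ and rearranging exactly as in the proof of Lemma~\ref{Lem:EqualitiesNoether} --- no bookkeeping at all. Your averaging route can be made to work, but the appeal to Cauchy--Schwarz is the weak point: the accumulated cross term is $\tfrac14\bigl[(a_3-b_3)\sum_{i\neq 3}(a_i-b_i)-\sum_{i\neq3}(a_i-b_i)^2\bigr]$, and bounding $\sum_i|a_i-b_i|$ by Cauchy--Schwarz introduces a factor $\sqrt{k}$ (with $k$ the number of base points, a priori unbounded) that you cannot absorb into $\tfrac14\sum_i(a_i-b_i)^2$. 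The fix is to observe that $\sum_i(a_i-b_i)=0$ (both sums equal $3d-3$), whence $\sum_{i\neq3}(a_i-b_i)=-(a_3-b_3)$ and the total error is exactly $-\tfrac14\sum_i(a_i-b_i)^2$; combined with the corrected bound $2d(\lambda^{-1}+\lambda+2)$ this gives precisely the stated inequality. So your approach is salvageable, but only after correcting the constant and replacing the Cauchy--Schwarz step by the vanishing of $\sum_i(a_i-b_i)$.
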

\begin{proof}
The first equality directly follows from Lemma~\ref{Lem:EqualitiesNoether}, which asserts that $\sum_{i=1}^k a_i=\sum_{i=1}^k b_i=3d-3$. By Lemma~\ref{Lem:Distvv}, we have 
$$\sum_{i=1}^k \left(\frac{a_i}{d}-\frac{b_i}{d}\right)^2< \left(\sqrt{\frac{2}{\lambda d}}+\sqrt{\frac{2\lambda}{ d}}\right)^2=\frac{2}{d}\left(\frac{1}{\lambda}+\lambda+2\right).$$
Since $\sum (a_i)^2=\sum (b_i)^2=d^2-1$, we get $d^2-1-\sum a_i b_i<d\left(\frac{1}{\lambda}+\lambda+2\right)$, hence
\[
\begin{array}{rcl}
\sum ( a_i+ b_i)^2&=&2(d^2-1)+2\sum a_i b_i\\
&>&4(d^2-1)-2d\left(\frac{1}{\lambda}+\lambda+2\right).\end{array}
\]
Dividing by $4$, we obtain the first inequality. Then, subtract $c_3\sum c_i=c_3\cdot  3(d-1)$ to obtain
successively
\[
\sum_{i=1}^k (c_i)^2-c_3\sum_{i=1}^k c_i> (d-1)((d+1)-3c_3)-\frac{d}{2}\left(\lambda^{-1}+\lambda+2\right)
\]
and
\[
 (d-1)(3c_3-(d+1))+\sum_{i=1}^3 c_i(c_i-c_3)>\sum_{i=4}^k c_i(c_3-c_i)-\frac{d}{2}\left(\lambda^{-1}+\lambda+2\right).
 \]
The   inequality follows by rearranging the terms as in the proof of Noether's inequality.
\end{proof}

%%% 
\subsubsection{Decreasing the distance to the axis by conjugacy in $W_\infty$}
%%% 

\begin{pro}\label{Pro:Lambda106}
Let $h\in W_\infty$ be of degree $d$ and dynamical degree $\lambda>10^6$. 
Let $p_1,\dots,p_k$ be the base-points of $h$ and $h^{-1}$, and   $c_i=(a_i+b_i)/2$ be the average of their multiplicities. Order the $c_i$ in such a way that $c_1\geq c_2\ge c_3\geq \dots \geq c_k$. If $d>24\lambda^3$, then 
\[
c_1+c_2+c_3\geq d+\frac{5}{2}\sqrt{\frac{d}{\lambda}}.
\]
\end{pro}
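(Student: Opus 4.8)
The plan is to argue by contradiction: assume $S:=c_1+c_2+c_3<d+\frac52\sqrt{d/\lambda}$ and derive a contradiction. The relations I will use are the ones already available: $\sum_i c_i=3(d-1)$ and $\sum_i c_i^2>d^2-1-\epsilon$ with $\epsilon:=\frac d2(\lambda+\lambda^{-1}+2)$ (Lemma~\ref{Lem:Noetherci}); the pairwise inequality $c_i+c_j\le d$ (Lemma~\ref{Lem:EqualitiesNoether}), which with the chosen ordering forces $c_2,c_3\le d/2$ and $c_1\le d-c_3$; the fact that a loxodromic map is not a Jonqui\`eres element, so Proposition~\ref{Lem:JonqRac} yields $d-c_i>\sqrt{d/3}$ for every base-point; and the two-sided control of $v_+\cdot v_-$ from Lemma~\ref{Lem:Distvv} and \eqref{Eq:Ineq-Axis-Deg}, i.e.\ the quantitative statement that $e_0$ is far from the axis of $h$. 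As suggested in the remark opening this section, I would keep every step phrased in terms of intersection numbers.

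First I would convert the hypothesis ``$S$ small'' into an estimate on $\sum c_i^2$. Since $c_i\le c_3$ for $i\ge4$, one has $\sum_{i\ge4}c_i^2\le c_3\sum_{i\ge4}c_i=c_3(3(d-1)-S)$, and together with $\sum_i c_i^2>d^2-1-\epsilon$ this gives
\[
c_1^2+c_2^2+c_3(3(d-1)-c_1-c_2)>d^2-1-\epsilon .
\]
I would then bound the left-hand side from above, using $c_1+c_2\le d$, the ordering, the Jonqui\`eres bound $c_1<d-\sqrt{d/3}$, and the standing assumption $S<d+\frac52\sqrt{d/\lambda}$. The quantity to be maximised is a quadratic expression whose maximum over the resulting polytope is attained at a vertex — either $c_1=c_2=c_3$, or $c_1$ at its Jonqui\`eres value with $c_2=d-c_1$ — and in each case the hypothesis $d>24\lambda^3$, which makes $\sqrt{d/3}$ and $\sqrt{d/\lambda}$ large compared with $\epsilon/d\approx\lambda/2$ and with $\sqrt{v_+\cdot v_-}$, is exactly what forces the upper bound below $d^2-1-\epsilon$ and contradicts the displayed inequality.

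The hard part will be this last step. The bare Noether relation of Lemma~\ref{Lem:Noetherci} only gives $c_1+c_2+c_3>(d+1)-\epsilon/(d-1)$, i.e.\ $d+1$ minus a term of order $\lambda$, whereas the claimed inequality asserts that $c_1+c_2+c_3$ exceeds $d$ by a term of order $\sqrt{d/\lambda}$ — which is much larger than $\lambda$ precisely in the regime $d\gg\lambda^3$. Closing this gap means one cannot discard either the Jonqui\`eres bound $c_i<d-\sqrt{d/3}$ or the geometric fact $v_+\cdot v_-<\frac1d(\lambda+\lambda^{-1}+2)$; it is the simultaneous use of $\sum c_i=3(d-1)$, $\sum c_i^2=d^2-1-O(\epsilon)$, $c_i+c_j\le d$, $c_i<d-\sqrt{d/3}$ and the bounds on $v_+\cdot v_-$, together with the numerically generous hypothesis $d>24\lambda^3$, that produces the margin $\frac52\sqrt{d/\lambda}$. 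The delicate point is the book-keeping of the several error terms (of respective sizes $\epsilon$, $\sqrt{d/\lambda}$ and $\sqrt{d\lambda}$), carried out so that the target margin survives all the estimates.
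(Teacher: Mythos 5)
Your list of ingredients is not sufficient: there is a configuration of multiplicities that satisfies every constraint you invoke and yet violates the conclusion, namely the ``Halphen'' configuration $c_1=\dots=c_9=(d-1)/3$ and $c_i=0$ for $i\ge 10$. For it, $\sum_i c_i=3(d-1)$; $\sum_i c_i^2=(d-1)^2=d^2-2d+1$, which exceeds $d^2-1-\epsilon$ because $\epsilon\approx d\lambda/2\gg 2d$; $c_i+c_j=2(d-1)/3<d$; and $d-c_i=(2d+1)/3>\sqrt{d/3}$, so the Jonqui\`eres bound of Proposition~\ref{Lem:JonqRac} is comfortably met. The two-sided control of $v_+\cdot v_-$ adds nothing here beyond what is already encoded in the second inequality of Lemma~\ref{Lem:Noetherci}. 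Yet $c_1+c_2+c_3=d-1<d+\frac{5}{2}\sqrt{d/\lambda}$. Consequently no optimization of your quadratic expression over the polytope cut out by these inequalities can produce the desired contradiction: the near-Halphen point lies \emph{inside} that polytope, so the purely numerical book-keeping you describe cannot close, however carefully the error terms of sizes $\epsilon$, $\sqrt{d/\lambda}$ and $\sqrt{d\lambda}$ are tracked.

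What is missing is a geometric input that excludes this configuration. The paper's proof, after disposing of the tractable cases (where $c_1$ is large, or $c_1-c_3\ge\sqrt{d}/100$, or the tail $\sum_{i\ge4}\min\{c_3-c_i,c_i\}$ is at least $\sqrt{d}/15$), shows that the remaining case forces exactly nine points to carry multiplicities close to $d/3$ each, with the residual mass of order at most $\lambda$. It then invokes Lemma~\ref{Lem:HalphenIneq}: such a concentration forces $h$ to fix the isotropic class $3e_0-\sum_{i=1}^9 e(p_i)$, hence to be a non-loxodromic Halphen element by Lemma~\ref{lem:Halphengrowth}, unless $d<4.88\sqrt{d}\,\lambda^{3/2}$, i.e.\ $d<24\lambda^3$. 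This is where the hypothesis $d>24\lambda^3$ is actually consumed --- not in comparing $\sqrt{d/\lambda}$ with $\epsilon/d$. Without this Halphen-exclusion step the argument has a genuine gap.
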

\begin{rem}
Together with Lemma~\ref{Lem:Decreasing}, Assertion (2), this proposition provides a way to conjugate a loxodromic element $h$ of $W_\infty$
by a quadratic involution $g\in W_\infty$ so as to decrease the distance from $e_0$ to the axis.\end{rem}

\begin{proof}
%**New** added explanation why $(c_2-c_3)(d-1-c_2)$ disappears
We use the inequality of
 Lemma~\ref{Lem:Noetherci} and observe that $(c_2-c_3)(d-1-c_2)$ can be removed, as it is non-negative. Indeed, by hypothesis we have $c_2\ge c_3$, and Noether equalities (\ref{eq:NoetherEquality}) imply that $a_2\le d-1$ and $b_2\le d-1$. This yields %**New** end of changes
\begin{equation}\begin{array}{l}\label{TheEq}
\sum_{i=1}^3c_i- d>1+\frac{(c_1-c_3)(d-1-c_1)}{d-1}+\frac{\sum_{i=4}^k c_i(c_3-c_i)}{d-1}-\frac{d(\lambda^{-1}+\lambda+2)}{2(d-1)}.\end{array}\end{equation}
As a consequence, the result follows from 
\[
(c_1-c_3)(d-1-c_1)+\sum_{i=4}^k c_i(c_3-c_i)>\frac{1}{2}d(\lambda^{-1}+\lambda+2)+ (d-1)\frac{5}{2}\sqrt{\frac{d}{\lambda}}.
\]
The hypothesis on $\lambda$ implies $\frac{5}{2}\sqrt{\frac{d}{\lambda}}<{\sqrt{d}}/{400}$;  thus, it suffices to prove that 
\begin{equation}\label{TheEqToProve}
(c_1-c_3)((d-1)-c_1)+\sum_{i=4}^k c_i(c_3-c_i)>\frac{d\sqrt{d}}{200}.
\end{equation}
Note that $2(d-1- c_1)=((d-1)-a_1)+((d-1)-b_1)$  is non-negative because $d-1\ge a_1$ and $d-1\ge b_1$ (this follows, for instance, from Lemma~\ref{Lem:EqualitiesNoether}). Since $c_1\ge c_3\ge c_i$ for $i\ge 4$, every term of the left sum is non-negative.

We do a case by case study, and show that Inequality~(\ref{TheEqToProve}) holds in each case. 

\smallskip

\noindent{\bf{Step 1.--}}  Assume, first, that $c_3\ge ((d+\sqrt{d}/400)-c_1)/2$. In this situation,  the result directly follows: 
\[
c_1+c_2+c_3 \geq c_1+2c_3 \geq d+\sqrt{d}/400 \geq d+\frac{5}{2}\sqrt{\frac{d}{\lambda}}
\]
because $\lambda>10^6$. 

\smallskip

\noindent{\bf{Step 2.--}}  Hence, we  assume $c_3< ((d+\sqrt{d}/400)-c_1)/2$ in what follows. This yields 
\[
(c_1-c_3)(d-1-c_1)>\frac{1}{2}(3c_1-(d+\sqrt{d}/400)) (d-1-c_1).
\]
The right-hand side is a quadratic polynomial in the variable $c_1$; it vanishes at $d/3+\sqrt{d}/1200$ and $d-1$, and is positive between
these two roots. If $c_1\ge d/3+\sqrt{d}/100$ Proposition~\ref{Lem:JonqRac} implies that  
\[
c_1\in [d/3+\sqrt{d}/100,d-\sqrt{d/3}].
\]  
Both extremities of this interval are between the two roots of the above quadratic polynomial; thus, the infimum of this polynomial function on this
interval is equal to its value at $d/3+\sqrt{d}/100$ or at  $d-\sqrt{d/3}$. One easily estimates these two values from below; the first one is
\[
\frac{11}{800}\sqrt{d}\cdot (2/3d-\sqrt{d}/100-1)> \frac{d\sqrt{d}}{200}
\]
because $d> 24 \cdot 10^{18}$, and the second one is
\[(2d-(\sqrt{3} +1/400)\sqrt{d})/2\cdot (\sqrt{d}/\sqrt{3}-1)>\frac{d}{2} \cdot \frac{\sqrt{d}}{2}>\frac{d\sqrt{d}}{200}
\]
for the same reason. This implies Inequality~(\ref{TheEqToProve}).

\smallskip

\noindent{\bf{Step 3.--}} We can then assume that
\begin{equation}\label{Eq:c1d3}
c_1< d/3+\sqrt{d}/100.
\end{equation}
In particular, we have $d-1-c_1\ge 2d/3-\sqrt{d}/100-1>(0.6)\cdot d$. If $c_1-c_3\ge \sqrt{d}/100$, we obtain Inequality~(\ref{TheEqToProve}); hence, we may add the assumption 
\begin{equation}\label{Eq:c1d3bis}
c_3>c_1-\sqrt{d}/100.
\end{equation}
Lemma~\ref{Lem:Distvv} provides the inequality 
\[
\frac{1}{d^2}\sum_{i=1}^k (a_i-b_i)^2<\left(\sqrt{\frac{2}{\lambda d}}+\sqrt{\frac{2\lambda}{ d}}\right)^2=\frac{2}{d}\left(\frac{1}{\lambda}+\lambda+2\right).
\]
In particular $
(a_r-b_r)^2<2d\left(\frac{1}{\lambda}+\lambda+2\right)<{2.01}\cdot{\lambda d} 
$
for all indices $r$. Choosing $r$ such that $a_r\ge a_j$ for all $j$, we know from Noether inequality that  $a_r\geq d/3$ (see Lemma~\ref{Lem:EqualitiesNoether}); this leads to $b_r>d/3-1.42\sqrt{\lambda d}$, $c_r=(a_r+b_r)/2>d/3-0.71\sqrt{d\lambda}$, and $c_3>c_1-\sqrt{d}/100\ge c_r-\sqrt{d}/100>d/3-(3/4)\cdot\sqrt{d\lambda}.$ Hence,
%**New: I have added a name to the equation (because it is used after) and also written $c_3>3d/10$, which is used after
\begin{equation}\label{Eq:c3d334}
c_3>d/3-(3/4)\cdot\sqrt{d\lambda}>3d/10,
\end{equation}
where the last inequality follows from $d>24\lambda^3>10^{12}\lambda$.
%**New: end of changes

 For each $i\geq 4$, define $\epsilon_i=\min \{c_3-c_i,c_i\}$, and note that $c_i\cdot (c_3-c_i)\ge\epsilon_i\cdot c_3/2$. 
 This gives 
 \[
 \sum_{i=4}^k c_i\cdot (c_3-c_i)\ge (\sum_{i=4}^k \epsilon_i)\cdot c_3/2.
 \] 
  If $\sum_{i=4}^k \epsilon_i>\sqrt{d}/15$, Inequality~(\ref{TheEqToProve}) follows from $c_3>3d/10$ (Inequality~\eqref{Eq:c3d334}).
  %**New**: added the reference to the equation 

\smallskip

\noindent{\bf{Step 4.--}}  We can now add the inequality  
\begin{equation}\label{Eq:epsi-esti}
\sum_{i=4}^k \epsilon_i\le\frac{1}{15}\sqrt{d}
\end{equation}
to our assumptions. Our goal  is to derive a contradiction from these assumptions.
 
 Denote by $l$ the largest index such that $c_l\ge c_3/2$. %**New: More details ("explain better"...)
For $i=4,\dots,l$, the inequality $c_i\ge c_3/2$ corresponds to $c_i\le c_3-c_i$, hence  $\epsilon_i=c_3-c_i$. 
This yields, together with Inequality~(\ref{Eq:epsi-esti}), the following estimates for $\sum_{i=4}^l c_i$:
$$(l-3)c_3-\sqrt{d}/15<(l-3)c_3-\sum_{i=4}^l \epsilon_i=\sum_{i=4}^l c_i\le (l-3)c_3.$$
Moreover, $c_1<c_3+\sqrt{d}/100$ (Inequality~\ref{Eq:c1d3bis}), so
$3c_3\le c_1+c_2+c_3 < 3c_3+\sqrt{d}/50$. Adding the two estimates yields
 \[
 lc_3-\sqrt{d}/15<\sum_{i=1}^l c_i<lc_3+\sqrt{d}/50.
 \]
 Because $\sum_{i=1}^l c_i\le \sum_{i=1}^k c_i=3d-3$ (Lemma~\ref{Lem:Noetherci}) and $c_3>3d/10$ (Equation~\ref{Eq:c3d334}), we have  $l(3d/10)-\sqrt{d}/15< 3d-3$, that gives $l<10$, hence $l\le 9$.

 From $\sum_{i=l+1}^k c_i=\sum_{i=l+1}^k \epsilon_i<\sqrt{d}/15$ (Inequality~\eqref{Eq:epsi-esti}), one gets $3d-3=\sum_{i=1}^k c_i<lc_3+\sqrt{d}/50+\sqrt{d}/15$. Together with $c_3\le c_1<d/3+\sqrt{d}/100$ (Inequality~\eqref{Eq:c1d3}), we get $3d-3<l(d/3+\sqrt{d}/100)+\sqrt{d}/50+\sqrt{d}/15$, so $l\ge 9$.  
 Since $l=9$, Inequality \eqref{Eq:epsi-esti} yields
\[
\sum_{i=10}^k c_i=\sum_{i=10}^k \epsilon_i< \sqrt{d}/15.
\]

In other words, there is a concentration of the  multiplicities on the $9$ points $p_1,\dots,p_9$:  $h$ behaves like a Halphen element of $W_\infty$.

\smallskip

\noindent{\bf{Step 5.--}} To derive a contradiction, we apply 
Lemma~\ref{Lem:HalphenIneq}:
\begin{equation}\label{Eq:LemLoxoHalp}
d/3<  (3+\sum_{i={10}}^k a_i)\left( (3\max\{b_i\}_{i=1}^9-d)  + \sum_{j={10}}^{k} b_j\right)+3,
\end{equation}
because $h$ is loxodromic. 

Let us estimate $\sum_{i=10}^k c_i$ from below.
For $i=1,\dots,9$, write $\mu_i=d/3-c_i$ and observe that Inequality~\eqref{Eq:c1d3} implies 
\[
-\sqrt{d}/100<\mu_1\le \mu_2\le\dots\le\mu_9.
\] 
Lemma~\ref{Lem:Noetherci} yields $3d-3=\sum_{i=1}^k c_i=3d-\sum_{i=1}^9 \mu_i+\sum_{i=10}^k c_i$, so that
\[
\sum_{i=1}^9 \mu_i=3+\sum_{i=10}^k c_i<3+\sqrt{d}/15<\sqrt{d}/14,
\]
and
\[
\mu_9<\sqrt{d}/14-\sum_{i=1}^8\mu_i<\sqrt{d}/14+8\sqrt{d}/100<\sqrt{d}/6.
\] 
In particular, $\sum_{i=1}^9 (\mu_i)^2<9(\sqrt{d}/6)^2=d/4$. 

We also have $\sum_{i=10}^k (c_i)^2\le (\sum_{i=10}^k c_i)^2<d/225$.
We compute 
\begin{eqnarray*}
\sum_{i=1}^k (c_i)^2 & = & \sum_{i=1}^9 (d/3-\mu_i)^2+\sum_{i=10}^k (c_i)^2 \\
& = & d^2-\frac{2}{3}d \sum_{i=1}^9 \mu_i +\sum_{i=1}^9 (\mu_i)^2+\sum_{i=10}^k (c_i)^2\\
& < & d^2-\frac{2}{3}d(3+\sum_{i=10}^k c_i)+\frac{1}{4}d+\frac{1}{225}d\\
& < & d^2-\frac{2}{3} d(\sum_{i=10}^k c_i)-\frac{3}{2}d.
\end{eqnarray*}

On the other hand,  Lemma~\ref{Lem:Noetherci} yields   
\[
\sum_{i=1}^k (c_i)^2>d^2-1-\frac{d}{2}\left(\lambda^{-1}+\lambda+2\right)>d^2-(0.501) \cdot (\lambda d),
\]  
and we obtain 
\[
d^2-(0.501)\cdot (\lambda d)<d^2-\frac{2}{3} d(\sum_{i=10}^k c_i)-\frac{3}{2}d,
\] 
hence
\[
\sum_{i=10}^k c_i<(0.501)\frac{3}{2}\lambda-\frac{9}{4}< 0.7516\lambda .
\]
 In particular, either $\sum_{i=10}^k a_i<0.7516\lambda$ or $\sum_{i=10}^k b_i<0.7516\lambda$. We assume the first (otherwise we apply Lemma~\ref{Lem:HalphenIneq} to $h^{-1}$ instead of $h$).

For each $i$, recall that $(a_i-b_i)^2<2d\left(\frac{1}{\lambda}+\lambda+2\right)<2.01d\lambda$ and, consequently, $\vert a_i-b_i\vert <1.42\sqrt{\lambda d}$, hence 
\[
b_i<c_i+0.71\sqrt{\lambda d}< d/3+\sqrt{d}/100+0.71\sqrt{\lambda d}<d/3+0.72\sqrt{\lambda d}.
\] 
This yields $(3\max\{b_i\}_{i=1}^9-d)<2.16\sqrt{d\lambda}$.
Equation~\ref{Eq:LemLoxoHalp} implies
$$\begin{array}{rcl}
d&<&  3(\sum_{i={10}}^k a_i)\left( (3\max\{b_i\}_{i=1}^9-d)  + \sum_{j={10}}^{k} b_j\right)\\
&<&3\cdot (0.7516)\cdot \lambda (2.16\sqrt{d\lambda}+2\cdot 0.7516\lambda)\\
&<&4.88\sqrt{d}\lambda^{3/2}.\end{array}$$
In particular, $\sqrt{d}<4.88\lambda^{3/2}$, contradicting the hypothesis $d>24\lambda^3$.
\end{proof}

%%%%%%%% 
\subsection{From $W_\infty$ to the Cremona group}
%%%%%%%% 
We can now prove Theorem~C, Assertion (1), which we rephrase as follows.
\begin{thm}\label{thm:thmD-rephrased}
Let $f\in \Bir(\P^2_\k)$ be a loxodromic element of dynamical degree $\lambda>10^6$. 
There exists a birational map $g\in \Bir(\P^2_\k)$ such that $\deg ( gfg^{-1} ) <4700\lambda^5$.
\end{thm}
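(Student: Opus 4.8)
The plan is to combine the machinery built up in Section~\ref{par:PROOF-D} — the approximation of the axis of $f_\p$ by $d^{-1}f_\p^{\pm 1}(e_0)$, the Noether-type inequalities for the averaged multiplicities $c_i$, and the ``concentration of multiplicities'' Proposition~\ref{Pro:Lambda106} — to run an iterative conjugation procedure. Starting with $f$ of degree $d_0=\deg(f)$ and dynamical degree $\lambda>10^6$, set $h_0=f_\p\in W_\infty$. As long as the degree $d$ of the current transformation exceeds $24\lambda^3$, Proposition~\ref{Pro:Lambda106} gives three base-points $p_1,p_2,p_3$ with $c_1+c_2+c_3\ge d+\tfrac{5}{2}\sqrt{d/\lambda}$; then Lemma~\ref{Lem:Decreasing}, Assertion~(2), shows that conjugating by a quadratic involution $g_0\in W_\infty$ with base-points $p_1,p_2,p_3$ strictly decreases $\cosh(\dist(e_0,\ax(h)))$ by at least $\frac{5-2\sqrt6}{\sqrt2(\lambda+1)}$. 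Iterating, I would produce a sequence $h_{n+1}=w_n h_n w_n^{-1}$ in $W_\infty$, with each $w_n$ quadratic, such that $\cosh(\dist(e_0,\ax(h_n)))$ decreases by a fixed amount proportional to $1/(\lambda+1)$ at each step, so after finitely many steps one reaches a transformation $h$ with $\deg(h)\le 24\lambda^3$ (here one uses the lower bound \eqref{Eq:Ineq-Axis-Deg} to translate ``distance to the axis small'' into ``degree small'', roughly $\deg\le C\lambda\cdot\cosh(\dist(e_0,\ax))$).

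Once the degree is below $24\lambda^3$ inside $W_\infty$, the remaining and genuinely delicate task is to pass from $W_\infty$ back to $\Bir(\P^2_\k)$: the conjugating element we have produced is a composition of elements of $W_\infty$, not a priori induced by a birational transformation, because a quadratic element of $W_\infty$ is realized by a quadratic Cremona map only when its three base-points are in ``general position'' in the sense of Proposition~\ref{Prop:ExistenceJonq} (for the quadratic/Jonqui\`eres case: three base-points, at least one proper, no two infinitely near in a forbidden pattern, and not collinear through a common point). Following Castelnuovo's correction of Noether's argument, I would at each step perturb the chosen triple $p_1,p_2,p_3$ slightly — replacing a ``bad'' base-point by a nearby proper point of $\P^2_\k$ — so that the quadratic map becomes a genuine element of $\Cr_2(\k)$; the key point is that such a perturbation changes the relevant intersection numbers by a controlled amount (bounded in terms of $\lambda$, not $d$), so the strict-decrease estimate of Lemma~\ref{Lem:Decreasing} survives with a slightly worse constant. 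This is where Proposition~\ref{Prop:ExistenceJonq} and the positivity statements of Lemma~\ref{EPositif} are essential, and this is the part I expect to be the main obstacle: verifying that the perturbation can always be carried out and that the cumulative loss over all $O(\lambda\cdot\cosh(\dist(e_0,\ax(f_\p))))$ steps does not destroy the final bound.

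Finally I would assemble the degree estimate. After the loop, the resulting $g\in\Bir(\P^2_\k)$ conjugates $f$ to a transformation $f'=gfg^{-1}$ whose axis passes within bounded distance of $e_0$, and \eqref{Eq:Ineq-Axis-Deg} together with $\cosh(\dist(e_0,f'_\p(e_0)))=\deg(f')$ gives $\deg(f')\le \frac{2}{\lambda-\lambda^{-1}}\cdot d'$ where $d'$ is controlled by the terminal value $24\lambda^3$ of the loop plus the bounded overshoot from the last quadratic conjugations and the Castelnuovo perturbations. Tracking the constants through Lemma~\ref{Lem:LoxoAxDeltaE}, Lemma~\ref{Lem:Distvv}, and the perturbation bounds — and using $\lambda>10^6$ to absorb lower-order terms — yields $\deg(gfg^{-1})<4700\,\lambda^5$; the extra powers of $\lambda$ beyond $\lambda^3$ come from the $(\lambda+1)$ in the denominator of the per-step decrease (so the number of steps, hence the accumulated additive error from realizing quadratic elements as honest Cremona maps, scales with an extra factor of $\lambda$) and from the cosh-to-degree conversion. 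The easy cases $\deg(f)\le 24\lambda^3$ at the start require no conjugation at all, so the bound holds trivially there.
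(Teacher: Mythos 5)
Your reduction inside $W_\infty$ --- iterate Proposition~\ref{Pro:Lambda106} and Lemma~\ref{Lem:Decreasing} to push the axis towards $e_0$ until the degree drops below $24\lambda^3$, then convert back to a degree bound via \eqref{Eq:Ineq-Axis-Deg} --- is indeed the skeleton of the argument. But the step you yourself flag as the main obstacle, realizing the conjugating elements in $\Bir(\P^2_\k)$, is not closed by your perturbation argument, and it cannot be closed that way. If you replace a base point $p_i$ of the chosen triple by a nearby proper point $q$ of $\P^2_\k$, the quantity controlling the decrease in Lemma~\ref{Lem:Decreasing} changes from $e(p_i)\cdot E$ to $e(q)\cdot E$: the former is of order $\alpha_0/3=\cosh(\dist(e_0,\ax))/3$, hence of order $d/\lambda$, while for a general proper $q$ the latter is essentially $0$. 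The error is therefore comparable to $d$, not ``bounded in terms of $\lambda$'', and the strict-decrease estimate is destroyed. This is exactly the gap in Noether's original proof of the Noether--Castelnuovo theorem. Moreover, even if each perturbation cost only an additive error bounded in $\lambda$, you note yourself that the number of steps is of order $\lambda\cdot\cosh(\dist(e_0,\ax(f_\p)))$, which depends on $\deg(f)$; an error accumulating over that many steps cannot produce a bound depending on $\lambda$ alone.

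The paper's mechanism is structurally different. One fixes once and for all a proper point $p_1$ maximizing $e(q)\cdot E$ and conjugates not by quadratic maps but by Jonqui\`eres-type involutions $\sigma_\Omega$ of $W_\infty$ anchored at $p_1$ (\S\ref{par:SigmaOmega}); the quadratic modifications of your greedy loop are absorbed into a single set $\Omega$ via a minimality argument over the family $\mathcal{S}$, whose defining conditions match hypotheses (1)--(4) of Proposition~\ref{Prop:ExistenceJonq} and are restored after each enlargement of $\Omega$ by exchanging or deleting points without increasing $\sigma_{\Omega}(e_0)\cdot E'$ (this uses effectivity and Lemma~\ref{EPositif}). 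When the remaining hypotheses (5)--(6) of Proposition~\ref{Prop:ExistenceJonq} fail, the paper does not perturb and continue: it derives $\alpha_1/\alpha_0\ge (l+1)/(l+2)$, adds $2l$ general points to produce an honest Jonqui\`eres map $g$ of larger degree, shows $\cosh(\dist(e_0,\ax((gfg^{-1})_\p)))<2\cosh(\dist(e_0,\sigma_\Omega\ax(f)))$, and \emph{stops}; the factor $2$ together with the terminal bound $\deg(\sigma_\Omega f_\p\sigma_\Omega)\le 24\lambda^3$ forced by $(\star)$ and with \eqref{Eq:Ineq-Axis-Deg} yields $\deg(gfg^{-1})<8\,(24\lambda^3)^2(\lambda+2+\lambda^{-1})/(\lambda-\lambda^{-1})^2<4700\lambda^5$. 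Note finally that the exponent $5$ comes from squaring $24\lambda^3$ in this last conversion, not from the number of iterations, which does not enter the final estimate at all.
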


To prove Theorem~\ref{thm:thmD-rephrased}, we denote by $\ax(f_\p)$ the axis of $f$ and by $E$ 
 the projection of the point $e_0$ onto $\ax(f_\p)$ (see Lemma~\ref{Lem:LoxoAxDeltaE}). We fix a point $p_1\in \B(\P^2_\k)$ such that $e(p_1)\cdot E\ge e(q)\cdot E$ for each $q\in \B(\P^2_\k)$. We can choose $p_1$ so that it is a proper point of the plane.

%%%
\subsubsection{The involutions $\sigma_\Omega$}\label{par:SigmaOmega}
%%% 

 %**NewSerge** changed several little things
For each $q\in \B(\P^2_\k)\backslash \{p_1\}$, the vector
\[
w_q:=\frac{e_0-e({p_1})}{2}-e({q})\in \z_{\P^2_\k}\otimes \Q 
\]
has self-intersection $-1$; we denote by $\nu_q$ the orthogonal reflection of $\z_{\P^2_\k}\otimes \Q$ with respect to the hyperplane orthogonal to $w_q$. The $\Q$-linear automorphism $\nu_q$ is given by 
\[
u\mapsto u+2(u\cdot w_q)w_q=u-2\frac{(u\cdot w_q)}{(w_q\cdot w_q)} w_q.
\] 
The transformations $\nu_q$, for $q$ in $\B(\P^2_\k)\backslash \{p_1\}$, constitute a family of commuting involutions 
because $w_q$ is orthogonal to $w_{q'}$ if $q\neq q'$.

For any finite set $\Omega \subset \B(\P^2_\k)\backslash \{p_1\}$ consisting of an even number $2m-2\ge 0$ of points, we denote by $\sigma_\Omega$ the composition of all $\nu_q$ for $q$ in $\Omega$. By induction, the transformation $\sigma_\Omega$ is the automorphism of $\z_{\P^2_\k}$ given by
\[
\begin{array}{rcl}
\sigma_\Omega(e_0)&=&m e_0-(m-1)e({p_1})-\sum_{q\in \Omega} e({q}) \\
& = & e_0+\sum_{q\in \Omega} (\frac{e_0-e({p_1})}{2}-e({q}));\\
\sigma_\Omega(e({p_1}))&=&(m-1) e_0-(m-2)e({p_1})-\sum_{q\in \Omega} e({q})\\
& = &e({p_1})+\sum_{q\in \Omega} (\frac{e_0-e({p_1})}{2}-e({q}));\\
\sigma_\Omega(e_q)&=&e_0-e({p_1})-e({q})\mbox{ if }q\in \Omega;\\
\sigma_\Omega(e_q)&=&e_q\mbox{ if }q\in \B(\P^2_\k)\backslash(\{p_1\}\cup \Omega).\end{array}
\]
The following statement is easily proved. It implies that the $\sigma_\Omega$ form a subgroup of $W_\infty$. 
\begin{lem} For all pairs $\Omega$, $\Omega'$ of subsets of $\B(\P^2_\k)\backslash \{p_1\}$ with an even number of elements, and 
for all pairs $(q,q')$ of distinct points in $\B(\P^2_\k)\backslash \{p_1\}$, we have:
\begin{itemize}
\item[(i)]  $\sigma_\Omega$ of $\z_{\P^2_\k}$ is  the product of all $\nu_q$, $q\in \Omega$, and, as such, is an involution;
\item[(ii)] $\sigma_{\Omega}\cdot \sigma_{\Omega'}=\sigma_{\Omega\cup \Omega'\backslash \Omega\cap \Omega'}$;
\item[(iii)] $\sigma_{\{q,q'\}}$ is conjugate to the composition of $\sigma_0\in W_\infty$ with a transposition $\eta\in \Sym({\B(\P^2_\k)})$.  
\item[(iv)] $\sigma_\Omega$ is an element of $W_\infty$.
\end{itemize}
 \end{lem}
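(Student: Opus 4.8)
The plan is to verify the four assertions in order, since each one essentially reduces to a direct computation with the explicit formulas defining $\nu_q$ and $\sigma_\Omega$.

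\textbf{Proof of (i).} First I would record that the vectors $w_q = \frac{e_0-e(p_1)}{2} - e(q)$, for distinct $q \in \B(\P^2_\k)\setminus\{p_1\}$, are pairwise orthogonal: using $e_0\cdot e_0=1$, $e(p_1)\cdot e(p_1)=-1$, $e(q)\cdot e(q)=-1$, and orthogonality of the $e(\cdot)$ basis, one computes $w_q\cdot w_{q'} = \frac14 e_0\cdot e_0 + \frac14 e(p_1)\cdot e(p_1) = \frac14 - \frac14 = 0$ when $q\neq q'$, and $w_q\cdot w_q = \frac14 - \frac14 - 1 = -1$. Each $\nu_q$ is by definition the orthogonal reflection through $w_q^\perp$, hence an involution preserving the intersection form. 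Since the $w_q$ for $q\in\Omega$ are pairwise orthogonal, the reflections $\nu_q$ ($q\in\Omega$) pairwise commute, so their product $\sigma_\Omega$ is well-defined independently of the order, and is an involution because each factor is and they commute.

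\textbf{Proof of (ii).} This is a formal consequence of the structure just established. Since $\sigma_\Omega = \prod_{q\in\Omega}\nu_q$ with commuting factors, and each $\nu_q$ is an involution, in the product $\sigma_\Omega\sigma_{\Omega'} = \bigl(\prod_{q\in\Omega}\nu_q\bigr)\bigl(\prod_{q\in\Omega'}\nu_q\bigr)$ every $\nu_q$ with $q\in\Omega\cap\Omega'$ occurs twice and cancels, while every $\nu_q$ with $q$ in exactly one of $\Omega,\Omega'$ survives once; commutativity lets us reorder freely. Hence $\sigma_\Omega\sigma_{\Omega'} = \prod_{q\in\Omega\triangle\Omega'}\nu_q = \sigma_{\Omega\cup\Omega'\setminus\Omega\cap\Omega'}$. (Note that $\Omega\triangle\Omega'$ has an even number of elements, as required for the notation to make sense; this follows since $|\Omega|$ and $|\Omega'|$ are even and $|\Omega\triangle\Omega'| = |\Omega|+|\Omega'|-2|\Omega\cap\Omega'|$.)

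\textbf{Proof of (iii) and (iv), and the main obstacle.} For (iii), take $\Omega=\{q,q'\}$; I would choose an element $\eta\in\Sym(\B(\P^2_\k))$ sending $p_1,q,q'$ to the three base points $p_1=[1:0:0],p_2=[0:1:0],p_3=[0:0:1]$ of $\sigma_0$ (say $\eta$ fixes $p_1$ — relabel if necessary — and swaps $q\leftrightarrow p_2$, $q'\leftrightarrow p_3$). A direct check using the displayed formula for $\sigma_\Omega$ with $m=2$ and the defining formula for $\sigma_0$ shows that $\eta^{-1}\sigma_0\eta$ and $\sigma_{\{q,q'\}}$ act identically on $e_0$ and on every $e(p)$: both send $e_0\mapsto 2e_0-e(p_1)-e(q)-e(q')$, fix every $e(p)$ with $p\notin\{p_1,q,q'\}$, and one checks the action on $e(p_1),e(q),e(q')$ matches as well. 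Hence $\sigma_{\{q,q'\}} = \eta^{-1}\sigma_0\eta$, where one may incorporate a transposition into $\eta$ so that the statement reads as claimed. Finally (iv): since $W_\infty$ contains $\Sym(\B(\P^2_\k))$ and $\sigma_0$, part (iii) shows every $\sigma_{\{q,q'\}}\in W_\infty$; for general $\Omega$ with $2m-2$ elements, partition $\Omega$ into $m-1$ disjoint pairs and apply (ii) to write $\sigma_\Omega$ as a product of the corresponding $\sigma_{\{q_i,q_i'\}}$, each in $W_\infty$, so $\sigma_\Omega\in W_\infty$. The only point requiring care — the ``main obstacle'' such as it is — is the bookkeeping in (iii): verifying that the permutation $\eta$ can be chosen so that conjugating $\sigma_0$ reproduces $\sigma_{\{q,q'\}}$ exactly (rather than up to a further permutation), which is why the statement allows composition with a transposition $\eta$. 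Everything else is routine linear algebra with the orthogonal basis $(e_0, e(p))_{p\in\B(\P^2_\k)}$.
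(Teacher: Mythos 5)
Your argument for (i), (ii), and (iv) is correct and is the computation the paper has in mind (the paper offers no proof, merely asserting the lemma is "easily proved"): orthogonality of the $w_q$'s gives commuting reflections, hence a well-defined involution, the symmetric-difference formula, and the reduction of (iv) to (iii) by pairing off the points of $\Omega$. The one place where your write-up goes wrong is the "direct check" in (iii): it is \emph{not} true that $\eta^{-1}\sigma_0\eta$ and $\sigma_{\{q,q'\}}$ agree on $e(q)$ and $e(q')$. With $\eta$ sending $p_1,q,q'$ to the three coordinate points, one finds $\eta^{-1}\sigma_0\eta\bigl(e(q)\bigr)=e_0-e(p_1)-e(q')$, whereas the defining formula gives $\sigma_{\{q,q'\}}\bigl(e(q)\bigr)=e_0-e(p_1)-e(q)$; the two maps differ exactly by the transposition exchanging $q$ and $q'$, and no plain conjugate of $\sigma_0$ can equal $\sigma_{\{q,q'\}}$ for this reason. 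This is precisely why the lemma is phrased as "conjugate to the composition of $\sigma_0$ with a transposition": the correct identity is $\sigma_{\{q,q'\}}=s\,(\sigma_0\circ\tau)\,s^{-1}$, where $\tau$ transposes $[0:1:0]$ and $[0:0:1]$ and $s$ carries the coordinate points to $p_1,q,q'$. You gesture at this at the end ("one may incorporate a transposition"), so the conclusion stands and (iv) is unaffected, since $\sigma_0\circ\tau$ and its conjugates all lie in $W_\infty$; but the intermediate claim that the two maps "act identically \ldots on every $e(p)$" should be corrected, as the check you describe would in fact expose the discrepancy.
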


%%%
\subsubsection{Minimal sets}
%%% 
 
 %**NewSerge** Expanded formula for delta
To simplify the notation, we define
\[
\delta=\frac{\sqrt{2}(\frac{5}{2}-\sqrt{6})}{\sqrt{\lambda\left(\frac{1}{\lambda}+\lambda+2\right)}}= \frac{5-2\sqrt{6}}{\sqrt{2}(\lambda+1)},
\]
which is the value given by Lemma~\ref{Lem:Decreasing}.
 
We denote by $\mathcal{S}$ the collection  of all finite subsets $\Omega\subset \B(\P^2_\k)\backslash \{p_1\}$ consisting of an even number of points, and satisfying the following properties:
\begin{enumerate}
\item
Each point $q\in \Omega$ is either a proper point of $\P^2_\k$ or is in the first neighbourhood of a point $q'\in \Omega$.
\item
There is no line of $\P^2_\k$ passing through $p_1$ and through two distinct points of $\Omega$ (taking here also infinitely near points).
\item
For any three distinct points $q_i,q_j,q_k\in \Omega$, the points $q_j,q_k$ cannot simultaneously belong, as proper or infinitely near points, to the exceptional curve obtained by blowing-up $q_i$ (this means that $q_j,q_k$ are not both "proximate" to $q_i$).
\item
Either $\Omega=\emptyset$ or 
\[
\cosh\left( \dist(e_0,\sigma_{\Omega}\ax(f_\p)) \right)<\cosh\left( \dist(e_0,\ax(f_\p)) \right)-\delta.
\]
(note that $\sigma_\Omega(\ax(f_\p))=\ax(\sigma_{\Omega}f_\p\sigma_{\Omega}^{-1}))$)
\end{enumerate}

\smallskip

We put a partial order on $\mathcal{S}$, defined by $\Omega'< \Omega$ if and only if
\[
\cosh\left( \dist(e_0,\sigma_{\Omega'}\ax(f_\p)) \right)<\cosh\left( \dist(e_0,\sigma_{\Omega}\ax(f_\p)) \right) -\delta.
\]

The set $\mathcal{S}$ is of course not empty, since it contains the set $\Omega=\emptyset$. The definition of the order implies that there is no infinite decreasing sequence $\Omega_1>\Omega_2>\dots$ in $\mathcal{S}$, so $\mathcal{S}$ contains minimal elements. We now prove the following assertion:

\smallskip

\begin{center}\begin{tabular}{lp{11.5cm}}
$(\star)$ & {\sl{Let $\Omega$ be a minimal element of $\mathcal{S}$, and let $E$ be the projection of $e_0$ onto $\sigma_{\Omega}\ax(f_\p)$. Either 
\[
\deg(\sigma_{\Omega}f_\p\sigma_{\Omega}^{-1})\le 24\lambda^3
\] 
or there exists $q\in \B(\P^2_\k)$ which satisfies $e(q)\cdot E>e({p_1})\cdot E$}}.\end{tabular}\end{center}

\smallskip

To prove $(\star)$, we take  an element $\Omega$ of $\mathcal{S}$, we assume that $\deg(\sigma_{\Omega}f_\p\sigma_{\Omega})> 24\lambda^3$ and that $e({p_1})\cdot E\ge e({q})\cdot E$ for all $q\in \B(\P^2_\k)$, and we show that $\Omega$ is not minimal in $\mathcal{S}$. 

\smallskip

$\bullet$ Proposition~\ref{Pro:Lambda106} and Lemma~\ref{Lem:Decreasing} provide three distinct points $q_1$, $q_2$, and $q_3$ such that $(e({q_1})+e({q_2})+e({q_3})-e_0)\cdot E>\delta$. Because $e({p_1})\cdot E\ge e({q_i})\cdot E$ for all indices $i$, we can assume that $q_1=p_1$. Since 
\[
(\sigma_{\{q_2,q_3\}})^{-1}(e_0)=2e_0-e({p_1})-e({q_2})-e({q_3}),
\] 
we obtain $E\cdot e_0-\sigma_{\{q_2,q_3\}}(E)\cdot e_0=E\cdot e_0-E\cdot (\sigma_{\{q_2,q_3\}})^{-1}(e_0)>\delta$; this implies that 
\[
e_0\cdot \sigma_{\{q_2,q_3\}}(E)<\cosh(\dist(e_0,\sigma_{\Omega}\ax(f_\p)))-\delta.
\]
As a consequence, if we define $\Omega'=\Omega \cup \{q_2,q_3\} \backslash (\Omega \cap \{q_2,q_3\})$, then $\sigma_{\Omega'}=\sigma_{\{q_2,q_3\}}\cdot \sigma_\Omega$, and the point  $E'=(\sigma_\Omega)^{-1}(E)\in \ax(f_\p)$ satisfies  $\sigma_{\{q_2,q_3\}}(E)=\sigma_{\Omega'}(E')$; thus, the inequalities
\begin{eqnarray*}
\cosh( \dist (e_0,\sigma_{\Omega'}\ax(f_\p) )) & \leq &  e_0\cdot \sigma_{\Omega'}(E')\\
& < & \cosh(\dist(e_0,\sigma_{\Omega}\ax(f_\p)))-\delta
\end{eqnarray*}
imply that $\Omega$ is not a minimal element of $S$, or that $\Omega'$  does not satisfy one of the assertions $(1)$, $(2)$, $(3)$ in the definition of $\mathcal{S}$.

%**NewSerge** Changed the last sentence. **Jeremy** J'ai mis comme le referee proposait (on cherche a montrer que Omega est minimal 

\smallskip

$\bullet$   We now replace $\Omega'$ by a new set $\Omega''$ such that $\sigma_{\Omega''}(e_0)\cdot E'$ does not increase and $\Omega''$ satisfies the defining properties of $\mathcal{S}$.  From \S~\ref{par:SigmaOmega} we know that
\[
e_0\cdot \sigma_{\Omega'}(E')=\sigma_{\Omega'}(e_0)\cdot E'=(e_0+\sum_{q\in \Omega'} ((e_0-e({p_1}))/2-e({q})))\cdot E'.
\]
If there are two distinct points $q,q'\in \Omega'$ such that 
\[
(e_0-e({p_1})-e({q})-e({q'}))\cdot E'\ge 0,
\] 
we can replace $\Omega'$ with $\Omega\backslash \{q,q'\}$, and this does not increase $\sigma_{\Omega'}(e_0)\cdot E'$. We can thus assume that $(e_0-e({p_1})-e({q})-e({q'}))\cdot E'< 0$ for all pairs of distinct points $(q,q')$ of $\Omega'$. 

If $q\in \Omega$ is in the first neighborhood of a point $q'$, the divisor $e({q})-e({q'})$ is effective, and intersects $E'$ non-negatively, because $E'$ is on the axis $\ax(f_\p)$ of $f\in \Cr_2(\k)$  (Lemma~\ref{EPositif}). If $q'$ does not belong to $\Omega'$, we can thus replace $\Omega'$ with $\Omega' \cup \{q'\}\backslash \{q\}$; again, this does not increase $\sigma_{\Omega'}(e_0)\cdot E'$. 

%**NewSerge** Added a short sentence below for Property (4).
These replacements down, we get a new set $\Omega''$.  Let us show that $\Omega''$ belongs to $\mathcal{S}$. Property (4) is obviously satisfied. The fact that $(e_0-e({p_1})-e({q})-e({q'}))\cdot E'< 0$ for all pairs of distinct points $(q,q')$ in $\Omega''$ implies that $p_1$, $q$, $q'$ are not collinear (Assertion $(2)$).  Similarly,  the second family of modifications of $\Omega'$ shows that $\Omega''$ satisfies Assertion $(1)$. It remains to show that Assertion $(3)$ holds for $\Omega''$. Let $q_i$, $q_j$, and $q_k$  be three distinct points of $\Omega''$ such that $q_j$ and $q_k$ are proximate to $q_i$; then,  the divisor $e({q_i})-e({q_j})-e({q_k})$ is effective and intersects thus $E'$ non-negatively (Lemma~\ref{EPositif}). This yields 
\[
0>(e_0-e({p_1})-e({q_j})-e({q_k}))\cdot E'\ge (e_0-e({p_1})-e({q_i}))\cdot E',
\] 
which is impossible. Indeed, this implies that $ (e_0-e({p_1})-e({q}))\cdot E'<0$, where $q\not=p_1$ is a point which is either a proper point of $\P^2_\k$ or in the first neighbourhood of $p_1$ (choose either $q=q_i$ or $q$ such that $q_i$ is infinitely near to $q$), and this is impossible because $e_0-e({p_1})-e({q})$ is effective, as it corresponds to a line of $\P^2_\k$. This concludes the proof of $(\star)$.

%%%
\subsubsection{Strategy}
%%% 
To prove Theorem~\ref{thm:thmD-rephrased}, we provide an algorithm which runs as follows. Start with $f$ and choose a minimal configuration $\Omega\in \mathcal{S}$. If there is an element $g$ in $\Bir(\P^2_\k)$ such that $g_\p(e_0)=\sigma_\Omega(e_0)$, we prove that the distance from $e_0$ to the axis is decreased by a multiplicative factor that depends only on $\delta$; we can thus replace $f$ by $gfg^{-1}$. If this is not the case, it is proved that a conjugate of $f$ satisfies $\deg(gfg^{-1})<4700 \lambda^5$, and the algorithm stops. 

%%%
\subsubsection{Algorithm: first case}\label{par:algo-1}
%%% 
We  now take an element $\Omega\subset \mathcal{S}$, which is minimal in $\mathcal{S}$. By Property $(\star)$, the set $\Omega$ is not empty
(otherwise the Theorem~is proved, with $g$ equal to the identity), hence we have
%**NewSerge** Changed the parenthesis above.
 \[
 \cosh(\dist(e_0,\ax(\sigma_{\Omega}f_\p\sigma_{\Omega}))<\cosh(\dist(e_0,\ax(f_\p)))-\delta.
 \]
We write now explicitly $\Omega=\{p_2,\dots,p_{2m-1}\}$, we denote by  $E$ the projection of $e_0$ on $\sigma_{\Omega}\ax(f_\p )$, and we denote by $E'\in \ax(f_\p)$ the element $(\sigma_{\Omega})^{-1}(E)$; in general, this point differs from the projection of $e_0$ onto $\ax(f_\p)$.

Suppose that there exists  an element $g \in \Bir(\P^2_\k)$ with 
$(g_{\p})^{-1}(e_0)=\sigma_\Omega^{-1}(e_0)=\sigma_\Omega(e_0)$.
The point $g_\p(E')\in \ax ((g fg^{-1})_\p)$ satisfies
\[
e_0\cdot g_\p(E')= (g_{\p})^{-1}(e_0)\cdot E'=\sigma_\Omega(e_0)\cdot \sigma_\Omega(E)=e_0\cdot E,
\]
and this implies
\begin{equation}\label{eq:decrease-by-delta}
\cosh(\dist(e_0,\ax((g fg^{-1})_\p)))<\cosh(\dist(e_0,\ax(f_\p)))-\delta.
\end{equation}
We can thus replace $f$ with $g fg^{-1}$ and repeat the process (see below \S~\ref{par:thmD-conclusion}).

%%%
\subsubsection{Algorithm: second case}\label{par:algo-2}
%%% 
Suppose now that such  a birational transformation $g$ does not exist.  Denote by $p_i$ the elements of $\Omega$ (including the
point $p_1$).
  
  \smallskip
  
$\bullet${\bf{An inequality.--}} Recall that $p_1$ is a proper point of $\P^2_\k$. Since $\Omega$ is in the family $\mathcal{S}$, properties $(1)$ to $(4)$ of Proposition~\ref{Prop:ExistenceJonq} are fulfilled. Thus, if there is no birational transformation $g$ such that 
\[
g^{-1}(e_0)=\sigma^{-1}(e_0)= m e_0-(m-1)e({p_1})-\sum_{i} e({p_i}),
\]
one of the two assumptions $(5)$--$(6)$ of Proposition~\ref{Prop:ExistenceJonq} is not satisfied. We now study these two possibilities. 

Write 
\[
E=\alpha_0 e_0-\sum \alpha_i e({p_i}),
\]
where $p_1,\dots,p_{2m-1}$ are as above and the remaining points $p_{k}$, $k\geq 2m$, are elements of $\B(\P^2_\k)$;  the $\alpha_i$'s are real non-negative numbers (apply Lemma~\ref{EPositif}).

If Assumption $(5)$ is not fulfilled, the number of points in $\{p_2,\dots,p_{2m-1}\}$ which belong, as proper or infinitely near points, to the exceptional divisor associated to $p_1$ is equal to $m+l$ with $0\le l\le m-2$; we write these points as $p_{i_1},\dots,p_{i_{m+l}}$. The divisor $e({p_1})-\sum_{j=1}^{m+l} e({p_{i_j})}$ is thus effective; hence, it intersects $E'$ non-negatively.

Applying $\sigma_\Omega$, we see that $E=\sigma_\Omega(E_f)$ intersects non-negatively 
\begin{eqnarray*}
\sigma_\Omega(e({p_1})-\sum_{j=1}^{m+l} e({p_{i_j}})) & = & (m-1)e_0-(m-2)e({p_1})- \\
& & \sum_{i=2}^{2m-1} e({p_i})-\sum_{j=1}^{m+l}(e_0-e({p_1})-e({p_{i_j}})).
\end{eqnarray*}
This gives  $(-1-l)\alpha_0+(l+2)\alpha_1\ge 0$, i.e.
\begin{equation}\label{EqFulfill4}
\frac{\alpha_1}{\alpha_0}\ge \frac{l+1}{l+2}.
\end{equation}

   If Assumption $(6)$ of Proposition~\ref{Prop:ExistenceJonq} is not satisfied, we obtain the existence of a curve of degree $k\ge 1$ with multiplicity $k-1$ at $p_1$ which passes  through $k+m+l$ of the points $\{p_2,\dots,p_{2m-1}\}$, for some $l\ge 0$; note that this curve is unique because it corresponds to the exceptional section of the Hirzebruch surface obtained by blowing-up $p_1$ and performing elementary links at $p_2,\dots,p_{2m-2}$. As before, this implies that 
   \[
   ke_0-(k-1)e({p_1})-\sum_{j=1}^{k+m+l} e({p_{i_j}})=k(e_0-e({p_1}))+e({p_1})-\sum_{j=1}^{k+m+l} e({p_{i_j}})
   \]
    intersects non-negatively $E'$. Applying $\sigma_\Omega$, we see that $E$ intersects non-nega\-ti\-vely
 \[
 k(e_0-e({p_1}))+(m-1)e_0-(m-2)e({p_1})-\sum_{i=2}^{2m-1} e({p_i})-\sum_{j=1}^{k+m+l}(e_0-e({p_1})-e({p_{i_j}})),
 \]
 and this leads again to  $(-1-l)\alpha_0+(l+2)\alpha_1\ge 0$ and to Equation~(\ref{EqFulfill4}).

  \smallskip

$\bullet${\bf{Upper bound on the degree.-- }} Coming back to the proof of Proposition~\ref{Prop:ExistenceJonq} we know that, in both cases, the problem is that the Hirzebruch surface obtained after blowing-up $p_1$ and performing elementary links at $p_2,\dots,p_{2m-2}$ is equal to a new Hirzebruch surface $\mathbb{F}_{1+2l}$ which does
 not coincide with $\mathbb{F}_1$; so the map $\sigma_\Omega$ does not correspond to a geometric Jonqui\`eres map. 
 
 To recover a well defined birational transformation of the plane, we choose  $2l$ general points of $\P^2_\k$,  that we call $p_{2m},\dots,p_{2m+2l-1}$. Then, we obtain the existence of a Jonqui\`eres transformation $g \in \Bir(\P^2_\k)$ such that 
 \[
 (g_{\p})^{-1}(e_0)=(m+l)e_0-(m+l-1)e({p_1})-\sum_{i=2}^{2m+2l-1} e({p_i}).
 \]
 
 We now estimate the degree of $g  f g^{-1}$ from above. The bound comes from the computation of  the intersection of $g_{\p}(E')\in \ax((g  f g^{-1})_\p)$ with $e_0$, a number which is smaller than, or equal to $\cosh(\dist(e_0,\ax((g  f g^{-1})_\p)))$.
 First, 
 \[
 g_{\p}^{-1}(e_0)=\sigma_\Omega(e_0)+l(e_0-e({p_1}))-\sum_{i=2m}^{2m+2l-1} e({p_i}),
 \]
  and therefore
 \[
 \sigma_\Omega(g_{\p}^{-1}(e_0))=e_0+l(e_0-e({p_1}))-\sum_{i=2m}^{2m+2l-1} e({p_i}).
 \] 
 In particular, we obtain
 \[
 e_0\cdot g_{\p}(E')=\sigma_\Omega(g_{\p}^{-1}(e_0))\cdot E\le\alpha_0+l(\alpha_0-\alpha_1).
 \]
 Inequality~\eqref{EqFulfill4} provides the estimate $\alpha_1\geq \alpha_0\cdot  \frac{l+1}{l+2}$, and therefore
\[
e_0\cdot g_{\p}(E')\le\alpha_0\cdot (1+l-l\cdot \frac{l+1}{l+2})=2\alpha_0 \cdot \frac{l+1}{l+2}<2\alpha_0=2e_0\cdot E.
 \] 
Since $E$ is the projection of $e_0$ on  $\sigma_\Omega\ax( f)$, one gets 
\[
\cosh(\dist(e_0,\ax((g fg^{-1})_\p)))<2\alpha_0=2\cosh(\dist(e_0,\sigma_\Omega\ax( f))).
\]

\begin{rem}%**New** changed the argument
We have $\alpha_i+\alpha_j\le \alpha_0$ for all indices $i\not=j$. Indeed, one can choose $p_i$, $p_j$ proper points and obtain that $e_0-e(p_i)-e(p_j)$ is effective, hence has non-negative intersection with $E$. This also follows from Lemma~\ref{EPositif}, as $e_0-e(p_i)-e(p_j)$ is in the same $W_\infty$-orbit as $e(p_i)$.
\end{rem}

From Equation~\eqref{EqFulfill4}, we get $\alpha_1\ge \frac{\alpha_0}{2}$; hence the remark shows that  $\alpha_1\ge \alpha_i$ for all $i\ge 2$. 
By $(\star)$, this implies that $\deg(\sigma_\Omega f\sigma_\Omega)\leq 24\lambda^3$.

According to the Inequality~\eqref{Eq:Ineq-Axis-Deg} we have
\[
\alpha_0=\cosh(\dist(e_0,\sigma_\Omega\ax( f)))<\frac{2\deg(\sigma_\Omega f_\p\sigma_\Omega)}{\lambda-\frac{1}{\lambda}}
\]
 and 
\[
\sqrt{\frac{2\deg(g fg^{-1})}{\frac{1}{\lambda}+\lambda+2}}< \cosh(\dist(e_0,\ax((g fg^{-1})_\p))).
\] 
As a consequence,
\[
\sqrt{\frac{2\deg(g fg^{-1})}{\frac{1}{\lambda}+\lambda+2}}<\frac{4\deg(\sigma_\Omega h\sigma_\Omega)}{\lambda-\frac{1}{\lambda}}
\] 
and 
\[
\deg(g fg^{-1})<\frac{8\deg(\sigma_\Omega h\sigma_\Omega)^2}{(\lambda-\frac{1}{\lambda})^2}(\frac{1}{\lambda}+\lambda+2)<4700\lambda^5.
\]
Thus, we may stop the algorithm, since $\deg(g fg^{-1})<4700\lambda^5$.

%%%
\subsubsection{Conclusion}\label{par:thmD-conclusion}
%%% 
Thus, 
\begin{itemize}
\item either \S~\ref{par:algo-1} apply, which means that we can find an element $g$ in the Cremona group such that the hyperbolic cosine of the distance from $e_0$ to the axis of $gfg^{-1}$ decrease by $\delta$. We can then repeat the process for $gfg^{-1}$ as long as $\deg(gfg^{-1})>24\lambda^3$;
\item or the process stops, which means that \S~\ref{par:algo-1} does not apply, and then \S~\ref{par:algo-2} shows that there exists an element $g$ in $\Bir(\P^2_\k)$ with $\deg(gfg^{-1})< 4700 \lambda^5$. 
\end{itemize}
To sum up, 
the theorem~\ref{thm:thmD-rephrased} is proved in at most $\cosh(\dist(e_0,\ax(f)))/\delta$ steps.

%%%%%%%% 
\subsection{Proof of Theorem~C}
%%%%%%%%  

\begin{figure}[h]
\input{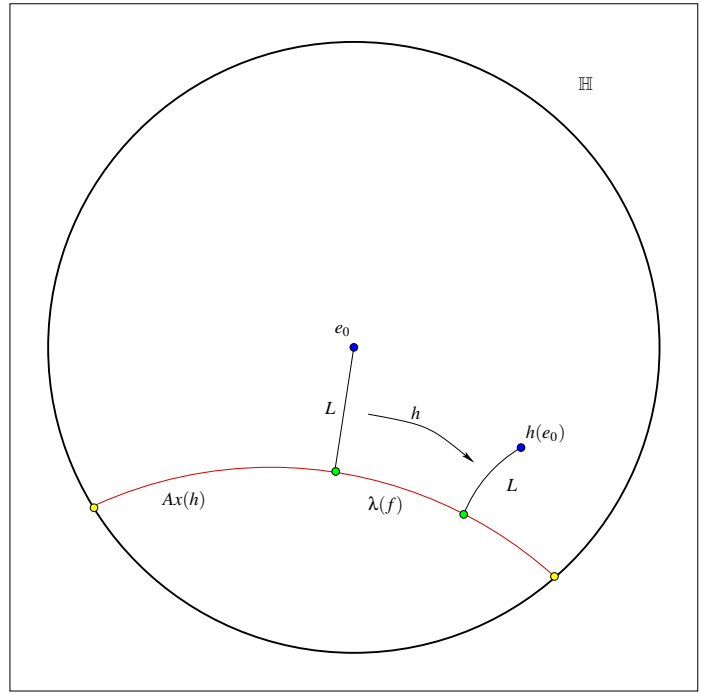}
\caption{{\sf{Axis and distances.--}} The blue points are the base point $e_0$ and its image by $h=gfg^{-1}$; the green points
are the projections of these blue points onto the axis of $h$.}
\end{figure}

To conclude the proof of Theorem~C, we need to prove the second assertion (using the first one, provided by Theorem~\ref{thm:thmD-rephrased}). Let $f$ be a loxodromic element of 
$\Bir(\P^2_\k)$. By the spectral gap property (see \S~\ref{par:firstgap}), $\lambda(f)\geq \lambda_l\simeq 1.176280$; hence, $\lambda(f^{86})>10^6$ and the first assertion of Theorem~C provides an element
$g$ in $\Bir(\P^2_\k)$ such that 
\[
\deg(g f^{86} g^{-1}) \leq 4700\,  \lambda(f^{86})^{5}.
\]
Let $h$ be equal to $gfg^{-1}$; we have $\lambda(h)=\lambda(f)$. Both $h$ and $h^{86}$ have the same axis, and we denote by $L$ the
distance from $e_0$ to it (see Figure~2). By definition, the distance from $e_0$ to $(h^{86})_\p(e_0)$ is at most equal to $\log(2 \deg(h^{86}))$ and,  by hyperbolicity of $\H_{\P^2_\k}$, it is bounded from below by 
$86 \log(\lambda(f)) + 2 L - 8\log(3)$. From the last inequality, we get 
\[
\log(2\deg(h^{86})) \leq \log(9400) + 5\cdot 86 \log(\lambda(f));
\]
hence,
\[
2L\leq 8\log(3)+\log(9400) + 4\cdot 86 \log(\lambda(f))\leq 18 + 344\log(\lambda(f)).
\]

With this upper bound in hands, we estimate 
\begin{eqnarray*}
\dist(e_0, (h)_\p e_0) & \leq & 2 L + \log(\lambda(f)) \\
&\leq & 18 + 345 \log(\lambda(f))
\end{eqnarray*}
and we obtain the inequality $\mcdeg(f)\leq \cosh(18+345\log(\lambda(f)))$. This concludes the proof of Theorem~C.

%%%%%%%%%%%%%%%%%%%%%%%%%%%%%%
\section{Algebraic families of birational transformations and decreasing sequences of dynamical degrees}\label{par:ALG-FAM}
%%%%%%%%%%%%%%%%%%%%%%%%%%%%%%

In this section, we prove the main corollaries of Theorem~C. This includes Theorem~D, which states that $\Lambda(\P^2_\k)$ is well ordered and that is is closed if $\k$ is uncountable and algebraically closed.

%%%%%%%%%% 
\subsection{The dynamical spectrum is well ordered}
%%%%%%%%%% 

%%%
\subsubsection{Well ordered subsets of $\R$}
%%%

Let $\Lambda$ be a subset of the real line $\R$. By definition, $\Lambda$ is well ordered if every subset $\Lambda_0$ of $\Lambda$ 
has a minimum $\min(\Lambda_0)\in \Lambda_0$; equivalently, $\Lambda$ is well ordered if it satisfies the descending chain condition: Every decreasing sequence $(\lambda_n)$ of elements of $\Lambda$ becomes eventually constant.

\begin{eg}
Consider the set of volumes of all compact riemannian manifolds of dimension $3$ with a metric of constant curvature $-1$. According to Jorgensen and Thurston, 
this set is infinite, well ordered, and contains accumulation points.
\end{eg}

%%%
\subsubsection{Dynamical degrees are well ordered}
%%%

\begin{thm}\label{thm:Well-Ordered}
Let $\k$ be a field. 
The subset $\Lambda\subset \R$ of dynamical degrees of all birational transformations 
of projective surfaces defined over $\k$ is well ordered.
\end{thm}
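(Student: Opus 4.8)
The plan is to deduce this from Theorem~B (for surfaces that are not geometrically rational) and from Theorem~C together with a finiteness statement for dynamical degrees inside bounded--degree families of $\Cr_2$. First I would make the standard reduction: $\Lambda$ is well ordered iff it satisfies the descending chain condition, so I take a non--increasing sequence $\lambda_n=\lambda(f_n)$ with $f_n\in\Bir(X_n)$, $X_n$ a projective surface over $\k$. If some $\lambda_n=1$ the sequence is eventually constant since $1=\min\Lambda$, so I may assume $\lambda_n>1$ for all $n$ and set $M=\lambda_0\ge\lambda_n$. Since $\deg(f^m)=D\cdot(f^m)_\ast D$ is an intersection number, $\lambda(f)$ is unchanged under the extension $\k\hookrightarrow\bar\k$. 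If $X_n$ is not geometrically rational, Theorem~B puts $\lambda_n$ in a fixed closed discrete subset $\Delta\subset\R$, and $\Delta\cap(1,M]$ is finite; if $X_n$ is geometrically rational then $(X_n)_{\bar\k}$ is birational to $\P^2_{\bar\k}$ and $\lambda_n\in\Lambda(\P^2_{\bar\k})$. As a union of two well ordered subsets of $\R$ is well ordered, the whole statement reduces to: $\Lambda(\P^2_K)$ is well ordered for $K$ algebraically closed.

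For the algebraically closed case I would again argue by descending chains: given $\lambda(g_n)$ non--increasing in $\Lambda(\P^2_K)\cap(1,M]$, Theorem~C gives birational conjugates $h_n$ of $g_n$ with $\deg(h_n)=\mcdeg(g_n)\le e^{18}\lambda(g_n)^{345}\le e^{18}M^{345}=:D$, and $\lambda(h_n)=\lambda(g_n)$. So it suffices to prove that for each $D$ the set $\Lambda_{\le D}:=\{\lambda(h)\mid h\in\Bir(\P^2_K),\ \deg h\le D\}$ is finite. Note $\deg h=\deg h^{-1}$ by Lemma~\ref{Lem:LoxoAxDeltaE}(i), so $h$ and $h^{-1}$ both have degree $\le D$.

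To prove finiteness of $\Lambda_{\le D}$ I would use algebraic families. The set $\Bir_{\le D}$ of degree $\le D$ transformations is a constructible subset of a finite--dimensional variety (it is cut out inside the space of triples of forms of degree $\le D$ by the constructible conditions "no common factor" and "dominant of degree one"; cf.\ \cite{Blanc-Furter}), hence a finite union of irreducible algebraic families $(f_t)_{t\in V}$ with $\deg f_t\le D$. By Noetherian induction on $\dim V$ it is enough to show $t\mapsto\lambda(f_t)$ takes finitely many values on each such $V$. By the Corollary following Theorem~\ref{thm:conj}, the locus $\{t\in V\mid\deg(f_t^{400})\ge 3^{19}\deg(f_t^{200})\}$ is both constructible and equal to the locus where $f_t$ is loxodromic; off it $\lambda(f_t)=1$, so after replacing $V$ by the finitely many irreducible components of that locus I may assume all $f_t$ are loxodromic. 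Then I would spread out the Diller--Favre algebraic stabilization (\cite{Diller-Favre:2001}) over the generic point of $V$: there is a dense open $V'\subseteq V$ and a family of birational morphisms $\pi_t\colon Y_t\to\P^2_K$ ($t\in V'$) of uniformly bounded length such that $\tilde f_t:=\pi_t^{-1}f_t\pi_t$ is algebraically stable on $Y_t$; hence $\lambda(f_t)$ equals the spectral radius of $(\tilde f_t)_\ast\in\GL(\NS(Y_t))$. Since $\deg f_t=\deg f_t^{-1}\le D$, the Noether equalities force every entry of $(\tilde f_t)_\ast$, in the basis consisting of $e_0$ and the exceptional classes, to have absolute value $\le D+1$; as $\NS(Y_t)$ has fixed rank over $V'$, only finitely many such integral matrices, and finitely many spectral radii, occur for $t\in V'$. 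Applying the induction hypothesis to the proper closed subset $V\setminus V'$ finishes the argument, so $\Lambda_{\le D}$ is finite and $\Lambda(\P^2_K)$, hence $\Lambda$, is well ordered.

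The main obstacle I expect is the family--theoretic input in the last paragraph: making precise that $\Bir_{\le D}$ carries the structure of a (finite union of) algebraic varieties on which $t\mapsto\deg(f_t^n)$ is constructible, and — most delicately — that the Diller--Favre resolution procedure can be performed in a family so that a single bound on the number of blow--ups (equivalently, a single target rank for $\NS(Y_t)$) works on a dense open subset. This is the step where countability of $K$ could otherwise cause trouble, and it is why one argues over the generic point of $V$ and then spreads out, rather than trying to bound the stabilization index of a single map in terms of $D$ (which is false). The remaining ingredients — the reduction to $\P^2$, the union--of--well--ordered principle, and the bound on matrix entries from Noether's equalities — are routine once this is in place.
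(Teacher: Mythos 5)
Your reduction steps match the paper: pass to $\bar\k$, dispose of non-geometrically-rational surfaces via Theorem~B, and use Theorem~C to replace a descending chain $\lambda(f_n)$ by conjugates of uniformly bounded degree inside $\Bir_d(\P^2_K)$. The divergence — and the gap — is in your final step, where you claim that $\Lambda_{\le D}=\{\lambda(h)\mid \deg h\le D\}$ is \emph{finite}. This is false. The paper itself records (in \S on small Pisot numbers, item (b), and see also Theorem~\ref{thm:sal-pis}) that all Pisot numbers below the golden mean are realized as dynamical degrees of \emph{quadratic} birational transformations of the plane; since the Pisot numbers accumulate at $\lambda_G$ from below, already $\Lambda_{\le 2}$ is infinite. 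More generally, whenever $\beta\in\Lambda^P(\P^2_K)$ is an accumulation point from below, Theorem~C forces the approximating maps into a single bounded-degree family, so no bound $D$ can make $\Lambda_{\le D}$ finite. What is true, and what suffices, is that $\Lambda_{\le D}$ satisfies the descending chain condition — a strictly weaker statement that still permits accumulation from below.

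The mechanism by which your family argument breaks is worth naming: the Diller--Favre stabilization cannot be spread out over a dense open subset of an irreducible family with uniformly bounded length, because algebraic stability is not a constructible condition. The locus in $V$ where stabilization requires more than $n$ blow-ups (i.e., where some indeterminacy point of $f_t^{-1}$ reaches an indeterminacy point of $f_t$ in more than $n$ steps) is a countable \emph{union} of proper closed subsets, one for each orbit length; over an uncountable field this union is typically Zariski dense in $V$ without being contained in any proper closed subset, so no single rank for $\NS(Y_t)$ works generically. (If your generic-spreading step were valid, Noetherian induction would indeed yield finiteness of $\Lambda_{\le D}$, contradicting the above.) The paper's proof replaces this entire paragraph with a single input: Junyi Xie's theorem that $\lambda\colon\Bir_d(\P^2_K)\to[1,\infty)$ is lower semicontinuous for the Zariski topology, so each level set $L(\beta)=\{h\mid\lambda(h)\le\beta\}$ is Zariski closed. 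A strictly decreasing sequence $\lambda(g_n)$ then produces a strictly decreasing chain of closed subsets $L(\lambda(g_n))$ of $\Bir_d(\P^2_K)$, which Noetherianity forbids — giving the descending chain condition directly, with no finiteness claim. If you substitute Xie's semicontinuity for your final paragraph, the rest of your argument goes through.
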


\begin{proof}
We may assume that $\k$ is algebraically closed.
Let $\lambda_n$, $n\geq 1$,  be a sequence of dynamical degrees. Suppose that $\lambda_{n+1}< \lambda_n$ for all indices $n$. Our goal is to prove that the number of terms in this sequence is bounded. 

Each element $\lambda_n$ of this sequence is the dynamical degree of some birational transformation $f_n\colon X_n\dasharrow X_n$. 
The subsequence of dynamical degrees $\lambda_n$ for which $X_n$ is not geometrically rational is bounded, because the set of dynamical degrees
of birational transformations of irrational surfaces is discrete. Thus, in what follows, we assume that $X_n$ is equal to the projective plane
$\P^2_\k$ for all $n\geq 1$. 

If $\lambda_{n_0}=1$ for some index $n_0$, the sequence contains  $n_0$ terms, because all dynamical degrees are larger than or equal to $1$.
Thus, we assume that $\lambda_n>1$ for all $n$. Let $\lambda_\infty$ be the limit of the sequence $(\lambda_n)$;  by Corollary~\ref{cor:thm-A-cor1}, 
\[
 \lambda_\infty+1\geq \lambda_n\geq\lambda_\infty\geq  \lambda_L\simeq 1.17628
\]
if $n$ is large enough ($n\geq n_0$).
Theorem~C  provides conjugates $g_n$ of $f_n$, such that 
\[
\deg(g_n)\leq \cosh(18+ 345\log(\lambda_\infty+1)).
\]
Hence, the degree of $g_n$ is bounded and, extracting a subsequence, we may assume that the degree of $g_n$ does not depend on $n$: There exists a degree $d$ such that $g_n$ is contained in the algebraic variety $\Bir_d(\P^2_\k)$ of elements of $\Bir(\P^2_\k)$ of degree $d$, for $n\geq n_0$.
 Junyi Xie  proved in \cite{Junyi-Xie:preprint} that the dynamical degree 
\[
\lambda\colon \Bir_d(\P^2_\k)\to [1,+\infty[
\]
is lower semi-continous for the Zariski topology. In other words, the level subsets 
\[
L(\beta)=\{h\in \Bir_d(\P^2_\k)\vert \quad \lambda(h)\leq \beta\}
\]
are Zariski closed (for all $\beta \geq 1$). As the sequence of dynamical degrees $(\lambda_n)$ is strictly decreasing,
we deduce that the sequence of Zariski closed sets $L(\lambda_n)\subset \Bir_d(\P^2_\k)$ decreases strictly. 
Since the Zariski topology is Noetherian, the sequence $(\lambda_n)$ contains only finitely many terms. \end{proof}

%%%%%%%%%%%%%%
\subsection{Small Pisot numbers and spectral gaps}
%%%%%%%%%%%%%%

Theorem~\ref{thm:Well-Ordered} implies that there are gaps in the dynamical spectrum on the right of every dynamical degree; the first gap occurs after the Lehmer number $\lambda_L$, the first gap on the right of a Pisot number occurs on the right of the plastic number $\lambda_P$. If one restricts the study to dynamical degrees in the Pisot family,  the following properties are corollaries of our previous results, known facts on Pisot numbers (see \cite{BDG:Book}), and a systematic study of quadratic birational transformations of the plane (see \cite{Diller:2011} and \cite{Blanc-Cantat:ToCome} for assertion (b)):
{\sl{
\begin{itemize}
\item[(a)] The Golden mean $\lambda_G$ is the smallest accumulation point in the set of Pisot numbers; it is an accumulation point from below, and from above. 

\item[(b)] All Pisot numbers below the Golden mean are realized as dynamical degrees of quadratic birational
transformations of the plane. 

\item[(c)] There is an $\epsilon>0$ such that $]\lambda_G, \lambda_G+\epsilon]$ does not contain any dynamical 
degree; hence, the infimum of the set 
\[
\{\lambda \in \Pis \mid \lambda \; {\text{is not a dynamical degree}}\}
\]
is equal to $\lambda_G$.
\end{itemize}
}}

%%%%%%%%%% 
\subsection{The dynamical spectrum is closed}
%%%%%%%%%% 

We now prove that {\sl{the dynamical spectrum $\Lambda(\P^2_\k)$ is closed if $\k$ is uncountable and algebraically closed}}.

Let $(f_i)_{i\geq 0}$ be a sequence of birational transformations of the projective plane such that $\lambda(f_i)$ converges
towards a real number $\lambda_\infty$. Our goal is to construct an element $h$ in $\Bir(\P^2_\k)$ such that $\lambda(h)=\lambda_\infty$.
Thus, we may (and do) assume that the numbers $\lambda_i:=\lambda(f_i)$ form a strictly increasing sequence converging to  $\lambda_\infty>1$.
Theorem~C applies: Changing each $f_i$ into a conjugate element of $\Bir(\P^2_\k)$, we assume that 
\[
\deg(f_i)\leq D
\]
where $D$ depends on the supremum of the $\lambda_i$ but does not depend on $i$. Replacing $(f_i)$ by a subsequence, we assume that the $f_i$ 
have the same degree $d$ (with $2\leq d\leq D$). 

The set $\Bir_d(\P^2_\k)$ of all birational transformations of degree $d$ is naturally endowed with the structure of an algebraic variety (see \cite{Blanc-Furter}); we denote by $F$ the Zariski closure of the set $\{ f_i\}_{i\geq 0}$ in $\Bir_d(\P^2_\k)$. The dimension of $F$ is positive because
the $\lambda_i$ are pairwise distinct; extracting a new subsequence, we assume that $F$ is irreducible. 

For each positive integer $n$, consider the set 
\[
X_n=\{g \in \Bir_d(\P^2_\k) \mid \deg(g^n)\leq \max_i (\deg(f_i^n)) \}.
\]
By \cite{Blanc-Furter},  this set is Zariski closed\footnote{With the notation of \cite{Blanc-Furter}, consider the set $\tilde{H_d}$ of triples $(p,q,r)$ of homogeneous polynomials of degree $d$ such that $f:[x:y:z]\mapsto [p:q:r]$ is a birational map; let $H_d$ be the quotient of $\tilde{H_d}$
by the equivalence relation for which two triples are equivalent if they are multiple of each other by a non-zero constant. 
Then $(p,q,r)\mapsto f$ is a map from $H_d$ to the set of birational transformations of degree $\leq d$. Denote by $H_{d,d}$ the subset of $H_d$ made of triples of $(p,q,r)$ that give rise to a birational map of degree $d$ exactly; this set is a Zariski open subset of $H_d$, and the projection $\pi_d\colon H_{d,d}\to \Bir_d(\P^2_\k)$ is an isomorphism. The map $H_{d}\to H_{d^n }$ that applies $f$ to $f^n$ is a morphism. Since $\Bir_{\leq \ell}(\P^2_\k)$
is closed in $\Bir(\P^2_k)$ (see Corollary 2.8 of \cite{Blanc-Furter}), one deduces that $X_n$ is closed.}; 
 hence $F$ is contained in $X_n$ for all $n\geq 1$. 
Similarly, the set
\[
Y_n= \{ g \in F \mid \deg(g^n) \leq \max_i ( \deg(f_i^n)) -1\}
\]
is a Zariski closed subset of $F$, and is a strict subset of $F$ because its complement contains at least one $f_j$. 
Since $\k$ is not countable, there is at least one element $h$ in 
$F\setminus \cup_{n\geq 1} Y_n$. This birational transformation satisfies 
\[
\deg(h^n) = \max_i (\deg(f_i^n))
\]
for all $n\geq 1$. This implies that $\deg(f_i^n)\leq \deg(h^n)$ for all indices $i$ and $n$; in particular, 
$\lambda(f_i)\leq \lambda(h)$ for all $i$, and 
\[
1< \lambda_\infty \leq \lambda(h).
\]
Thus, $h$ is a {\sl{loxodromic}} transformation of degree $d$. 

\begin{lem} Let $\k$ be a field and $d\geq 2$ be an integer. There exists a constant $\Delta(d)$ such that, 
for all  loxodromic element $g \in \Bir(\P^2_\k)$ of degree $d$, 
\begin{eqnarray}
\dist(e_0, \ax(g)) & \leq &  \Delta(d)/2 \\
\vert  \log(\deg(g^m))-m\log(\lambda(g))\vert & \leq & m\log(\lambda(g^m)) + \Delta(d)
\end{eqnarray}
for all $m\geq 1$.
\end{lem}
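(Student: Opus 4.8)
All the quantities appearing in the statement --- $\deg(g)$, the dynamical degree $\lambda(g)$, and $\dist(e_0,\ax(g))$ --- depend only on the sequence of integers $\deg(g^n)$, $n\in\Z$, and on a finite configuration of base points of the iterates of $g$; they are therefore unchanged if $\k$ is replaced by an extension. So the plan is to first assume that $\k$ is algebraically closed. Then $g_\p$ is an element of $W_\infty$ (Lemma~\ref{lem51}); it is a loxodromic isometry of the $\log(3)$-hyperbolic space $\H_{\P^2_\k}$, of degree $d$ and dynamical degree $\lambda:=\lambda(g)$, its translation length is $L=\log\lambda$ (Theorem~\ref{thm:Giz-class}), and one has $\deg(g^m)=\cosh\!\left(\dist(e_0,g^m_\p(e_0))\right)$ for every $m\ge 1$.

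The key step is a uniform bound on $\delta:=\dist(e_0,\ax(g_\p))$, and this is the only place where a non-formal ingredient intervenes. By the spectral gap (Corollary~\ref{cor:thm-A-cor1}) we have $\lambda\ge\lambda_L$, hence $\lambda-\lambda^{-1}\ge\lambda_L-\lambda_L^{-1}>\frac14$ because $x\mapsto x-x^{-1}$ is increasing on $\R_{>0}$. Feeding this into the upper bound of Inequality~\eqref{Eq:Ineq-Axis-Deg}, applied to $h=g_\p$, gives
\[
\cosh(\delta)<\frac{2d}{\lambda-\lambda^{-1}}<8d,
\]
and since $\cosh(\log(16d))=8d+\frac{1}{32d}>8d$ and $\cosh$ is increasing on $[0,\infty)$, this yields $\delta<\log(16d)$. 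I would then simply set $\Delta(d):=2\log(16d)$; then $\Delta(d)>\log 2$ and $\dist(e_0,\ax(g))=\delta\le\Delta(d)/2$, which is the first assertion.

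For the second assertion the plan is to sandwich $\dist(e_0,g^m_\p(e_0))$ between $mL$ and $mL+2\delta$. The lower bound holds because $g^m_\p$ is a loxodromic isometry with the same axis as $g_\p$ and translation length $mL$. For the upper bound, let $y$ be the projection of $e_0$ onto $\ax(g_\p)$, so $\dist(e_0,y)=\delta$; since $g^m_\p$ fixes $\ax(g_\p)$ and translates it by $mL$, the point $g^m_\p(y)$ lies on the axis at distance $mL$ from $y$, and, $g^m_\p$ being an isometry,
\[
\dist(e_0,g^m_\p(e_0))\le\dist(e_0,y)+\dist(y,g^m_\p(y))+\dist(g^m_\p(y),g^m_\p(e_0))=2\delta+mL.
\]
Now I would pass from distances to degrees using the elementary inequalities $t-\log 2\le\log\cosh(t)\le t$, valid for $t\ge 0$, together with $\deg(g^m)=\cosh(\dist(e_0,g^m_\p(e_0)))$: this gives $mL-\log 2\le\log\deg(g^m)\le mL+2\delta$, that is,
\[
\lvert\log\deg(g^m)-m\log\lambda(g)\rvert\le\max(\log 2,\,2\delta)\le\Delta(d)
\]
for all $m\ge 1$. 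Since $m\log\lambda(g^m)\ge 0$, this is in fact stronger than the inequality claimed in the lemma, which therefore follows.

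The main --- and really only --- point that must be used is the spectral gap: it keeps $\lambda(g)-\lambda(g)^{-1}$ bounded away from $0$ over all loxodromic $g$, which is what makes $\Delta$ depend on $d$ alone; without it $\cosh(\delta)$ could be arbitrarily large for a fixed degree $d$. Everything else is routine bookkeeping in the $\log(3)$-hyperbolic space $\H_{\P^2_\k}$, resting on Inequality~\eqref{Eq:Ineq-Axis-Deg}, Lemma~\ref{Lem:LoxoAxDeltaE}, Theorem~\ref{thm:Giz-class} and Lemma~\ref{lem51}, so I do not anticipate a genuine obstacle.
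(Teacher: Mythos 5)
Your proof is correct, and it reaches the same two estimates the paper needs ($2\dist(e_0,\ax(g))$ bounded in terms of $d$ alone, and $\log\deg(g^m)$ within an additive constant of $m\log\lambda(g)$), but the key step is obtained by a different route. The paper bounds $\dist(e_0,\ax(g))$ by Gromov hyperbolicity: it invokes a displacement inequality $\dist(e_0,g_\p(e_0))\ge 2\dist(e_0,\ax(g))+\log\lambda(g)-\epsilon$ (in the spirit of Lemma~\ref{lem:28}, where the spectral gap enters through the lower bound on the translation length) and combines it with $\dist(e_0,g_\p(e_0))\le\log(2d)$; it then sandwiches $\dist(e_0,(g^m)_\p(e_0))$ around $m\log\lambda(g)+2\dist(e_0,\ax(g))$, again using hyperbolicity for the lower bound. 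You instead take the explicit algebraic estimate \eqref{Eq:Ineq-Axis-Deg} (i.e.\ Lemma~\ref{Lem:Distvv}, which rests on the Noether equalities and Cauchy--Schwarz), feed in the spectral gap via $\lambda-\lambda^{-1}\ge\lambda_L-\lambda_L^{-1}>1/4$, and get the clean bound $\cosh(\dist(e_0,\ax(g)))<8d$; for the sandwich your lower bound $\dist(e_0,(g^m)_\p(e_0))\ge mL$ needs only the definition of the translation length, no hyperbolicity at all. Your version yields explicit constants ($\Delta(d)=2\log(16d)$) and avoids the thin-quadrilateral arguments entirely, at the cost of relying on the $W_\infty$ machinery (hence the preliminary reduction to $\k$ algebraically closed, which you handle correctly and which is harmless since degrees, dynamical degrees, and the axis are insensitive to field extension). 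Both arguments are sound; yours is the more self-contained of the two.
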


\begin{proof}
From the spectral gap property $\lambda(g)\geq \lambda_L=1.176280$. Thus, hyperbolic geometry implies
that $\dist(e_0, g_\p(e_0))$ goes to infinity with $\dist(e_0,\ax(g))$; more precisely, there is a uniform constant 
$\epsilon >0$ such that 
\[
\dist(e_0,  g_\p(e_0)) \geq 2\dist(e_0,\ax(g)) +\log(\lambda(g))-\epsilon.
\] 
Since $\dist(e_0, g_\p e_0)$ is bounded from above by $\log(2\deg(g))$, the first upper bound follows. 

The triangular inequality implies 
\[
\dist(e_0, (g^m)_\p(e_0)) \leq 2 \dist(e_0, \ax(g_\p)) + m\log(\lambda(g))
\]
and hyperbolicity implies
\[
\dist(e_0, (g^m)_\p(e_0)) \geq m\log(\lambda(g)) + 2 \dist(e_0, \ax(g_\p)) - \delta
\]
where $\delta$ is a uniform constant ($\delta<100$). The result follows.
\end{proof}

Apply this lemma to $h$ and to the $f_i$: 
\[
m \log(\lambda(h)) - \Delta(d) \leq \log(\deg(h^m)) \leq m \log(\lambda(h)) + \Delta(d) ,
\]
\[
m \log(\lambda(f_i)) - \Delta(d) \leq \log(\deg(f_i^m)) \leq m \log(\lambda(f_i)) + \Delta(d) .
\]
Let $\epsilon$ be a positive real number, and $m$ be a positive integer such that $\epsilon \geq 2\Delta(d)/m$.
Then, there exists $i$ such that $\deg(f_i^m)=\deg(h^m)$, and we get 
\[
m\log(\lambda(h))-\Delta(d) \leq \log(\deg(h^m)) = \log(\deg(f_i^m)) \leq m\log(\lambda(f_i)) + \Delta(d). 
\]
Hence, $\log(\lambda(h))\leq \log(\lambda(f_\infty))+ \epsilon$. Since this inequality holds for all $\epsilon>0$, 
we obtain $\lambda(h)\leq \lambda_\infty$, and thus $\lambda(h)= \lambda_\infty$, as desired. 

%%%%%%%
\subsection{Increasing approximation by Salem dynamical degrees}\label{par:Approx-Salem}
%%%%%%%
The set $\Pis$ is contained in the closure of the 
set $\Sal$. In this section, we show that the same property holds for dynamical degrees:

\begin{thm}\label{thm:sal-pis}
Let $\k$ be an algebraically closed field of characteristic $0$. 
Let $\beta$ be an element of $\Lambda^P(\P^2_\k)$. There exists a strictly increasing sequence $(\alpha_n)_{n\geq 0}$ 
of elements of $\Lambda^S(\P^2_\k)$  that converges towards $\beta$.
\end{thm}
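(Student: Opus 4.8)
The statement asserts that every Pisot dynamical degree $\beta \in \Lambda^P(\P^2_\k)$ (in characteristic $0$) is approximated from below by Salem dynamical degrees. My strategy is to transfer the analogous classical fact about numbers — that $\Pis$ lies in the closure of $\Sal$, and in fact every Pisot number is a limit \emph{from below} of Salem numbers — to the setting of dynamical degrees, using the realization results already available in the excerpt. The key point is that the Salem part $\Lambda^S(\P^2_\k)$ is, by the Nagata--McMullen--Uehara theorem quoted above, exactly the set of spectral radii $\lambda(\Phi) > 1$ of elements $\Phi$ of the infinite family of Weyl groups $W_n$, $n \geq 10$ (here we use $\mathsf{char}(\k) = 0$, so that Uehara's realization applies without passing to a finite-index subgroup). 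So it suffices to produce, for a given Pisot $\beta$ occurring as a dynamical degree, a sequence of Salem numbers $\alpha_n \nearrow \beta$ each realized as $\lambda(\Phi_n)$ for some $\Phi_n \in W_{n_k}$.

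\textbf{Key steps.} First I would recall the number-theoretic input: given a Pisot number $\beta$ with minimal polynomial $P(t)$, Salem's construction produces, for each large $m$, the reciprocal polynomial $Q_m^{\pm}(t) = t^m P(t) \pm P^*(t)$ (where $P^*$ is the reciprocal of $P$), and for one choice of sign and all large $m$ the largest real root $\alpha_m$ of $Q_m^\pm$ is a Salem number with $\alpha_m \to \beta$; moreover one of the two families approaches $\beta$ strictly from below (this is exactly the classical proof that $\Pis \subset \overline{\Sal}$, and that $\beta$ is a two-sided limit of Salem numbers). So I have an explicit increasing sequence $\alpha_n \nearrow \beta$ of Salem numbers. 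Second, and this is the crux, I must show each such $\alpha_n$ actually lies in $\Lambda^S(\P^2_\k)$. For this I would invoke the mechanism behind McMullen's and Uehara's work: a Salem number $\alpha$ is realized as $\lambda(\Phi)$ for some $\Phi \in W_n$ as soon as its Salem polynomial "fits inside" the Coxeter form of type $T_{2,3,n-3}$ — concretely, one seeks $n$ large and an element of $W_n$ (often a Coxeter-like or bipartite element, or one built from a Leavitt/spectral-realization argument) whose characteristic polynomial on $k_X^\perp$ is divisible by the Salem polynomial of $\alpha$. McMullen's theorem on Coxeter elements, together with the fact that for each fixed Salem polynomial $S(t)$ the signature obstruction disappears once $n$ is large enough (the Lorentzian lattice $\mathbb{Z}^{1,n-1}$ eventually contains a suitable sublattice realizing $S$), gives that $\alpha_n \in \Lambda^S$. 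Then Uehara's realization (Assertion (2) of the Nagata--McMullen--Uehara theorem) upgrades this to the statement that $\alpha_n = \lambda(g_n)$ for an honest automorphism $g_n$ of a rational surface $Y_n$, hence $\alpha_n \in \Lambda(\P^2_\k)$, and being a Salem number it lies in $\Lambda^S(\P^2_\k)$. Third, assemble: $(\alpha_n)$ is strictly increasing, consists of elements of $\Lambda^S(\P^2_\k)$, and converges to $\beta$, which is the assertion.

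\textbf{Main obstacle.} The delicate point is the realizability step: passing from "$\alpha_n$ is a Salem number close to a realized Pisot number $\beta$" to "$\alpha_n$ is itself a dynamical degree". One cannot simply perturb the automorphism realizing $\beta$, because $\beta$ itself is \emph{not} realized by an automorphism (it is a Pisot, not a Salem, number, and Theorem~A plus the Diller--Favre dichotomy forbid it); indeed $\beta$ arises from a Jonqui\`eres- or Halphen-like loxodromic \emph{birational} map, or more likely from an algebraic family whose degenerations carry the Salem approximants. The cleanest route, I expect, is the purely lattice-theoretic one: show directly, using McMullen's criterion on when a Salem polynomial is the characteristic polynomial of an element of some $W_n$ acting on $\mathbb{Z}^{1,n-1}$, that \emph{all sufficiently good Salem approximants of a given Pisot number} satisfy it. This requires checking that the unimodular Lorentzian lattice condition and the factorization $\chi_\Phi = S(t)\cdot(\text{cyclotomic factors})$ can be met simultaneously, which becomes possible by taking $n$ large; the subtlety is controlling the cyclotomic cofactor and the parity/spinor-norm constraints uniformly along the sequence. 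Once that is in place, invoking Uehara's theorem (legitimate since $\mathsf{char}(\k)=0$) finishes the argument, and the monotonicity and convergence of $(\alpha_n)$ are immediate from Salem's classical construction.
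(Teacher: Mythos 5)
Your proposal has a genuine gap at its central step. You reduce the theorem to the claim that the classical Salem approximants $\alpha_m$ of $\beta$ (largest roots of $t^mP(t)\pm P^*(t)$, where $P$ is the minimal polynomial of $\beta$) are themselves spectral radii of elements of the Weyl groups $W_n$, and you assert that the relevant obstruction "disappears once $n$ is large enough." This is precisely what is known to fail in general: Assertion (3) of the Nagata--McMullen--Uehara theorem quoted in the paper, resting on McMullen's work on Coxeter groups, states that there exist Salem numbers that are not eigenvalues of any element of any $W_n$ and hence are not dynamical degrees. Realizing a Salem number in $W_n$ is strictly stronger than embedding its Salem lattice into the Lorentzian lattice $\Z^{1,n}$, and no criterion is supplied (nor is one available) guaranteeing that the specific Salem numbers produced by Salem's construction from the Pisot polynomial of a dynamical degree are realized. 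So the "crux" step you identify is not merely delicate; as formulated, it is unsupported and its blanket version is false.

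The paper avoids this entirely by a geometric construction in which realizability is automatic. Starting from an algebraically stable $f$ with $\lambda(f)=\beta$, it sorts the base points of $f^{-1}$ by whether their forward orbit eventually meets a base point of $f$ (finite versus infinite length); since $\beta$ is Pisot, some orbit has infinite length. Truncating the infinite orbits after $k$ steps via an explicit projection $\pi_k$ yields isometries $F_k=\pi_k\circ f_\p$ of finite-rank sublattices of the Picard--Manin space which satisfy the Noether equalities, hence lie in Coxeter groups $W_r$; Uehara's theorem (characteristic $0$) then produces automorphisms $f_k$ of rational surfaces with $\lambda(f_k)=\lambda(F_k)$. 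A characteristic-polynomial computation of the form $x^kP(1/x^k,x)$ shows $\lambda(F_k)\to\beta$, discreteness of reciprocal quadratic integers forces $\lambda(F_k)$ to be Salem for large $k$, and the well-ordering of the dynamical spectrum (Theorem~D) extracts a strictly increasing subsequence. A separate explicit construction in $W_\infty$ handles the case where $\beta$ is a reciprocal quadratic integer. In short: the monotone convergence part of your argument is fine, but the membership $\alpha_n\in\Lambda^S(\P^2_\k)$ is exactly the hard point, and the paper's solution is to manufacture the approximants from the geometry of $f$ rather than from number theory.
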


\begin{cor} Let $\k$ be an algebraically closed field of characteristic $0$ and let $X$ be a projective surface defined over $\k$.
The dynamical spectrum $\Lambda(X)$ is contained in the closure of the set of dynamical degrees $\lambda(f)$ of automorphisms
of surfaces which are birationally equivalent to $X$.
\end{cor}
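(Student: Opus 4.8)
The plan is to combine Theorem~\ref{thm:sal-pis} with Theorem~A and with Theorem~B. Let $X$ be a projective surface over $\k$, with $\k$ algebraically closed of characteristic $0$, and let $\beta \in \Lambda(X)$; I want to exhibit a sequence of dynamical degrees of automorphisms of surfaces birationally equivalent to $X$ converging to $\beta$. The case $\beta = 1$ is trivial (take $\beta$ itself, realised by an automorphism of any minimal model in the birational class, e.g.~$\mathrm{id}$). So assume $\beta > 1$, in which case, by the Diller--Favre theorem quoted in the introduction, $\beta$ is either a Salem number or a Pisot number.

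First I would dispose of the non-rational case. If $X$ is not geometrically rational, then by Theorem~B every element of $\Lambda(X)\setminus\{1\}$ is a quadratic integer or a Salem number realised on a minimal model $X'$ of $X$ (which is an abelian, K3 or Enriques surface, with $\Bir(X')=\Aut(X')$); so $\beta$ is itself a dynamical degree of an automorphism of a surface birational to $X$, and the constant sequence $(\beta)$ works. Hence I may assume $X$ is rational, so $\Lambda(X) = \Lambda(\P^2_\k)$ and I am reduced to the statement for the plane. If $\beta \in \Lambda^S(\P^2_\k)$, Theorem~A provides a surface $Y$ birational to $\P^2_\k$ on which a conjugate of the relevant transformation becomes an automorphism, with the same dynamical degree $\beta$; again the constant sequence suffices. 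The only remaining case is $\beta \in \Lambda^P(\P^2_\k)$, and this is exactly where Theorem~\ref{thm:sal-pis} applies: it yields a strictly increasing sequence $(\alpha_n)$ in $\Lambda^S(\P^2_\k)$ converging to $\beta$, and for each $n$, Theorem~A promotes $\alpha_n$ to the dynamical degree of an automorphism of a rational surface $Y_n$ birational to $\P^2_\k$, hence birational to $X$.

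Assembling these pieces gives the corollary: in every case $\beta$ lies in the closure of $\{\lambda(g) : g \in \Aut(Z),\ Z \text{ birational to } X\}$, either because $\beta$ is itself such a $\lambda(g)$, or because it is a limit of such numbers via the Pisot-to-Salem approximation followed by the realisation of Salem degrees by automorphisms. The main (and only genuine) obstacle is Theorem~\ref{thm:sal-pis} itself — the increasing approximation of a Pisot dynamical degree by Salem dynamical degrees — but that is assumed as already proved; granting it, the corollary is a short bookkeeping argument reshuffling Theorems~A and~B. One small point to be careful about is the reduction to the plane: I should note that a birational equivalence $X \dasharrow \P^2_\k$ transports the collection of surfaces birational to $\P^2_\k$ bijectively onto those birational to $X$, so "birationally equivalent to $X$" and "birationally equivalent to $\P^2_\k$" name the same class of surfaces, and dynamical degrees are preserved under conjugacy.
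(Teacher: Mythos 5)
Your proposal is correct and follows exactly the paper's own argument: the non-rational case is handled by Theorem~B (everything is already realized by an automorphism of the minimal model), and the rational case by combining Theorem~\ref{thm:sal-pis} with Theorem~A; you merely spell out the case split (trivial, Salem, Pisot) that the paper leaves implicit.
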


\begin{proof}[Proof of the corollary] If $X$ is rational, this is a consequence of Theorem~\ref{thm:sal-pis} and Theorem~A. If $X$ is not rational, all dynamical degrees are realized by dynamical degrees of automorphisms of surfaces which are birationally equivalent to $X$ (see Section~\ref{par:not-rational}). \end{proof}

The proof of Theorem~\ref{thm:sal-pis}  is given in $\S\ref{thm:sal-pisproofgeneral}$ when $\beta$ is not a reciprocal quadratic integer. The case of 
reciprocal quadratic integers is dealt with in  $\S\ref{thm:sal-pisproofquad}$.  

%%%
\subsubsection{Pisot numbers which  are not reciprocal quadratic integers}
\label{thm:sal-pisproofgeneral}
%%%

Let $\beta\in \Lambda^P(\P^2_\k)$ be a Pisot number which is not a reciprocal quadratic integer; thus, $\beta$
is the dynamical degree of a birational transformation of $\P^2_\k$, but is not the dynamical degree of an
automorphism of rational projective surface.

Choose $f\in \Bir(\P^2_\k)$ with $\lambda(f)=\beta$, denote by $d$ the degree of $f$, by $p_i$ and $q_i$ the base points of $f$ and $f^{-1}$ ($1\leq i \leq m$), and write
\begin{eqnarray*}
f_\p (e_0) & = & d e_0 - \sum_{i=1}^m a_i e(q_i) \\
f_\p (e(p_i)) & = & d_i e_0 - \sum_{i=1}^m c_{i,j} e(q_j).
\end{eqnarray*}
The multiplicities $a_i$ are positive integers; the $c_{i,j}$ are non-negative integers.  

Say that $q_i$ has an {\bf{infinite length}} if $f^l(q_i)$ is not a base point of $f$ for all $l\geq 0$, and say that $q_i$ has a {\bf{finite length}} (equal to $\ell_i$) if $f^l(q_i)$ is not a base point of $f$ for $0\leq l \leq \ell_i-1$ but $f^{\ell_i}(q_i)$ is one of the base points $p_j$ of $f$. 
If all the base points $q_i$, $1\leq i\leq m$, have a finite length, one can blow up the points $f^l(q_i)$, $1\leq l \leq \ell_i$ to get a new surface
on which $f$ is an automorphism. Since $f$ is not conjugate to an automorphism, at least one of the base points $q_i$ has an infinite length. 

Order the base points $q_i$ in such a way that $q_1$, ..., $q_n$ have infinite length and $q_{n+j}$  has finite length $\ell_{n+j}$ for 
$j=1, \ldots, m-n$. Then, number the $p_j$ in such a way that $p_{n+j}=f^{\ell_j}(q_{n+j})$ for all $j\geq 1$. We shall now construct
a sequence of birational transformations $f_k$ such that 
\begin{itemize}
\item each $f_k$ is conjugate to an automorphism ;
\item $\lambda(f_k)$ converges to $\lambda(f)=\beta$ as $k$ goes to $+\infty$.
\end{itemize}
The idea is to transform the points $q_i$ into base points of finite length $\ell_i$ for $i\leq n$, but with length $\ell_i=k$ going to $+\infty$.

For this purpose, define %**New** k_i->l_i-1
\begin{eqnarray*} 
A & = & \{e_0\}\cup \bigcup\limits_{i=n+1}^m \left(\bigcup\limits_{j=0}^{l_i-1}\left\{ f^j_\p(e(q_i))\right\}\right)\\
 B_j&=& \bigcup\limits_{i=1}^n\left\{ f^j_\p(e(q_i))\right\} \mbox{ for any } j\ge 0\\
 C&=&\bigcup\limits_{i=1}^n \{e(p_i)\}.
 \end{eqnarray*}
The elements of these three sets are linearly independent in $\z_{\P^2_\k}$. In particular, $A$ is a basis of the sub-module 
$V_A$ spanned by $A$ in $\z_{\P^2_\k}$. Similarly, $B_j$ (resp. $C$) is a basis of the sub-module $V_{B_j}$ spanned by $B_j$ (resp. $C$)
for all $j\geq 0$. The map $f_\p$ restricts to an isomorphism between $V_C\oplus V_A$ and $V_A\oplus V_{B_0}$, %**New** B_1->B_0
 and also to an isomorphism 
\[
V_C\oplus V_A\oplus V_{B_0}\oplus\dots \oplus V_{B_k}\stackrel{f_{*}}{\longrightarrow} V_A\oplus V_{B_0}\oplus\dots \oplus V_{B_{k+1}}.
\]
Writing $V_k=V_C\oplus V_A\oplus V_{B_0}\oplus\dots \oplus V_{B_k}$, we define a linear transformation 
$F_k\in \Aut(V_k)$ by 
\[
F_k=\pi_k\circ f_\p,
\]
where $\pi_k\colon V_{k+1}\to V_k$ is the isomorphism defined by $\pi_k(f^{k+1}_\p(e(q_i)))=e(p_i)$ for $i=1,\dots,n$ and 
$\pi_k(x)=x$ for $x\in V_A\oplus V_{B_0}\oplus\dots \oplus V_{B_k}=V_k\cap V_{k+1}$. 
 
Since $f_\p$ preserves the intersection form of $\z_{\P^2_\k}$, $F_k$ preserves the intersection form of $V_k$. This latter space is of Minkowski type: An orthonormal basis is given by $e_0$ (of self-intersection $+1$) and by the other elements of $A,B_j,C$ (each of self-intersection $-1$). 
Since $f_\p$ satisfies the Noether equalities~\eqref{eq:NoetherEquality}, so does~$F_k$:
\[
 \sum\limits_{i=1}^m a_i^2 = d^2-1;\quad \; 
 \sum\limits_{i=1}^m a_i =  3d-3.
 \]

For a fixed integer $k\geq 1$, denote by $r+1$ the dimension of $V_k$. Then,
denote by $W_r$ the subgroup of $W_\infty$ generated by 
\begin{itemize}
\item the finite group of permutations of the set $e(q)$, where $e(q)$ runs over the elements of $A\setminus\{e_0\}$, of $B_j$ ($j\leq k$), and of $C$;
\item the involution $\sigma_0$ (with base points $p_1$, $p_2$, $p_3$ chosen among the base points $p_j$);
\item the involutions $\tau_{p,q}$ for $e(p)$ and $e(q)$ in the sets   $A\setminus\{e_0\}$, $B_j$ ($j\leq k$) and~$C$.
\end{itemize}
This group is isomorphic to the Coxeter group of the Dynkin diagram $T_{2,3,r-3}$ introduced in Section~\ref{par:Salem-in-Cremona}.
Since $F_k$ satisfies the Noether equalities, $F_k$ is an element of the Coxeter group $W_r$ (this is a version of Nagata's theorem mentioned in
\S~\ref{par:Salem-in-Cremona}; see \cite{DO} and the proof of Lemma~\ref{Lem:MultiplicitiesPositive}). 

By Uehara's theorem (see \cite{Uehara}), there is an element $f_k\in \Bir(\P^2_\k)$ for which $\lambda(f_k)$ is equal to the
spectral radius of the linear transformation $F_k$; moreover, $f_k$ is conjugate to an automorphism of a projective rational surface $X_k$. 

\begin{lem}\label{lem:cvlambda}
If $\beta$ is not a reciprocal quadratic integer, the sequence $(\lambda(f_k))$ converges towards $\beta$ as $k$ goes to $+\infty$, and it contains a sub-sequence that increases strictly towards $\beta$.
\end{lem}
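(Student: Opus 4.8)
The plan is to compare the characteristic polynomial $\chi_k(t)$ of the linear transformation $F_k\in \Aut(V_k)$ with the action of $f_\p$ on an appropriate invariant subspace, and to show that $\chi_k$ converges coefficientwise, after normalization, to a polynomial whose largest real root is $\beta$. The key observation is that $F_k$ differs from $f_\p$ only by the ``gluing'' map $\pi_k$, which identifies $f_\p^{k+1}(e(q_i))$ with $e(p_i)$ for $i=1,\dots,n$; thus $F_k$ should be thought of as a finite-dimensional truncation of the infinite matrix of $f_\p$ acting on $V_C\oplus V_A\oplus\bigoplus_{j\ge 0}V_{B_j}$. First I would write $F_k$ in block form with respect to the decomposition $V_k=V_C\oplus V_A\oplus V_{B_0}\oplus\dots\oplus V_{B_k}$: the action of $f_\p$ sends $V_C\oplus V_A$ isomorphically onto $V_A\oplus V_{B_0}$ and sends each $V_{B_j}$ onto $V_{B_{j+1}}$ by a fixed isomorphism $\varphi$ (independent of $j$), while $\pi_k$ folds $V_{B_{k+1}}$ back onto $V_C$. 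So in a suitable basis, $F_k$ is a companion-type matrix whose characteristic polynomial is $\det(t\,\mathrm{Id}-F_k)=\det\bigl(P(t)\bigr)+(\text{correction of size }O(t^{?}))$, where $P(t)$ is a fixed polynomial matrix built from the blocks of $f_\p$ on $V_C\oplus V_A\oplus V_{B_0}$ and $\varphi$.

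The second step is the limit computation. As $k\to\infty$, the extra factor coming from the $k$ copies of the shift $\varphi$ contributes a factor like $\det(\varphi)^k$ and a ``tail'' that, after dividing $\chi_k(t)$ by $t^{(k+1)\dim V_{B_0}}$ (which corresponds to removing the cyclotomic/eventually-periodic part), tends to a fixed rational function $R(t)=\chi_\infty(t)$ whose roots are exactly the eigenvalues of $f_\p$ on the span of $A\cup C\cup\bigcup_j B_j$ together with possibly roots of unity. Here is the place where the hypothesis that $\beta$ is \emph{not} a reciprocal quadratic integer enters: it guarantees that $\beta$ is genuinely an eigenvalue of $f_\p$ on this infinite-dimensional invariant subspace (because $f$ is not conjugate to an automorphism, at least one $q_i$ has infinite length, so the subspace is genuinely infinite-dimensional and carries the Perron eigenvector $v_+$), and moreover that the limiting polynomial is not forced to split off the Salem/quadratic factor prematurely. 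I would then invoke continuity of roots: since $\chi_k\to\chi_\infty$ and $\beta$ is a simple root of the ``interesting factor'' of $\chi_\infty$ that dominates in modulus, the spectral radius $\lambda(F_k)=\lambda(f_k)$ converges to $\beta$.

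The third step is to upgrade ``converges'' to ``contains a strictly increasing subsequence converging to $\beta$''. For this I would argue that $\lambda(f_k)<\beta$ for all $k$: indeed, $F_k$ is a truncation of $f_\p$, and a Perron–Frobenius / monotonicity argument on the nonnegative cone (using Lemma~\ref{Lem:MultiplicitiesPositive} and Lemma~\ref{EPositif}, which give that all the relevant classes pair nonnegatively, so the relevant matrices act monotonically on an invariant cone) shows that $\deg(f_k^\ell)\le \deg(f^\ell)$ for all $\ell$, whence $\lambda(f_k)\le\lambda(f)=\beta$, with equality impossible because $\beta$ is Pisot and not realized by an automorphism while $f_k$ is conjugate to one. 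Once $\lambda(f_k)<\beta$ for all $k$ and $\lambda(f_k)\to\beta$, one extracts a strictly increasing subsequence by a routine diagonal argument. The main obstacle I anticipate is the bookkeeping in the block-matrix/companion-matrix computation of $\chi_k(t)$ and the precise identification of which factor of $\chi_\infty(t)$ carries $\beta$: one must be careful that the folding map $\pi_k$ does not introduce spurious large eigenvalues, and that the normalization (dividing by the correct power of $t$) isolates exactly the factor whose roots are the honest spectral data of $f_\p$ on the infinite invariant subspace. Showing $\beta$ is a root of that factor — equivalently, that $v_+$ restricted suitably survives the truncation in the limit — is the technical heart, and it is exactly where ``$\beta$ not a reciprocal quadratic integer'' is used to rule out the degenerate situation where the limiting polynomial is a product of cyclotomic polynomials times the minimal polynomial of a quadratic integer only.
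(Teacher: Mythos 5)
Your first two steps follow the paper's route: write $F_k=\pi_k\circ f_\p$ in block form on $V_C\oplus V_A\oplus V_{B_0}\oplus\dots\oplus V_{B_k}$, recognize $\chi_{F_k}$ as a perturbation of a fixed polynomial, and pass to the limit. The paper makes this precise by showing $\chi_{F_k}(x)=x^kP(1/x^k,x)$ for a fixed two--variable polynomial $P$ with $P(0,t)=t^l\det(tI-N)$, and by checking that $\beta$ is a \emph{simple} root of $\det(tI-N)$ because the Perron eigenvector $v_+$ of $f_\p$ has $v_A\neq 0$ (it pairs positively with $e_0$). Note that this last point is unconditional: the hypothesis that $\beta$ is not a reciprocal quadratic integer is \emph{not} what makes $\beta$ a root of the limit polynomial, contrary to what your second step suggests; it is used only in the set-up (to guarantee some $q_i$ has infinite length) and at the very end (to get $\lambda(f_k)\neq\beta$).

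There are two genuine gaps. First, since $\dim V_k\to\infty$, ``continuity of roots'' only produces \emph{some} eigenvalue of $F_k$ near $\beta$; it does not by itself show that this eigenvalue is the spectral radius, i.e.\ that the folding $\pi_k$ introduces no larger eigenvalue. The paper closes this by observing that $F_k$ lies in the Coxeter group $W_r$ preserving a lattice with a form of signature $(1,r)$, hence has at most one real eigenvalue $>1$; you need this (or an equivalent) to identify the root near $\beta$ with $\lambda(f_k)$. Second, your route to the strictly increasing subsequence rests on the unproved claim $\deg(f_k^\ell)\le\deg(f^\ell)$, justified only by an appeal to ``Perron--Frobenius/monotonicity.'' The matrices here are not nonnegative in the Picard--Manin basis, the relevant invariant cone is the nef cone rather than an orthant, and it is not clear that the identification $\pi_k$ (which closes up orbits of base points) is degree-decreasing for all iterates; as stated this step would not go through. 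The paper avoids the inequality $\lambda(f_k)<\beta$ entirely: it shows only $\lambda(f_k)\neq\beta$ (because $\beta$, being Pisot and not a reciprocal quadratic integer, is not realized by an automorphism, while each $f_k$ is conjugate to one --- a point you do make correctly), extracts a subsequence with pairwise distinct values, and then invokes Theorem~D (the descending chain condition for $\Lambda(\P^2_\k)$) to conclude that this subsequence must be strictly increasing. Replacing your monotonicity claim with this well-ordering argument is the cleanest repair.
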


This lemma concludes the proof of Theorem~\ref{thm:sal-pis} when $\beta$ is not a reciprocal quadratic integer. Indeed, $\lambda(f_k)$ is
not a quadratic integer if $k$ is large, because the set of reciprocal quadratic integers is discrete; hence, $(\lambda(f_k))$ contains
a strictly increasing sequence of Salem numbers that converges towards $\beta$.  

\begin{proof}[Proof of Lemma~\ref{lem:cvlambda}]

Let $\lambda_k=\lambda(f_k)$.
This number is the largest real eigenvalue of $F_k\in \Aut(V_k\otimes \R)$.

The map $f_\p$ preserves $V_C\oplus V_A\bigoplus\limits_{i=1}^{\infty} V_{B_i}$ and its matrix can be written as a matrix by blocks as follows:
 \[
 \left(\begin{array}{ccccc}
  0& 0&  0 & &\\
  M& N & 0 & &\\
  P& Q& 0& &\\
  0  &0 &I&\ddots&\\
  & & &\ddots\end{array}\right)
  \]
  
Let $v$ be an eigenvector of $f_\p$ with eigenvalue $\lambda(f)$; such a vector exists (and is isotropic) in $\zz_{P^2_\k}$. Decompose $v$ as
$  v=0 + v_A + v_{B_1} + v_{B_2} + \ldots$ with respect to the direct sum $V_C\oplus V_A\bigoplus\limits_{i=1}^{\infty} V_{B_i}$. We obtain the
system of equations
\[
\left(\begin{array}{c} 0 \\ \lambda(f)\cdot v_A \\ \lambda(f)\cdot v_{B_1}\\\lambda(f)\cdot v_{B_2}\\ \dots \end{array}\right)=\lambda(f)\cdot v=f_{*}(v)=\left(\begin{array}{c} 0 \\ Nv_A \\ Qv_A\\ v_{B_1}\\ \dots \end{array}\right),
\]
from which we deduce that $v_A$ is an eigenvector of $N\in \GL(V_A)$, and $v_{B_{i}}=Qv_A/\lambda(f)^i$ for $i\ge 1$. Moreover, 
$v_A\neq 0$ because $v$ intersects $e_0$ positively. Thus, $\beta$ is a root of the characteristic polynomial $\det(tI-N)$.

 The matrix of $F_k$, acting on $V_k=V_C\oplus V_A\oplus V_{B_1}\oplus\dots \oplus V_{B_k}$, is
\[
M_{F_k}\left(\begin{array}{cccccc}
  0& 0&  & &&I\\
  M& N & & &\\
  P& Q& 0& &\\
  &&I&\ddots&\\
  & &&\ddots& 0\\
  &&&&I & 0\end{array}\right).
\]
Its characteristic polynomial $det(xI-M_{F_k})$ is equal to
 \[\det \left(\begin{array}{cccccc}
  xI& 0&  & &&-I\\
  -M& xI-N & & &&0\\
  -P& -Q& xI& x^2I&\dots &x^kI\\
  &&-I&0&&0\\
  & &&\ddots&\ddots&\vdots \\
  &&&&-I & 0\end{array}\right) 
\]
\[=\det\left(\begin{array}{cccccc}
  xI& 0& -I\\
  -M& xI-N &0\\
  -P& -Q& x^kI\\
 \end{array}\right)=\det\left(\begin{array}{cccccc}
 0& 0& -I\\
  -M& xI-N &0\\
  -P+x^{k+1}I& -Q& x^kI\\
 \end{array}\right)
 \]
Let $P(s,t)$ be the polynomial function in two variables that is defined by 
\[
P(s,t)= \det\left(\left(\begin{array}{cccccc}
 0& 0& -I\\
  -M& tI-N &0\\
  -sP+tI& -sQ& I\\
 \end{array}\right)\right)
\] 
The characteristic polynomial of $F_k$ is equal to $x^kP(1/x^k,x).$ Hence, 
\[
P(\lambda(f_k)^{-k}, \lambda(f_k))=0.
\]
Moreover, $P(0,t)=t^l\det(tI-N)$ for some integer $l$. Hence, the biggest real root of the polynomial $P(0,t)$ is $\beta$, and this root is simple.

We choose real numbers $\beta^{-},\beta^{+}$ with $1<\beta^{-}<\beta<\beta^{+}$ and define $\delta_k$ to be
$$\delta_k=\max_{t\in [\beta^{-},\beta^{+}]} |P(0,t)-P(1/t^k,t)|.$$
By construction, $\lim_{k\to \infty} \delta_k=0$. Hence, for large $k$ the rational function $P(1/t^k,t)$ has a real root $\beta_k$ near $\beta$, and $\lim_{k\to \infty} \beta_k=\beta$. Since $F_k$ is an element of the Coxeter group $W_r$, it has at most one real root bigger than $1$. Hence, $\beta_k=\lambda(f_k)$.

Thus, $\lambda(f_k)$ converges towards $\beta$. Since $\beta$ is a Pisot number and is not a reciprocal quadratic integer, it is not equal to the
dynamical degree of an automorphism. Thus, $\lambda(f_k)\neq \beta$ for all $k$, and one can extracts a sub-sequence from $(\lambda(f_k))$ whose members are pairwise distinct. Theorem~D implies that the sequence is strictly increasing. \end{proof}

%%%
\subsubsection{Reciprocal quadratic integers}
\label{thm:sal-pisproofquad}
%%%
It remains to prove Theorem~\ref{thm:sal-pis} for reciprocal quadratic integers. 

We fix integers $m,k\ge 2$ and choose a set $\Delta\subset \B(\P^2)$ of $2m-1+(m-2)k$ distinct points that we denote by
$$\begin{array}{c}
\Delta=\{q_i\}_{i=1}^{2m-1}\cup  
\{a_{i,j}\}_{i=1,\dots,m-2,j=1,\dots,k}.\end{array}$$
We choose $2m-1$ from these points, that we write $p_1,\dots,p_{2m-1}$. These are \begin{center}$p_i=a_{i,k}$ for $i=1,\dots,m-2$, $p_i=q_i$ for $i=m-1,\dots,2m-1.$\end{center}
Then we construct an element $h\in W_\infty$ defined by
$$\begin{array}{rcl}
h(e_0)&=&m e_0-(m-1)e({q_1})-\sum_{i=2}^{2m-1} e({q_i});\\
h(e({p_1}))&=&(m-1) e_0-(m-2)e({q_1})-\sum_{i=2}^{2m-1} e({q_i});\\
h(e(p_i))&=&e_0-e({q_1})-e({q_i})\;  \mbox{ for } i=2,\dots,2m-1;\\
h(e(q_i))&=&e(a_{i,1})\;  \mbox{ for } i=1,\dots,m-1;\\
h(e(a_{i,j}))&=&e(a_{i,j+1})\;  \mbox{ for } i=1,\dots,m-1,j=1,\dots,k-1.\\
h(e(r))&=&e(r)\; \mbox{ for } r\in \B(\P^2)\setminus\Delta.
\end{array}$$
Note that $h$ preserves the $\Z$-module $W$ generated by $e_0$ and the $\{e(r)\} _{r\in \Delta}$. It corresponds then to an element of the Coxeter group associated to these points. By Uhehara (see \cite{Uehara}), there is an element $f_{m,k}\in \Bir(\P^2_\k)$ for which $\lambda(f_{m,k})$ is equal to the
spectral radius $\lambda_{m,k}$ of the linear transformation $h$; moreover, $f_{m,k}$ is conjugate to an automorphism of a projective rational surface. Hence, Theorem~\ref{thm:sal-pis} follows from the following lemma in the case of reciprocal quadratic integers.

\begin{lem}
For integers $m\ge 2$, the sequence $(\lambda_{m,k})_{k\geq 1}$ converges towards
the largest root $\lambda_{m,\infty}$ of $P_m(x)=x^2-(m+1)x+1$, and 
 $\lambda_{m,k}$ is a Salem number if $k$ is large enough.
\end{lem}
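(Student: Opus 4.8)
The plan is to compute the characteristic polynomial of $h$ explicitly, exactly as in the previous subsection, and then let $k\to\infty$. First I would observe that $h$ preserves the orthogonal decomposition of $W$ into the span $V_C$ of $\{e(p_1),\dots,e(p_{m-2})\}$ (more precisely the classes that will be relabelled), the span $V_A$ of $\{e_0, e(p_{m-1}),\dots,e(p_{2m-1})\}$ together with $\{e(q_1),\dots,e(q_{m-1})\}$, and the "tail" spaces $V_{B_j}=\mathrm{span}\{e(a_{i,j})\}_{i}$ for $j=1,\dots,k$; the key structural point is that $h$ maps $V_{B_j}$ isomorphically onto $V_{B_{j+1}}$ by the identity-type shift $e(a_{i,j})\mapsto e(a_{i,j+1})$, and at the top of the tower $h$ sends $e(q_i)$ into $V_{B_1}$, while at the bottom the gluing $a_{i,k}=p_i$ closes the loop. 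Writing the matrix of $h$ in block form, the characteristic polynomial acquires exactly the shape $x^k P(1/x^k, x)$ for a two-variable polynomial $P(s,t)$ whose specialization $P(0,t)$ is (up to a power of $t$) the characteristic polynomial of the "truncated" transformation on $V_A\oplus V_C$ after the loop is cut. This is the same computation carried out in the proof of Lemma~\ref{lem:cvlambda}, and I would simply transcribe it, being careful that here the tail has uniform length $k$ in all $m-1$ strands.

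Next I would identify $P(0,t)$. The point of the very specific choice of $h$ on $V_A\oplus V_C$ — namely $h$ acts on $e_0$ and the $e(p_i)$, $e(q_i)$ by a Jonqui\`eres-type formula of degree $m$ with one base point of multiplicity $m-1$ and the rest of multiplicity $1$, twisted by the shift at the first level — is that when the loop is cut, the resulting finite-dimensional transformation has characteristic polynomial divisible by $P_m(t)=t^2-(m+1)t+1$, with $\lambda_{m,\infty}=\tfrac{(m+1)+\sqrt{(m+1)^2-4}}{2}$ its unique root $>1$, and this root is simple. I would verify this by a direct low-rank computation: the $2\times 2$ "action on the Jonqui\`eres invariant plane" spanned by $e_0-e(p_1)$-type vectors, or equivalently by restricting to the $2$-dimensional subspace where the dynamics concentrates, gives precisely the matrix $\left(\begin{smallmatrix} m & m-1\\ -1 & ? \end{smallmatrix}\right)$-type block whose characteristic polynomial is $P_m$. (This mirrors how in Lemma~\ref{Lem:Jonqtrans} the Jonqui\`eres element has $h(e_0)=me_0-(m-1)e(q_1)-\sum e(q_i)$ and $h(e(p_1))=(m-1)e_0-(m-2)e(q_1)-\sum e(q_i)$, whose restriction to $\mathrm{span}(e_0,e(q_1))$ modulo the orthogonal complement is governed by $P_m$.)

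With $P(0,t)$ in hand, the convergence argument is routine and identical to the one in Lemma~\ref{lem:cvlambda}: fix $\beta^-<\lambda_{m,\infty}<\beta^+$ with $\beta^->1$, set $\delta_k=\max_{t\in[\beta^-,\beta^+]}|P(0,t)-P(t^{-k},t)|$, note $\delta_k\to 0$, and conclude that for large $k$ the rational function $t\mapsto P(t^{-k},t)$ (whose roots, scaled, are the eigenvalues of $h$ acting on $W$) has a real root $\lambda_{m,k}$ near $\lambda_{m,\infty}$ with $\lambda_{m,k}\to\lambda_{m,\infty}$. Since $h\in W_r$ for the appropriate Coxeter group $W_r$ (it satisfies the Noether equalities, so Nagata's theorem applies as in the previous proof), $h$ has at most one eigenvalue of modulus $>1$, so this root is the spectral radius and hence equals $\lambda(f_{m,k})$ by Uehara's theorem. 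Finally, because $\lambda_{m,\infty}$ is a reciprocal quadratic integer while $\lambda_{m,k}$ has modulus-one conjugates forced by the Salem/Pisot dichotomy (Diller--Favre) together with the fact that $\lambda_{m,k}$ lives on a rational surface of Picard number $r+1\geq 11$ growing with $k$, for $k$ large $\lambda_{m,k}$ cannot be quadratic — the set of reciprocal quadratic integers is discrete and $\lambda_{m,k}\neq\lambda_{m,\infty}$ for $k$ large since $f_{m,k}$ is an automorphism while $\lambda_{m,\infty}$ is not a dynamical degree of one (or simply since $\lambda_{m,k}$ converges to but is distinct from $\lambda_{m,\infty}$ for infinitely many $k$) — hence $\lambda_{m,k}$ is a genuine Salem number.

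The main obstacle I anticipate is bookkeeping rather than conceptual: getting the block decomposition of $h$ exactly right so that the "loop-cutting" specialization $P(0,t)$ really is $t^l P_m(t)$ and not some larger factor, and in particular checking that $\lambda_{m,\infty}$ is a \emph{simple} root of $P(0,t)$ (needed for the perturbation argument to produce a nearby real root rather than a complex pair). One must also make sure that when $k$ is small the identification $a_{i,k}=p_i$ is consistent with the stated formulas (e.g.\ for $m=2$ there are no $a_{i,j}$ at all and the lemma degenerates, so the statement is really for $m\geq 3$, or one reads the $m=2$ case as already covered by Example~\ref{eg:counterexample-quad}); I would state the hypothesis $m\geq 3$ if needed. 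The verification that $h$ preserves the intersection form and satisfies the Noether equalities $\sum a_i^2=d^2-1$, $\sum a_i=3d-3$ with $d=m$ is immediate from the formula for $h(e_0)$, so invoking Nagata/Uehara is unproblematic.
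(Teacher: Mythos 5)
Your overall strategy is the one the paper uses: put $h$ in block form on an invariant submodule, write its characteristic polynomial as a fixed polynomial perturbed by terms of order $x^{-k}$, run the $\delta_k\to 0$ argument of Lemma~\ref{lem:cvlambda}, and conclude with the Coxeter trichotomy (the spectral radius of an element of $W_r$ is $1$, a reciprocal quadratic integer, or a Salem number) together with discreteness of the set of reciprocal quadratic integers. That surrounding machinery is reproduced correctly, and your block decomposition $V_C\oplus V_A\oplus\bigoplus_j V_{B_j}$ would work just as well as the paper's slightly different reduction (the paper restricts $h$ to a $(2k+4)$-dimensional invariant submodule spanned by $e_0$, a few individual classes, and the symmetric sums $\sum_i e(p_i)$, $\sum_i e(q_i)$, $\sum_i e(a_{i,j})$, and computes the determinant explicitly).

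The gap is that the one non-routine step --- identifying the limiting factor of the characteristic polynomial and checking that its largest root is a simple root $>1$ --- is asserted rather than carried out, and the sketch you give of it does not check out. A matrix $\left(\begin{smallmatrix} m & m-1\\ -1 & \ast \end{smallmatrix}\right)$ cannot have characteristic polynomial $t^2-(m+1)t+1$: the trace forces $\ast=1$, and then the determinant is $2m-1\neq 1$. If one actually computes the relevant block from the stated formulas for $h$ --- for instance on the $2$-plane spanned by $e_0$ and $\sum_{i=m-1}^{2m-1}e(q_i)$ inside $V_A$, where the truncated map acts by $\left(\begin{smallmatrix} m & m+1\\ -1 & -1 \end{smallmatrix}\right)$ --- one finds $t^2-(m-1)t+1$, which is also the coefficient of $x^{2k+2}$ in the paper's explicit characteristic polynomial. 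That polynomial has no root $>1$ for $m=2,3$, so the computation is precisely where one discovers whether the construction of $h$ actually matches the target $P_m(x)=x^2-(m+1)x+1$ (as written, either the indices in the definition of $h$ or the statement need adjusting; your remark that the case $m=2$ degenerates is a symptom of the same issue). This cannot be left as ``bookkeeping.'' A second, smaller point: your justification that $\lambda_{m,k}\neq\lambda_{m,\infty}$ for large $k$ does not transfer from Lemma~\ref{lem:cvlambda}, because here $\lambda_{m,\infty}$ is a reciprocal quadratic integer and may perfectly well be the dynamical degree of an automorphism (Example~\ref{eg:Kummer}); one should instead verify directly on the explicit characteristic polynomial that $\lambda_{m,\infty}$ is not a root of it for $k$ large.
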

\begin{proof}
Denote by $W'\subset W$ the sub-$\Z$-module whose basis is
\[
\begin{array}{l}e(p_1),\; \sum\limits_{i=2}^{m-2} e(p_i),\; e_0,\; \sum\limits_{i=m-1}^{2m-1} e(p_i)=\sum\limits_{i=m-1}^{2m-1} e(q_i),\; e(q_1),\; \sum\limits_{i=2}^{m-2} e(q_i),\\
e(a_{1,1}),\; \sum\limits_{i=2}^{m-2} e(a_{i,1}),\; \dots,\; 
e(a_{1,k-1}),\; \sum\limits_{i=2}^{m-2} e(a_{i,k-1}).\end{array}
\]
Then, $W'$ is invariant by $h$, and the matrix of $h$ relative to the above basis is 
$$M_h=\left(\begin{array}{rrrr|rrr|rr}
0 &0& 0&0& 0 & \dots & 0& 1 & 0\\
0 &0& 0&0& 0 & \dots & 0& 0 & 1\\
\hline
m-1&m-3& m& m+1& 0 &  \dots & 0& 0 & 0\\
-1& 0& -1& -1& 0 &  \dots & 0& 0 & 0\\
-(m-2)& -(m-3)& -(m-1)& -(m+1)& 0 &  \dots & 0& 0 & 0\\
-1& -1& -1& 0& 0 &  \dots & 0& 0 & 0\\
\hline
0& 0& 0& 0 & 1&&0&0&0\\
\vdots& \vdots& \vdots& \vdots & & \ddots&  &\vdots&\vdots\\
0& 0& 0& 0 & &  & 1 & 0 & 0\\
\end{array}\right)$$
A computation similar to the one done in the proof of Lemma~\ref{lem:cvlambda} shows that its characteristic polynomial $\det(xI-M_{h})$ is equal to
%\[\det\left(\left(\begin{array}{rrrr|rr|rr|r|rrr}
%x &0& 0&0& 0 &0&0&0 &\dots  & -1 & 0\\
%0&x &0& 0&0& 0 &0&0&\dots&0  & -1\\
%\hline
%-(m-1)&-(m-3)& x-m& -(m+1)& 0 & 0& 0&0&\dots & 0& 0 \\
%1& 0& 1& x+1& 0 &  0 & 0& 0 & \dots & 0& 0\\
%m-2& m-3& m-1& m+1& x & 0 & x^2 & 0& \dots & x^k& 0 \\
%1& 1& 1& 0& 0 &  x   & 0 & x^2& \dots & 0 & x^k\\
%\hline
%0& 0& 0& 0 &-1& 0&0&0&\dots &0&0\\
%0& 0& 0& 0 &0& -1&0&0&\dots &0&0\\
%0& 0& 0& 0 &0& 0&-1&0&\dots &0&0\\
%0& 0& 0& 0 &0& 0&0&-1&\dots &0&0\\
%\vdots& \vdots& \vdots& \vdots & \vdots& \vdots& \vdots &\vdots&\ddots& \vdots & \vdots\\
%0& 0& 0& 0 & 0& 0 &  0 & 0 & \dots & -1 & 0\\
%0& 0& 0& 0 & 0& 0 &  0 & 0 & \dots & 0 & -1\\
%\end{array}\right)\right)\]
\[\det \left(\begin{array}{rrrrrr}
x &0& 0&0&  -1 & 0\\
0&x &0& 0&0  & -1\\
-(m-1)&-(m-3)& x-m& -(m+1)&  0& 0 \\
1& 0& 1& x+1&  0& 0\\
m-2& m-3& m-1& m+1&  x^k& 0 \\
1& 1& 1& 0& 0 & x^k
\end{array}\right)\]
and is therefore equal to 
\[
x^{2k+2}(x^2-x(m-1)+1)+x^{k+1}((m-1)x^2-4x+(m-1))+(x^2-(m-1)x+1)
\]
Fixing $m$, we see that the sequence $(\lambda_{m,k})_{k}$ converges towards $\lambda_{m,\infty}$, and that $\lambda_{m,k}\neq \lambda_{m,\infty}$ for $k$ large. Each $\lambda_{m,k}$ being the spectral radius of an element in a Coxeter group $W_{r_k}$, it is either equal to $1$, to a quadratic integer or a Salem number. The set of quadratic integers being discrete, and $\lambda_{m,k}$ being different from $\lambda_{m,\infty}$, the  $\lambda_{m,k}$ are all Salem numbers for $k$ large enough.
\end{proof}

%%%%%%%
\section{Appendix: Modular groups, dilatations, and volumes}\label{par:modular-group}
%%%%%%%

\subsection{}

The Cremona group $\Bir(\P^2_\k)$ acts faithfully on the hyperbolic space $\H_{\P^2_\k}$; this space contains all classes
\[
\frac{1}{\sqrt{C\cdot C}}[C]
\]
where $C$ is a curve with positive self-intersection on some rational surface. Similarly, the modular group (or mapping class group)
${\rm{Mod}}(g)$ of the closed, connected, and orientable surface $\Sigma_g$ of genus $g\geq 2$ acts by isometries on
several metric spaces, for instance on the Teichm\"uller space, endowed with its Teichm\"uller metric. \footnote{It also acts on the complex of curves of the surface, a metric space which is Gromov hyperbolic (see \cite{Masur-Minsky:1999}).}

The comparison of those two isometric actions provides a fruitful analogy between $\Bir(\P^2_\k)$ and ${\rm{Mod}}(g)$ for $g\geq 2$ (see \cite{Cantat:Annals, Cantat:ECM}). 
In this analogy,  loxodromic elements $f\in \Bir(\P^2_\k)$ correspond to pseudo-Anosov classes $\varphi\in {\rm{Mod}}(g)$. The
dynamical degree $\lambda(f)$ may be compared to the dilatation factor $\lambda(\varphi)$  of $\varphi$;
both $\lambda(f)$ and $\lambda(\varphi)$ are algebraic numbers: The degree of $\lambda(f)$ is bounded from above by the 
Picard number of a surface on which $f$ is conjugate to an algebraically stable transformation, while the degree of $\lambda(\varphi)$
is at most  $6g-6$. 

Theorem~A may be compared to Franks and Rykken result, according to which a pseudo-Anosov homeomorphism 
$\Phi\colon \Sigma_g\to \Sigma_g$ with a quadratic dilatation factor and with orientable stable and unstable foliations is semi-conjugate, via
a ramified cover, to a linear automorphism of a torus (see \cite{Franks-Rykken:1999}). As for birational transformations, the infimum of $\lambda(\varphi)$ when $\varphi$ describes the set of pseudo-Anosov classes that are composition of   Dehn-multitwists is the Lehmer number (see \cite{Leininger:2004}).

\subsection{} 

Another measure of the complexity of a pseudo-Anosov isotopy class $\varphi$ is obtained as follows. According to Thurston and Mostow, the three-dimensional manifold 
\[
M_\varphi=\left(\Sigma_g\times [0,1]\right)/(x,0)=(\Phi(x),1)
\]
(where $\Phi$ is a diffeomorphism of $\Sigma_g$ in the isotopy class $\varphi$) admits a unique hyperbolic metric (a riemannian metric of constant curvature $-1$). The volume of $M_\varphi$ with respect to this riemannian metric is a positive real number ${\rm{vol}}(\varphi)$; this volume is, up to a bounded multiplicative error, the translation length of $\varphi$ on the Teichm\"uller space with respect to the Weil-Petersson metric (see \cite{Brock:2003}). Jorgensen and Thurston proved that the set of all volumes ${\rm{vol}}(M)$ of all compact hyperbolic three-manifolds is infinite countable, contains accumulation points, and is well ordered (see \cite{BP:Book}). Thus, the set $\{{\rm{vol}}(\varphi)\}$ where $\varphi$ describes the set of pseudo-Anosov classes of some higher genus surface is well ordered too; moreover, this set is not discrete (consider sequences  
${\rm vol}(\phi\circ \tau^n)$ where $\tau$ is a Dehn twist).  This parallels Theorem~C. Moreover, as shown in Section~\ref{par:Approx-Salem},
accumulation points in $\Lambda(\P^2_\k)$ are obtained by replacing orbits of base points with an infinite length by orbits with finite length. For volumes of hyperbolic manifolds, one obtains accumulation points by Dehn fillings of cusps. Thus, cusps correspond to base points of infinite length in this dictionary. 
 
\subsection{}

It may also be interesting to compare our results to the description obtained by Thurston of the possible topological entropies of multimodal continuous maps of the interval $[0,1]$ into itself which are postcritically finite (see \cite{Thurston:entropies}). Those entropies are logarithms of Perron numbers, and all Perron numbers $\lambda>1$ are realized. Thus, in this setting, there is no gap phenomenon similar to the gaps in the dynamical spectrum $\Lambda(\P^2_\k)$.  

%%
%%%%%%%
%%\section{Appendix : Quadratic birational transformations and small Pisot numbers} 
%%%%%%%
%%\label{par:appendix}
%%One can list all Pisot numbers which are realized by dynamical degrees of quadratic birational maps of the plane. 
%%As a byproduct, one can check the following result, the proof of which will be given in another article. 
 
%%\begin{thm}
%%Let $\k$ be an algebraically closed field of characteristic $0$.
%%All Pisot numbers in the interval $[\lambda_{P}, \lambda_G]$ are realized by dynamical degrees
%%of quadratic birational transformations of $\P^2_\k$. Let $\beta\simeq 1.640727$ be the root of
%%\[
%%t^7-2t^6+t^5 -t^4+ t^3-t^2+t-1
%%\]
%%which is larger than $1$. If $f$ is a quadratic birational transformation of $\P^2_\k$ and its dynamical 
%%degree is a Pisot number, then $\lambda(f)\notin (\lambda_G,\beta)$.
%%\end{thm}

%%Thus, the infimum of the set of Pisot numbers which are not realized as dynamical degrees is the Golden mean. 

%%%%%%%%%%%%%%%%%%%%%%%%%%%%%%

%%%%%%%%%%%%%%%%%%%%%%%%%%%%%%

%%%%%%%%%%%%%%%%%%%%%%%%%%%%%%

%%%%%%%%%%%%%%%%%%%%%%%%%%%%%%

%
%%%%%%%%%%%%%%%%%%%%%%%%%%%%%%%%%%%%%%%%%%%%%%%%%%%%%%%%%%%%%%%%%%
%%
%%%%%%%%%%%%%%%%%%%%%%%%%%%%%%%%%%%%%%%%%%%%%%%%%%%%%%%%%%%%%%%%%%
%%
%%%%%%%%%%%%%%%%%%%%%%%%%%%%%%%%%%%%%%%%%%%%%%%%%%%%%%%%%%%%%%%%%%
%

\vspace{8mm}

\bibliographystyle{plain}
\bibliography{bibliofinal}
\nocite{}

\end{document}